\documentclass [12pt,oneside]{amsart}
\usepackage{mathrsfs}
\usepackage{xcolor}
\usepackage{amssymb,amsmath,amsthm,amsfonts,amscd}
\usepackage{enumerate}
\usepackage{setspace}
\usepackage{empheq}
\usepackage[all]{xy}
\usepackage{verbatim}
\usepackage[normalem]{ulem}
\usepackage{todonotes}
\usepackage[bookmarks,colorlinks,breaklinks]{hyperref}  
\hypersetup{linkcolor=blue,citecolor=blue,filecolor=blue,urlcolor=blue} 
\usepackage{time}

\numberwithin{equation}{section}
\numberwithin{figure}{section}

\newtheorem{theorem}{Theorem}[section]
\newtheorem{proposition}[theorem]{Proposition}
\newtheorem{prop}[theorem]{Proposition}

\newtheorem{lemma}[theorem]{Lemma}

\newtheorem{corollary}[theorem]{Corollary}

\theoremstyle{definition}

\newtheorem{remark}[theorem]{Remark}

\definecolor{myblue}{rgb}{0.6, 0.9, 1}

\makeatletter

\newcommand{\Rmnum}[1]{\expandafter\@slowromancap\romannumeral #1@}
\makeatother

\definecolor{myblue}{rgb}{0.6, 0.9, 1}
\definecolor{mygreen}{rgb}{0,0,1}
\definecolor{purple}{rgb}{0.6,0.2,1}
\definecolor{orange}{rgb}{0.8,0,0.2}

\newcommand{\bC}{\mathbb{C}}
\newcommand{\bP}{\mathbb{P}}
\newcommand{\C}{\mathbb{C}}
\newcommand{\Q}{\mathbb{Q}}

\newcommand{\bZ}{\mathbb{Z}}
\newcommand{\bQ}{\mathbb{Q}}

\newcommand{\R}{\mathbb{R}}
\newcommand{\bN}{\mathbb{N}}

\newcommand{\bA}{\mathbb{A}}

\newcommand{\Kbar}{\overline{K}}
\newcommand{\Qbar}{\overline{\bQ}}

\newcommand{\supp}{\operatorname{supp}}

\newcommand{\cH}{\mathcal{H}}
\newcommand{\Pic}{\operatorname{Pic}}
\newcommand{\Pichat}{\widehat{\Pic}}
\newcommand{\<}{\langle}
\renewcommand{\>}{\rangle}

\newcommand{\rank}{\operatorname{rank}}
\newcommand\volhat{\widehat{\mathrm{vol}}}
\newcommand\iso{\simeq}

\newcommand{\Z}{\mathbb{Z}}

\newcommand{\cal}{\mathcal}
\newcommand{\an}{\mathrm{an}}

\newcommand{\Spec}{\operatorname{Spec}}
\newcommand{\nef}{\mathrm{nef}}
\newcommand\Rat  {\mathrm{Rat}}

\newcommand\Lbar{\overline{L}}

\newcommand\Mbar {\overline{M}}  
\newcommand\Nbar {\overline{N}}

\newcommand\Preper {\mathrm{Preper}}

\newcommand\Crit {\mathrm{Crit}}

\newcommand\PGL{\mathrm{PGL}}

\newcommand\cX{\mathcal{X}}

\newcommand\actdim{\operatorname{act-dim}}
\newcommand\cE{\mathcal E}
\newcommand\cA{\mathcal A}
\newcommand\cB{\mathcal B}
\newcommand\cG{\mathcal G}
\newcommand\cQ{\mathcal Q}
\newcommand\cV{\mathcal V}
\newcommand\cY{\mathcal Y}

\newcommand\Betti{\mathrm{Betti}}

\begin{document}
	\title[]{Dynamical relations on $\bP^1$, higher bifurcations, and arithmetic bigness}

\author{Laura DeMarco}
\author{Niki Myrto Mavraki}


	\begin{abstract}
In this article, we study algebraic relations among rational maps on $\bP^1$ and among their preperiodic points.  Working over $\bC$, we characterize algebraic families of positive-dimensional varieties in $(\bP^1)^N$ and tuples of maps $(f_1, \ldots, f_N)$ for which the subvariety is dynamically special for a Zariski-dense set of parameters.  Moreover, outside of well-understood elliptic phenomena, we show that all families of maps and subvarieties that bifurcate are maximally bifurcating. Working over $\Qbar$, we characterize arithmetic and geometric potential bigness for the canonical line bundle associated to families of endomorphisms of $(\bP^1)^N$ when restricted to the subvarieties.  

As consequences, we obtain uniform bounds for the periods and preperiods of relations among non-exceptional maps, and we show uniform bounds on the geometry of preperiodic points satisfying these relations. Over $\Qbar$, our results yield corresponding uniform bounds for points and subvarieties satisfying a height gap. 
    \end{abstract}
	
	\maketitle


\tableofcontents

\section{Introduction}

In this article, we study the prevalence of algebraic relations among maps on $\bP^1$, and we provide uniform bounds on their geometry as well as the geometry of their preperiodic points; see Theorems \ref{preperiodbound} and \ref{degree bound}. 
These bounds are a consequence of our first main result, Theorem \ref{main1}, which characterizes algebraic families of product maps and subvarieties of $(\bP^1)^N$ with a Zariski-dense set of dynamically special fibers and shows that this condition is equivalent to the Zariski density of preperiodic points in sufficiently high fiber powers. 

Working over the field $\C$ of complex numbers, these questions are closely connected to the theory of bifurcations in holomorphic families, and we characterize the vanishing of the associated bifurcation measures, giving a criterion for geometric potential bigness of the canonical line bundles; see Theorems \ref{hypersurfaces} and \ref{true deg char}. When the families are defined over the field $\Qbar$ of algebraic numbers, we characterize arithmetic bigness of the canonical line bundles, distinguishing geometric degeneracy from an additional obstruction arising from simultaneous torsion specializations in algebraic groups; see Theorem \ref{main3}. 

As much as possible, we formulate our results in the language of complex dynamical systems, but we provide equivalent, and sometimes stronger, formulations in the language of heights and volumes of adelic line bundles.

The theorems below were inspired by recent developments both in complex dynamical systems, especially in the theory of bifurcation currents and measures \cite{GauthierVigny2025, Gauthier:Taflin:Vigny, Astorg:Bianchi, Dujardin:higherbif}, and in arithmetic geometry, especially as applied to families of abelian varieties \cite{Gao:Habegger:RMM, Kuhne:RBC, Yuan:bigness, Yuan:Zhang:quasiprojective}, as well as the broader conjecture we formulated in \cite{DM:UDMM}.  

\subsection{Key definitions}
Fix an integer $N\geq 2$, and suppose that $f_1, \ldots, f_N: \bP^1\to\bP^1$ are rational maps defined over $\C$, of degrees $\deg f_i >1$.  A closed subvariety
	\[ X \subset (\bP^1)^N \]
is said to be {\bf $\Phi$-special} for $\Phi := (f_1, \ldots, f_N)$ if it is preperiodic for a product of iterates
	\[ (f_1^{n_1}, \ldots, f_N^{n_N}) : (\bP^1)^N \to (\bP^1)^N \]
or, more generally, for a product endomorphism $\Psi: (\bP^1)^N \to (\bP^1)^N$ which commutes with a product of iterates of $\Phi$ and has degree $>1$ in every coordinate.  By \cite{Ghioca:Nguyen:Ye:DMM2,Ghioca:Nguyen:Ye:DMM1,MSW2023, Hindry, Ghioca:Tucker:DMM}, these are precisely the subvarieties containing a Zariski-dense set of points in
	\[ \Preper(\Phi) = \Preper(f_1) \times \cdots \times \Preper(f_N). \]
We say that a proper $\Phi$-special subvariety $X \subset (\bP^1)^N$ defines a {\bf relation among the} $f_1, \ldots, f_N$ if $X$ projects surjectively to every factor $\bP^1$.  The existence of a relation forces compositional relations among the maps; see \S\ref{relations}.  For example, the diagonal line $\Delta \subset \bP^1\times\bP^1$ is periodic for $(f,g)$ if and only if $f^n = g^n$ for some $n\geq 1$.  It is preperiodic for $(f,g)$ if and only if the pair satisfies semiconjugacy relations of the form $f^m \circ g^m = f^{2m}$ and $g^m\circ f^m = g^{2m}$ for some $m\geq 1$. 

We say that $f: \bP^1\to\bP^1$ is {\bf exceptional} if it is the finite quotient of an isogeny of a one-dimensional algebraic group, either the multiplicative group $\mathbb G_m$ or an elliptic curve.  Those arising from elliptic curves are called {\bf Latt\`es maps}.  For non-exceptional maps, the special subvarieties are assembled from preperiodic curves in pairs of coordinates; see \cite{Medvedev:thesis, Medvedev:Scanlon} or \cite{Ghioca:Nguyen:Ye:DMM2, Ghioca:Nguyen:Ye:DMM1, MSW2023} for proofs.  Exceptional maps admit additional group-theoretic relations.  If each $f_i$ is exceptional, associated to a group $G_i$, then $X$ in $(\bP^1)^N$ is $\Phi$-special if and only if it is the quotient of a torsion-coset in the group $G_1\times\cdots \times G_N$.  In general, we will say that a subvariety $X \subset (\bP^1)^N$ is {\bf weakly $\Phi$-special} if it is either $\Phi$-special or if all maps $f_i$ are exceptional and $X$ is the quotient of a coset (torsion or not) in $G_1 \times \cdots \times G_N$.   

\subsection{Algebraic families over $\C$: the first main result}
Let $S$ be an irreducible quasiprojective variety over $\C$, and consider an algebraic family
	\[\Phi = (f_1, \ldots, f_N): S \times (\bP^1)^N \to S \times (\bP^1)^N, \]
given by
	\[\Phi(s,z_1,\ldots,z_N) = (s,(f_1)_s(z_1),\ldots,(f_N)_s(z_N)), \]
where $d_i=\deg(f_i)_s>1$ is independent of $s$.  Let $\cX\subset S\times(\bP^1)^N$ be (geometrically) irreducible and flat over $S$, and assume that its generic fiber projects dominantly to every coordinate $\bP^1$.  We let $X_s$ denote the fiber of $\cX$ over a point $s \in S(\C)$.  We say that $\cX$ is $\Phi$-special (or weakly $\Phi$-special) if its generic fiber is special (respectively, weakly special) for the generic fiber of $\Phi$.

Our first main theorem is the following.

\begin{theorem} \label{main1}
Let $\Phi = (f_1, \ldots, f_N)$ be an algebraic family of maps on $(\bP^1)^N$ and $\cX \subset S \times (\bP^1)^N$  irreducible, flat over $S$, and that its generic fiber projects dominantly to every coordinate.  The following are equivalent.
\begin{enumerate}
\item The set
	\[ \{s\in S(\C): X_s \mbox{ is $\Phi_s$-special}\} \]
is Zariski dense in $S$.
\item Either $\cX$ is $\Phi$-special, or at least two of the maps $f_i$ are exceptional and every essential hypersurface shadow of $\cX$ is weakly special, becoming special simultaneously on a Zariski-dense set of parameters.
\item The set $\Preper(\Phi^{(1+\dim S)})$ is Zariski dense in the fiber power $\cX^{(1+\dim S)}$.
\end{enumerate}
\end{theorem}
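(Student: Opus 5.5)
The plan is to prove the cycle (2)$\Rightarrow$(1)$\Rightarrow$(3)$\Rightarrow$(2). The engine is bifurcation theory for the canonical line bundle. Write $\widehat{L}_\Phi$ for the sum over $i$ of the pullbacks of the canonical metrized $\mathcal{O}(1)$ on the $i$-th factor, and $T_i$ for the fiberwise canonical currents. The bifurcation measure of $\cX$ is then the pushforward
\[ \mu_{\cX} = \pi_*\Big( (\textstyle\sum_i T_i)^{\dim X + \dim S}\big|_{\cX}\Big) \]
restricted to the base. I will also use the equivalent statement on the fiber power $\cX^{(m)}$ with $m = 1+\dim S$. The guiding principle is that Zariski density of special fibers, density of preperiodic points in high fiber powers, and nonvanishing of these higher bifurcation measures are all equivalent. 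The argument has three parts: an equidistribution/transversality step, a rigidity step showing that vanishing forces a dynamical relation, and a classification of the exceptional case.

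For (2)$\Rightarrow$(1): if $\cX$ is $\Phi$-special, every fiber $X_s$ is preperiodic for the specialized product of iterates, hence special, and there is nothing more to do. In the exceptional case, $X_s$ is determined up to the non-exceptional special factors by its essential hypersurface shadows. These shadows become simultaneously special on a Zariski-dense set by hypothesis, and the special non-exceptional factors are special everywhere, so I assemble $X_s$ as an intersection of preimages of special subvarieties. The point to check is that such an intersection, as a component with the correct dimension and projections, is itself special. This follows from the Medvedev--Scanlon / Ghioca--Nguyen--Ye structure theorem cited in the introduction.

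For (1)$\Rightarrow$(3): if $X_s$ is special, then $\Preper(\Phi_s)$ is dense in $X_s$ by the cited results, and hence dense in $X_s^m$. A Zariski-dense union of fibers, each with dense preperiodic points, gives a dense set in $\cX^{(m)}$. The only subtlety is that the union of fibers over a dense set of parameters is dense in the total space. This holds because $\cX$ is flat and irreducible, so a closed subset containing those fibers maps onto a closed set containing the dense parameter set.

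The hard direction is (3)$\Rightarrow$(2). First I show that density of preperiodic points in $\cX^{(m)}$ with $m=1+\dim S$ forces a nonvanishing (equivalently, big) measure $\widehat{L}_\Phi^{\,\dim \cX^{(m)}}$ on $\cX^{(m)}$ to vanish in the appropriate sense. I work over $\Qbar$ by specialization, or over $\C$ via a Yuan--Zhang style equidistribution of small points. In the geometric language, the dense height-zero points show that the top self-intersection of the arithmetic bundle vanishes. Since $m(\dim X)+\dim S$ exceeds $m\dim X$ by exactly $\dim S$, this in turn forces the fiberwise currents to be "stable" along every direction of $S$ on a Zariski-open set, i.e.\ the family has no maximal bifurcation on $\cX$. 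Second, I run a rigidity argument: stability along $S$ gives holomorphic motions of Julia sets for the relevant coordinates, hence isotriviality of the non-exceptional maps on the fibers. The relation among the $f_i$ cut out by $\cX$ then persists; this uses the theorems of \S\ref{relations} on compositional relations, together with Medvedev--Scanlon, to conclude that $\cX$ is generically $\Phi$-special. The remaining case is that in which a coordinate bifurcates only through Lattès or power-map behavior. There the relation is a group-coset quotient whose moving part is a family of cosets, and the preperiodic density yields density of simultaneous torsion in the shadows. This is exactly (2), with ``at least two exceptional'' needed because a single moving coset in one factor cannot lie in a proper relation.

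I expect the main obstacle to be the passage from vanishing of the top bifurcation measure on the fiber power to genuine special structure. This is the "bifurcating implies maximally bifurcating" statement in the abstract. My first attempt would be induction on $N$ via projections to hypersurface shadows, combined with Gauthier--Vigny type results characterizing when $T_{\mathrm{bif}}^{k}$ vanishes.
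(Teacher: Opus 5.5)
\textbf{There is a genuine gap in (3)$\Rightarrow$(2).} Your implications (2)$\Rightarrow$(1)$\Rightarrow$(3) are essentially fine. The paper assembles $X_s$ from its shadows in Proposition~\ref{essential intersection}: it lifts preperiodic points through the generically finite projections $Y_{R\cup\{i\}}\to(\bP^1)^R$ and applies dynamical Manin--Mumford. The gap is where you pass from vanishing arithmetic volume to vanishing of the bifurcation measure. Dense height-zero points do give $\volhat(\Lbar_{\Phi^{(m)}}|_{\cX^{(m)}})=0$. But this does not make the geometric part non-big, and your dimension count has no bearing on it.

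The implication fails exactly in the exceptional branch of (2). Over a curve $S$, take $f_1=f_2$ a Latt\`es family from a non-isotrivial elliptic scheme $\cE\to S$. Let $\cH$ be the image of $\{(P,P+\sigma)\}$ for a non-torsion section $\sigma$. Then $\sigma$ is torsion on a dense set of parameters, so preperiodic points are dense in $\cH^{(2)}$. Yet $T_{\Phi,\cH}=a_{\cH}\,\bar\sigma_{\cH}^*\omega_{\cQ}\neq0$ by Proposition~\ref{weakly special bif}, so $\mu_{\Phi,\cH}\neq0$.

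The paper treats the regime ``geometric part big, arithmetic volume zero'' by a separate argument, Proposition~\ref{arithmetic rigidity hypersurface}:
\begin{enumerate}
\item The fundamental inequality gives a generic small sequence, which is also small for the bundles obtained by forgetting one coordinate of the last factor.
\item Archimedean equidistribution makes the resulting measures proportional.
\item Slicing to curves in $\bP^1\times\bP^1$ and applying Mavraki--Schmidt measure rigidity forces weak specialness.
\end{enumerate}
Nothing in your proposal replaces this step.

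\textbf{Your second step also fails.} Vanishing of $T_{\Phi,\cX}$, and a fortiori of $\mu_{\Phi,\cX}$, constrains the pair, not the individual factors. For $\cX=S\times\Delta$ with $f_1=f_2$ any bifurcating family, the pair is stable by Proposition~\ref{easy stability}, yet no Julia set moves holomorphically. So you cannot extract isotriviality of the non-exceptional $f_i$.

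What actually turns ``not weakly special'' into a contradiction is Theorem~\ref{hypersurfaces}: a maximally varying, non-weakly-special essential family has $\mu_{\Phi,\cH}\neq0$. This is applied after reducing to an essential shadow, and, when variation is not maximal, after descending to the moduli image of dimension $q$ with $q+1\le m$. You rightly flag this ``bifurcating implies maximally bifurcating'' statement as the main obstacle. It is the heart of the paper, and its proof needs substantial input:
\begin{enumerate}
\item with no Latt\`es factors, the multiplier-function tower, the Yuan--Zhang fundamental inequality over one-variable function fields, non-archimedean slicing, and Ji--Yap rigidity;
\item with all factors Latt\`es, Gao's Betti-rank theorem;
\item in the mixed case, rigid repellers.
\end{enumerate}

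Finally, over $\C$ there are no heights to equidistribute. The paper spreads out one transcendental at a time, inducting on transcendence degree. It combines geometric bigness from Theorem~\ref{hypersurfaces} with Yuan--Zhang's height inequality over $k(B_0)$ to get the height gap of Lemma~\ref{function field height gap}, which contradicts density of preperiodic points.
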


The essential hypersurface shadows in part (2) are introduced in \S\ref{decomposition}.  They are obtained by projecting an $r$-dimensional variety in $(\bP^1)^N$ to suitable collections of $r+1$ coordinates and then discarding variables on which the resulting hypersurface equation does not depend.  We will explain and characterize the simultaneity condition in (2) in \S\ref{simultaneous torsion}.  For a positive integer $m$, we write $\Phi^{(m)}$ and $\cX^{(m)}$ for the $m$-th fiber powers over $S$.  Thus a point of
	\[ \Preper(\Phi^{(m)})\cap\cX^{(m)} \]
is a collection of $m$ points in a single fiber $X_s$, all of whose coordinates are preperiodic for the corresponding maps $(f_i)_s$.  In the case of one-parameter families of curves in $(\bP^1)^2$, defined over $\Qbar$, Theorem \ref{main1} is established in \cite[Theorems 1.9 and 5.7]{Mavraki:Schmidt}.

\subsection{Some consequences of Theorem \ref{main1}}
We begin with uniform bounds on the complexity of relations.  Fix once and for all a projective embedding of $(\bP^1)^N$, so that degrees of subvarieties are defined.

\begin{theorem} \label{preperiodbound}
Fix integers $N, D \geq 2$ and $E \geq 1$.  There exists $M=M(N,D,E)$ with the following property.  Let $f_1,\ldots,f_N$ be non-exceptional maps of degrees at most $D$, and suppose that a subvariety $V\subset(\bP^1)^N$ of degree at most $E$ defines a relation among them.  Then there are integers
	\[ 1\leq a_i\leq M, \qquad 0\leq m<n\leq M, \]
such that, for
	\[ F_{\mathbf a}=(f_1^{a_1},\ldots,f_N^{a_N}), \]
we have
	\[ F_{\mathbf a}^n(V)=F_{\mathbf a}^m(V). \]
\end{theorem}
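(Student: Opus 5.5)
The plan is a compactness argument over the parameter space, driven by Theorem \ref{main1}. The triples $(f_1,\ldots,f_N,V)$ with $\deg f_i = d_i \leq D$ and $V$ of degree at most $E$ are parametrized by a finite union of quasiprojective varieties: products of spaces $\Rat_{d_i}$ with a Chow (or Hilbert) variety of subvarieties of $(\bP^1)^N$ of bounded degree. Inside this space, let $\cB$ be the set of tuples where all $f_i$ are non-exceptional, $V$ is irreducible and surjects onto every factor, and $V$ is $\Phi$-special. The goal is to show that $\cB$ is contained in a finite union of locally closed families $S_j$, and that over each $S_j$ a single choice of $(\mathbf a, m, n)$ works. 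Taking $M$ to be the maximum over $j$ then gives the theorem. Reducible $V$ are handled componentwise, since there are at most $E$ components.

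First stratify. Fix a triple $(\mathbf a, m, n)$. The condition $F_{\mathbf a}^n(V)=F_{\mathbf a}^m(V)$ is Zariski closed in the parameter space, because images of bounded-degree cycles under bounded-degree maps vary algebraically. Non-exceptionality is Zariski open, as the exceptional maps of each degree form a closed set. So each set
\[ \cB_{\mathbf a,m,n} := \{ F_{\mathbf a}^n(V) = F_{\mathbf a}^m(V)\} \cap \{\text{non-exceptional}\} \]
is constructible, and $\cB$ is the countable union of these sets. This countable union also contains the points satisfying the more general definition of special, via commuting maps $\Psi$. For non-exceptional maps, Medvedev--Scanlon and the cited works show that commuting maps share an iterate, so being preperiodic for $\Psi$ reduces to preperiodicity for some $F_{\mathbf a}$.

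Next show that finitely many strata suffice. Take the Zariski closure $\overline{\cB}$ and an irreducible component $S$ of it. Over $S$, after passing to a constructible stratification where $V$ is flat and surjects onto each factor, we have the tautological family $\Phi$ and $\cX$. In this family the special fibers are Zariski dense by construction. Theorem \ref{main1} (1)$\Rightarrow$(2) applies, and since the generic maps are non-exceptional (we may shrink to the open non-exceptional locus, which meets $S$ densely because $\cB$ lies in it), $\cX$ is $\Phi$-special generically. So the generic fiber satisfies $F_{\mathbf a}^n(X_\eta)=F_{\mathbf a}^m(X_\eta)$ for one fixed $(\mathbf a,m,n)$. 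This closed condition then holds on all of $S$.

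Noetherian induction applies to the complement: the set of points of $\cB\cap S$ not covered by this one stratum is a proper closed subset of $S$. Its intersection with $\cB$ is handled by repeating the argument on each component of its closure. This terminates, giving finitely many triples. To arrange $m<n$ and $a_i\geq 1$ uniformly, we take least common multiples over the finitely many triples.

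The main obstacle is the Noetherian induction. Theorem \ref{main1} only applies to a family in which the special parameters are Zariski dense. At each stage we must therefore replace the leftover closed set by the Zariski closures of the $\cB$-points it contains, and check the remaining hypotheses there. Those hypotheses are irreducibility, flatness after shrinking, and dominance onto each coordinate: dominance holds because relations project surjectively, and flatness holds after removing a closed set, which is itself absorbed into the induction.

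There is one further subtlety. The locus where some $f_i$ becomes exceptional is excluded: it can lie in the closure of $\cB$, but those points are not in $\cB$. This is why we work with the constructible set $\cB\cap\{\text{non-exceptional}\}$ and apply Theorem \ref{main1} over the open non-exceptional part of each component.
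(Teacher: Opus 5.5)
Your proposal is correct and follows essentially the paper's proof. The relation locus in the parameter space of non-exceptional tuples and irreducible, coordinate-dominating cycles is a countable union of the closed loci $\{F_{\mathbf a}^n(V)=F_{\mathbf a}^m(V)\}$, and on each irreducible component $S$ of its Zariski closure Theorem~\ref{main1} (with all maps non-exceptional) forces the generic fiber to be special, so one fixed triple $(\mathbf a,m,n)$ holds on all of $S$. The paper finishes more directly by noting that every component of the closure therefore lies in the relation locus, which is closed with finitely many components, so your Noetherian induction and lcm step are unnecessary; your aside on reducible $V$ should also be dropped, since a component such as $\{a\}\times\bP^1$ with $a$ preperiodic of large preperiod would destroy uniformity (the statement is meant for irreducible $V$).
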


In particular, for fixed degrees of the maps $f_i$ and bounded degree
of $V$, the locus of relations is a finite union of algebraic families
in the parameter space of non-exceptional maps and proper irreducible
subvarieties of degree $\leq E$ that dominate every coordinate.
\begin{remark}
Uniform period bounds for invariant curves in $\bP^1\times\bP^1$ were known for non-exceptional polynomial pairs \cite{GhiocaNguyen2017, Pakovich2025}, and bounds depending on primes of good reduction are known for rational maps over number fields \cite{BellGhiocaTucker2015}.  For each $d\geq2$, Pakovich constructs a nonempty Zariski-open subset $U\subset\Rat_d$ such that every irreducible periodic curve in $\bP^1\times\bP^1$ with dominant coordinate projections for a pair $(f,g)\in U\times U$ is invariant \cite[Theorem~1.2]{PakovichGeneral2025}.  Theorem \ref{preperiodbound} treats arbitrary products of non-exceptional rational maps over $\C$, and it bounds both period and tail length, though in this form it must also depend on the degree of $V$.
\end{remark}

For the diagonal line in $\bP^1\times\bP^1$, we obtain the following.

\begin{corollary} \label{iterateshare}
Fix $D>1$.  There exists $M=M(D)$ such that, whenever non-exceptional rational maps $f$ and $g$ of degrees at most $D$ share an iterate, one can write
	\[ f^n=g^m \]
with $n,m\leq M$.
\end{corollary}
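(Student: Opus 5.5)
Corollary \ref{iterateshare} follows by specializing Theorem \ref{preperiodbound} to $N=2$ and the diagonal $V=\Delta\subset\bP^1\times\bP^1$, which has a fixed degree $E_0$ in our chosen embedding. The plan is to turn the shared-iterate hypothesis into a relation, apply the theorem to get bounded exponents, and then convert the resulting periodicity statement for $\Delta$ back into an equality of iterates.

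\emph{Step 1: $\Delta$ is a relation.} Suppose $f^{n_0}=g^{m_0}$ for some $n_0,m_0\geq 1$. Then $\Delta$ is invariant under $(f^{n_0},g^{m_0})$, since $(f^{n_0}(z),g^{m_0}(z))=(f^{n_0}(z),f^{n_0}(z))$. Hence $\Delta$ is $\Phi$-special for $\Phi=(f,g)$. It projects surjectively to both factors, so it defines a relation among $f$ and $g$. Because $\deg f^{n_0}=\deg g^{m_0}$, the degrees of $f$ and $g$ are multiplicatively dependent, but we will not need this.

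\emph{Step 2: apply the theorem.} Theorem \ref{preperiodbound} with $N=2$, $D$, and $E=E_0$ gives $M_0=M(2,D,E_0)$ and integers $1\le a,b\le M_0$, $0\le m<n\le M_0$ with $F^n(\Delta)=F^m(\Delta)$, where $F=(f^a,g^b)$.

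\emph{Step 3: from preperiodicity to an equal iterate (the main obstacle).} Put $C=F^m(\Delta)$, the image of $z\mapsto(f^{am}(z),g^{bm}(z))$. Let $k=n-m$, $p=f^{ak}$, $q=g^{bk}$, so that $(p,q)(C)=C$. We want an equality $f^{r}=g^{s}$ with $r,s$ bounded in terms of $M_0$ and $D$.

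First I would show directly that $\Delta$ is itself periodic. Since $f^{n_0}=g^{m_0}$, the map $H=(f^{n_0},g^{m_0})$ fixes $\Delta$ and commutes with $F$. So $H^j(C)=F^m(H^j\Delta)=C$ for all $j$. I would like to deduce that the preperiodic curve $\Delta$ for $F$ lies in a periodic cycle, using the fact that $\Delta$ is $H$-invariant, together with the structure theory of \S\ref{relations} (Medvedev--Scanlon / Pakovich style).

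The cleaner route is to invoke the compositional consequences recorded in \S\ref{relations}. For the diagonal these say: $\Delta$ is preperiodic for $(f^a,g^b)$ iff $f^{ak'}\circ g^{bk'}=f^{2ak'}$ and $g^{bk'}\circ f^{ak'}=g^{2bk'}$, and it is periodic iff $f^{ak'}=g^{bk'}$, with $k'$ controlled by $n\le M_0$. Then I need to upgrade the semiconjugacy to equality of iterates using the known shared-iterate hypothesis. From $f^{n_0}=g^{m_0}$ both $f$ and $g$ commute with $h:=f^{n_0}$. By Ritt/Eremenko theory, non-exceptional maps commuting with a common $h$ share an iterate with $h$. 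From $f^{ak'}\circ g^{bk'}=f^{2ak'}$, right-cancellation is not available in general, but $g^{bk'}$ commutes with $f^{ak'}$ in this situation. Then $f^{ak'}\circ g^{bk'}=f^{ak'}\circ f^{ak'}$ gives $g^{bk'}=\sigma\circ f^{ak'}$ with $\sigma$ in the finite group of Möbius maps $\sigma$ satisfying $f^{ak'}\circ\sigma=f^{ak'}$. I would bound the order of this group in terms of $D$, say by $\le 2D^{2}$-type bounds. Iterating with commutation, $g^{bk't}=\sigma'\circ f^{ak't}$ with $\sigma'$ ranging in a bounded group, which yields an exact equality at bounded $t$.

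The resulting exponents are bounded in terms of $M_0$ and $D$, giving $M(D)$. I expect this last upgrade, from the preperiodic semiconjugacy relation to a genuine equality with bounded $t$, to be the delicate point.
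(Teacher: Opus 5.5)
Your Steps 1 and 2 are the paper's argument: specialize Theorem \ref{preperiodbound} to $V=\Delta$. The gap is Step 3, which you rightly flag but do not carry out, and the route you sketch fails as written.

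\begin{itemize}
\item The claim that $f^{ak'}$ and $g^{bk'}$ commute is not justified, and it cannot come from the shared iterate alone. For $R(z)=z^5+z^2$ and $\zeta$ a primitive cube root of unity, $R(\zeta w)=\zeta^2R(w)$. So $P=\zeta R$ and $Q=R$ are non-exceptional and satisfy $P^2=Q^2=R\circ R$. Yet $P\circ Q=\zeta R^{2}\neq\zeta^2R^{2}=Q\circ P$.
\item The passage from $f^{ak'}\circ g^{bk'}=f^{ak'}\circ f^{ak'}$ to $g^{bk'}=\sigma\circ f^{ak'}$ with $\sigma$ M\"obius is invalid. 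Cancelling a common \emph{left} factor $u$ only says that $z\mapsto(g^{bk'}(z),f^{ak'}(z))$ parametrizes a component of the curve $\{u(x)=u(y)\}$. Such components need not be graphs of M\"obius maps.
\item If commutation were known, you would not need $\sigma$ at all. Right-cancelling the surjective $f^{ak'}$ in $g^{bk'}\circ f^{ak'}=f^{ak'}\circ f^{ak'}$ gives equality at once.
\item The final claim, that iterating within a bounded group yields equality at bounded $t$, is not an argument.
\item Your direct attempt, $H^j(C)=C$, also stalls. It never compares $F$ with $H$ itself.
\end{itemize}

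The missing ingredient is the degree relation you set aside in Step 1, and it gives the direct route immediately.
\begin{enumerate}
\item The curve $C=F^m(\Delta)$ is irreducible, dominates both factors, and is fixed by $F^{n-m}$. Proposition \ref{relation semiconjugacy} therefore gives $d_f^{a}=d_g^{b}$.
\item Together with $d_f^{n_0}=d_g^{m_0}$, this forces $am_0=bn_0$. Hence $F^{n_0}=(f^{an_0},g^{bn_0})=(f^{n_0},g^{m_0})^{a}=H^a$, which fixes $\Delta$.
\item So $\Delta$ is itself $F$-periodic. Since $F$ permutes the curves of its cycle, $F^n(\Delta)=F^m(\Delta)$ forces $F^{n-m}(\Delta)=\Delta$.
\item That means $f^{a(n-m)}=g^{b(n-m)}$, with both exponents at most $M_0^2$, where $M_0=M(2,D,E_0)$.
\end{enumerate}
This is what the paper's one-line remark encodes: the diagonal is necessarily periodic for iterates $f^a,g^b$ with $d_f^a=d_g^b$. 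With it, your Steps 1--2 finish the proof, and no Ritt-type or semiconjugacy analysis via Proposition \ref{diagonal relations}(2) is needed.
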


\begin{remark}
For polynomials, this type of conclusion goes back to Ritt \cite{Ritt:permutable}; see also the later work on common iterates and common Julia sets, for example \cite{BakerEremenko1987, Beardon:symmetries, Schmidt:Steinmetz}.  Explicit bounds are known for commuting rational maps \cite{Pakovich2020}, and for bicritical maps of even degree one can take the optimal bound $M=2$ \cite{KochLindseySharland2024}.
\end{remark}

The fiber-power condition in Theorem \ref{main1} (3) gives uniform control on the geometry of the Zariski closure of preperiodic points, even allowing a mix of exceptional and non-exceptional maps:

\begin{theorem} \label{degree bound}
Fix integers $D,E,N\geq2$.  There exists $M=M(D,E,N)$ such that, if $V\subset(\bP^1)^N$ is irreducible, has degree $<E$, and projects surjectively to every coordinate, and if $\Phi=(f_1,\ldots,f_N)$ with $\deg f_i\leq D$, then
	\[ \deg\left(\overline{V\cap\Preper(\Phi)}^{\mathrm{Zar}}\right)\leq M. \]
\end{theorem}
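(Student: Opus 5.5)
We prove Theorem \ref{degree bound} by a Noetherian induction over a parameter space, with Theorem \ref{main1} supplying the inductive step.

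\textbf{Setup.} Let $S_0$ be the quasiprojective parameter space of all data $(f_1,\ldots,f_N,V)$: the factor $\Rat_{d_1}\times\cdots\times\Rat_{d_N}$ with $d_i\leq D$, times the Chow (or Hilbert) variety of irreducible subvarieties of $(\bP^1)^N$ of degree $<E$ that project surjectively to every coordinate. This space has finitely many irreducible components. Over it sits the universal family $\Phi$ and the universal subvariety $\cX$, which we make flat by passing to a finite stratification. For an irreducible locally closed $S\subset S_0$ with flat family $\cX_S\to S$ and $\dim S$ minimal, we aim to show:
\begin{itemize}
\item[(a)] there is a closed subvariety $\cY\subset\cX_S$ and a dense open $U\subset S$ such that, for all $s\in U$, $\overline{X_s\cap\Preper(\Phi_s)}=Y_s$;
\item[(b)] the degrees of the fibers $Y_s$ are bounded, by flatness of $\cY$ after shrinking $U$.
\end{itemize}
Applying the same construction to the complement $S\setminus U$ by Noetherian induction then bounds the degree over all of $S_0$.

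\textbf{Constructing $\cY$.} Let $\cY$ be the Zariski closure of $\Preper(\Phi)\cap\cX_S$ in the total space, and decompose it into irreducible components $\cY_j$. For each $\cY_j$ that dominates $S$, the preperiodic points are dense in $\cY_j$ itself, not merely in its fibers. We want the fibers $(Y_j)_s$ for generic $s$ to be $\Phi_s$-special, with preperiodic points dense in them. After replacing $\cY_j$ by a component of its Stein factorization, or by a finite base change, we may assume its generic fiber is irreducible. Theorem \ref{main1} is stated for subvarieties dominating each coordinate. Where a coordinate projection of $\cY_j$ is constant, that coordinate is a preperiodic point varying with $s$, and we split it off, reducing $N$. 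Where the projections are dominant, we need an appropriate form of Theorem \ref{main1} in which dense preperiodic points in a positive-dimensional family of points of $\cY_j$ are related to speciality of the fibers.

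\textbf{The difficulty.} Density of $\Preper$ in the total space $\cY_j$ does not immediately give density in the generic fiber: each fiber might contain only finitely many preperiodic points. This is the main obstacle, and it is where we use (1)$\Leftrightarrow$(3) of Theorem \ref{main1}. The plan is to stratify $S$ further by the generic dimension $r$ of the fiberwise closures. Consider the subfamily $\cZ\subset\cX_S$ defined as the closure of $\bigcup_s \overline{X_s\cap\Preper}$. Using the fiber power $\cZ^{(1+\dim S)}$ and the dimension count implicit in (3), we show that if the fiberwise closures are generically of dimension $r$, then on a dense open set they are cut out uniformly, as the special subvarieties $Y_s$ of $\cZ$. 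This is because $\Phi_s$-special subvarieties of bounded degree form finitely many algebraic families; by Theorem \ref{preperiodbound} in the non-exceptional case, and by torsion cosets of bounded degree in the exceptional case. Finally, each $Y_s$ is a union of special subvarieties of $X_s$. In the exceptional coordinates, the count of maximal torsion cosets in a subvariety of bounded degree is uniformly bounded, by a uniform Manin--Mumford count for $\mathbb G_m^n$ and for products of elliptic curves in isotrivial families. Combining these gives a uniform degree bound $M(D,E,N)$.
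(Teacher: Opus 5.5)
There is a genuine gap. It sits exactly at the step you call ``the difficulty,'' and the proposed fix does not close it.

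\textbf{Claim (a) is false.} The fiberwise closures $\overline{X_s\cap\Preper(\Phi_s)}$ need not be the fibers of any algebraic family over a dense open set. Take $X_s=\Delta\subset\bP^1\times\bP^1$ and $\Phi_s=(z^2+s,z^2)$.
\begin{enumerate}
\item For $s\neq0$ the two maps have different Julia sets, so $\Delta$ is not special and each fiber meets $\Preper(\Phi_s)$ in only finitely many points.
\item Every root of unity is preperiodic for $z^2+s$ for infinitely many $s$. Hence $\Preper(\Phi)\cap\cX_S$ is Zariski dense in the two-dimensional total space, even after deleting finitely many parameters.
\item Consequently any closed $\cY$ agreeing with the fiberwise closures over a dense open $U$ would be all of $\cX_S$ over $U$, contradicting (1).
\end{enumerate}
So the total-space closure carries no information, and ``stratifying by the generic dimension of the fiberwise closures'' is not an algebraic operation. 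Theorem \ref{main1} does not repair this: it only addresses the case where preperiodic points are dense in a fiber power.

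\textbf{Your closing count is circular.} Knowing that each component of $\overline{V\cap\Preper(\Phi)}$ is special does not bound the number of components or their degrees. Knowing that special subvarieties of \emph{bounded} degree form finitely many families does not help either. When $\dim V=1$ the components are points, and their number is exactly what must be bounded. The uniform Manin--Mumford appeal covers only purely exceptional coordinates, and the elliptic families are not isotrivial in general. It says nothing about non-exceptional or mixed tuples, which are the heart of the theorem.

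\textbf{The missing idea} is a fiber-power dichotomy combined with a product-set lemma. Work on a flat stratum $S$, set $m=\dim S+1$, and ask whether $\Preper(\Phi^{(m)})\cap\cV^{(m)}$ is Zariski dense.
\begin{enumerate}
\item If it is not dense, its closure $\cY$ has fibers $Y_s\subsetneq V_s^m$. After shrinking $S$, these have uniformly bounded number of components and degrees. With $\Sigma_s=V_s\cap\Preper(\Phi_s)$ we have $\Sigma_s^m\subset Y_s$. The product-set lemma \cite[Lemma 4.3]{Gao:Ge:Kuhne} then gives a proper $Z_s\subset V_s$ of bounded degree containing $\Sigma_s$. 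One inducts on $\dim V$, restricting components of $Z_s$ to their non-constant coordinates.
\item If it is dense, Theorem \ref{main1} makes every essential shadow weakly special. Either all shadows are special and $V_s$ is special by Proposition \ref{essential intersection}, or there is a non-special exceptional shadow. In that case \eqref{torsion versus preperiodic} gives $\Sigma_s=\emptyset$ (elliptic) or $\Sigma_s$ inside at most $2N$ coordinate sections (multiplicative), and again one inducts.
\end{enumerate}
Noetherian induction on $S$ then yields $M(D,E,N)$. The passage from $\Sigma_s^m\subset Y_s$ to $\Sigma_s\subset Z_s$ is what produces uniformity, and your outline has no substitute for it.
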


\noindent
By convention, the degree of a reducible subvariety will mean the sum of the degrees of its irreducible components.

The proof, given in \S\ref{proof degree bound}, follows an argument we used in \cite[\S5]{DMY:UniformDAO}, but here we rely on the statement of Theorem \ref{main1}; see also \cite{Yuan:bigness, Gao:Ge:Kuhne} for related uniformity arguments.  Recently, Ji and Yap obtained a related uniformity theorem \cite[Theorem 1.4]{JiYap2026}. Assuming the $f_i$ have equal degrees and at most one is exceptional, they showed that the variety
	\[\overline{V\cap\Preper(\Phi)}^{\mathrm{Zar}} \subset (\bP^1)^N \] 
has at most $M$ irreducible components, for a uniform $M$ as in our theorem.  The proofs are not completely independent, as we rely on their archimedean measure-rigidity theorem \cite[Theorem 1.1]{JiYap2026} in our proof, but the overall strategy is very different; while they build on the quantitative fundamental inequality in \cite{Mavraki:Yap}, we build on the theory of Yuan--Zhang \cite{Yuan:Zhang:II} with the goal of proving Thm 1.13 below. 

Combining the uniformity statements above with the characterization of maps sharing their preperiodic points gives the following.

\begin{theorem}\label{theorem: common}
Fix $D>0$.  There exist $M_1(D),M_2(D)>0$ such that the following hold.
\begin{enumerate}
\item If $f$ and $g$ are rational maps over $\C$ of degrees at most $D$, then either
	\[ \#\bigl(\Preper(f)\cap\Preper(g)\bigr)\leq M_1(D) \]
or $\Preper(f)=\Preper(g)$.
\item For degrees $d_1,d_2\leq D$, the locus of non-exceptional pairs $(f,g)$ satisfying $\Preper(f)=\Preper(g)$ is a closed algebraic subvariety of $\Rat_{d_1}^{\mathrm{nonexc}}\times\Rat_{d_2}^{\mathrm{nonexc}}$.  Every pair in this locus satisfies
	\[	f^n\circ g^m=f^{2n}, \qquad g^m\circ f^n=g^{2m} \]
for some $n,m\leq M_2(D)$.
\end{enumerate}
\end{theorem}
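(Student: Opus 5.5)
Both parts follow by applying the earlier uniformity results to the diagonal $\Delta\subset\bP^1\times\bP^1$, which has degree $2$ in the fixed embedding and projects surjectively to both factors. I also use the known characterization of pairs sharing preperiodic points (Levin--Przytycki, and the arithmetic equidistribution results of Baker--DeMarco, Yuan--Zhang): for maps over $\C$ of degree $\geq 2$, $\Preper(f)=\Preper(g)$ if and only if $\Preper(f)\cap\Preper(g)$ is infinite, if and only if $\Delta$ is $(f,g)$-special.

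\textbf{Part (1).} I apply Theorem \ref{degree bound} with $N=2$, $E=3$, $V=\Delta$ and $\Phi=(f,g)$. Identifying $\Delta\cap\Preper(f,g)$ with $\Preper(f)\cap\Preper(g)$, let $Z$ be its Zariski closure. Theorem \ref{degree bound} gives $\deg Z\leq M(D,3,2)$. There are two cases.
\begin{itemize}
\item If $Z$ is finite, it is a finite set of points of total degree at most $M$, so $\#(\Preper(f)\cap\Preper(g))\leq M$.
\item If $Z$ is not finite, then $Z=\Delta$, so $\Delta$ contains a Zariski-dense set of $(f,g)$-preperiodic points and is therefore $(f,g)$-special by the cited characterization. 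The characterization then gives $\Preper(f)=\Preper(g)$.
\end{itemize}
This holds for arbitrary, possibly exceptional, maps, since Theorem \ref{degree bound} allows mixtures. The only point needing care is that for degree-$1$ maps, which the statement formally allows, $\Preper$ is not the relevant notion. I restrict to degrees between $2$ and $D$ and set $M_1(D)=M(D,3,2)$.

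\textbf{Part (2).} For non-exceptional $f,g$, the condition $\Preper(f)=\Preper(g)$ is equivalent to $\Delta$ being $(f,g)$-special, that is, to $\Delta$ defining a relation between $f$ and $g$. Theorem \ref{preperiodbound} with $N=2$, $E=2$ then gives $a_1,a_2\leq M$ and $0\leq m'<n'\leq M$ with $F^{n'}_{\mathbf a}(\Delta)=F^{m'}_{\mathbf a}(\Delta)$. Set $n=a_1n'$-type multiples: choosing a common multiple $k\leq M^{O(1)}$ of the period and exceeding the preperiod, the curve $C=F_{\mathbf a}^{k}(\Delta)$ is invariant under $F_{\mathbf a}^{k}$. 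Since $\Delta$ maps onto $C=\{(f^{n}(z),g^{m}(z))\}$ with $n=a_1k$, $m=a_2k$, invariance gives $F^k_{\mathbf a}(\Delta)=F^{2k}_{\mathbf a}(\Delta)$. Reading this as equality of the image curves $\{(f^n z,g^m z)\}=\{(f^{2n}z,g^{2m}z)\}$, I deduce the stated semiconjugacy identities $f^n\circ g^m=f^{2n}$ and $g^m\circ f^n=g^{2m}$, following the discussion in \S\ref{relations}; this translation is the step I expect to need the most care. This gives $M_2(D)$.

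\textbf{Closedness.} The locus is the union over the finitely many $n,m\leq M_2$ of the loci cut out by these polynomial identities in the coefficients. Each such locus is closed in $\Rat^{\mathrm{nonexc}}_{d_1}\times\Rat^{\mathrm{nonexc}}_{d_2}$. Conversely, each identity forces $\Delta$ to be preperiodic, so it forces $\Preper(f)=\Preper(g)$. Hence the locus is a finite union of closed sets, and so closed.
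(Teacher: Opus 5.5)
Your Part (1), your use of Theorem \ref{preperiodbound} for $\Delta$, and your closedness argument all follow the paper. The gap is the step you flagged in Part (2). Write $C=F_{\mathbf a}^k(\Delta)=\{(f^nz,g^mz)\}$. The identity $f^n\circ g^m=f^{2n}$ is equivalent to $(f^n,f^n)(C)\subseteq\Delta$. By contrast, $F^k_{\mathbf a}(\Delta)=F^{2k}_{\mathbf a}(\Delta)$ only says that $z\mapsto(f^{2n}z,g^{2m}z)$ has the same image as $z\mapsto(f^nz,g^mz)$. Equality of images does not yield the composition identity.

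Here is a concrete counterexample. Take $\psi(z)=\frac{z^2-12}{2(z^2-6)}$, $\pi_1(t)=\frac{2(t^2+1)}{t^2-1}$, $\pi_2(t)=t+t^{-1}$ and $\iota(t)=t^{-1}$. One checks that $\psi\circ\pi_2=\iota\circ\psi\circ\pi_1$, $\pi_2\circ\iota=\pi_2$, and $\pi_1\circ\iota=-\pi_1$. Set $f=\pi_1\circ\psi$, $g=\pi_2\circ\psi$ and $F=(f,g)$. These have degree $4$ and are non-exceptional, since $f$ has the five postcritical points $\infty,\pm2,\pm\frac{10}{3}$.
\begin{itemize}
\item The curve $C=F(\Delta)=\{(\pi_1(t),\pi_2(t))\}$ satisfies $F(C)=C$, because $F(\pi_1(t),\pi_2(t))=(\pi_1(s),\pi_2(s))$ with $s=\psi(\pi_1(t))$.
\item Hence $F(\Delta)=F^2(\Delta)$.
\item Yet $f\circ g=\pi_1\circ\iota\circ\psi\circ\pi_1\circ\psi=-f\circ f$.
\end{itemize}
In this example $f^2\circ g^2=f^4$ does hold. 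But nothing in your argument says which multiple of the period works in general, and that is exactly what a bound $M_2(D)$ requires.

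The missing ingredient is Proposition \ref{diagonal relations}(2), which is what the paper uses at this point. Its proof runs as follows.
\begin{itemize}
\item It follows a backward orbit $(z_n)$ of $f^p$.
\item For each $n$, it records which of the at most $d^q$ germs $f^{-q}\circ f^q$ at $z_0$ realizes $F^{q+np}(\Delta)=F^q(\Delta)$ near $z_n$.
\item It pigeonholes to get $g^{q+n''p}=g^{q+n'p}\circ f^{(n''-n')p}$.
\item Composing on the left then yields $g^k\circ g^k=g^k\circ f^k$, and symmetrically for $f$.
\end{itemize}
The exponent $k$ is dictated by this pigeonhole, not by the orbit data alone, but it is bounded in terms of $p,q,d$. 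Theorem \ref{preperiodbound} bounds $a_1,a_2,p,q$, and hence $d=\deg(f)^{a_1}\le D^M$, so this gives $M_2(D)$. With that proposition in place of your ``reading off'' step, your proof coincides with the paper's.
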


\begin{remark}
For non-exceptional $f$ and $g$, results of Levin, Levin--Przytycki, and Yuan--Zhang \cite{Levin:symmetries, Levin:Przytycki, Yuan:Zhang:II} can be combined to identify equality of preperiodic sets with equality of maximal-entropy measures and with the composition relations displayed in Theorem \ref{theorem: common}(2).  Their arguments do not control the iterates $n$ and $m$, however.
\end{remark}

\begin{remark}
Part (1) of Theorem \ref{theorem: common} resolves a conjecture of DeMarco--Krieger--Ye \cite{DKY:quad}. The result also appears in the new article of Ji and Yap \cite[Theorem 1.3]{JiYap2026}, with a different proof.  The polynomial case was also obtained recently by Gong--Yap \cite{GongYap2026}. Earlier partial results include \cite{DKY:quad, DKY:Lattes, Poineau:BFT, Mavraki:Schmidt, Mavraki:Yap, DM:commonprep, Gauthier2024, AngYap2026}.  In previous work we showed that the non-exceptional locus with $\Preper(f)=\Preper(g)$ was {\em contained in} a proper algebraic subvariety of $\Rat_{d_1}^{\mathrm{nonexc}}\times\Rat_{d_2}^{\mathrm{nonexc}}$ \cite{DM:commonprep}; Theorem \ref{theorem: common}(2) rules out infinitely many irreducible components to this locus.
\end{remark}

We conclude this subsection with a uniform bound on solutions to a mixed ``dynamical unit equation", illustrating that exceptional and non-exceptional coordinates can be treated together.

\begin{theorem}
\label{thm:mixed}
Fix integers $d\geq2$ and $q\geq1$.  There exists $B=B(d,q)$ with the following property.  Let $E$ be an elliptic curve over $\C$, let $\pi:E\to E/\{\pm1\}\simeq\bP^1$ be a quotient map, let $f\in\Rat_d(\C)$ be non-exceptional, and let $a_0,a_1,b_1,\ldots,b_q\in\C^*$.  Then
	\[ a_0x+a_1y+\sum_{j=1}^q b_j\zeta_j=1 \]
has at most $B$ nondegenerate solutions
	\[ (x,y,\zeta_1,\ldots,\zeta_q)\in\pi(E_{\mathrm{tors}})\times\Preper(f)\times(\mu_\infty)^q. \]
\end{theorem}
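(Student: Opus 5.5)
We will deduce the bound from Theorem \ref{degree bound}, applied to a suitable subvariety of $(\bP^1)^{q+2}$ and a suitable product map, combined with an induction on $q$ to dispose of degenerate configurations.

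\textbf{Setup.} Put $N=q+2$ and let
\[
V=\{a_0x+a_1y+\textstyle\sum_j b_j\zeta_j=1\}\subset(\bP^1)^{q+2},
\]
taken as the Zariski closure of the affine hyperplane. Since all coefficients are nonzero, $V$ is irreducible of bounded degree and projects surjectively to every coordinate. Take the product map
\[
\Phi=(\ell,f,z^2,\ldots,z^2),
\]
where $\ell$ is a Latt\`es map of degree $4$ induced by multiplication by $2$ on $E$. Then $\Preper(\ell)=\pi(E_{\mathrm{tors}})$ and $\Preper(z^2)=\mu_\infty\cup\{0,\infty\}$. Every solution in the statement is a point of $V\cap\Preper(\Phi)$.

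\textbf{Applying Theorem \ref{degree bound}.} All maps have degree at most $\max(d,4)$, so Theorem \ref{degree bound} gives a uniform $M$ with
\[
\deg\bigl(Z\bigr)\leq M,\qquad Z:=\overline{V\cap\Preper(\Phi)}^{\mathrm{Zar}}.
\]
In particular $Z$ has at most $M$ irreducible components, and the zero-dimensional components contribute at most $M$ solutions. The work is to show that a positive-dimensional component $W$ of $Z$ contains no nondegenerate solutions, where nondegenerate means no proper subsum of the left-hand side vanishes. Such a $W$ is a subvariety with Zariski-dense preperiodic points, so it is $\Phi$-special. Since $f$ is non-exceptional and the other coordinates are exceptional for different groups, the structure theory of special subvarieties recalled in \S1 applies, as in Medvedev--Scanlon and Ghioca--Nguyen--Ye. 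It shows that $W$ is, up to coordinate permutation, a product of a component in the $\zeta$-coordinates and special curves or points in the remaining coordinates. The non-exceptional $f$ admits no relation with $\ell$ or with $z^2$, and $\ell$ admits none with $z^2$.

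\textbf{Expected obstacle: the unit-equation step.} Consider the projection of $W$ to the coordinates that vary along $W$. The linear relation restricted to $W$ forces a vanishing subsum. If the $x$-coordinate or the $y$-coordinate varies on $W$ while all others stay constant, the equation cannot hold identically. So only the $\zeta$-part can vary, and it must lie in a torsion coset of $\mathbb G_m^q$ contained in an affine hyperplane. By the classical unit-equation argument of Lang and Mann type, a positive-dimensional torsion coset lying in such a hyperplane forces a vanishing subsum on it, so every point of $W$ is degenerate. I expect the main difficulty to be making this rigorous for cosets that are not coordinate-aligned. I would handle it by identifying the minimal vanishing subsums via the character relations defining the coset, and then check that nondegenerate solutions cannot be Zariski dense in such a $W$.

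Consequently $B=M$ suffices.
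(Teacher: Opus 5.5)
Your outline matches the paper's proof step for step:
\begin{itemize}
\item the same product map $(\ell,f,z^2,\ldots,z^2)$ and hyperplane;
\item Theorem \ref{degree bound} to bound the number of components of $Z$;
\item the classification of special subvarieties to split a positive-dimensional component $W$, so that $a_0,a_1\neq0$ force $x=x_0$ and $y=y_0$;
\item a vanishing subsum on the remaining torsion coset $\tau T\subset\mathbb G_m^q$.
\end{itemize}
Here one only needs components meeting $\bA^1\times\bA^1\times\mathbb G_m^q$, since every solution lies there.

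The one piece you leave unproved is the step you flag as the obstacle. It needs no analysis of the character relations defining $T$. Since $\dim T>0$, $T$ contains a cocharacter $s\mapsto(s^{m_1},\ldots,s^{m_q})$ with the $m_j$ not all zero. So for every $z\in\tau T$ the identity $\sum_j b_jz_js^{m_j}=1-a_0x_0-a_1y_0$ holds for all $s\in\C^*$. Fix a nonzero $m$ among the $m_j$ and compare coefficients of $s^m$. This gives $\sum_{j\in I}b_jz_j=0$ with $I=\{j:m_j=m\}\neq\emptyset$, and the same $I$ works at every point of $\tau T$.

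Note that this is exactly the statement you need: every point of $W$ in the solution region is degenerate. Your closing formulation, that nondegenerate solutions are not Zariski dense in $W$, would not suffice. An infinite but non-dense set of solutions on $W$ is not controlled by the degree bound on $Z$.

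The induction on $q$ announced at the start is never used and is not needed.
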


\noindent
Here a solution is {\bf nondegenerate} if there is no nonempty subset $I\subseteq\{1,\ldots,q\}$ such that $\sum_{i\in I} b_i\zeta_i=0$, and $\mu_\infty$ denotes the roots of unity in $\C^*$.

\begin{remark}
Mann's theorem \cite{Mann1965} implies a uniform bound for the number of nondegenerate solutions of linear equations in roots of unity with rational coefficients.  Schlickewei \cite{Schlickewei1996} established such a bound for arbitrary complex coefficients, subsequently sharpened by Evertse \cite{Evertse1999}.  Evertse--Schlickewei--Schmidt
\cite{EvertseSchlickeweiSchmidt2002} later generalized these results proving uniform bounds
for solutions in multiplicative groups of finite rank, depending only on the number of variables and the rank.  Theorem \ref{thm:mixed} is a dynamical analogue with an elliptic torsion coordinate, a non-exceptional preperiodic coordinate, and arbitrarily many multiplicative torsion coordinates. It is merely illustrative; the same argument allows more coordinates and more general bounded-degree subvarieties.
\end{remark}

\subsection{Bifurcations and non-degeneracy: the second main result} \label{stableintro}
Let $S$ be an irreducible quasiprojective variety over $\C$, let
	\[ \Phi=(f_1,\ldots,f_N):S\times(\bP^1)^N\to S\times(\bP^1)^N \]
be an algebraic family, and let $\cX\subset S\times(\bP^1)^N$ be irreducible and flat over $S$, of relative dimension $r$.  Write $\pi:S\times(\bP^1)^N\to S$ for the projection.  Following \cite{GauthierVigny2025}, we define the {\bf bifurcation current} by
	\[ T_{\Phi,\cX}=\pi_*\left(\hat T_\Phi^{\wedge(r+1)}\wedge[\cX]\right) \]
on $S$, where $\hat T_\Phi$ is the dynamical Green current on $S\times (\bP^1)^N$.  This $T_{\Phi,\cX}$ is a positive $(1,1)$-current on $S$, with continuous local potentials.  Put $\ell=\dim S$.  Following \cite{Gauthier:Taflin:Vigny}, the associated {\bf bifurcation measure} on $S(\C)$ is
	\[ \mu_{\Phi,\cX}=\pi^{(\ell)}_*\left(\hat T_{\Phi^{(\ell)}}^{\wedge\ell(r+1)}\wedge[\cX^{(\ell)}]\right), \]
for the projection $\pi^{(\ell)}: S \times (\bP^1)^{N\ell} \to S$.  

\begin{remark}
If $\cX$ is a hypersurface, then direct computation shows that
	\[ \mu_{\Phi,\cX}=c\,(T_{\Phi,\cX})^{\wedge\ell} \]
on $S$, for an explicit constant $c>0$, and in general we have $\mu_{\Phi,\cX}\geq c\,(T_{\Phi,\cX})^{\wedge\ell}$ for any relative dimension $r$ \cite[Proposition 1.4]{Gauthier:Taflin:Vigny}.  In Corollary \ref{measure cor}, we will see that $\mu_{\Phi,\cX} = 0$ $\iff$ $(T_{\Phi, \cX})^{\wedge \ell} = 0$ for any family $\cX$ with generic fiber that projects dominantly to each $\bP^1$ factor; compare \cite[Theorem 4.1]{Yuan:bigness}.
\end{remark}

A hypersurface in $(\bP^1)^N$ is {\bf essential} if its defining equation has positive degree in every coordinate, equivalently if every projection obtained by forgetting one coordinate is dominant.  The pair $(\Phi,\cX)$ is {\bf maximally varying} if its pair-moduli map, recording both the tuple of maps and the hypersurface up to coordinatewise conjugacy, is generically finite onto its image; see \S\ref{moduli map} for definitions.  We first state our result in the simpler setting where $\cX$ is a maximally varying essential hypersurface:

\begin{theorem} \label{hypersurfaces}
Let $S$ be irreducible and quasiprojective over $\C$, with $\dim S>0$.
Let $\Phi=(f_1,\ldots,f_N)$ be an algebraic family with $\deg f_i>1$,
and let $\cX\subset S\times(\bP^1)^N$ be a flat family of irreducible
essential hypersurfaces.  Assume that $(\Phi,\cX)$ is maximally varying.
If the maps $f_i$ are not all Latt\`es families, then the following are equivalent:
\begin{enumerate}
\item $T_{\Phi,\cX}=0$;
\item $\cX$ is $\Phi$-special;
\item $\mu_{\Phi,\cX}=0$.
\end{enumerate}
If all $f_i$ are Latt\`es families, then (1) and (2) remain equivalent, and we have $T_{\Phi,\cX}\neq0$ with ($T_{\Phi,\cX})^{\wedge\dim S}=0$ if and only if $\cX$ is weakly special but not $\Phi$-special and $\dim S>1$.
\end{theorem}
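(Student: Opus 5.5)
We organize the argument around the chain (2) $\Rightarrow$ (1) $\Rightarrow$ (3) $\Rightarrow$ (2), and we treat the Latt\`es case as a separate refinement at the end.

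\emph{(2) $\Rightarrow$ (1).} If $\cX$ is $\Phi$-special, then some product of iterates (or a commuting endomorphism) satisfies $\Psi^n(\cX)=\Psi^m(\cX)$ over the generic point. The Green currents are then invariant along $\cX$, and $\hat T_\Phi^{\wedge(r+1)}\wedge[\cX]$ vanishes, because $\cX$ carries a family of preperiodic fibers whose fiberwise heights are identically zero. Concretely, $\hat T_\Phi^{\wedge(r+1)}\wedge[\cX]$ pushes forward to a current whose potential is the fiberwise canonical height of $X_s$. This height is zero for every $s$, so $T_{\Phi,\cX}=0$. Since $\mu_{\Phi,\cX}\ge c\,T_{\Phi,\cX}^{\wedge\ell}$, and the hypersurface case gives equality $\mu=c\,T^{\wedge\ell}$ by the Remark, we also get (3).

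\emph{(1) $\Rightarrow$ (2).} Suppose $T_{\Phi,\cX}=0$. Then the fiberwise height $s\mapsto \hat h_{\Phi_s}(X_s)$ is pluriharmonic, hence locally the real part of a holomorphic function. We first show that $X_s$ is $\Phi_s$-special for a Zariski-dense set of $s$. For this we use Theorem \ref{main1}: stability (vanishing of $T$) propagates, so that the bifurcation locus is empty, and density of special parameters is obtained by a perturbation argument. Here we would argue as in McMullen/Dujardin–Favre that in a stable family the preperiodic configurations persist. Theorem \ref{main1}(1) then implies that $\cX$ is $\Phi$-special, or that at least two $f_i$ are exceptional and the hypersurface is weakly special. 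For a hypersurface, the only essential shadow is $\cX$ itself. Maximal variation excludes the weakly special non-special case when some $f_i$ is non-Latt\`es: a $\mathbb G_m$-type or mixed coset that is not torsion must vary nontrivially in moduli, which gives positive height variation. We would show this via an explicit computation of $T$ for cosets in $G_1\times\cdots\times G_N$, where the current is the pullback of the Betti form on the translation parameter. For tori this form is $dd^c\log|a|$-type and vanishes only if the translation is torsion, i.e.\ locally constant.

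\emph{(3) $\Rightarrow$ (1).} By the hypersurface identity $\mu=c\,T^{\wedge\ell}$, vanishing of $\mu$ gives $T^{\wedge\ell}=0$. The key step is to upgrade this to $T=0$ when not all $f_i$ are Latt\`es. We would use Theorem \ref{main1}(3) together with the equidistribution of small points (Yuan–Zhang): if $T\neq0$ but $T^{\wedge\ell}=0$, the Betti map of the family has rank less than $\ell$. We expect this step to be the main obstacle, namely ruling out intermediate degeneracy. The approach we would try first is a local foliation argument. Leaves where $T$ vanishes are subvarieties $S'\subset S$ along which $\cX$ is stable. Applying the previous case on $S'$ gives specialness along $S'$, which contradicts maximal variation unless the relevant structure is Latt\`es, where the Betti foliation of an elliptic family provides exactly such leaves.

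\emph{The Latt\`es case.} For the final assertion, when all $f_i$ are Latt\`es, $\cX$ weakly special means it is a quotient of a coset $a+B$ in a product of elliptic curves. Then $T$ is the pullback via the section $a$ of the Betti form, and it has rank at most $\dim$ of the real-analytic Betti image. Consequently $T^{\wedge\ell}=0$ exactly when the Betti map of $a$ is degenerate. This forces $\ell>1$, while $T\ne0$ exactly when $a$ is nontorsion, i.e.\ non-special. Conversely, a non-weakly-special $\cX$ gives $T^{\wedge\ell}\ne0$ by the argument above.
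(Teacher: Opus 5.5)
\textbf{There is a genuine gap: the hard direction is not proved.} Your easy chain (2) $\Rightarrow$ (1) $\Rightarrow$ (3) is fine in substance. Over $\C$ there is no ``fiberwise canonical height'' to appeal to, though. The clean argument is the invariance one of Proposition \ref{easy stability}: if $\Psi(\cX)=\cX$, pushing $\Psi^*\hat T^{\wedge(r+1)}\wedge[\cX]$ to $S$ gives $d^{r+1}T_{\Phi,\cX}=d^{r}T_{\Phi,\cX}$.

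What is missing is the whole content of the theorem: a maximally varying, non-special essential hypersurface must have $\mu_{\Phi,\cX}\neq0$.
\begin{itemize}
\item Your (1) $\Rightarrow$ (2) invokes Theorem \ref{main1}. In the paper, Theorem \ref{main1} is deduced from this very theorem, via Theorem \ref{arithmetic bigness: essential case}, Corollary \ref{dense special hypersurfaces} and Lemma \ref{function field height gap}. So the argument is circular.
\item Independently, the mechanism runs backwards. In a stable family, intersections of $\cX$ with (pre)periodic points persist, and that persistence is exactly what prevents new special parameters from appearing. Special parameters are produced by activity, for instance through the rigid repellers of Proposition \ref{analytic characterization}. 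So you cannot manufacture a Zariski-dense special set from $T_{\Phi,\cX}=0$ by perturbation. What must be shown is the contrapositive: non-special plus maximal variation forces activity.
\end{itemize}

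\textbf{Your (3) $\Rightarrow$ (1) assumes what has to be shown.} Ruling out $T\neq0$ with $T^{\wedge\ell}=0$ when some factor is non-Latt\`es is precisely the hard point.
\begin{itemize}
\item For non-exceptional factors there is no Betti map.
\item A positive closed $(1,1)$-current with continuous potentials and vanishing top power need not come with algebraic, or even analytic, leaves on which it vanishes.
\item Maximal variation is not inherited by subvarieties $S'\subset S$, so ``the previous case'' does not apply on such leaves.
\end{itemize}
The paper instead proves $\mu\neq0$ directly, case by case.
\begin{itemize}
\item With no Latt\`es factors (Theorem \ref{non-Lattes hypersurface}), it slices $S$ by a maximal algebraically independent set of multipliers. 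McMullen rigidity then gives an isotrivial terminal stage, where positivity is clear. Geometric bigness is propagated back up the tower using four ingredients: the Yuan--Zhang fundamental inequality; equidistribution at the non-archimedean place where the new multiplier blows up; Chambert-Loir--Ducros slicing to curves in $\bP^1\times\bP^1$; and Ji--Yap rigidity, which contradicts essentiality.
\item In the mixed case (Theorem \ref{mixed prerepelling}), it builds rigid repellers inductively. This uses dynamical Manin--Mumford for curves when the Latt\`es factor is non-isotrivial, and Proposition \ref{bounded degree} when it is isotrivial.
\end{itemize}

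\textbf{In your Latt\`es paragraph}, the weakly special computation matches Proposition \ref{weakly special bif}. You still need Manin's theorem of the kernel, together with maximal variation to exclude a constant section of an isotrivial quotient, to get $T\neq0$ from non-torsion. However, ``a non-weakly-special $\cX$ gives $T^{\wedge\ell}\neq0$ by the argument above'' has no argument behind it. Without it, both (1) $\Leftrightarrow$ (2) and the characterization of $T\neq0$, $T^{\wedge\ell}=0$ are unproved in the all-Latt\`es case. The paper needs Gao's fiber-power Betti-rank theorem here (Theorem \ref{lattes nondegeneracy}), applied after quotienting by the stabilizer and checking generic finiteness of the modular map on $\cY^{(\ell)}$.
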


The equivalence of (1) and (2) in Theorem \ref{hypersurfaces} follows from the theorems of Mavraki--Schmidt \cite{Mavraki:Schmidt} in the case that neither $f_i$ is isotrivial; we provide a proof here also for the full statement.  Our emphasis in Theorem \ref{hypersurfaces} is the higher bifurcation information:  away from the all-Latt\`es exception, every nonzero bifurcation current has nonzero top wedge power.

Theorem \ref{hypersurfaces} should be compared with what is known for the classical bifurcation current for a family $f:S\times\bP^1\to S\times\bP^1$ with $\cX=\Crit(f)$.  A non-isotrivial stable family must have a preperiodic critical locus (and therefore is Latt\`es) \cite[Theorem 2.2]{McMullen:families}, while any bifurcating family must bifurcate maximally \cite[Lemma 6.8]{GauthierOkuyamaVigny2019}; see also \cite{D:current, Bassanelli:Berteloot, Dujardin:higherbif, Buff:Epstein:PCF, DM:commonprep}.

We say that $\cX$ is {\bf non-degenerate for $\Phi$} if
	\[ \hat T_\Phi^{\wedge\dim\cX}\wedge[\cX]\neq0, \]
where $\dim\cX=r+ \dim S$, and {\bf degenerate} otherwise.  We say that $\cX$ is {\bf potentially non-degenerate} if
	\[ \hat T_{\Phi^{(\ell)}}^{\wedge\dim\cX^{(\ell)}}\wedge[\cX^{(\ell)}]\neq0, \]
for the $\ell$-th fiber power, and it is {\bf truly degenerate} if $\hat T_{\Phi^{(\ell)}}^{\wedge\dim\cX^{(\ell)}}\wedge[\cX^{(\ell)}] = 0$. Thus true degeneracy is exactly the vanishing of the bifurcation measure $\mu_{\Phi,\cX}$ on $S$. (It is not hard to see that true degeneracy implies degeneracy of all fiber powers $\cX^{(m)}$ for $\Phi^{(m)}$, for all $m\geq 1$.)  

For a general family of subvarieties $\cX$, with any relative dimension $r$, the relevant invariant is what we call the activity dimension of its essential hypersurface shadows:

\begin{theorem} \label{true deg char}
Suppose that $\dim S > 0$.  Let $\Phi = (f_1, \ldots, f_N)$ be an algebraic family of maps on $(\bP^1)^N$ and $\cX \subset S \times (\bP^1)^N$ irreducible, flat over $S$, with relative dimension $0 < \dim_S \cX < N$ such that its generic fiber projects dominantly to every coordinate $\bP^1$.  Then $\cX$ is truly degenerate for $\Phi$ if and only if
	\[ \actdim(\Phi,\cX)<\dim S. \]
\end{theorem}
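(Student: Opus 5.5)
We first reduce true degeneracy of $\cX$ to the behavior of its essential hypersurface shadows, and then handle each shadow with Theorem \ref{hypersurfaces}. We take $\actdim(\Phi,\cX)$ to be defined, as the name suggests, as the maximum over the essential hypersurface shadows $\cY\subset S\times(\bP^1)^{J}$ of $\cX$ of an activity dimension. For a single shadow $\cY$, this is the largest $k$ such that $(T_{\Phi_J,\cY})^{\wedge k}\neq 0$. Equivalently, away from the all-Latt\`es situation, it is the dimension of the image of $S$ under the pair-moduli map of $(\Phi_J,\cY)$ when $\cY$ is not $\Phi_J$-special, and $0$ when it is special.

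The first step is to express $\mu_{\Phi,\cX}$ via hypersurfaces. Write $\ell=\dim S$ and $r=\dim_S\cX$. We expand $\hat T_{\Phi^{(\ell)}}^{\wedge\ell(r+1)}$ on $\cX^{(\ell)}$ as a sum of products of pullbacks of the one-dimensional Green currents $\hat T_{f_i}$ on each coordinate of each fiber-power factor. Each nonzero monomial selects, in each of the $\ell$ factors, a set $J_j$ of $r+1$ coordinates. Since $\cX$ has relative dimension $r$, the wedge of $r+1$ coordinate currents on one factor is the pullback under the projection to $(\bP^1)^{J_j}$ of the corresponding current on the image. Generically that image is a hypersurface, and after discarding the coordinates on which its equation does not depend it becomes an essential hypersurface shadow.

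Pushing forward fiberwise then gives
\[
\mu_{\Phi,\cX}=\sum c_{J_1,\ldots,J_\ell}\, T_{\Phi,\cY_{J_1}}\wedge\cdots\wedge T_{\Phi,\cY_{J_\ell}},
\]
with positive constants. This is the mechanism behind $\mu\ge c\,T^{\wedge\ell}$ in \cite{Gauthier:Taflin:Vigny}. Two technical points need care here. Projections that are not generically finite onto a hypersurface contribute zero. Discarding dummy coordinates only rescales by the total masses of the Green measures. As a result, $\mu_{\Phi,\cX}=0$ if and only if every mixed wedge $T_{\cY_1}\wedge\cdots\wedge T_{\cY_\ell}$ of shadow currents vanishes. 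All these currents are positive closed $(1,1)$-currents with continuous potentials, so the mixed wedges are well defined.

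The second step controls each shadow individually. Factor the pair-moduli map of $(\Phi_J,\cY)$ through its image $S\to S_{\cY}$. The current $T_{\Phi,\cY}$ is the pullback of the corresponding current on $S_{\cY}$, where the family is maximally varying, and Theorem \ref{hypersurfaces} applies there. If $\cY$ is special, the current vanishes. Otherwise, outside the all-Latt\`es case, the current has nonzero top power on $S_{\cY}$. In the all-Latt\`es weakly special case, we argue directly: the current is then the pullback from a quotient parametrizing the coset, and its maximal nonvanishing power equals that quotient's dimension.

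The main obstacle is the third step, which converts these individual data into the vanishing criterion for mixed wedges. The difficulty is that each $T_{\cY}$ is essentially a Kähler-type form pulled back from a space $S\to S_{\cY}$. We need a mixed-wedge criterion of the following form: a product of such currents is nonzero if and only if the images satisfy a rank (Hall-type) condition. Stated precisely, the rank condition is that the combined map $S\to\prod S_{\cY_i}$ has image of dimension $\ell$ when suitable multiplicities are allowed.

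I would attack this step by first working on an open set where the potentials are smooth and strictly plurisubharmonic transversally to the fibers of $S\to S_{\cY}$, which is possible on the support of the bifurcation measure. There the currents are semipositive forms, and nonvanishing of a wedge of semipositive forms is a linear-algebra statement about the kernels of their differentials. The condition obtained this way should be exactly the definition of $\actdim(\Phi,\cX)\geq\ell$, which presumably is phrased via the dimension of the joint image of the shadows' moduli maps. Continuity of the potentials then lets us pass from the open set to all of $S$.
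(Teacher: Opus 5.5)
There is a genuine gap in your first step. When you expand $\hat T_{\Phi^{(\ell)}}^{\wedge\ell(r+1)}\wedge[\cX^{(\ell)}]$, the relation $\hat T_{f_i}\wedge\hat T_{f_i}=0$ only forces the $\ell(r+1)$ coordinate currents to be distinct. It does not force exactly $r+1$ of them into each factor of the fiber power. A factor can receive $r+k$ currents with $k\geq 2$ while other factors receive only $r$, and those integrate to constant fiber masses. So $\mu_{\Phi,\cX}$ also contains products involving higher bifurcation currents $\pi_*\bigl(\hat T_\Phi^{\wedge(r+k)}\wedge[\cX]\bigr)$. These are nonzero in general once $r+1<N$ and $\ell\geq 2$. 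For example, take constant maps and $X_s=\{(x,x+s_1,x+s_2)\}\subset(\bP^1)^3$ over $S=\C^2$: the evaluation map is birational, so $\pi_*(T_1\wedge T_2\wedge T_3\wedge[\cX])\neq 0$. This is why Gauthier--Taflin--Vigny only get $\mu_{\Phi,\cX}\geq c\,T_{\Phi,\cX}^{\wedge\ell}$, with equality in the hypersurface case.

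Consequently, your claimed equivalence between $\mu_{\Phi,\cX}=0$ and the vanishing of all mixed wedges of shadow currents is unproven in exactly the direction you need for ``$\actdim<\ell\Rightarrow$ truly degenerate''. That direction amounts to $(T_{\Phi,\cX})^{\wedge\ell}=0\Rightarrow\mu_{\Phi,\cX}=0$, which the paper obtains only afterwards, as Corollary \ref{measure cor}. The paper argues by contradiction instead:
\begin{enumerate}
\item If $\mu_{\Phi,\cX}\neq 0$, some projection $\cY_K$ of $\cX^{(\ell)}$ to $\ell(r+1)$ coordinates, distributed arbitrarily among the factors, carries a rigid repeller by Proposition \ref{analytic characterization}.
\item This repeller is chosen off the intersections of distinct components of $Z(\cY_K)$ from Proposition \ref{essential intersection}.
\item The essential shadows of $\cY_K$ are shadows of $\cX$ by Proposition \ref{essential product}.
\item Along a curve in a positive-dimensional fiber of the activity map through that parameter, all these shadows are stable, contradicting rigidity.
\end{enumerate}

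Your third step also fails as proposed. Bifurcation currents of ordinary shadows are in general not smooth semipositive forms on any open set meeting their support; those supports typically have empty interior, like the boundary of the Mandelbrot set. So nonvanishing of a mixed wedge cannot be reduced to linear algebra of kernels of differentials, and continuity of potentials does not fix this. Nor does ``each shadow current has nonzero top power on its own moduli image'' give a Hall-type criterion by itself. You must show that the currents of later shadows remain nonzero on the loci cut out by earlier ones. The paper supplies exactly this through an inductive construction of a single rigid repeller for a fiber product $\cH^{\mathrm{act}}$ of $\ell$ shadows. It first cuts $S$ by torsion values of the Betti maps of the elliptic shadows. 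It then cuts successively by fibers of ordinary moduli maps, applying Theorem \ref{hypersurfaces} and Proposition \ref{analytic characterization} on transverse slices.

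This also shows why the invariant must be the rank of the joint activity map including the real Betti maps $\beta_J$, not the joint image of pair-moduli maps: a strictly weakly special elliptic shadow can be maximally varying while $T_{\cY}^{\wedge 2}=0$. It is certainly not the maximum over shadows that you start with, which would make the statement false. For instance, $\cX=\cX_1\times_S\cX_2$ with two curve families in $(\bP^1)^2$ varying in independent directions of a two-dimensional $S$ has $\mu_{\Phi,\cX}\neq 0$, yet each shadow alone has activity $1$.
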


This notion of degeneracy comes from the study of subvarieties within families of abelian varieties, and it was recently extended to a more general study of polarized endomorphisms (as a notion of ``geometric bigness") in \cite[\S6.2.2]{Yuan:Zhang:quasiprojective} and ``potential bigness" in \cite{Yuan:bigness}.  Theorem \ref{true deg char} is a dynamical analogue of the non-degeneracy characterizations for subvarieties of abelian schemes that enter the relative Manin--Mumford and uniform Mordell--Lang problems; see \cite{GaoBettiRank, GaoBettiRankCorrigendum, Gao:Habegger:RMM, Gao:Ge:Kuhne}.  The activity dimension $\actdim(\Phi,\cX)$ is defined in \S\ref{active dim}.  Non-exceptional shadows of $(\Phi, \cX)$ contribute the complex rank of their pair-moduli maps, while strictly weakly special Latt\`es shadows contribute (half of) the real rank of the corresponding Betti maps.  

\subsection{Algebraic families over $\Qbar$: the third main result}
Assume now that $(\Phi,\cX)$ is defined over $\Qbar$.  The canonical metrics associated to the factors $f_i$ define a nef adelic line bundle $\Lbar_\Phi \in \Pichat\left( (S\times(\bP^1)^N) / \Z\right)$, in the sense of Yuan and Zhang \cite{Yuan:Zhang:quasiprojective}; see Section \ref{section: prelims}.

\begin{theorem} \label{main3}
Let $(\Phi,\cX)$ be as in Theorem \ref{main1}, defined over $\Qbar$ with $\dim S>0$.  Assume that the exceptional factors are not of mixed group type; that is, there do not exist $i\neq j$ such that $f_i$ is exceptional of multiplicative type and $f_j$ is exceptional of elliptic type. The following are equivalent.
\begin{enumerate}
\item The adelic line bundle
	\[ \Lbar_{\Phi^{(1+\dim S)}}|\cX^{(1+\dim S)} \]
has arithmetic volume zero.
\item For one, and hence every, ample Weil height $h_S$ on $S$, there exists a Zariski-dense sequence $s_n\in S(\Qbar)$ such that
	\[ \frac{\hat h_{\Lbar_{\Phi_{s_n}}}(X_{s_n})}{\max\{1,h_S(s_n)\}}\rightarrow0,\text{ as }n\to\infty. \]
\item Either $\cX$ is truly degenerate for $\Phi$, or its non-special essential shadows are simultaneously torsion weakly special.
\end{enumerate}
\end{theorem}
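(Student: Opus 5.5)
We would prove the cycle (1)$\Rightarrow$(2)$\Rightarrow$(3)$\Rightarrow$(1), working in Yuan--Zhang's theory of adelic line bundles over quasiprojective varieties. Put $m=1+\dim S$ and $\Lbar=\Lbar_{\Phi^{(m)}}|\cX^{(m)}$, a nef adelic line bundle on $\cX^{(m)}$ over $\Qbar$.

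For (1)$\Rightarrow$(2) we use the arithmetic Hilbert--Samuel and Siu-type inequality together with the successive minima/fundamental inequality of Yuan--Zhang. Since $\Lbar$ is nef, $\volhat(\Lbar)=\Lbar^{\,\dim\cX^{(m)}+1}$. Vanishing of this top self-intersection lets us bound the essential minimum of $\Lbar$ against $\Lbar$ twisted by a small multiple of the pullback of an ample $\overline{H}_S$. Unwinding this as in \cite{Yuan:bigness}, we obtain for every $\epsilon>0$ a Zariski-dense set of $s$ with $\hat h_{\Lbar_{\Phi_s}}(X_s)\le \epsilon\,h_S(s)+O(1)$. This uses the comparison between the height of the subvariety $X_s$ and the essential minima of points in the fiber power; the fiber power exponent $m$ is exactly what is needed for points of $X_s^{m}$ to detect the height of $X_s$. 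A diagonal argument over $\epsilon\to0$ then gives the sequence. Independence of $h_S$ is immediate, since any two ample heights are comparable up to constants.

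For (2)$\Rightarrow$(3) we split according to true degeneracy. If $\cX$ is potentially non-degenerate, then $\mu_{\Phi,\cX}\neq0$, and we want to show the height is bounded below by $c\,h_S(s)-C$ on a dense open set. Here $\mu_{\Phi,\cX}\ne0$ means the geometric part $\Lbar^{\dim}$ restricted to $\cX^{(\ell)}$ is big. The standard argument (Yuan--Zhang, Gao--Habegger style height inequality) then yields $\hat h(x)\ge c\,h_S(\pi x)-C$ for $x$ in a dense open subset of $\cX^{(\ell)}$. Via the successive minima comparison, this gives the same bound for $\hat h(X_s)$, which contradicts (2) unless we are in the exceptional situation. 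More precisely, we reduce to essential hypersurface shadows using the decomposition of \S\ref{decomposition}, and apply Theorem \ref{true deg char} to each shadow. For non-special shadows that fail to be weakly special, activity dimension bounds force a height inequality. For weakly special shadows that are not special (necessarily all exceptional of a single group type, by the non-mixed hypothesis), the height of $X_s$ in the coset case is governed by the height of the coset translate in $G_1\times\cdots\times G_N$. For elliptic curves this height is the Néron--Tate height of a section; for $\mathbb G_m$ it is the Weil height. A small-height sequence therefore gives a Zariski-dense set of small-height points of a section of an abelian scheme or torus. By the relative Bogomolov / Gao--Habegger and Kühne results (or Bogomolov for tori), this forces the sections to be simultaneously torsion on a dense set, which is exactly the simultaneously torsion weakly special condition.

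For (3)$\Rightarrow$(1), if $\cX$ is truly degenerate then the geometric volume vanishes on fiber powers. The arithmetic volume of a nef bundle whose geometric top power vanishes is controlled by Theorem \ref{main1}-type density: we exhibit a Zariski-dense set of $s$ with $\hat h(X_s)=0$ or small, using the simultaneous-torsion parameters (special fibers have height zero) in the second case. In the truly degenerate case we use Theorem \ref{true deg char}: $\actdim<\dim S$ gives positive-dimensional fibers of the moduli/Betti maps, along which the bundle is isotrivial or Betti-constant, so its height is bounded and the top arithmetic intersection vanishes. The main obstacle is the middle step: converting smallness of heights into simultaneous torsion for strictly weakly special Lattès shadows. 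This requires the relative Bogomolov theorem in families of elliptic curves, and it is also where the non-mixed hypothesis must be used.
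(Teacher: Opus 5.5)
Your outline of (1)$\Leftrightarrow$(2) via Yuan's fiber-power criterion matches the paper. So does your final step, where K\"uhne's relative Bogomolov theorem (elliptic) or Zhang's (multiplicative) turns small quotient sections into simultaneous torsion. The gap is in (2)$\Rightarrow$(3), at the sentence ``for non-special shadows that fail to be weakly special, activity dimension bounds force a height inequality.''

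Theorem \ref{true deg char} and the activity dimension are purely geometric. They give bigness of the geometric part, hence at best $h\ge c\,h_S-C$ on a dense open set. Combined with (2), this does not contradict anything, contrary to what you write. It only yields a generic sequence with $h_S(s_n)$ bounded and $\hat h(X_{s_n})\to0$. No geometric statement can rule this out. For example, a strictly weakly special all-Latt\`es hypersurface over a curve $S$ has $T_{\Phi,\cH}\neq0$, so its geometric part is big, yet it has a Zariski-dense set of height-zero parameters.

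What is missing is \emph{arithmetic} bigness, i.e.\ positive essential minimum, of the Deligne bundle of each non-weakly-special shadow. This has to be proved after descending the shadow to its pair-moduli image $T_J$, where it is maximally varying. In the paper this is Theorem \ref{arithmetic bigness: essential case}, whose core is Proposition \ref{arithmetic rigidity hypersurface}, an equidistribution-plus-rigidity argument:
\begin{enumerate}
\item Vanishing volume with big geometric part gives, by the fundamental inequality, a generic small sequence in $\cH^{(\ell+1)}$.
\item That sequence is also small for the bundles obtained by forgetting one coordinate, and archimedean equidistribution makes the corresponding measures proportional.
\item Slicing to curves in $\bP^1\times\bP^1$ and applying the Mavraki--Schmidt measure rigidity theorem makes those curves weakly special.
\item Essentiality then forces $\cH$ itself to be weakly special.
\end{enumerate}
Nothing in your proposal plays this role. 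You also do not treat shadows whose moduli image is a point ($\dim T_J=0$). There the shadow height is constant in $s$, so no inequality in $h_S$ helps, and the paper invokes the dynamical Bogomolov theorem for the fixed map.

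Some smaller points:
\begin{itemize}
\item To pass smallness from $X_{s_n}$ to every shadow, you need the decomposition $\overline{\mathcal M}_{\Phi,\cX}=\sum_J c_J\overline{\mathcal M}_J$ with $c_J>0$ and each $h_{\overline{\mathcal M}_J}\ge0$ (Proposition \ref{Deligne essential decomposition}). ``Reduce to shadows'' should mean exactly this statement.
\item In (3)$\Rightarrow$(1), truly degenerate case, your Betti-leaf and bounded-height heuristics are unnecessary. Betti leaves are not algebraic in general, so adelic bundles cannot be restricted to them. The direct argument is that true degeneracy makes the geometric part non-big, and arithmetic bigness implies geometric bigness.
\item Within a single weakly special essential shadow, a common group type is forced by essentiality, not by the non-mixed hypothesis. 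That hypothesis is needed because different shadows could have different group types in the simultaneous-torsion step of Proposition \ref{homogeneous relative Bogomolov}.
\end{itemize}
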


Theorem \ref{main3} says that there are only two mechanisms for failure of arithmetic bigness.  The first is geometric: the bifurcation measure vanishes.  The second is arithmetic: the geometric line bundle is big, but a collection of quotient sections becomes torsion simultaneously on a dense set.  Outside of these two cases, the canonical height of $X_s$ grows at least linearly with a height of the parameter, away from a proper closed subset of $S$.

\begin{remark} \label{remark:mixed exceptional} 
The restriction on exceptional factors in Theorem \ref{main3} is used only at the final ``Relative Bogomolov" step of the proof (Proposition \ref{homogeneous relative Bogomolov}).  When all exceptional factors are Latt\`es maps, we rely on K\"uhne's relative Bogomolov theorem \cite{Kuhne:RBC}; when they all come from multiplicative groups, we rely on Zhang's toric Bogomolov theorem \cite{Zhang:positive}.  
\end{remark}

\subsection{Proof ideas} \label{sketch}
We view Theorem \ref{hypersurfaces}, and its extension Theorem \ref{true deg char}, as the principal new input.  It is what allows the existing arithmetic equidistribution and rigidity machinery to yield Theorems \ref{main1}, \ref{degree bound}, and \ref{main3}.

We first assume that $\cX$ is a family of essential hypersurfaces in $(\bP^1)^N$.  The proof of Theorem \ref{hypersurfaces} separates according to whether Latt\`es maps occur.  With at least one (but not all) Latt\`es factor $f_i$, we use complex-analytic bifurcation theory, as in \cite{DM:commonprep, DM:UDMM}.  We build proper intersections between the hypersurface $\cX$ and preperiodic points of $\Phi$ that map to repelling cycles, leading to positivity of the bifurcation measure.  If all maps $f_i$ are Latt\`es, we rely on \cite{GaoBettiRank, GaoBettiRankCorrigendum} and \cite{Habegger:special}. 

When no factor is Latt\`es, the proof passes through a tower of one-dimensional function fields
	\[ \C=K_0\subset K_1=K_0(\sigma_1)\subset\cdots\subset K_m=K_{m-1}(\sigma_m)\subset\overline{\C(S)}. \]
The functions $\sigma_j$ are algebraically independent multipliers of periodic cycles of some $f_i$.  Passing successively to their generic fibers reduces the dimension of the parameter space while preserving maximal variation.  At the terminal stage, McMullen's rigidity theorem \cite[Corollary 2.3]{McMullen:families} implies that every remaining dynamical factor is isotrivial, and maximal variation of the hypersurface gives positivity of the geometric volume directly.

We then propagate this positivity back through the multiplier slices.  If positivity were lost at some step, the two-sided Fundamental Inequality of Yuan--Zhang \cite[Theorem 5.3.3]{Yuan:Zhang:quasiprojective} (which relies on ``geometric bigness" of a certain line bundle, which we prove by equating its volume to the ``arithmetic" line bundle at the previous step) would produce a generic sequence of small points for several auxiliary adelic line bundles, obtained by omitting one coordinate at a time.  By the arithmetic equidistribution theorem \cite[Theorem 5.4.3]{Yuan:Zhang:quasiprojective}, the associated measures must agree.  Using the theory of non-archimedean currents of Chambert-Loir and Ducros \cite{CLD}, we slice these measure identities to curves.  The choice of multiplier functions guarantees the maps have positive Lyapunov exponents, and we can apply the non-archimedean measure rigidity theorem of Ji--Yap \cite[Theorem 1.1]{JiYap2026} for curves in $\bP^1\times\bP^1$, which tells us that the curve sections must be preperiodic.  This contradicts essentiality of the hypersurface $\cX$.

For a subvariety $X \subset (\bP^1)^N$ of arbitrary relative dimension $r$, we consider projections of $X$ to subspaces of the form $(\bP^1)^{r+1}$; the images are called shadows of $X$.  We show that that the bifurcation current of a family $\cX$ is a positive linear combination of the bifurcation currents of its shadows.  The activity dimension of $\cX$ then records the dimension of the image of the parameter space $S$ in an appropriate moduli space of shadow pairs $(\Phi_J, \cH_J)$, for subsets of indices $J$ with $|J| =r+1$ (with a modified definition for shadows where all factor maps are Latt\`es).  To obtain the characterization of true degeneracy in Theorem \ref{true deg char}, we are then able to combine Theorem \ref{hypersurfaces} with an analytic characterization of positivity of the bifurcation currents and measures, in terms of the existence of repelling cycles that intersect properly with the various shadows.  

To obtain the arithmetic consequences in Theorem \ref{main1} (over $\Qbar$) and Theorem \ref{main3}, we combine this geometric non-degeneracy with the complex measure rigity of \cite[Theorem 3.1]{Mavraki:Schmidt} building on the equidistribution strategy from \cite[proof of Theorem 1.8]{Mavraki:Schmidt}, later extended and simplified in \cite[Theorem 5.1]{DM:commonprep}.

We first prove the relative Manin-Mumford type result in Theorem \ref{main1} over $\Qbar$. To pass to $\C$ we argue with induction on the transcendence degree of a field of definition; compare with \cite[\S 10]{Gao:Habegger:RMM} and other classical specialization approaches. 
After reducing to essential hypersurfaces, we spread out one transcendence variable at a time, where the enlarged base dimension matches the available fiber power.  Theorem \ref{hypersurfaces} gives geometric bigness, unless the hypersurface is weakly special, and Yuan--Zhang's height inequality \cite{Yuan:Zhang:quasiprojective} then gives a function-field height gap, contradicting the density of preperiodic points.

Finally, for Theorem \ref{main3}, we use the Deligne pairing of Yuan--Zhang to construct an adelic line bundle on $S$ whose height records the canonical heights of the fibers. Yuan's 
criterion \cite[Theorem 4.1]{Yuan:bigness} identifies its arithmetic bigness with that of the canonical bundle on $\cX^{(1+\dim S)}$. We show that this Deligne bundle is a positive linear combination of the corresponding bundles for the essential shadows, just as we had already done over $\C$ with the bifurcation currents.  Assuming vanishing arithmetic volume and excluding true degeneracy, the fundamental inequality gives a generic sequence small for the Deligne bundle, which then has to be small for every shadow bundle. The arithmetic rigidity argument then forces every non-special shadow to be weakly special.  It remains to pass from simultaneous smallness to simultaneous torsion. For elliptic quotients, we combine K\"uhne's relative Bogomolov theorem \cite{Kuhne:RBC} with the Betti-rank arguments of \cite{GaoBettiRank, Gao:Habegger:RMM}; for multiplicative quotients, we use Zhang's Bogomolov theorem \cite{Zhang:positive}. This is the step where we require the hypothesis in Theorem \ref{main3} that exceptional factors have a common group type.

\subsection{Acknowledgements}
We would like to thank many people for helpful conversations as we worked on this project, especially Ziyang Gao, Thomas Gauthier, Mattias Jonsson, Holly Krieger, Curt McMullen, Harry Schmidt, Jit Wu Yap, Hexi Ye, and Xinyi Yuan. We specially thank Zhuchao Ji and Jit Wu Yap for sharing early drafts of their preprint \cite{JiYap2026}.  Our proof idea for Theorem \ref{hypersurfaces} only crystallized after we learned about \cite[Theorem 1.1]{JiYap2026}.  Our research was supported in part by the National Science Foundation, the Natural Sciences and Engineering Research Council of Canada, the Isaac Newton Institute, and the Harvard Radcliffe Institute.

\subsection{AI disclosure}
This project extended over several years and progressed at a slow human pace.  However, during the last 6 months, we used various versions of ChatGPT, at first for help with translations and explanations of the results in \cite{CLD}, then for checking our arguments and details throughout this paper.

\bigskip
\section{Relations and bifurcations in families}

In this section, we fix terminology and provide background from complex dynamics.  In Theorem \ref{isotrivial case}, we prove a special case of Theorem \ref{hypersurfaces} where the family of maps $\Phi$ is isotrivial (so only the family of hypersurfaces $\cX$ is varying).

\subsection{Relations among maps on $\bP^1$} \label{relations}

We begin with a few standard facts that explain our terminology.  Let
	\[ \Phi=(f_1,\ldots,f_N): (\bP^1)^N\longrightarrow(\bP^1)^N \]
be a product map over $\C$, with $\deg f_i>1$ for every $i$.  The dynamical Manin--Mumford theorems of Ghioca--Nguyen--Ye and Mavraki--Schmidt--Wilms imply that a subvariety containing a Zariski-dense set of $\Phi$-preperiodic points is special in the sense of the Introduction \cite{Ghioca:Nguyen:Ye:DMM2,Ghioca:Nguyen:Ye:DMM1,MSW2023}.  For non-exceptional maps, these subvarieties are assembled from preperiodic curves in pairs of coordinates; see
\cite[Proposition 2.21 and Remark 2.31]{Medvedev:Scanlon}.  

The following elementary observation will be used repeatedly.

\begin{proposition} \label{relation semiconjugacy}
Let $f_1,f_2:\bP^1\to\bP^1$ have degrees $>1$, and let $C\subset\bP^1\times\bP^1$ be an irreducible curve that projects dominantly to both factors.  If $C$ is fixed by $(f_1,f_2)$, then
\[ \deg f_1=\deg f_2. \]
Moreover, writing $\phi=(f_1,f_2)|_C:C\to C$ and $\pi_i:C\to\bP^1$ for the coordinate projections, we have
\[ f_i\circ\pi_i=\pi_i\circ\phi, \qquad i=1,2. \]
\end{proposition}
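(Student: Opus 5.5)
The second claim is formal, so I will prove it first and then deduce the degree equality from it. Let $C$ be irreducible, dominant onto both factors, and fixed by $F=(f_1,f_2)$, so $F(C)=C$. Set $\phi=F|_C:C\to C$, which is a finite surjective morphism of the (possibly singular) curve $C$. For any point $(x_1,x_2)\in C$ we have $\phi(x_1,x_2)=(f_1(x_1),f_2(x_2))$. Applying $\pi_i$ gives
\[ \pi_i(\phi(x_1,x_2))=f_i(x_i)=f_i(\pi_i(x_1,x_2)). \]
This is exactly $f_i\circ\pi_i=\pi_i\circ\phi$ for $i=1,2$.

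For the degree statement, I will compute the degree of $\phi$ in two ways. Each $\pi_i:C\to\bP^1$ is a dominant morphism from an irreducible projective curve, hence finite of some degree $e_i\geq 1$. Degrees of finite maps of irreducible curves are multiplicative under composition; I can pass to the normalization of $C$ if needed, since $\phi$ lifts to it and all degrees are preserved. Taking degrees in $f_i\circ\pi_i=\pi_i\circ\phi$ gives
\[ d_i\,e_i=e_i\deg\phi . \]
Since $e_i\neq 0$, this yields $\deg\phi=d_i$ for $i=1,2$. Therefore $\deg f_1=\deg\phi=\deg f_2$.

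The only point needing care is that $\phi$ has a well-defined degree as a finite self-map of $C$. It is surjective because $F(C)=C$, and it is finite as the restriction of the finite map $F$. Its degree is computed on function fields, $[\C(C):\phi^*\C(C)]$, which is where the multiplicativity above is justified. The same argument works for any irreducible $C$, singular or not, because everything is read off from function field extensions.
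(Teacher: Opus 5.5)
Your proposal is correct and follows the paper's own proof: the semiconjugacy $f_i\circ\pi_i=\pi_i\circ\phi$ comes straight from the definition of $\phi$, and comparing degrees gives $(\deg f_i)(\deg\pi_i)=(\deg\pi_i)(\deg\phi)$, so $\deg f_1=\deg\phi=\deg f_2$ because the projections are nonconstant. Your remarks that $\phi$ is finite and surjective, and that degrees are multiplicative via function fields, are the routine justifications the paper leaves implicit.
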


\begin{proof}
The semiconjugacy relations follow from the definition of $\phi$.  Taking degrees gives
\[ (\deg f_i)(\deg\pi_i)=(\deg\pi_i)(\deg\phi), \]
and the projections are nonconstant, so $\deg f_1=\deg\phi=\deg f_2$.
\end{proof}

For example, if $C \subset \bP^1\times\bP^1$ is the graph of a M\"obius transformation $A$, then invariance of $C$ by $(f_1, f_2)$ is equivalent to conjugacy of $f_1$ and $f_2$ by $A$.  If $C$ is the graph of a rational function $h$ of higher degree, then invariance gives a semiconjugacy relation $h\circ f_1=f_2\circ h$.  For pairs of non-exceptional rational maps, Pakovich describes periodic and preperiodic curves in terms of semiconjugacies and decompositions of iterates \cite[Theorems~4.6 and~4.15]{PakovichInvariant2023}.

The diagonal line in $\bP^1\times\bP^1$ gives a relation that is especially important for our applications.  We record the precise form that we need.

\begin{proposition} \label{diagonal relations}
Let $f,g:\bP^1\to\bP^1$ have degrees $>1$, and let $\Delta\subset\bP^1\times\bP^1$ be the diagonal.
\begin{enumerate}
\item The curve $\Delta$ is periodic for $(f,g)$ if and only if $\deg f = \deg g$ and $f$ and $g$ share an iterate.
\item The curve $\Delta$ is preperiodic for a map $F := (f^a,g^b)$ for some $a,b\in\bN$ if and only if 
$d:=\deg(f)^a=\deg(g)^b$ and there exists a positive integer $k$ such that 
	\[ f^{ka}\circ g^{kb}=f^{2ka} \qquad\hbox{and}\qquad g^{kb}\circ f^{ka}=g^{2kb}. \] 
If $F^{q+p}(\Delta)=F^q(\Delta)$, the integer $k$  may be bounded in terms of $p,q,d$.
\end{enumerate}
\end{proposition}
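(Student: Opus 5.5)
For part (1) I would unwind the definition directly. The curve $(f^n,g^n)(\Delta)$ is the image of $z\mapsto (f^n(z),g^n(z))$, so $(f^n,g^n)(\Delta)=\Delta$ says exactly that $f^n(z)=g^n(z)$ for all $z$, i.e.\ $f^n=g^n$. Then $\deg f=\deg g$ follows either from comparing degrees or from Proposition \ref{relation semiconjugacy} applied to $\Delta$ and $(f^n,g^n)$. Conversely, suppose $\deg f=\deg g$ and $f^n=g^m$. Comparing degrees forces $n=m$, so $\Delta$ is fixed by $(f^n,g^n)$. I expect no difficulty here.

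For part (2), first suppose $F^{q+p}(\Delta)=F^q(\Delta)$. The curve $C_q:=F^q(\Delta)$ is irreducible, dominates both factors, and is fixed by $F^p$. Proposition \ref{relation semiconjugacy} then gives $\deg(f)^{pa}=\deg(g)^{pb}$, hence $\deg(f)^a=\deg(g)^b=d$. Next I would pass to a convenient iterate. Since $F^{q+jp}(\Delta)=F^q(\Delta)$ for all $j\ge0$, choose $k$ to be a multiple of $p$ with $k\ge q$. Then $C:=F^k(\Delta)=F^{2k}(\Delta)$, and $F^k(C)=C$.

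To extract the composition identity, I would compare two parametrizations of $C$. Feeding the diagonal points $(g^{kb}(w),g^{kb}(w))\in\Delta$ into $F^k$ gives
\[ \alpha(w)=\bigl(f^{ka}g^{kb}(w),\,g^{2kb}(w)\bigr)\in C, \]
while $F^{2k}$ applied to $(w,w)$ gives
\[ \beta(w)=\bigl(f^{2ka}(w),\,g^{2kb}(w)\bigr)\in C. \]
Both $\alpha$ and $\beta$ are nonconstant maps $\bP^1\to C$ with the same second coordinate. The desired identity $f^{ka}\circ g^{kb}=f^{2ka}$ is exactly $\alpha=\beta$. If $\pi_2|_C$ is birational this is immediate. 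In general $\alpha$ and $\beta$ lift to the normalization $\tilde C$ and differ by an element of the fibre structure of $\pi_2:\tilde C\to\bP^1$.

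The main obstacle is this last step. To handle it I would exploit the periodic dynamics $\phi=F^k|_C$ on $C$, which semiconjugates to $f^{ka}$ and $g^{kb}$ via $\pi_1$ and $\pi_2$ by Proposition \ref{relation semiconjugacy}. Concretely, I would replace $k$ by a multiple $k'$, with the multiplier bounded in terms of $\deg\pi_2|_C$, so that $\phi^{k'/k}$ preserves the relevant lift. The point is that the discrepancy between $\alpha$ and $\beta$ is an automorphism-type ambiguity of $\pi_2$. Equivalently, it is a choice among finitely many branches, at most $\deg(\pi_2|_C)\le d^k$ of them. A pigeonhole/iteration argument should then make it trivial after passing to a bounded multiple.

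The symmetric identity $g^{kb}\circ f^{ka}=g^{2kb}$ follows by swapping the roles of the coordinates and taking a common multiple. Since $k$ is a multiple of $p$ that is at least $q$, times a factor bounded by a function of $d^k$, the resulting $k$ is bounded in terms of $p,q,d$, which gives the final assertion.

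For the converse, assume $f^{ka}\circ g^{kb}=f^{2ka}$ and $g^{kb}\circ f^{ka}=g^{2kb}$. I would show that $C=F^k(\Delta)=\{(f^{ka}z,g^{kb}z)\}$ satisfies $F^k(C)=C$. Indeed, $F^k(C)=\{(f^{2ka}z,\,g^{2kb}z)\}$. Writing $w=g^{kb}z$, the relations rewrite this point as $(f^{ka}w,\,g^{kb}w)$, which lies on $C$. Then $F^k(C)\subseteq C$, and equality holds by irreducibility and dimension. Hence $\Delta$ is preperiodic for $F$.
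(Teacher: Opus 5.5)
Part (1), the degree equality $\deg(f)^a=\deg(g)^b$ via Proposition \ref{relation semiconjugacy}, the choice of $k$ with $C=F^k(\Delta)=F^{2k}(\Delta)$, and the converse in (2) are all fine; the converse in fact needs only the first relation. The gap is the step you flag as the main obstacle: passing from $\pi_2\circ\alpha=\pi_2\circ\beta$ to $\alpha=\beta$ after replacing $k$ by a bounded multiple. This step is the whole content of (2), and your proposal does not supply it. The phrases ``the relevant lift'', ``$\phi^{k'/k}$ preserves it'' and ``a pigeonhole/iteration argument should make it trivial'' never say what finite set you pigeonhole over, or what identity a repetition would produce. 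The discrepancy is also not of automorphism type, since $\pi_2|_C$ need not be Galois. So $\alpha$ and $\beta$ are just two of the finitely many lifts of $g^{2kb}$ through $\pi_2\colon\tilde C\to\bP^1$; locally they are two branches of $\pi_2^{-1}\circ\pi_2$ at the moving point $g^{2kb}(w)$. When you change $k$ to $k'$, the map being lifted changes, so lifts at different levels do not lie in one common finite set. You also give no mechanism by which a coincidence would force $\alpha=\beta$ at some level.

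The missing idea is to anchor everything at a single base point using a backward orbit. This is what the paper does, following Levin, Levin--Przytycki and Ye.
\begin{itemize}
\item Reduce to $a=b=1$ and $\deg f=\deg g=d$, and set $C=F^q(\Delta)$. Fix a generic $z_0$ and a backward orbit $f^p(z_{n+1})=z_n$.
\item Since $F^{q+np}(\Delta)=C$, there is a branch $h_n$ of $\pi_2\circ\pi_1^{-1}$ near $f^q(z_0)$ with $h_n\circ f^{q+np}=g^{q+np}$ near $z_n$.
\item Since $C$ is also parametrized by $(f^q,g^q)$, we have $h_n=g^q\circ f^{-q}$ for some inverse branch of $f^q$. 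Hence the germ $a_n=f^{-q}\circ f^q$ at $z_0$ satisfies $g^q\circ a_n\circ f^{np}=g^{q+np}$ near $z_n$.
\item These germs lie in a set of at most $d^q$ elements, independent of $n$. So $a_{n'}=a_{n''}$ for some $n'<n''$ bounded in terms of $d$ and $q$.
\item This gives the global identity $g^{A+t}=g^{A}\circ f^{t}$, with $A=q+n'p$ and $t=(n''-n')p$.
\end{itemize}
This identity is still not of the form $\alpha=\beta$, so one more algebraic step is needed. Iterating gives $g^{A+jt}=g^A\circ f^{jt}$. For $m=jt\geq A$, composing on the left with $g^{m-A}$ gives $g^{2m}=g^m\circ f^m$. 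The other relation follows by symmetry, and taking a common multiple yields $k$ bounded in terms of $p,q,d$.
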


Combining Proposition \ref{diagonal relations} with Theorem \ref{preperiodbound} proves the bounded-complexity assertion in Theorem \ref{theorem: common}(2).  We return to this in \S\ref{uniformity}.

\begin{proof}
Statement (1) is immediate (with Proposition \ref{relation semiconjugacy}). 
For (2), we follow a step in the proof of \cite[Theorem 3.1]{Ye:symmetries}, which itself builds on the proofs in \cite{Levin:symmetries, Levin:Przytycki}.  

Assume that $\Delta$ is preperiodic for the map $F$.  From Proposition \ref{relation semiconjugacy}, we can replace $f$ and $g$ with iterates $f^a$ and $g^b$ with the same degree.  So we may assume that $d := \deg f = \deg g$ and that $F^{p+q}(\Delta) = F^q(\Delta)$ for integers $q\geq 0$ and $p > 0$.  Set $C = F^q(\Delta)$, and let $\pi_i: C \to \bP^1$ be the projection to the $i$-th coordinate, for $i = 1,2$.  Choose an open subset $V \subset \bP^1$ containing no critical values for $f^q$ nor for $\pi_1$, and choose a branch of the inverse of $\pi_1$ and set 
	\[ h = \pi_2\circ \pi_1^{-1}. \]
Shrink $V$ if necessary so that there are no critical values of $g^q$ in $V' = h(V)$.  Since the curve $C$ is parameterized by both $u \mapsto (f^q(u), g^q(u))$ and $w \mapsto (f^{q+p}(w), g^{q+p}(w))$, we can find open $U, W \subset \bP^1$ so that 
	\[ h\circ f^q = g^q  \mbox{ on } U \]
and 
	\[ h\circ f^{q+p} = g^{q+p} \mbox{ on } W.\]
It follows immediately that 
	\[ (f^{-q} \circ f^q) \circ f^p = (g^{-q} \circ g^q) \circ g^p \]
from $W$ to $U$, for inverse branches of $f^q$ and $g^q$ from $V$ and $V'$ back to $U$.  Or, alternatively, we observe that 
	\[ g^q \circ (f^{-q} \circ f^q) \circ f^p = g^{q+p}. \]
Now we repeat the construction above for multiples of $p$ making compatible choices as follows. Choose $z_0$ outside the postcritical set of $f$ such that $f^q(z_0)$ is not a critical value of $f^q$ or $\pi_1$, and choose a backward orbit
	\[f^p(z_{n+1})=z_n,~ n\geq0.\]
For each $n\geq1$, choose the branch $h_n$ of $\pi_2\circ\pi_1^{-1}$ near $f^q(z_0)$ for which
	\[h_n\circ f^{q+np}=g^{q+np}~\text{near }z_n.\]
Such a branch exists because $F^{q+np}(\Delta)=C$.  As above, choose an inverse branch $f^{-q}$ satisfying $g^q\circ f^{-q}=h_n$. Then $a_n=f^{-q}\circ f^q$ is a germ at the common base point $z_0$, and
	\[g^q\circ a_n\circ f^{np}=g^{q+np}~\text{near }z_n.\]

There are at most $d^q$ possibilities for the germs $a_n$ at $z_0$, so there exist indices $n''>n'>0$ such that $a_{n''}=a_{n'}$. Since $f^{(n''-n')p}(z_{n''})=z_{n'},$
we have 
\begin{align*}
g^{q+n''p}&=g^q\circ a_{n''}\circ f^{n''p}\\
&=g^q\circ a_{n'}\circ f^{n'p} \circ f^{(n''-n')p}\\
&=g^{q+n'p}\circ f^{(n''-n')p}
\end{align*}
near $z_{n''}$, and hence globally as an identity of rational
functions.
Note that we can choose both $n'$ and $n''$ to be bounded in terms of $d,p,q$ and so that $m:=(n''-n')p\geq q+n'p$. 
Therefore, composing both sides of the relation on the left with $g^{m-(q+n'p)}$, we obtain
\[g^m\circ g^m=g^m\circ f^m.\]
By symmetry, we obtain the other relation, though possibly with a different $m$. As the relation holds when its exponent is replaced by positive multiples of $m$, we can obtain a common exponent $k$ bounded in terms of $p,q,d$. In the original notation, this gives
	\[f^{ka}\circ g^{kb}=f^{2ka},~g^{kb}\circ f^{ka}=g^{2kb}.\]

For the converse implication in (2), again writing $f,g$ for $f^a,g^b$, assume that
$g^m\circ g^m=g^m\circ f^m$ and $f^m\circ f^m=f^m\circ g^m$ for some $m\geq1$. Then
\begin{eqnarray*}
(f^m,g^m)^2(\Delta)
	&=&\{(f^{2m}(z),g^{2m}(z)):z\in\bP^1\}\\
	&=&\{(f^m(f^m(z)),g^m(f^m(z))):z\in\bP^1\}=(f^m,g^m)(\Delta),
\end{eqnarray*}
so $\Delta$ is preperiodic.
\end{proof}

\subsection{Moduli spaces and maximal variation} \label{moduli map}
Let $S$ be an irreducible quasiprojective variety, let
\[ \Phi=(f_1,\ldots,f_N):S\times(\bP^1)^N\rightarrow S\times(\bP^1)^N \]
be an algebraic family of product maps, with $d_i = \deg f_i > 1$.  Let $\cX\subset S\times(\bP^1)^N$ be a flat family of subvarieties over $S$.  Fix a projective embedding of $(\bP^1)^N$ to define degrees of subvarieties.  Fixing a degree bound of $E$ on the subvarieties, let $\mathcal C_E$ be the corresponding Chow variety.  The pair $(\Phi, \cX)$ determines a rational moduli map
\begin{equation} \label{induced from S}
\rho_{\Phi,\cX}:S\dashrightarrow
	\left(\Rat_{d_1}\times\cdots\times\Rat_{d_N}\times\mathcal C_E\right)/(\PGL_2)^N,
\end{equation}
for some $E$, where $(\PGL_2)^N$ acts by coordinatewise conjugacy on the maps and by the induced action on the cycle.  We say that $(\Phi,\cX)$ is {\bf maximally varying} if $\rho_{\Phi,\cX}$ is generically finite onto its image.  We say that $(\Phi, \cX)$ is {\bf isotrivial} if $\rho_{\Phi, \cX}$ is a constant map.

\subsection{The dynamical Green current}  \label{Green}
Let $\omega_0$ denote a smooth and positive (1,1)-form on $\bP^1$ (such as the Fubini-Study form), normalized so that $\int_{\bP^1} \omega_0 = 1$.  For a family 	
	\[ f: S \times \bP^1\to S \times\bP^1 \] 
of maps of degree $d>1$ on $\bP^1$, recall that its dynamical Green current is defined by 
	\[ \hat T_f = \lim_{n\to \infty} \frac{1}{d^n} (f^n)^* (p^*\omega_0),\]
for the projection $p: S \times \bP^1 \to \bP^1$.  This current has continuous local potentials and satisfies $\frac{1}{d} f^* \hat T_f = \hat T_f$.  Slicing $\hat T_f$ along a fiber $\bP^1$ over $s \in S$ gives the measure $\mu_{f_s}$ of maximal entropy for $f_s$; see for example \cite[\S3]{Dujardin:Favre:critical}.

Given a family $\Phi=(f_1,\ldots,f_N):S\times(\bP^1)^N\longrightarrow S\times(\bP^1)^N$ as in \S\ref{moduli map}, we let
	\[ \hat T_{\Phi} := \sum_j (\pi_j)^* \hat T_{f_j} \] 
on $S\times(\bP^1)^N$, for each of the projections $\pi_j: S \times (\bP^1)^N \to S \times \bP^1$.  Note that this current $\hat T_\Phi$ is not invariant under (normalized) pullback by $\Phi$ unless the degrees of the $f_j$ coincide.  Nevertheless, the top wedge power $\hat T_{\Phi}^{\wedge N}$ is strongly invariant, in the sense that 
\begin{align}\label{top wedge invariance}  
\frac{1}{d_1d_2\cdots d_N} \Phi^*  (\hat T_{\Phi}^{\wedge N}) = \hat T_{\Phi}^{\wedge N}.
\end{align}

Now let $\cX\subset S\times(\bP^1)^N$ be an irreducible and flat variety over $S$, of relative dimension $r$.  Write $\pi:S\times(\bP^1)^N\to S$ for the projection.  Recall that the bifurcation current was defined by
	\[ T_{\Phi,\cX}=\pi_*\left(\hat T_\Phi^{\wedge(r+1)}\wedge[\cX]\right) \]
on $S$ \cite{GauthierVigny2025}.  We say that the pair $(\Phi, \cX)$ is {\bf stable} if $T_{\Phi, \cX} = 0$.

\subsection{Isotrivial or special families}
Here we observe that stability holds in the expected settings, proving the ``easy" implication in Theorem \ref{hypersurfaces} and a step in the proof of Theorem \ref{true deg char}. 

\begin{proposition} \label{easy stability}
Let $\Phi$ be a family of product maps on $S\times(\bP^1)^N$, and let $\cX\subset S\times(\bP^1)^N$ be flat of relative dimension $r$.  If $\cX$ is $\Phi$-special, or if the pair $(\Phi,\cX)$ is isotrivial, then
	\[ T_{\Phi,\cX}=\pi_*\left(\hat T_\Phi^{\wedge(r+1)}\wedge[\cX]\right)=0. \]
\end{proposition}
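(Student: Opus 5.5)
The plan is to handle the two hypotheses separately, in each case producing a positive current $\Theta$ on $S \times (\bP^1)^N$, related to $\hat T_\Phi^{\wedge(r+1)}\wedge[\cX]$, whose pushforward to $S$ we can compute to be zero. Throughout we use that $T_{\Phi,\cX}$ is a positive closed $(1,1)$-current with continuous potentials. Hence its vanishing is a local question on $S$. It can also be tested after a finite base change or restriction to a Zariski-open subset: a positive current with continuous local potentials on a proper closed analytic subset has no mass there, and pushforward under a finite map is positive.

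\emph{Isotrivial case.} After a finite (étale-locally analytic) base change $S'\to S$ and restriction to a dense open set, there is a holomorphic family $A_s=(A_{1,s},\ldots,A_{N,s})\in(\PGL_2)^N$ with
\[ (f_i)_s=A_{i,s}\circ g_i\circ A_{i,s}^{-1}\quad\text{and}\quad X_s=A_s(X_0) \]
for a fixed tuple $(g_i)$ and a fixed subvariety $X_0$. Consider the biholomorphism
\[ H(s,z)=(s,A_s(z)) \]
of $S'\times(\bP^1)^N$. It conjugates the constant family $\Psi=(g_1,\ldots,g_N)$ to $\Phi$. By uniqueness of the Green current (the continuous-potential solution of $\frac1{d}f^*T=T$ in the class of $p^*\omega_0$), we get $H^*\hat T_{f_i}=p_i^*\mu_{g_i}$ on each factor, where $\mu_{g_i}$ is pulled back from $\bP^1$. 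One has to check that the normalization class is preserved; this holds because $H$ preserves the fiber classes. Therefore
\[ H^*\bigl(\hat T_\Phi^{\wedge(r+1)}\wedge[\cX]\bigr)=p^*\Bigl(\bigl(\textstyle\sum_i\pi_i^*\mu_{g_i}\bigr)^{\wedge(r+1)}\wedge[X_0]\Bigr), \]
with $p:S'\times(\bP^1)^N\to(\bP^1)^N$. The bracketed current has bidegree $(r+1,r+1)$ on $X_0$, which has dimension $r$, so it vanishes. Hence the pushforward to $S$ vanishes.

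\emph{Special case.} Suppose $\cX$ is preperiodic for a product of iterates $F=(f_1^{n_1},\ldots,f_N^{n_N})$. Since $\Phi^*$-invariance of each $\hat T_{f_i}$ gives $(f_i^{n_i})^*\hat T_{f_i}=d_i^{n_i}\hat T_{f_i}$, we may replace $\Phi$ by $F$, because $\hat T_\Phi$ depends only on the individual Green currents. Write $F^{q+p}(\cX)=F^q(\cX)=:\cY$.

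First reduce to $\cY$. Since $F$ is finite and fiber-preserving,
\[ \pi_*(\hat T^{\wedge(r+1)}\wedge[\cX])\le C\,\pi_*\bigl(F_*(F^*\hat T^{\wedge(r+1)}\wedge[\cX])\bigr). \]
Now $F^*\hat T_F\le(\max d_i^{n_i})\hat T_F$ termwise, and $F^*\hat T_F\ge(\min d_i^{n_i})\hat T_F$. Using the projection formula $F_*(F^*T\wedge[\cX])=T\wedge F_*[\cX]$, vanishing for $\cY$ implies vanishing for $\cX$. We may therefore assume $\cY$ is periodic, and after passing to $F^p$ we may assume $\cY$ is invariant: $F(\cY)=\cY$.

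Next set $\Theta=\hat T^{\wedge(r+1)}\wedge[\cY]$ and $\nu=\pi_*\Theta$. The comparison inequalities give
\[ \nu=\pi_*\bigl(T\wedge\tfrac{1}{e}F_*[\cY]\bigr)\ \ge\ \frac{c^{r+1}}{e}\cdot(\text{pullback of }\nu), \]
where $e=\deg(F|_{Y_s})$ and $c=\min d_i^{n_i}$. Controlling these constants is delicate when the degrees differ. To avoid this, I would first reduce to equal degrees $d=d_i^{n_i}$ by choosing the $n_i$ suitably; this is possible since a relation forces multiplicative dependence of the degrees, via Proposition \ref{relation semiconjugacy} applied to pairwise projections. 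With equal degrees, $F^*\hat T_F=d\hat T_F$ exactly. Then
\[ \nu=\frac{1}{e}\pi_*(F_*(F^*\Theta'))=\frac{d^{r+1}}{e}\,\nu, \]
so it suffices to show $e<d^{r+1}$. Here $e=d^{r}$, because $F|_{Y_s}$ has topological degree $d^r$ (its top self-intersection scales by $d^r$). Since $d>1$, we conclude $\nu=0$. The $\Psi$-commuting variant reduces to this by the same computation with $\Psi$ in place of $F$.

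The main obstacle is this degree bookkeeping: the normalization $\deg(F|_{Y_s})=d^r$, and reducing to equal degrees when only a general $\Psi$ is given.
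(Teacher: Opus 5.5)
Your isotrivial case is the paper's argument. Your plan for the special case (pass from $\cX$ to an invariant $\cY=F^q(\cX)$, then play $F_*[\cY]=e[\cY]$ against the scaling $F^*T_i=d_iT_i$) is the computation behind the paper's one-line appeal to invariance of the Green current.

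The step that fails is the reduction to equal degrees: a relation does not force the degrees to be multiplicatively dependent. For instance, $\cX=S\times\Delta\times\bP^1\subset S\times(\bP^1)^3$ dominates every coordinate and is invariant under $(g,g,h)$ with $\deg g=2$ and $\deg h=3$, yet no choice of iterates has equal degrees. Proposition \ref{relation semiconjugacy} constrains $d_i,d_j$ only when the projection of $X$ to the $(i,j)$-plane is a curve. For $r\geq 2$ these projections are typically all of $\bP^1\times\bP^1$, and they always are for essential hypersurfaces with $N\geq 3$, which is where the paper uses this proposition. So pairwise projections give no information there. The same objection applies to your treatment of a commuting $\Psi$. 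There you should also record that $\hat T_{\psi_i}=\hat T_{f_i}$, which follows from uniqueness of the Green current since $\psi_i$ commutes with an iterate of $f_i$.

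The repair is to do your degree count coordinate by coordinate, with no reduction. Let $F=(g_1,\ldots,g_N)$ leave $\cY$ invariant, and put $d_i=\deg g_i$ and $e=\deg(F|_{\cY})$. For $|J|=r+1$ let $\nu_J=\pi_*\bigl(\bigwedge_{i\in J}T_i\wedge[\cY]\bigr)$ and $d_J=\prod_{i\in J}d_i$. Since $T_i\wedge T_i=0$,
\[ \nu=(r+1)!\sum_{|J|=r+1}\nu_J, \qquad e\,\nu=\pi_*\bigl(F^*\hat T_\Phi^{\wedge(r+1)}\wedge[\cY]\bigr)=(r+1)!\sum_{|J|=r+1}d_J\,\nu_J. \]
Suppose $\nu_J\neq0$.
\begin{enumerate}
\item Then $\cY\to S\times(\bP^1)^J$ is generically finite, so $\cY$ dominates $S\times(\bP^1)^{J'}$ for some $J'\subset J$ with $|J'|=r$.
\item Compare degrees in $\pi_{J'}\circ F=F_{J'}\circ\pi_{J'}$ on $\cY$, where $F_{J'}=(g_i)_{i\in J'}$, exactly as in Proposition \ref{relation semiconjugacy}. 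This gives $e=d_{J'}$.
\item Hence $d_J=e\,d_j>e$ for $\{j\}=J\setminus J'$.
\end{enumerate}
Therefore $\sum_{\nu_J\neq0}(d_J-e)\nu_J=0$ is a vanishing sum of nonzero positive currents with positive coefficients, which forces the sum to be empty. This replaces your count $e=d^r<d^{r+1}$.

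The paper instead passes to a commuting polarized product map and invokes the known stability criterion. That shortcut is available where the proposition is actually applied: for an essential hypersurface, the comparison above over all $(N-1)$-element $J'$ forces the degrees of $F$ to coincide. The termwise count, by contrast, works in the full generality of the statement.
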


\begin{proof}
If the pair is isotrivial, then after base change and conjugacy, both $\Phi$ and $\cX$ are pulled back from a single pair $(F,X)$ over a point, via a projection $p: S \times(\bP^1)^N \to (\bP^1)^N$, and $\dim X = r$.  Thus
\[ \hat T_\Phi^{\wedge(r+1)}\wedge[\cX] =p^*\left(T_F^{\wedge(r+1)}\wedge[X]\right)=0 \]
for dimension reasons.  If $\cX$ is special, replace $\Phi$ by a commuting polarized product map for which $\cX$ is preperiodic.  The invariance of the Green current then gives the vanishing; equivalently, this is one of the stability criteria in \cite[Theorem 4.4]{GauthierVigny2025}; see also \cite[Proposition 2.2]{DM:UDMM}.
\end{proof}

The following is an important special case of Theorem \ref{hypersurfaces}, and it will serve as a base case for the inductive proof that we give in general.

\begin{theorem} \label{isotrivial case}
Let $S$ be irreducible of dimension $\ell>0$, let $\Phi$ be an isotrivial family of product maps on $(\bP^1)^N$, and let $\cH\subset S\times(\bP^1)^N$ be a flat family of irreducible hypersurfaces.  If $(\Phi,\cH)$ is maximally varying, then
	\[ \hat T_{\Phi^{(\ell)}}^{\wedge N\ell}\wedge[\cH^{(\ell)}]\neq0.  \] 
In particular, $\mu_{\Phi,\cH}\neq0$.
\end{theorem}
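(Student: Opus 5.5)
We first reduce to a constant family of maps.  Because $\Phi$ is isotrivial, after a finite base change $S'\to S$ (generically finite, so nonvanishing of the measure is unaffected) and a fiberwise coordinatewise conjugacy by $(\PGL_2)^N$, we may assume $\Phi_s=F=(f_1,\ldots,f_N)$ for all $s$.  The conjugacy is holomorphic in $s$ and the Green current is natural under conjugation, so the conjugation does not change the positivity question.  Then $\hat T_\Phi=p^*T_F$ with $T_F=\sum_j \pi_j^*\mu_{f_j}$.  The $\ell$-th fiber power $\cH^{(\ell)}\subset S\times(\bP^1)^{N\ell}$ has dimension $\ell+\ell(N-1)=N\ell$.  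Consider the natural map
\[ \Psi:\cH^{(\ell)}\to(\bP^1)^{N\ell},\qquad (s,x^1,\ldots,x^\ell)\mapsto (x^1,\ldots,x^\ell). \]
Then $\hat T_{\Phi^{(\ell)}}^{\wedge N\ell}\wedge[\cH^{(\ell)}]=\Psi^*\bigl(T_{F^{(\ell)}}^{\wedge N\ell}\bigr)$, and $T_{F^{(\ell)}}^{\wedge N\ell}=c\,\mu_{f_1}^{\otimes\ell}\otimes\cdots\otimes\mu_{f_N}^{\otimes\ell}$ with $c=(N\ell)!>0$.  This is a product of the maximal-entropy measures, which have continuous potentials.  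So the statement reduces to two facts: (a) $\Psi$ is dominant, hence generically finite since dimensions agree; (b) the pullback by a generically finite dominant map of a wedge of positive currents with continuous potentials is nonzero.  Fact (b) is standard: on a Zariski open set where $\Psi$ is a finite étale cover, $\Psi^*$ of the measure is locally the measure itself, and the product measure has no mass on proper subvarieties (continuous potentials).  So $\Psi^*$ of the product measure charges the cover over the complement of the branch locus.

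The key step is dominance of $\Psi$, which is exactly where maximal variation enters.  Suppose $\Psi$ is not dominant.  Then for a generic $\ell$-tuple of points $(x^1,\ldots,x^\ell)$ in the image, the fiber $\{s: x^i\in H_s \text{ for all } i\}$ has positive dimension.  We argue by induction on $\ell$, or directly via incidence.  Consider the map $\kappa:S\to\mathcal C_E$, $s\mapsto H_s$ (after the trivialization above).  Maximal variation means $\rho_{\Phi,\cH}$ is generically finite.  Since the maps are constant, $\rho$ factors through $\kappa$ followed by the quotient by the finite-or-positive-dimensional group $G=\mathrm{Aut}(F)\subset(\PGL_2)^N$ of coordinatewise automorphisms commuting with $F$.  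Because $\deg f_i>1$, each $\mathrm{Aut}(f_i)$ is finite, so $G$ is finite and $\kappa$ is generically finite.  Let $Z=\kappa(S)$, of dimension $\ell$, a family of hypersurfaces.  Imposing that a hypersurface in $Z$ pass through a generic point $x$ of a generic member cuts $Z$ by a divisor, so the incidence has codimension exactly one.  Passage through one point is a nontrivial condition unless all members of $Z$ contain $x$.  That cannot hold for generic $x$, because then all $H_s$ would coincide, contradicting $\dim Z>0$.  Iterating, $\ell$ generic points chosen successively cut $Z$ down to a finite set.  This inductive choice is made inside the image of $\Psi$: each new point is chosen generically on a member of the current subfamily, which is allowed by irreducibility of $\cH$.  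It shows that the generic fiber of $\Psi$ is finite, so $\Psi$ is dominant.

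Finally $\mu_{\Phi,\cH}=\pi^{(\ell)}_*$ of this nonzero positive measure is nonzero.  The main obstacle I expect is making the dominance/incidence argument rigorous when the subfamilies become reducible during the iteration, and justifying that the group of symmetries of $F$ is finite.  The finiteness of the symmetry group is standard for $\deg>1$.  For the iteration, I would handle it by choosing an irreducible component of maximal dimension at each step.
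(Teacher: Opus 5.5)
Your proposal is correct and follows the paper's proof. The paper likewise trivializes $\Phi$ after a generically finite base change, deduces from maximal variation that $s\mapsto[H_s]$ is generically finite, and concludes that the evaluation map $\cH^{(\ell)}\to((\bP^1)^N)^{\ell}$ is generically finite, hence dominant since both sides have dimension $N\ell$; it then pulls back the product of maximal-entropy measures. There are two minor differences: the paper cites \cite[Proposition 2.1]{DMY:UniformDAO} for generic finiteness of the evaluation map, where you give a direct incidence/dimension count (sound, and cleanest as an induction showing the image of $\cH^{(k)}$ has dimension $Nk$ for $k\le\ell$), and your appeal to finiteness of $\mathrm{Aut}(F)$ is unnecessary, since $\rho_{\Phi,\cH}$ factors through the Chow map, so its generic finiteness already forces that of the Chow map.
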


\begin{proof}
After a generically finite base change and coordinatewise conjugacy, we may assume that $\Phi$ is independent of $s\in S$, equal to a single map $\Phi_0$ on $(\bP^1)^N$.  Let
	\[ q_\ell:\cH^{(\ell)}\rightarrow((\bP^1)^N)^\ell \]
be the evaluation map. Since $\Phi$ is constant, maximal variation of $(\Phi,\cH)$ implies that the Chow map $s\mapsto[H_s]$ is generically finite. By \cite[Proposition 2.1]{DMY:UniformDAO}, the evaluation map $q_\ell$ is generically finite. Since its source and target both have dimension $N\ell$, it is also dominant.   

The Green current of the constant family is pulled back from the dynamical space.  Consequently,
\[
\hat T_{\Phi^{(\ell)}}^{\wedge N\ell}\wedge[\cH^{(\ell)}]
=q_\ell^*\left(T_{\Phi_0^{(\ell)}}^{\wedge N\ell}\right),
\]
up to the positive degree of $q_\ell$.  The measure on the right has positive mass, so the current on the left is nonzero.
\end{proof}

\subsection{Rigid prerepelling intersections}   \label{rigid repellers} 
We recall the local criterion that equates positivity of a current with the existence of a dynamical intersection.  Let $S$ be quasiprojective complex algebraic variety over $\C$ of dimension $\ell > 0$.  Suppose $\Phi = (f_1, \ldots, f_N):  S\times(\bP^1)^N \to S \times (\bP^1)^N$ is an algebraic family of maps defined over $\C$, and let $\pi: S \times(\bP^1)^N \to S$ denote the projection.  Suppose that $\cX\subset S \times(\bP^1)^N$ is an irreducible family of subvarieties of codimension $\ell = \dim S$, which is flat over $S$.  Note that $N = \dim \cX$.  

We say that a point $y_0 \in \cX(\C)$ is a {\bf rigid repeller} for the pair $(\Phi, \cX)$ if  
\begin{enumerate}
\item  some iterate $x_0 = \Phi^{n_0}(y_0)$ is a repelling periodic point for $\Phi_{s_0}$, where $s_0 = \pi(y_0)$;  
\item there is a holomorphic section $\eta$ over a neighborhood of $s_0$ in $S$ parameterizing a repelling periodic point of $\Phi_s$, with $\eta(s_0) = x_0$, so that $x_0$ is an isolated point of the intersection of $\Phi^{n_0}(\cal{X})$ with the image of $\eta$.
\end{enumerate} 
Compare \cite[\S4.3]{DM:commonprep} where this notion is defined for families of polarized dynamical systems.  

\begin{proposition} \label{analytic characterization}
Let $S$ be quasiprojective complex algebraic variety over $\C$ of dimension $\ell > 0$.  Suppose $\Phi = (f_1, \ldots, f_N):  S\times(\bP^1)^N \to S \times (\bP^1)^N$ is an algebraic family of maps defined over $\C$.  Suppose that $\cX\subset S \times(\bP^1)^N$ is an irreducible family of subvarieties of codimension $\ell = \dim S$, which is flat over $S$.  Then
	\[ \hat T_\Phi^{\wedge N}\wedge[\cX]\neq0 \]
if and only if $\cX$ contains a rigid repeller for the pair $(\Phi, \cX)$.  Moreover, rigid repellers are dense in the support of this positive measure. If the measure is nonzero, then, for every sufficiently large $n$, the image $\Phi^n(\cX)$ contains a Zariski-dense set of repelling periodic points whose local periodic sections meet $\Phi^n(\cX)$ transversely.
\end{proposition}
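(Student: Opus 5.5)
We argue in both directions using the invariance \eqref{top wedge invariance} and local intersection theory of currents with continuous potentials. Write $\nu := \hat T_\Phi^{\wedge N}\wedge[\cX]$, a positive measure on $\cX$ since $\dim\cX = N$.

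First suppose $\cX$ contains a rigid repeller $y_0$, with $x_0=\Phi^{n_0}(y_0)$ and section $\eta$ near $s_0$. Since $\Phi^{n_0}$ is finite on fibers, \eqref{top wedge invariance} gives $(\Phi^{n_0})_*\nu$ comparable to $\hat T_\Phi^{\wedge N}\wedge[\Phi^{n_0}(\cX)]$ up to a positive constant, so it suffices to show the latter charges every neighborhood of $x_0$. We linearize: in local coordinates near the repelling point $\eta(s)$, after replacing $\Phi$ by the period $p$, the map $\Phi_s^{p}$ is uniformly expanding in each coordinate near $\eta(s)$. Since $x_0$ is an isolated point of $\Phi^{n_0}(\cX)\cap \eta(U)$, the intersection multiplicity there is positive, so the $N$-dimensional variety $\Phi^{n_0}(\cX)$ meets the graphs of nearby constant sections $s\mapsto \eta(s)+c$ (for $c$ small in $\C^N$) in a set of points depending continuously on $c$. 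Pulling back by the fiberwise expansion, $\frac{1}{d^{kp}}(\Phi^{kp})^*$ of the fiber measures $\mu_{f_{1,s}}\otimes\cdots\otimes\mu_{f_{N,s}}$ (which give positive mass to every neighborhood of $\eta(s)$, because repelling points lie in the Julia sets) is unchanged. Then $\nu$ near $x_0$ is computed, by the slicing formula for $\hat T_\Phi^{\wedge N}$, as an integral over $c$ of fiber measures pushed through the intersection correspondence. This gives positive mass. This is the argument of \cite[\S4.3]{DM:commonprep}, adapted to product maps, and I would cite and adapt it.

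Conversely, suppose $\nu\neq0$ and take $y$ in its support. We show rigid repellers accumulate at $y$, which also gives density. We use the equidistribution of repelling periodic points for each $f_{i,s}$, uniformly in $s$ locally, to the measures of maximal entropy. Then $\hat T_\Phi^{\wedge N}$ is the limit of currents of integration along the graphs of local sections $\eta$ of repelling periodic points of $\Phi_s$, weighted by $(d_1\cdots d_N)^{-n}$ (a Briend--Duval type statement in families, after \cite{Dujardin:Favre:critical}). Continuity of potentials lets us pass to the limit in the wedge with $[\cX]$. Nonzero mass near $y$ therefore forces some such graph to meet $\cX$ in a neighborhood of $y$ in an isolated point, because non-isolated intersections contribute nothing in the limit by dimension count. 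Pulling back along $\Phi^{n_0}$ then produces preimages in $\cX$, i.e.\ rigid repellers near $y$.

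The final statement follows from the same equidistribution, applied to $\Phi^n(\cX)$. For large $n$, the intersection of $\Phi^n(\cX)$ with the graphs of repelling sections has total mass tending to that of the nonzero measure. The generic intersections are transverse, since the proper intersections form a Zariski open condition along families of periods. They are also not contained in any proper subvariety $Z\subset\Phi^n(\cX)$, since the measure gives zero mass to $Z$: its potentials are continuous and $\dim Z<N$.

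The main obstacle is the converse direction: justifying equidistribution of graphs of repelling sections as currents on $S\times(\bP^1)^N$, uniformly over parameters where cycles may collide or stop being repelling. I would try first to work locally near a parameter where $\nu$ has support and use holomorphic motions of the Julia sets in each factor only where available. Where they are not, I would fall back on the argument of \cite{DM:commonprep}.
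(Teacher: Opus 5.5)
For the forward direction you ultimately rely on \cite[Proposition 4.8]{DM:commonprep} together with \eqref{top wedge invariance}, which is exactly what the paper does. Your slicing heuristic along the translates $s\mapsto\eta(s)+c$ should not be taken literally, though: $\hat T_\Phi^{\wedge N}$ has no laminar structure along those sections. The cited argument instead uses the expansion at the repelling cycle to blow a small piece of $\Phi^{n_0}(\cX)$ near $x_0$ up to a nearly vertical piece of definite size, whose positive mass is then transported back by invariance.

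The genuine gap is in the converse and in the final assertion. Your plan needs two things: a fixed neighborhood of a point of $\supp\nu$ over which the repelling cycles used in the approximation move holomorphically, and convergence to $\nu$ of the wedge products of the approximating graph currents with $[\cX]$. Neither is available. Unless the factors are $J$-stable near the parameter \cite{Mane:Sad:Sullivan}, cycles of large period collide and change type in every neighborhood. Moreover, $\supp\nu$ can consist entirely of such parameters: for $N=\ell=1$ and $\cX$ the graph of a marked critical point, $\nu$ projects to the bifurcation measure of that critical point. So your fallback to holomorphic motions of Julia sets fails precisely where it is needed. Even granting some equidistribution of normalized periodic-point divisors, their potentials are logarithmically singular. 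Continuity of the potentials of the \emph{limit} does not let you pass to the limit in an $N$-fold wedge product; Bedford--Taylor continuity requires locally uniform convergence of the potentials of the approximants. Finally, ``non-isolated intersections contribute nothing by dimension count'' is precisely the geometric-intersection statement that has to be proved. The product $[\Gamma]\wedge[\cX]$ is not even defined when the intersection is not proper, and nothing in your argument shows that the limit measure is carried by isolated intersection points. Transversality also does not follow from a Zariski-openness remark.

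The missing idea is to replace ``all cycles'' by holomorphically moving hyperbolic sets. Over a small ball $U\subset S$, each $f_i$ has a hyperbolic set with dense repelling periodic points that moves holomorphically over $U$, even at bifurcation parameters. Its uniformly laminar current $L_i$ of fiber mass $1$ has bounded local potentials. On a smaller ball, $L_i=T_i+dd^c u_i$ with $u_i$ bounded, where $T_i$ is the pulled-back Green current of $f_i$. Invariance gives $d_i^{-n}(\Phi^n)^*L_i=T_i+dd^c\bigl(d_i^{-n}u_i\circ\Phi^n\bigr)$ with uniformly converging potentials. Hence the wedge products with $[\cX]$ converge to $\frac{1}{N!}\hat T_\Phi^{\wedge N}\wedge[\cX]$, and the projection formula yields $L_1\wedge\cdots\wedge L_N\wedge[\Phi^n(\cX)]\neq0$ for all large $n$; this is where the iterate $\Phi^n(\cX)$ in the statement comes from. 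Dujardin's theorem \cite[Theorem 3.1]{Dujardin:higherbif} then shows that this measure is geometric, given by intersections of leaves with $\Phi^n(\cX)$, almost all of them transverse. Since the measure charges no proper algebraic subset, you may take such a leaf off any prescribed proper subvariety, and approximating it by leaves through repelling periodic points preserves transversality. Pulling back by $\Phi^n$ gives rigid repellers, and localizing the construction near a point of $\supp\nu$ gives density.
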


\begin{proof}
A proof that the existence of the repeller implies positivity of the measure is given in \cite[Proposition 4.8]{DM:commonprep} for families of polarized dynamical systems; it is itself a minor modification of \cite[Lemma 4.8]{GauthierVigny2025} and \cite[Proposition 3.7]{Berteloot:Bianchi:Dupont}.  While our $\Phi$ need not be polarized, the top wedge power $\hat T_\Phi^{\wedge N}$ is invariant, as in \eqref{top wedge invariance}, so the same proof works.  

For the converse implication and density in the support, we use the construction in \cite[\S6]{DM:UDMM}, following \cite[Theorem 0.1]{Dujardin:higherbif}, \cite[Theorem 1.6 and Proposition 3.8]{Berteloot:Bianchi:Dupont}, and \cite[Theorem 2.2]{Gauthier:abscont}. We explain its coordinatewise form, which also gives the assertion for a fixed iterate. Choose a small parameter ball $U$ over which the top Green measure is nonzero. For each coordinate map $f_i$, choose a moving hyperbolic set with dense repelling periodic points and let $L_i$ be the associated uniformly laminar $(1,1)$-current of fiber mass $1$, pulled back to $U\times(\bP^1)^N$. These currents have bounded local potentials; see \cite[Lemmas 4.1--4.2]{Dujardin:higherbif}. Write $T_i$ for the coordinate Green currents and $d_i=\deg f_i$. On a smaller parameter ball we may write $L_i=T_i+dd^cu_i$ with $u_i$ bounded. Invariance gives
\[
d_i^{-n}(\Phi^n)^*L_i
=T_i+dd^c\bigl(d_i^{-n}u_i\circ\Phi^n\bigr).
\]
The local potentials therefore converge uniformly on compact subsets to those of $T_i$, and hence
\[\left(\bigwedge_{i=1}^N d_i^{-n}(\Phi^n)^*L_i\right)\wedge[\cX]\rightarrow\left(\bigwedge_{i=1}^N T_i\right)\wedge[\cX]=\frac1{N!}\hat T_\Phi^{\wedge N}\wedge[\cX].\]
The projection formula therefore gives
\[L_1\wedge\cdots\wedge L_N\wedge[\Phi^n(\cX)]\ne0\]
for every sufficiently large $n$. For each fixed such $n$, this is a top-degree measure formed from currents with bounded local potentials, so it gives no mass to proper algebraic subsets of $\Phi^n(\cX)$. By \cite[Theorem 3.1]{Dujardin:higherbif}, the intersection is geometric, and almost every contributing section meets $\Phi^n(\cX)$ transversely. We may therefore choose such an intersection outside any prescribed proper algebraic subset. Approximating each coordinate by repelling periodic points preserves transversality and avoids that subset. This proves the last assertion. The same argument localized near a point in the support of the original measure gives the stated density of rigid repellers in $\cX$.
\end{proof}

\bigskip
\section{Adelic line bundles} \label{section: prelims}

In this section we introduce some notation and results from Yuan--Zhang \cite{Yuan:Zhang:quasiprojective} that will be used throughout the paper.  We work in one of the following three settings.
\begin{enumerate}
\item $k=\bZ$ and $K$ is a number field;
\item $k$ is a field of characteristic $0$ and $K$ is the function field of a curve over $k$; or
\item $k=K=\C$, in which case we work locally with line bundles carrying continuous Hermitian metrics.
\end{enumerate}
In the first two cases we use the adelic Picard groups of \cite{Yuan:Zhang:quasiprojective}.  In the complex case the same notation will refer to the corresponding local metrized Picard group, where forgetting the metric gives the ordinary line bundle.  

\subsection{Adelic line bundles and intersection theory}
Let $X$ be an essentially quasiprojective integral scheme over $k$;  see \cite[\S2.3.2]{Yuan:Zhang:quasiprojective}. We write
	\[\Pichat(X/k)\]
for the group of isomorphism classes of adelic line bundles on $X$; see \cite[\S2.5]{Yuan:Zhang:quasiprojective}.  We write $\Pichat(X/k)_{\nef}$ for the nef cone and $\Pichat(X/k)_{\mathrm{int}}$ for the group of integrable classes.  Recall that a strongly nef class is represented by a Cauchy sequence of nef model line bundles, while a nef class $\Lbar \in \Pichat(X/k)$ is one for which there exists a strongly nef $\overline{M} \in \Pichat(X/k)$ such that $m\Lbar + \Mbar$ is strongly nef for all positive integers $m$ \cite[\S2.5.2]{Yuan:Zhang:quasiprojective}.

The construction is functorial.  A morphism $u:X\to Y$ induces
\[u^*:\Pichat(Y/k)\rightarrow\Pichat(X/k),\]
and, when $X$ is an essentially quasiprojective scheme over $k$, there is a natural varying the base map
	\[\Pichat(X/k)\rightarrow\Pichat(X/K);\]
see \cite[\S2.5.5]{Yuan:Zhang:quasiprojective}.  For $\Lbar\in\Pichat(X/k)$, we denote its image by $\widetilde L \in \Pichat(X/K)$ and call it the {\bf geometric part} of $\Lbar$.  In the complex setting, where $X$ is a quasiprojective variety over $\C$, $\widetilde L$ is simply the underlying line bundle.

If $X$ is flat and integral over $k$, Yuan--Zhang define multilinear intersection products of integrable adelic line bundles; see \cite[Proposition 4.1.1]{Yuan:Zhang:quasiprojective}.  We will repeatedly use the projection formula.  If $u:X\to Y$ is dominant and generically finite and the absolute dimensions agree, then
\begin{equation} \label{proj formula}
(u^*\Lbar_1)\cdots(u^*\Lbar_d)
=(\deg u)\,\Lbar_1\cdots\Lbar_d,
\end{equation}
where $d$ is the absolute dimension of quasi-projective models for both $X$ and $Y$; see \cite[Proposition 4.1.2]{Yuan:Zhang:quasiprojective}.  The same formula holds with $\deg u = 0$ when $u$ is not dominant.

For a projective flat morphism $p:X\to S$ of relative dimension $r$, there is also a Deligne pairing
	\[\left\langle\Lbar_0,\ldots,\Lbar_r\right\rangle_{X/S}\in\Pichat(S/k),\]
multilinear and symmetric in the entries, compatible with base change and with the usual projection formulas; see \cite[Theorem 4.1.3 and Lemma 4.6.1]{Yuan:Zhang:quasiprojective}.  Its curvature at a place $v$ is the fiber integral
	\[c_1\left(\left\langle\Lbar_0,\ldots,\Lbar_r\right\rangle_{X/S} \right)_v 
		= p_*\left(c_1(\Lbar_0)_v\wedge\cdots\wedge c_1(\Lbar_r)_v\right).\]
This identity is interpreted in the sense of Chambert-Loir--Ducros \cite{CLD} at a non-archimedean place and in the usual sense of currents over $\C$.

\subsection{Volumes, heights, and equidistribution}
Let $X$ be a quasiprojective variety over $k$.  For a nef adelic line bundle $\Lbar$ on $X$, we have 
\begin{equation} \label{adelic volume formula}
	\volhat(\Lbar)=\Lbar^{\, d_k(X)},
\end{equation}
where $d_k(X)$ is the absolute dimension of $X$ over $k$; see \cite[Theorem 5.2.2]{Yuan:Zhang:quasiprojective}.  The geometric part is big if and only if
	\[ (\widetilde L)^{\dim_K X}>0. \]
At every place $v$ of $K$, this geometric intersection number is the mass of the associated measure (the Chambert-Loir measure at a non-archimedean place; see \cite[Definition 2.4 and Proposition 2.7]{ChambertLoir:equidistribution}),
\begin{equation} \label{geometric volume formula}
	(\widetilde L)^{\dim_K X} = \int_{X_v^{\an}}c_1(\Lbar)_v^{\wedge\dim_K X};
\end{equation}
see \cite[Lemma 5.4.4]{Yuan:Zhang:quasiprojective} and \cite[Theorem 1.1]{Guo} for the case of $k = K = \C$.  Also in the complex setting, the equality \eqref{geometric volume formula} for the dynamical metrics used in this article is proved in \cite[Lemma 1.9]{Gauthier:Taflin:Vigny}.

Assume now that $K$ is either a number field or a one-variable function field over $k$.  The line bundle $\Lbar$ defines a height
	\[ h_{\Lbar}:X(\Kbar)\rightarrow\R. \]
For an integral subvariety $Z\subset X$ with $(\widetilde L|_Z)^{\dim Z}>0$, we use the normalization
	\[ h_{\Lbar}(Z) = \frac{(\Lbar|_Z)^{\dim Z+1}} {(\dim Z+1)(\widetilde L|_Z)^{\dim Z}}. \]
The {\bf essential minimum} is
	\[ e(X,\Lbar) = \sup_{U\subset X}\inf_{x\in U(\Kbar)}h_{\Lbar}(x), \]
where $U$ ranges over nonempty Zariski-open subsets.

\begin{theorem}[Fundamental inequality] \label{fundamental inequality}
Suppose that $K$ is either a number field or a one-variable function field over $k$.  Let $X$ be a quasiprojective variety of dimension $n$ over $K$, and let $\Lbar\in\Pichat(X/k)_{\nef}$.  If the geometric part $\widetilde L$ is big, then
	\[ e(X,\Lbar) \geq h_{\Lbar}(X) \geq \frac{1}{n+1}e(X,\Lbar).  \]
\end{theorem}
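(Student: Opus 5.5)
The plan is to reduce Theorem \ref{fundamental inequality} to the statement of Yuan--Zhang \cite[Theorem 5.3.3]{Yuan:Zhang:quasiprojective} by approximation with model line bundles, treating the two inequalities separately. Throughout, heights of points and subvarieties are additive and continuous in $\Lbar$ in the boundary topology. Intersection numbers of nef adelic line bundles are limits of intersection numbers of nef models. Bigness of $\widetilde L$ is an open condition, since it amounts to $(\widetilde L)^n>0$, and this quantity varies continuously.

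For the lower bound $h_{\Lbar}(X)\geq \frac{1}{n+1}e(X,\Lbar)$, I would use the arithmetic Hilbert--Samuel / Siu-type argument. Fix $t<e(X,\Lbar)$ and consider $\Lbar(-t)$, meaning $\Lbar$ with its metric twisted by the constant $t$ (pulled back from $\Spec K$). Then $\Lbar(-t)$ has essential minimum $>0$. The key step is to show that a nef adelic bundle with big geometric part and positive essential minimum has nonnegative top self-intersection. Given that, expanding
\[ \Lbar(-t)^{n+1}=\Lbar^{n+1}-(n+1)t\,(\widetilde L)^n \]
yields $\Lbar^{n+1}\geq (n+1)t(\widetilde L)^n$, which after the normalization $h_{\Lbar}(X)=\Lbar^{n+1}/((n+1)(\widetilde L)^n)$ gives $h_{\Lbar}(X)\geq t$. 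Letting $t\to e(X,\Lbar)$ would in fact give the stronger bound $h_{\Lbar}(X)\geq e(X,\Lbar)$, which is suspicious, so I expect the correct expansion to produce the factor $\frac1{n+1}$. To get that factor, I would instead use Zhang's successive minima: $h_{\Lbar}(X)\geq \frac1{n+1}\sum_i e_i$ with $e_1=e(X,\Lbar)$ and all $e_i\geq$ the absolute minimum. After twisting so that the absolute minimum is $\geq 0$ (possible on models after shrinking), this gives the stated bound. On projective models this is Zhang's theorem. For the quasiprojective adelic case, I would pass to the limit over a Cauchy sequence of models $\Lbar_j$, using that $e(X,\cdot)$ changes by at most $\epsilon_j$ under an $\epsilon_j$-boundary perturbation.

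For the upper bound $e(X,\Lbar)\geq h_{\Lbar}(X)$, I would use arithmetic volume and the existence of small sections. Twist to $\Lbar(-t)$ with $t<h_{\Lbar}(X)$. Then $\Lbar(-t)$ is nef with
\[ \Lbar(-t)^{n+1}=(n+1)(\widetilde L)^n\,(h_{\Lbar}(X)-t)>0, \]
so by \eqref{adelic volume formula} it has positive volume. It therefore has a nonzero strictly effective section of some tensor power, and every point outside that section's divisor has height $\geq t$. Hence $e(X,\Lbar)\geq t$.

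The main obstacle I expect is making the twist-and-approximate step rigorous in the quasiprojective adelic setting. The intermediate bundles are only nef in the limit sense of \cite[\S2.5.2]{Yuan:Zhang:quasiprojective}, and the volume identity together with the existence of sections must be taken from \cite[Theorem 5.2.2]{Yuan:Zhang:quasiprojective}. I would simply cite \cite[Theorem 5.3.3]{Yuan:Zhang:quasiprojective} for this step.
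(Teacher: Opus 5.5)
The paper gives no argument of its own here: it simply cites \cite[Theorem 5.3.3]{Yuan:Zhang:quasiprojective}, with Zhang and Gubler for the projective cases. So your closing sentence, which cites the same theorem, is in effect the whole proof. The self-contained sketch around it, however, contains a genuine error. In the upper bound you assert that $\Lbar(-t)$ is nef for $0<t<h_{\Lbar}(X)$ and then invoke \eqref{adelic volume formula}. Nef adelic line bundles have nonnegative heights, whereas $h_{\Lbar(-t)}(x)=h_{\Lbar}(x)-t<0$ at every point of $\Lbar$-height below $t$ (for instance every preperiodic point when $\Lbar=\Lbar_\Phi$). So $\Lbar(-t)$ is not nef, and the volume formula says nothing about it. What the argument needs is a Siu-type inequality for a difference of nef bundles, applied to $\Lbar(-t)=\Lbar-\overline{\mathcal O}(t)$:
\[ \volhat\bigl(\Lbar-\overline{\mathcal O}(t)\bigr)\geq\Lbar^{n+1}-(n+1)\,\Lbar^{n}\cdot\overline{\mathcal O}(t)=(n+1)(\widetilde L)^n\bigl(h_{\Lbar}(X)-t\bigr)>0, \]
where the last equality is your own expansion. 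With this in place of the nef volume formula, your effective-section argument does give $e(X,\Lbar)\geq h_{\Lbar}(X)$.

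The same non-nefness is what breaks your first lower-bound attempt. For a non-nef bundle, a positive essential minimum does not force a nonnegative top self-intersection. If it did, you would get $h_{\Lbar}(X)=e(X,\Lbar)$ for every nef $\Lbar$ with big geometric part, which already fails for the Fubini--Study metric on $\mathcal O(1)$ over $\bP^1_{\Q}$. Falling back on Zhang's successive minima is the right idea on a projective model, and there no twist is needed. Nefness already gives $e_i\geq 0$, and a twist shifts $h$ and $e$ by the same constant, so it does not preserve an inequality with the factor $\frac{1}{n+1}$. Your passage to the quasiprojective limit, however, is not valid as stated. If $\Lbar_j$ is $\epsilon_j$-close to $\Lbar$ in the boundary topology, you only get $|h_{\Lbar}(x)-h_{\Lbar_j}(x)|\leq\epsilon_j\,h_{\overline{\mathcal D}_0}(x)$ for the boundary divisor $\overline{\mathcal D}_0$. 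That bound is in general unbounded on $X(\Kbar)$, so $e(X,\cdot)$ need not move by at most $\epsilon_j$. One only has one-sided comparisons such as $e(X,\Lbar)\leq e(X,\Lbar_j+\epsilon_j\overline{\mathcal D}_0)$. The bundles on the right are typically not nef, so Zhang's theorem cannot be applied to them directly. Handling exactly this is the content of the theorem you cite, so the real work remains in that citation.
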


\begin{proof}
This is \cite[Theorem 5.3.3]{Yuan:Zhang:quasiprojective}.  For projective $X$ and number fields $K$, it is due to Zhang \cite{Zhang:adelic}, and if defined over $K = k(B)$ for a curve $B$, the corresponding fundamental inequality in the projective case is due to Gubler \cite{Gubler:Bogomolov}.
\end{proof}

A sequence $\{x_n\}\subset X(\Kbar)$ is {\bf generic} if no subsequence is contained in a proper closed subvariety of $X$.

\begin{theorem}\label{equidistribution}
Let $X$ and $\Lbar$ be as in Theorem \ref{fundamental inequality}.  Suppose that $\{x_n\}$ is generic and
	\[ h_{\Lbar}(x_n)\rightarrow h_{\Lbar}(X) \text{, as } n\to\infty.\]
Then, at every place $v$ of $K$, the Galois orbits of $x_n$ equidistribute on $X_v^{\an}$ toward the normalized measure
	\[ \frac{1}{(\widetilde L)^n} c_1(\Lbar)_v^{\wedge n}. \]
\end{theorem}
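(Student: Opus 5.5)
This is the Yuan--Zhang equidistribution theorem for adelic line bundles on quasiprojective varieties. I would prove it by reducing to the classical projective case via the inverse-limit description of $\Lbar$. Alternatively, since the paper only cites it, the proof amounts to quoting \cite[Theorem 5.4.3]{Yuan:Zhang:quasiprojective}. Still, here is the route I would follow.

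First I fix a place $v$ and a compactly supported continuous test function $\phi$ on $X_v^{\an}$. I then need to show that
\[ \frac{1}{\deg x_n}\sum_{y\in \mathrm{Gal}\cdot x_n}\phi(y) \rightarrow \frac{1}{(\widetilde L)^n}\int\phi\, c_1(\Lbar)_v^{\wedge n}. \]
By density, it suffices to treat $\phi$ that arise as the difference of metrics on the trivial bundle, i.e.\ $\phi = -\log\|1\|_v$ for the metrized trivial bundle $\mathcal O(\epsilon\phi)$ supported at $v$. The smooth or model functions are dense in the compactly supported continuous functions, by Stone--Weierstrass at archimedean places and by Gubler's density of model functions at non-archimedean places.

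The core is Zhang's variational argument. Consider the perturbation $\Lbar(\epsilon)=\Lbar+\mathcal O(\epsilon\phi)$. Its geometric part is still $\widetilde L$, hence still big, and it is nef up to a small error that is controlled in the volume. Then:
\begin{itemize}
\item The height of the points is linear in the perturbation: $h_{\Lbar(\epsilon)}(x_n)=h_{\Lbar}(x_n)+\epsilon\cdot(\text{Galois average of }\phi)$.
\item For the variety, I expand the top arithmetic self-intersection. Using the volume in place of the self-intersection handles the non-nef perturbation via the differentiability of $\volhat$ (Chen, and Yuan's arithmetic Siu inequality). This gives $h_{\Lbar(\epsilon)}(X)=h_{\Lbar}(X)+\frac{\epsilon}{(\widetilde L)^n}\int\phi\, c_1(\Lbar)_v^{n}+O(\epsilon^2)$.
\end{itemize}

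Genericity of $x_n$ together with the fundamental inequality (Theorem \ref{fundamental inequality}) give $\liminf h_{\Lbar(\epsilon)}(x_n)\ge e(X,\Lbar(\epsilon))\ge h_{\Lbar(\epsilon)}(X)$. Note that this uses the first inequality, via Minkowski/Hilbert--Samuel applied to the volume, so it holds for non-nef bundles with big geometric part. Combining this with the hypothesis $h_{\Lbar}(x_n)\to h_{\Lbar}(X)$, dividing by $\epsilon$, and letting $\epsilon\to0^\pm$ yields both inequalities for the limit of the Galois averages.

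To transfer from projective models to the quasiprojective setting, I write $\Lbar$ as a Cauchy limit of model bundles $\Lbar_j$ on projective compactifications, with boundary divisors going to zero. Each quantity above (point heights, variety heights, measures) converges uniformly in $j$. The main obstacle is exactly here: controlling the volume perturbation and the measure uniformly near the boundary of the compactification, where $\phi$ has compact support but the models do not. I would handle it by using the boundary topology of Yuan--Zhang and the continuity of $\volhat$ in that topology \cite[\S5]{Yuan:Zhang:quasiprojective}.
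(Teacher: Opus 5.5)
Your proposal is correct and takes the same route as the paper, which simply cites \cite[Theorem 5.4.3]{Yuan:Zhang:quasiprojective}. Your sketch reproduces the standard variational argument behind that theorem: perturb by $\mathcal O(\epsilon\phi)$, bound the volume of the perturbed bundle from below, and apply the first half of the fundamental inequality to that non-nef bundle.

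One refinement: base the one-sided bound
$\volhat(\Lbar+\mathcal O(\epsilon\phi))\geq \Lbar^{n+1}+(n+1)\epsilon\,\Lbar^{n}\cdot\mathcal O(\phi)-O(\epsilon^2)$
on Yuan's arithmetic Siu inequality rather than on Chen's differentiability. Chen's result is proved at arithmetically big bundles, whereas the paper applies this theorem exactly when $\volhat(\Lbar)=0$.
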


\begin{proof}
This is \cite[Theorem 5.4.3]{Yuan:Zhang:quasiprojective}.
\end{proof}

\subsection{Projections and non-archimedean slicing} \label{slicing}
We state a non-archimedean Fubini-type statement that we need for our proofs in Section \ref{ordinary hypersurfaces}.  Let
	\[ p:A\rightarrow B \]
be flat and projective of relative dimension $m$, with $A$ and $B$ smooth and quasiprojective over $K$.  Let
	\[ \Lbar_1,\ldots,\Lbar_m\in\Pichat(A/k)_{\nef} \]
be strongly nef, and let
	\[ \Nbar_1,\ldots,\Nbar_b\in\Pichat(B/k)_{\nef}, \qquad b=\dim B, \]
be strongly nef.  At a non-archimedean place $v$, put
	\[ \mu_{B,v}=c_1(\Nbar_1)_v\wedge\cdots\wedge c_1(\Nbar_b)_v \]
and
	\[ \mu_{A,v}=c_1(\Lbar_1)_v\wedge\cdots\wedge c_1(\Lbar_m)_v\wedge p^*\mu_{B,v}. \]
For $y\in B_v^{\an}$, let $A_y$ be the analytic fiber over the completed residue field $\cH(y)$ and set
	\[ \mu_{y,v}=c_1(\Lbar_1|_{A_y})_v\wedge\cdots\wedge c_1(\Lbar_m|_{A_y})_v. \]

\begin{lemma} \label{fiber continuity}
For every continuous compactly supported function $\varphi$ on $A_v^{\an}$, the function
	\[ y\mapsto\int_{A_y}\varphi\,d\mu_{y,v} \]
is continuous on $B_v^{\an}$.
\end{lemma}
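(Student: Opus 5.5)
The plan is to reduce to the case of model metrics and then pass to the limit using uniform convergence. Since each $\Lbar_i$ is strongly nef, it is represented by a Cauchy sequence of nef model adelic line bundles $\Lbar_{i,j}$, realized on projective models over $B$. At the place $v$, the corresponding metrics on $L_i$ converge uniformly on compact subsets of $A_v^{\an}$ (in the boundary topology of Yuan--Zhang, this is uniform convergence of $\log$ of the ratio of metrics against a boundary divisor, which is locally bounded on $A_v^{\an}$).

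\textbf{Model case.} For model metrics, $c_1(\Lbar_{1,j}|_{A_y})_v\wedge\cdots\wedge c_1(\Lbar_{m,j}|_{A_y})_v$ is a finite combination of Dirac masses at divisorial points of the reduction of $A_y$, weighted by intersection numbers on the special fiber. The continuity of
	\[ y\mapsto \int_{A_y}\varphi\,d\mu_{y,v}^{(j)} \]
is then the statement that the fiber integral $p_*\bigl(\varphi\, c_1(\Lbar_{1,j})_v\wedge\cdots\wedge c_1(\Lbar_{m,j})_v\bigr)$ is a continuous function on $B_v^{\an}$. I would deduce this from Chambert-Loir--Ducros \cite{CLD}. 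There, for a flat proper morphism, the push-forward of a smooth form times a product of first Chern forms of smooth (or model) metrics is a continuous generalized function, as in their fiber-integration results. Flatness of $p$ is what keeps the fiber degrees constant. Equivalently, I would approximate $\varphi$ uniformly by smooth (piecewise-linear) functions in the sense of \cite{CLD}, for which continuity of fiber integrals is part of their formalism. The total mass $\int_{A_y}d\mu^{(j)}_{y,v}=L_{1}\cdots L_m\cdot[A_y]$ is constant in $y$ by flatness. This gives a uniform mass bound, which controls the error from approximating $\varphi$.

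\textbf{Passage to the limit.} Using the Chambert-Loir--Ducros / Chambert-Loir continuity estimate (the telescoping argument with $dd^c$ of the metric difference), I would show
	\[ \Bigl|\int_{A_y}\varphi\,d\mu_{y,v}-\int_{A_y}\varphi\,d\mu^{(j)}_{y,v}\Bigr| \le C\,\|\varphi\|\cdot \sup_{\supp\varphi}\bigl|\log\|\cdot\|_{i,j}/\|\cdot\|_{i}\bigr| \]
uniformly in $y$ ranging over a neighborhood of $p(\supp\varphi)$. First I would establish this for smooth $\varphi$, via integration by parts on each fiber $A_y$, using the uniform mass bound; then I would extend to continuous $\varphi$ by density. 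This gives uniform convergence of continuous functions, so the limit is continuous.

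\textbf{Main obstacle.} I expect the hardest step to be the uniformity in $y$ of the integration-by-parts estimate, since $\varphi$ is only compactly supported while the fibers $A_y$ are projective. Here I would use properness of $p$: the set $p(\supp\varphi)$ is compact, and a cutoff function pulled back from $B_v^{\an}$ lets me work on a compact region where the metric convergence is uniform.
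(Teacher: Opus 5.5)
Your outline is viable, but it is organized differently from the paper's proof.

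\emph{The paper's route.} The paper never needs a ``model case'' for arbitrary continuous $\varphi$. It approximates $\varphi$ uniformly on a compact set by model functions $\phi_j=-\log\|1\|_j$. For each such $\phi_j$, it identifies $y\mapsto\int_{A_y}\phi_j\,d\mu_{y,v}$ with the Green function of the adelic Deligne pairing $\langle\Mbar_{\phi_j},\Lbar_1,\ldots,\Lbar_m\rangle$ on $B$ \cite[Theorems 4.1.3 and 4.6.2]{Yuan:Zhang:quasiprojective}. Continuity of that Green function, and locally uniform convergence from the model approximations of the metrics, then follow from the boundary-topology continuity of the Deligne pairing. The proof ends with the same step you use: uniform approximation, with fiber masses constant by flatness.

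\emph{Your route.} You fix the model level, need continuity there for every continuous $\varphi$, and then pass to the limiting metrics by a Chambert-Loir--Thuillier telescoping estimate. This makes explicit the mechanism hidden in Yuan--Zhang's convergence statement, but it is longer. Your model case is also only loosely justified, since the atoms of $\mu^{(j)}_{y,v}$ move with $y$. Its cleanest proof is again that, for a model test function, the fiber integral is the Green function of a Deligne pairing of (flattened) model line bundles. Once that input is available for the limiting metrics, as in the paper, your separate metric-limit step becomes unnecessary.

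\emph{Two corrections to your limit step.}

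First, the displayed inequality cannot hold with $\|\varphi\|$ the sup norm. It would force $\mu^{(j)}_{y,v}\to\mu_{y,v}$ in total variation, which fails in general: model measures are atomic, and the limit typically is not. Integration by parts only gives $\bigl|\int\psi\,d(\mu_{y,v}-\mu^{(j)}_{y,v})\bigr|\le C_\psi\,\max_i\sup_{p^{-1}(K)}\bigl|\log(\|\cdot\|_{i,j}/\|\cdot\|_i)\bigr|$, where $K$ is a compact neighborhood of $p(\supp\varphi)$ and $C_\psi$ depends on $dd^c\psi$. So ``density'' must be applied to the conclusion, via a three-$\epsilon$ argument using the constant fiber mass, not to the estimate itself.

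Second, to make $C_\psi$ uniform in $y$, take $\psi$ to be a model function and write the corresponding model of $\mathcal O$ as a difference of two nef models. Then the masses of $dd^c\psi\wedge(\cdots)$ on $A_y$ are bounded by generic-fiber intersection numbers, which are independent of $y$ by flatness. With merely smooth $\psi$, this uniformity is not automatic.
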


\begin{proof}
This statement is essentially contained in \cite[Theorem 4.1.3]{Yuan:Zhang:quasiprojective}, but we provide details here.

First choose quasiprojective models over $k$ for $A$ and $B$ and Cauchy sequences of projective compactifications of $A$ and adelic line bundles $(A_n, (\Lbar_{i,n})_n, \ell_{i,n})$ in $\Pichat(A/k)$, for each $i = 1, \ldots, m$, in the sense of \cite{Yuan:Zhang:quasiprojective}.  We may choose the $A_n$ so that $p_n$ extends to a morphism $p: A_n \to B_n$ for some projective compactification of $B$.  

Restricting to any compact subset $E$ of $A_v^\an$, we may approximate the given continuous function $\phi$ uniformly by model functions $\phi_j$ defined on the compactification $A_1$; see \cite[Theorem 7.12]{Gubler:local} and \cite[Lemma 3.5]{Yuan:equidistribution}.  The function $\phi_j$ induces a function on each $A_n$ by pullback that we continue to write as $\phi_j$, and we may write 
	\[ \phi_j = -\log\|1\|_j  \]
for a model metric on the trivial line bundle $\Mbar_{j,n}$ on $A_n$.  Moreover, we may assume that $\Mbar_{j,n}$ converges as $n\to\infty$ in the boundary topology of \cite{Yuan:Zhang:quasiprojective} to an element $\Mbar_{\phi_j} \in \Pichat(A/k)$ which is integrable.  

Consider the functions 
	\[ F_{j,n}(b) = \int_{A_b} \phi_j  \, c_1(\Lbar_{1,n}|_{A_b})_v \wedge \cdots \wedge c_1(\Lbar_{m,n}|_{A_b})_v \]
on $B_v^{\an}$.  As described in \cite[Chapter 4]{Yuan:Zhang:quasiprojective}, this is the Green's function for the metric on the Deligne pairing 
	\[ \< \Mbar_{j,n}, \Lbar_{1,n}, \ldots, \Lbar_{m,n}\> \]
on $B_n$; these metrics are continuous over $B_v^{\an}$.  

From \cite[Theorem 4.1.3]{Yuan:Zhang:quasiprojective} and its proof, the sequence $\< \Mbar_{j,n}, \Lbar_{1,n}, \ldots, \Lbar_{m,n}\>$ converges as $n\to\infty$ in the boundary topology to the element 
	\[\< \Mbar_{\phi_j}, \Lbar_{1}, \ldots, \Lbar_{m}\> \in \Pichat(B/k). \]
In particular, its Green's function is given by 
	\[F_j(b) :=  \int_{A_b} \phi_j \, \mu_{b,v}  \]
as in \cite[Theorem 4.6.2]{Yuan:Zhang:quasiprojective}.  The function $F_j$ is continuous, and the convergence of $F_{j,n}$ to $F_j$ is locally uniform, as in \cite[Lemma 3.6.3]{Yuan:Zhang:quasiprojective}.

For each fixed $b \in B_v^{\an}$, the fiber $A_b$ is projective, and the measures $c_1(\Lbar_{1,n}|_{A_b})_v \wedge \cdots \wedge c_1(\Lbar_{m,n}|_{A_b})_v$ converge weakly to $\mu_{b,v}$.  Moreover, the total masses converge to the integral of $\mu_{b,v}$, by the computation of volume in the projective case as the limit of intersection numbers; see \cite[Definition 2.4 and Proposition 2.7]{ChambertLoir:equidistribution} and \cite[Proposition 3.12]{Gubler2007Tropical}.  Flatness of $p$ over $B$ implies that these masses are independent of the base point $b$.  It then follows from the uniform convergence of $\phi_j$ to $\phi$ (as $j\to\infty$) on the compact subset $E$ of $A_v^\an$ that the functions $F_j$ converge uniformly to $F$ on $E$.  We conclude that $F$ is continuous.  
\end{proof}

\begin{proposition} \label{Fubini}
For every continuous compactly supported function $\varphi$ on $A_v^{\an}$,
\[
\int_{A_v^{\an}}\varphi\,d\mu_{A,v}
=
\int_{B_v^{\an}}
\left(\int_{A_y}\varphi\,d\mu_{y,v}\right)d\mu_{B,v}(y).
\]
\end{proposition}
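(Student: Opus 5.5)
We will prove the identity first for model functions, where both sides are intersection numbers of adelic line bundles, and then pass to general continuous compactly supported $\varphi$ by uniform approximation, using Lemma \ref{fiber continuity} to make sense of the right-hand side.

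\emph{Reduction to model functions.} Both sides are linear in $\varphi$. Both are also continuous with respect to uniform convergence on a fixed compact set $E\supset\supp\varphi$: for the left side because $\mu_{A,v}$ is a Radon measure, and for the right side because the fiber masses of $\mu_{y,v}$ are constant in $y$ by flatness. Moreover, by Lemma \ref{fiber continuity}, $y\mapsto\int_{A_y}\varphi\,d\mu_{y,v}$ is continuous, so it is integrable against $\mu_{B,v}$. As in the proof of Lemma \ref{fiber continuity}, we approximate $\varphi$ uniformly on $E$ by model functions $\phi_j=-\log\|1\|_j$, coming from model metrics on the trivial bundle that converge to integrable classes $\Mbar_{\phi_j}\in\Pichat(A/k)$. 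This reduces the proposition to $\varphi=\phi_j$. Some care with compact support is needed: we can first multiply by a cutoff, or note that both measures have finite mass and approximate on an exhaustion.

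\emph{The identity for model functions.} We first work at each finite level $n$ of the Cauchy sequences, with projective models $p_n:A_n\to B_n$ and model line bundles. There the identity is the classical projection formula for Chambert-Loir measures. On one side,
\[\int\phi_j\, c_1(\Lbar_{1,n})\cdots c_1(\Lbar_{m,n})\, p_n^*\bigl(c_1(\Nbar_{1,n})\cdots c_1(\Nbar_{b,n})\bigr)\]
is the local contribution at $v$ of the intersection number $\Mbar_{j,n}\cdot\Lbar_{1,n}\cdots\Lbar_{m,n}\cdot p_n^*\Nbar_{1,n}\cdots p_n^*\Nbar_{b,n}$. On the other side, the Deligne pairing projection formula gives
\[\langle \Mbar_{j,n},\Lbar_{1,n},\ldots,\Lbar_{m,n}\rangle\cdot\Nbar_{1,n}\cdots\Nbar_{b,n},\]
whose metric has Green's function $F_{j,n}$. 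Concretely, for model metrics on trivial bundles, the intersection formula $\int \phi\, c_1\cdots c_1$ together with Chambert-Loir--Ducros \cite{CLD} pushforward $p_*(\phi\, c_1(\Lbar_1)\wedge\cdots\wedge c_1(\Lbar_m))=F$ as forms/currents on $B_v^{\an}$, and $\int_A \phi\wedge c_1\cdots\wedge p^*\omega=\int_B p_*(\cdots)\wedge\omega$, give equality at level $n$.

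\emph{Passage to the limit.} We let $n\to\infty$. The left side converges by the continuity of Chambert-Loir measures in the boundary topology, \cite[Theorem 5.4.3, Lemma 5.4.4]{Yuan:Zhang:quasiprojective}. The inner functions $F_{j,n}$ converge locally uniformly to $F_j$, as established in the proof of Lemma \ref{fiber continuity}, and the measures $c_1(\Nbar_{1,n})\cdots c_1(\Nbar_{b,n})$ converge weakly to $\mu_{B,v}$ with convergent masses.

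I expect the main obstacle to be exactly this limit in the quasiprojective setting. Weak convergence of the base measures has to be paired with the only locally uniform convergence of $F_{j,n}$ on the noncompact space $B_v^{\an}$, so one must check that no mass escapes to the boundary. I would handle this with the mass identity $\int d\mu_{B,v}=\Nbar_1\cdots\Nbar_b$ (geometric parts), which equals the limit of the masses at level $n$, together with the uniform bound $|F_{j,n}|\le \sup|\phi_j|\cdot(\text{fiber mass})$. Tightness then gives convergence of $\int F_{j,n}\,d\mu_{B,v,n}$ to $\int F_j\,d\mu_{B,v}$.
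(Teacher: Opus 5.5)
Your outline works, but it takes a different route from the paper.

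\textbf{The paper's route.} The paper never reduces to model test functions. It keeps $\varphi$ compactly supported and uses independent approximation indices: $r$ for the bundles on $A$ and $n$ for those on $B$. It then invokes Chambert-Loir--Ducros, Proposition 18.6.1(b): for formal metrics on the base, $\bigwedge_j c_1(p^*\Nbar_{j,n})=\sum_y\lambda_{y,n}[A_y]$ is a finite weighted sum of fiber currents over the atoms of $\mu_{B,n}$. The finite-level identity is then immediate for any metrics on $A$. Letting $r\to\infty$ involves only finitely many fibers. Letting $n\to\infty$ needs only weak convergence of $\mu_{B,n}$ against $F(y)=\int_{A_y}\varphi\,d\mu_{y,v}$. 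That function is continuous by Lemma \ref{fiber continuity} and supported in the compact set $p(\supp\varphi)$, so no mass or tightness estimates are needed.

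\textbf{Your route and its costs.} You replace that structure result by the Deligne-pairing projection formula for intersection numbers with a vertical divisor. This fits naturally with the proof of Lemma \ref{fiber continuity}. The price is extra bookkeeping that you should make explicit.
\begin{itemize}
\item Model functions are not compactly supported in $A_v^{\an}$, so approximating only uniformly on $E$ does not control $\int\phi_j\,d\mu_{A,v}$. Multiplying by a cutoff destroys the model property. Instead, extend $\varphi$ by zero to the compactification $A_{1,v}^{\an}$ and approximate uniformly there (Gubler's density theorem). Finite total masses and constant fiber masses then give the reduction.
\item The left-hand limit in $n$ needs the same tightness argument you give on the base, namely convergence of total masses to the geometric intersection numbers (Yuan--Zhang, Lemma 5.4.4). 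Theorem 5.4.3 there is the equidistribution theorem and is not relevant.
\item The finite-level identity needs the compactified map $A_n\to B_n$ to be flat, or Yuan--Zhang's construction of the pairing. This is what makes the model Deligne pairing a model metric with Green function $F_{j,n}$, so that its intersection with the $\Nbar_{j,n}$ equals $\int F_{j,n}\,d\mu_{B,n}$.
\end{itemize}
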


\begin{proof}
As in the proof of Lemma \ref{fiber continuity}, we first choose quasiprojective models over $k$ for $A$ and $B$ and Cauchy sequences of nef model line bundles $\Lbar_{i,r}$ on projective compactifications $A_r$ of $A$, and $\Nbar_{j,n}$ on projective compactifications $B_n$ of $B$, representing $\Lbar_i$ and $\Nbar_j$, respectively, in the sense of \cite{Yuan:Zhang:quasiprojective}.  We use independent indices $r$ and $n$ for the approximations on $A$ and $B$.  After passing to common refinements, we may extend $p$ to the projective compactifications used for each pair $(r,n)$.  Each approximation comes equipped with a formal metric (in the sense of \cite[\S17.7]{CLD}) at the place $v$.  We work with the restrictions of these metrics to $A_v^{\an}$ and $B_v^{\an}$, and put
	\[ \mu_{B,n}=c_1(\Nbar_{1,n})\wedge\cdots\wedge c_1(\Nbar_{b,n}) =\sum_y\lambda_{y,n}\delta_y, \]
where the sum is taken over the finite support of this measure in $B_v^{\an}$.  More precisely, as the approximating metrized line bundles are nef, the curvature currents are positive and their products are well defined.  In \cite[\S18.4]{CLD}, Chambert-Loir and Ducros identify these products with Chambert-Loir measures of \cite{ChambertLoir:equidistribution}, with computations given in \cite[\S18.5]{CLD}.  For these formal metrics, \cite[Proposition 18.6.1(b)]{CLD} states that the current
	\[\bigwedge_{j=1}^{b}c_1(p^*\Nbar_{j,n})=\sum_y\lambda_{y,n}[A_y]\]
on $A_v^{\an}$ is a weighted sum of currents of integration over fibers of $p$.  We then wedge with the currents $c_1(\Lbar_{1,r})\wedge\cdots\wedge c_1(\Lbar_{m,r})$ to form a measure, giving
\begin{align*}
c_1(\Lbar_{1,r})\wedge\cdots\wedge c_1(\Lbar_{m,r})\wedge
\bigwedge_{j=1}^{b}c_1(p^*\Nbar_{j,n}) = \sum_y\lambda_{y,n}\,
c_1(\Lbar_{1,r}|_{A_y})\wedge\cdots\wedge c_1(\Lbar_{m,r}|{A_y})
\end{align*}
for each pair $(r,n)$; see \cite[Proposition 18.3.2]{CLD} with $j$ the inclusion morphism of each fiber.

Now we integrate against $\phi$ and first keep $n$ fixed while letting $r\to\infty$.  As observed in \cite[\S3.6.7]{Yuan:Zhang:quasiprojective}, the metric of a strongly nef metrized line bundle is locally psh-approachable, and the approximating metrics converge locally uniformly.  Thus the corresponding products of curvature currents converge weakly; see also \cite[Proposition 8.7.2]{CLD}.  On the right-hand side, the base measure is fixed and has finite support, so we may pass to the limit in each of the finitely many fiber integrals.  Writing
	\[F(y)=\int_{A_y}\phi\,d\mu_{y,v},\]
we obtain, for each fixed $n$,
	\[\int_{A_v^{\an}}\phi\,c_1(\Lbar_1)\wedge\cdots\wedge c_1(\Lbar_m)
		\wedge\bigwedge_{j=1}^{b}c_1(p^*\Nbar_{j,n})=\sum_y\lambda_{y,n}F(y).\]
Finally, we let $n\to\infty$.  The same weak convergence gives
\[\int_{A_v^{\an}}\phi\,c_1(\Lbar_1)\wedge\cdots\wedge c_1(\Lbar_m)\wedge\bigwedge_{j=1}^{b}c_1(p^*\Nbar_{j,n})\rightarrow\int_{A_v^{\an}}\phi\,d\mu_{A,v}.\]
On the other hand, $F$ is continuous by Lemma \ref{fiber continuity}, and its support is contained in the compact set $p(\operatorname{supp}\phi)$.  Since $\mu_{B,n}$ converges weakly to $\mu_{B,v}$, we also have
\[\sum_y\lambda_{y,n}F(y)=\int_{B_v^{\an}}F\,d\mu_{B,n}\rightarrow\int_{B_v^{\an}}F\,d\mu_{B,v},\]
completing the proof.
\end{proof}

\begin{proposition} \label{measure equality}
Let $\Mbar_1,\ldots,\Mbar_m\in\Pichat(A/k)_{\nef}$ be strongly nef.  If
\[
c_1(\Mbar_1)_v\wedge\cdots\wedge c_1(\Mbar_m)_v\wedge p^*\mu_{B,v}
=
\mu_{A,v},
\]
then
\[
c_1(\Mbar_1|_{A_y})_v\wedge\cdots\wedge c_1(\Mbar_m|_{A_y})_v
=
\mu_{y,v}
\]
for every $y\in\supp\mu_{B,v}$.
\end{proposition}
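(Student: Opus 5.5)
Both sides of the hypothesis have the same Fubini decomposition, and a fiberwise comparison on the support of $\mu_{B,v}$ should follow from continuity of the fiber integrals. First apply Proposition \ref{Fubini} twice: once to the tuple $\Lbar_1,\ldots,\Lbar_m$, and once to the strongly nef tuple $\Mbar_1,\ldots,\Mbar_m$ with the same base bundles $\Nbar_j$. Write $\nu_{y,v}=c_1(\Mbar_1|_{A_y})_v\wedge\cdots\wedge c_1(\Mbar_m|_{A_y})_v$. For every continuous compactly supported $\varphi$ on $A_v^{\an}$, the hypothesis then gives
\[
\int_{B_v^{\an}} G_\varphi(y)\,d\mu_{B,v}(y)=0,\qquad G_\varphi(y):=\int_{A_y}\varphi\,d\nu_{y,v}-\int_{A_y}\varphi\,d\mu_{y,v}.
\]
By Lemma \ref{fiber continuity}, which applies equally to the $\Mbar_i$, the function $G_\varphi$ is continuous on $B_v^{\an}$.

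Next, localize on the base. Given $\varphi$ and a continuous compactly supported $\psi\ge0$ on $B_v^{\an}$, replace $\varphi$ by $\varphi\cdot(\psi\circ p)$. This is still continuous with compact support, since $p$ is proper, being projective. Because $\psi\circ p$ is constant on each fiber $A_y$, we get $G_{\varphi\cdot(\psi\circ p)}=\psi\,G_\varphi$. Hence $\int\psi\,G_\varphi\,d\mu_{B,v}=0$ for all such $\psi$, so $G_\varphi\,\mu_{B,v}=0$ as a signed Radon measure. Since $G_\varphi$ is continuous, it vanishes identically on $\supp\mu_{B,v}$. If $G_\varphi(y_0)\neq0$ at a support point $y_0$, then $G_\varphi$ has constant sign on a neighborhood of $y_0$ of positive $\mu_{B,v}$-mass, and a bump $\psi$ there gives a contradiction.

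Finally, fix $y\in\supp\mu_{B,v}$. We have $\int_{A_y}\varphi\,d\nu_{y,v}=\int_{A_y}\varphi\,d\mu_{y,v}$ for every continuous compactly supported $\varphi$ on $A_v^{\an}$. To conclude that the two measures on the fiber agree, we must check that restrictions of such $\varphi$ to the fiber are enough to test against. The main obstacle is the identification of the fiber: $A_y$ is the analytic fiber over the completed residue field $\cH(y)$, and it maps to the fiber $p^{-1}(y)\subset A_v^{\an}$, which is compact. I would use the standard fact that the analytic fiber of $p^{\an}$ over $y$ is identified with $(A\times_B \Spec\cH(y))^{\an}$. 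Under this identification, the measures $\mu_{y,v}$ and $\nu_{y,v}$ are exactly the fiber measures implicitly used in Lemma \ref{fiber continuity} and Proposition \ref{Fubini}, where they are integrated against restrictions of $\varphi$. Continuous functions on the compact Hausdorff fiber extend, by Tietze, to compactly supported continuous functions on $A_v^{\an}$, which is locally compact Hausdorff. Testing against all of them therefore determines a Radon measure on the fiber. This gives $\nu_{y,v}=\mu_{y,v}$ and proves the proposition.
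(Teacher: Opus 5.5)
Your proposal is correct and follows essentially the same route as the paper: both arguments combine Proposition \ref{Fubini} (applied to both tuples), continuity of fiber integrals from Lemma \ref{fiber continuity}, localization on the base by a bump function composed with $p$, and extension of a continuous test function from the compact fiber to $A_v^{\an}$. The paper argues by contradiction, starting from a function that separates the two measures on a single fiber $A_{b_0}$. You instead first show $G_\varphi\equiv 0$ on $\supp\mu_{B,v}$ for every global test function and then apply Tietze, which also makes explicit the identification of the fiber over $\mathcal{H}(y)$ with $p^{-1}(y)$ that the paper uses tacitly.
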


\begin{proof}
Suppose there is some $b_0 \in \supp \mu_{B,v}$ at which the measures differ on $A_{b_0}$.  Then there exists a continuous function $\phi_{b_0}$ on $A_{b_0}$ so that 
	\[ \int_{A_{b_0}} \phi_{b_0} \, c_1(\Mbar_1)_v \cdots c_1(\Mbar_m)_v > \int_{A_{b_0}} \phi_{b_0} \, c_1(\Lbar_1|_{A_{b_0}})_v\wedge \cdots \wedge c_1(\Lbar_m|_{A_{b_0}})_v. \]
Let $\phi$ be any continuous extension of $\phi_{b_0}$ to $A_v^\an$.  By Lemma \ref{fiber continuity}, the function 
	\[ D(b) = \int_{A_b} \phi \, c_1(\Mbar_1)_v \cdots c_1(\Mbar_m)_v - \int_{A_b} \phi \, c_1(\Lbar_1|_{A_b})_v\wedge \cdots \wedge c_1(\Lbar_m|_{A_b})_v \]
is continuous and positive in a neighborhood of $b_0$.  Replacing $\phi$ with the product $\phi \cdot (h\circ p)$ for a continuous bump function $h$ on $B_v^\an$ supported near $b_0$, we may assume that $D\geq 0$ on all of $B_v^{\an}$, with $D>0$ near $b_0$, so that 
	\[ \int_{B_v^\an} D \, \mu_{B,v} > 0. \]   
But 
\[ \int_{B_v^\an} D \, \mu_{B,v} = \int_{A_v^\an} \phi \, c_1(\Mbar_1)_v \cdots c_1(\Mbar_m)_v \wedge p^*\mu_{B,v} - \int_{A_v^\an} \phi \, c_1(\Lbar_1)_v \cdots c_1(\Lbar_m)_v \wedge p^*\mu_{B,v} \]
by Proposition \ref{Fubini}, which contradicts the given equality of the measures on $A_v^{\an}$.
\end{proof}

\subsection{A gluing lemma}
The following consequence of the projection formula will be used at every stage of the induction argument in the proof of Theorem \ref{hypersurfaces}.  

\begin{proposition} \label{gluing}\label{prop:gluing positivity}
Let $p:A\to B$ be flat and projective of relative dimension $m$.  Let $\Lbar_A\in\Pichat(A/k)_{\nef}$ and $\Lbar_B\in\Pichat(B/k)_{\nef}$.  Assume that $\Lbar_B$ is big and that
	\[ a=(\widetilde L_A|_{A_{\bar\eta}})^m>0 \]
on the geometric generic fiber of $p$.  If $d=d_k(B)$ is the dimension over $k$, then
	\[ \Lbar_A^m\cdot(p^*\Lbar_B)^d =a\,\Lbar_B^d>0.  \]
In particular, $\Lbar_A+p^*\Lbar_B$ is big on $A$.
\end{proposition}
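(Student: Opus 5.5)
The plan is to reduce the intersection number $\Lbar_A^m\cdot(p^*\Lbar_B)^d$ to a computation on $B$ via the Deligne pairing, and then to read off bigness from the volume formula. Note first that $d_k(A)=d+m$, so the product $\Lbar_A^m\cdot(p^*\Lbar_B)^d$ is a top-degree intersection number on $A$. By the projection formula for the Deligne pairing \cite[Theorem 4.1.3]{Yuan:Zhang:quasiprojective}, I would write
\[
\Lbar_A^m\cdot(p^*\Lbar_B)^d
=\bigl\langle \Lbar_A,\ldots,\Lbar_A,p^*\Lbar_B\bigr\rangle_{A/B}\cdot\Lbar_B^{\,d-1},
\]
with $m$ copies of $\Lbar_A$. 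Then I would use the standard identity
\[
\bigl\langle \Lbar_A,\ldots,\Lbar_A,p^*\Lbar_B\bigr\rangle_{A/B}=a\,\Lbar_B
\]
in $\Pichat(B/k)$, where $a=\deg(\widetilde L_A^m|_{A_\eta})$ is the generic fiber degree. This holds on models by the classical property of the Deligne pairing when one entry is pulled back from the base, and it passes to limits in the boundary topology. Iterating gives the equality $\Lbar_A^m\cdot(p^*\Lbar_B)^d=a\,\Lbar_B^d$.

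If I want to avoid the Deligne pairing, I could instead approximate $\Lbar_A$ and $\Lbar_B$ by Cauchy sequences of nef model bundles on compatible projective compactifications $A_n\to B_n$, as in the proof of Proposition~\ref{Fubini}. On projective models the identity is the classical projection formula: $(p^*N)^d$ is represented by $\deg(N^d)$ times a fiber class, and intersecting with $L^m$ gives the fiber degree. Flatness of $p$ (over $B$) makes this fiber degree equal to $a$. I would then pass to the limit using continuity of intersection numbers in the boundary topology \cite[Proposition 4.1.1]{Yuan:Zhang:quasiprojective}.

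For positivity, $\Lbar_B$ is nef and big, so $\Lbar_B^d=\volhat(\Lbar_B)>0$ by \eqref{adelic volume formula}, and $a>0$ by hypothesis, hence the product is positive. For the bigness of $\Lbar_A+p^*\Lbar_B$, which is nef as a sum of nef classes, I would apply \eqref{adelic volume formula} again. Expanding multilinearly,
\[
(\Lbar_A+p^*\Lbar_B)^{m+d}=\sum_j\binom{m+d}{j}\Lbar_A^{\,m+d-j}\cdot(p^*\Lbar_B)^j .
\]
Every term is nonnegative because all entries are nef, and intersections of nef adelic classes are $\ge0$. The term with $j=d$ is strictly positive by the computation above, so the volume is positive and the class is big.

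The main obstacle I expect is the justification of the identity $\langle\Lbar_A,\ldots,p^*\Lbar_B\rangle=a\Lbar_B$, or equivalently the limit argument, in the adelic quasiprojective setting. The subtle point is that the compactifications $A_n$ need not be flat over $B_n$ at the boundary. I would handle this by noting that the fiber degree is computed over the generic point, which is enough for the intersection number, and that boundary corrections vanish in the limit of the Cauchy sequences.
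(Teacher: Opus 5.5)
Your proposal is correct and follows the paper's own route. The paper uses the same relative projection formula $\langle\Lbar_A,\ldots,\Lbar_A,p^*\Lbar_B\rangle_{A/B}=a\,\Lbar_B$, intersects with $\Lbar_B^{d-1}$, gets positivity from \eqref{adelic volume formula}, and concludes from the multilinear expansion of $(\Lbar_A+p^*\Lbar_B)^{m+d}$: every term is nonnegative and the term with $m$ copies of $\Lbar_A$ is strictly positive. Your model-approximation variant only spells out how that Deligne-pairing identity is obtained in Yuan--Zhang, which the paper simply cites.
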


\begin{proof}
By the relative projection formula for Deligne pairings,
\[\left\langle\Lbar_A^m,p^*\Lbar_B\right\rangle_{A/B}=a\, \Lbar_B.\]
Intersecting with $\Lbar_B^{d-1}$ gives the displayed identity.  Since $\Lbar_B$ is nef and big, $\Lbar_B^d>0$ by \eqref{adelic volume formula}.  Finally, all terms in the expansion of
\[(\Lbar_A+p^*\Lbar_B)^{d+m}\]
are nonnegative, and the term with $m$ copies of $\Lbar_A$ is positive.
\end{proof}

\bigskip
\section{Canonical bundles for product dynamical systems}

In this section, we construct the nef adelic line bundle $\Lbar_\Phi$ associated to an algebraic family of product maps $\Phi$ on $(\bP^1)^N$, allowing the factors to have different degrees, and we discuss some of their basic properties.

\subsection{The invariant adelic line bundle}
Let $S$ be an integral quasiprojective variety over $k$ in one of the three settings of \S\ref{section: prelims}.  After replacing $S$ by an affine open subset, let
	\[ f:S\times\bP^1\rightarrow S\times\bP^1 \]
be an algebraic family of rational maps of degree $d>1$.  Write $L$ for the pullback of $O_{\bP^1}(1)$ to $S \times\bP^1$.

\begin{theorem} \label{Lf}
There is a unique nef adelic line bundle
	\[ \Lbar_f\in\Pichat(S\times\bP^1/k) \]
with underlying line bundle $L$ and satisfying
	\[ f^*\Lbar_f\simeq d\Lbar_f. \]
In the complex setting, its curvature current is the fibered Green current $\hat T_f$.
\end{theorem}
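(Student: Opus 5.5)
We build $\Lbar_f$ by Tate's limiting construction, carried out inside the adelic Picard group $\Pichat(S\times\bP^1/k)$ of Yuan--Zhang.

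\textbf{Step 1 (initial model and the discrepancy).} Since $S$ is affine, we can write $f(s,[x:y])=[P_s(x,y):Q_s(x,y)]$ with homogeneous forms of degree $d$ whose coefficients lie in $\cO(S)$. Because $f$ has degree $d$ at every $s$, the resultant $\mathrm{Res}(P_s,Q_s)$ is an invertible function on $S$. The pair $(P,Q)$ then gives an isomorphism $f^*L\simeq L^{\otimes d}$ on $S\times\bP^1$.

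Choose a model adelic line bundle $\Lbar_0$ with underlying bundle $L$: the Fubini--Study metric at archimedean places, and the model metric from $\cO(1)$ on $\overline{S}\times\bP^1$ for a projective model $\overline{S}$ of $S$ over $k$. Then
\[ f^*\Lbar_0 - d\,\Lbar_0 = \overline{\cO}(g_0) \]
is a metrized trivial bundle. Its Green function is $g_0=\log\max(|P|,|Q|)-d\log\max(|x|,|y|)$ (or the Fubini--Study analogue), so $g_0$ is an integrable adelic function on $S\times\bP^1$.

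\textbf{Step 2 (convergence).} Set
\[ \Lbar_n = \frac{1}{d^n}(f^n)^*\Lbar_0 = \Lbar_0+\sum_{j<n}\frac{1}{d^{j+1}}(f^j)^*\overline{\cO}(g_0). \]
To show that $\Lbar_n$ is Cauchy in the boundary topology, the key input is a bound of the form
\[ |g_0|\le \epsilon\,\cdot(\text{Green function of a fixed boundary divisor } D_0 \text{ on } \overline{S}). \]
This holds because $|g_0|$ is controlled by $|\log|\mathrm{Res}||$ and the logarithms of the coefficient sizes, all of which are regular functions on affine $S$. The functional $(f^j)^*$ preserves the bound with the same $D_0$, since $f$ is the identity on $S$. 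The tail is then bounded by $d^{-n}\cdot O(D_0)$, which tends to $0$ in the boundary topology. Hence $\Lbar_n\to\Lbar_f$ in $\Pichat(S\times\bP^1/k)$. Each $\Lbar_n$ is nef, since pullbacks of nef models are nef, so the limit is nef in the sense of \S\ref{section: prelims}. By construction, $f^*\Lbar_f=d\Lbar_f$.

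\textbf{Step 3 (uniqueness and curvature).} If $\Lbar'$ has underlying bundle $L$ and also satisfies $f^*\Lbar'=d\Lbar'$, then $\Lbar'-\Lbar_f=\overline{\cO}(h)$ with $h\circ f=d h$. The function $h$ is bounded by $\epsilon D_0$-type bounds, so iterating gives $h=d^{-n}h\circ f^n\to0$, and therefore $h=0$.

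In the complex setting, the curvature of $\Lbar_n$ is $d^{-n}(f^n)^*p^*\omega_0$. Locally uniform convergence of the potentials then identifies $c_1(\Lbar_f)$ with $\hat T_f$ from \S\ref{Green}.

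\textbf{Main obstacle.} The hardest step is the uniform boundary estimate on $g_0$ in Step 2, at non-archimedean places and near the boundary of $S$, since the metrics degenerate there. If a direct estimate is awkward, we would instead invoke the general Yuan--Zhang construction of invariant adelic bundles for polarizable endomorphisms of quasiprojective schemes (their \S6), applied to $f$ with the polarization $f^*L\simeq L^d$.
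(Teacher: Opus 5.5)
Your proposal is correct and takes the same route as the paper. The paper cites Yuan--Zhang's Tate-limit construction (their Theorems 6.1.1--6.1.2), identifies the complex limit with the Green current, and gets uniqueness from the contraction of $d^{-1}f^*$. Your observation that a boundary divisor pulled back from $\overline{S}$ is $f$-invariant, because $f$ covers the identity of $S$, is exactly what makes that construction converge.

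One small correction: affineness of $S$ alone does not give $f^*L\simeq L^{\otimes d}$. The bundle $f^*L\otimes L^{-d}$ is only a pullback from $\Pic(S)$, so you should shrink $S$ further to trivialize it before writing $f=[P:Q]$ globally.
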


\begin{proof}
In the arithmetic and function-field settings this is \cite[Theorems 6.1.1 and 6.1.2]{Yuan:Zhang:quasiprojective}.  The construction is by the normalized pullback of any initial semipositive metric.  The same limit over $\C$ produces the usual dynamical Green current; see, for example, \cite[Proposition 2.7]{GauthierVigny2025}.  Uniqueness follows from the functional equation and the contraction by $d^{-1}f^*$ on continuous metrics.
\end{proof}

When $K$ is either a number field or a function field, and for each point $s \in S(\Kbar)$, the restriction of $\Lbar_f$ to $\{s\}\times\bP^1$ is the canonical metrized line bundle for $f_s$ (possibly up to a normalization).  Consequently,
	\[ h_{\Lbar_f}(s,z)=\hat h_{f_s}(z). \]
Over $\C$, note that the current $\hat T_f$ satisfies
\begin{equation} \label{square zero}
	\hat T_f\wedge\hat T_f=0.
\end{equation}
This follows directly from the dynamical construction of the Green metric; see, for example, \cite{GauthierVigny2025}.

\subsection{Products of maps on $\bP^1$}
Let
	\[ \Phi=(f_1,\ldots,f_N):S\times(\bP^1)^N\rightarrow S\times(\bP^1)^N \]
be a family of product maps, with $d_i=\deg f_i>1$.  For the projection
	\[ \pi_i:S\times(\bP^1)^N\rightarrow S\times\bP^1 \]
to the $i$-th coordinate, set
\begin{equation} \label{LPhi}
	\Lbar_\Phi = \bigotimes_{i=1}^N\pi_i^*\Lbar_{f_i} \in \Pichat(S\times(\bP^1)^N/k).
\end{equation}
This line bundle is nef, even though $\Phi$ need not be polarized when the degrees $d_i$ are different.

When $K$ is either a number field or a function field, the associated height is the sum of the coordinatewise canonical heights:
\begin{equation} \label{product height sum}
	h_{\Lbar_\Phi}(s,z_1,\ldots,z_N) = \sum_{i=1}^N\hat h_{(f_i)_s}(z_i).
\end{equation}
In particular, this height is nonnegative and vanishes at the $\Phi_s$-preperiodic points.  

Write $T_i=\pi_i^*\hat T_{f_i}$.  Then
	\[ \hat T_\Phi=\sum_{i=1}^N T_i. \]
By \eqref{square zero}, every nonzero term in a wedge product contains each coordinate current at most once.  In particular,
\begin{equation} \label{product equilibrium measure}
	\hat T_\Phi^{\wedge N} = N!\; T_1\wedge\cdots\wedge T_N,
\end{equation}
which restricts on every fiber to a positive multiple of the product of the maximal-entropy measures of the maps $(f_i)_s$.

The construction is invariant under coordinatewise conjugacy.  More precisely, if $A=(A_1,\ldots,A_N)$ is an algebraic family of products of M\"obius transformations and
	\[ \Phi^A=A\circ\Phi\circ A^{-1}, \]
then $A^*\Lbar_{\Phi^A}\simeq\Lbar_\Phi$.  Therefore all heights, intersection numbers, and volume computations below are unchanged by the conjugacies used to trivialize an isotrivial family.

\subsection{Generic fibers over rational functions} \label{generic fibers}
Our inductive argument in Section \ref{ordinary hypersurfaces} below requires that we repeatedly pass to a generic level set of a rational function on $S$.  We record the construction and the two properties that will be used.

Let $k$ be a field of characteristic $0$, let $S$ be an integral quasiprojective $k$-variety of dimension $\ell\geq1$, and let $\sigma\in k(S)$ be transcendental over $k$.  On the largest open subset $U_\sigma\subset S$ where $\sigma$ is regular, it defines a dominant morphism
	\[\sigma:U_\sigma\rightarrow\bP^1_k.\]
Put
	\[ K_\sigma=k(\sigma) \qquad\hbox{and}\qquad S_\sigma=U_\sigma\times_{\bP^1_k}\Spec K_\sigma. \]
Then $S_\sigma$ is integral over $K_\sigma$,
	\[ K_\sigma(S_\sigma)=k(S), \qquad \dim_{K_\sigma}S_\sigma=\ell-1.  \]
For a family $(\Phi,\cX)$ over $S$, we write $(\Phi_\sigma,\cX_\sigma)$ for its base change to $S_\sigma$.

\begin{proposition} \label{natural identification}
After varying the base from $k$ to $K_\sigma$, the geometric part $\widetilde{L}_\Phi$ of $\Lbar_\Phi$  is canonically identified with $\Lbar_{\Phi_\sigma}$.  The same is true for every fiber power of $\Phi$ and after restriction to fiber powers of $\cX_\sigma$.  In particular,
	\[ \volhat\left(\widetilde L_{\Phi^{(a)}}|_{\cX^{(a)}}\right) 
	= \volhat\left(\Lbar_{\Phi_\sigma^{(a)}}|_{\cX_\sigma^{(a)}}\right) \]
for all $a\geq 1$.
\end{proposition}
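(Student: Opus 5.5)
The plan is to reduce the statement to the uniqueness assertion of Theorem \ref{Lf}, applied over the new base $K_\sigma$, and then deduce the volume identity from the volume formula \eqref{adelic volume formula}.

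\emph{Step 1: one factor.} Fix a factor $f=f_i$ over $S$ (replace $S$ by an affine open on which $f$ is defined; this is harmless because $U_\sigma$ and $S_\sigma$ only see the generic point of $\sigma$). Base change gives the family $f_\sigma$ over $S_\sigma$, with the same degree $d$. The varying-the-base map $\Pichat(S\times\bP^1/k)\to\Pichat(S_\sigma\times\bP^1/K_\sigma)$ from \cite[\S2.5.5]{Yuan:Zhang:quasiprojective} has three properties we need:
\begin{itemize}
\item[(a)] it is a group homomorphism;
\item[(b)] it is compatible with pullback by morphisms, in particular by $f$ and $f_\sigma$;
\item[(c)] it sends nef classes to nef classes, because it sends Cauchy sequences of nef models to Cauchy sequences of nef models over $K_\sigma$.
\end{itemize}
It also preserves the underlying line bundle $L$. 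Applying the map to $f^*\Lbar_f\simeq d\Lbar_f$ therefore gives $f_\sigma^*\widetilde L_f\simeq d\,\widetilde L_f$, where $\widetilde L_f$ is nef with underlying bundle $L$. The uniqueness in Theorem \ref{Lf}, used over $K_\sigma$ (a one-variable function field over $k$, so setting (2) applies), then yields $\widetilde L_f\simeq \Lbar_{f_\sigma}$.

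\emph{Step 2: products, fiber powers and restriction.} By \eqref{LPhi}, $\Lbar_\Phi$ is a tensor product of the pullbacks $\pi_i^*\Lbar_{f_i}$. Linearity and functoriality of the base change then give $\widetilde L_\Phi\simeq\bigotimes\pi_i^*\Lbar_{(f_i)_\sigma}=\Lbar_{\Phi_\sigma}$. For fiber powers, $\Phi^{(a)}$ is itself a product family on $(\bP^1)^{Na}$, and base change commutes with fiber products over $S$, since $\cX^{(a)}\times_{U_\sigma}S_\sigma=\cX_\sigma^{(a)}$. Compatibility with the pullback along the closed immersion $\cX^{(a)}\hookrightarrow S\times(\bP^1)^{Na}$ handles the restriction. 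We get
\[ \widetilde{L}_{\Phi^{(a)}}|_{\cX^{(a)}}\simeq \Lbar_{\Phi_\sigma^{(a)}}|_{\cX_\sigma^{(a)}}, \]
which is the canonical identification in the statement.

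\emph{Step 3: volumes.} Both sides of the volume identity are nef, so \eqref{adelic volume formula} computes each volume as a top self-intersection. The two classes are the same, viewed in $\Pichat(\cX_\sigma^{(a)}/K_\sigma)$, so they have the same top self-intersection over $K_\sigma$, and the displayed equality follows. Here the left side is read as the volume of the geometric part relative to $K_\sigma$.

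\emph{Main obstacle.} The delicate point is Step 1: we must check that the varying-the-base map really preserves nefness and intertwines $f^*$ with $f_\sigma^*$ at the level of adelic classes, not just on models, so that the uniqueness argument applies. I would verify this directly on a Cauchy sequence of models of the form $d^{-n}(f^n)^*\overline{M}$, where $\overline{M}$ is a fixed nef model. Its base change is the same normalized-pullback sequence for $f_\sigma$, and that sequence converges to $\Lbar_{f_\sigma}$ by the construction in \cite[Theorem 6.1.1]{Yuan:Zhang:quasiprojective}. This avoids appealing to abstract uniqueness at all.
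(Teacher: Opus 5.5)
There is a genuine gap. In Step 1 you identify the wrong class in the wrong group, and Step 3 then proves a tautology instead of the stated volume identity.

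The map $\Pichat(S\times\bP^1/k)\to\Pichat(S_\sigma\times\bP^1/K_\sigma)$ forgets exactly the information the proposition is about. Note the tilde on the left and the bar on the right of the statement. Here $\widetilde L_\Phi$ is the canonical bundle over $k$. The claim is that its restriction to the generic fiber $S_\sigma\times(\bP^1)^N$, regarded as an essentially quasiprojective $k$-scheme, is the function-field canonical bundle $\Lbar_{\Phi_\sigma}\in\Pichat(S_\sigma\times(\bP^1)^N/k)$. This is the bundle of setting (2) of \S\ref{section: prelims}, with $K=K_\sigma$.

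Both volumes are therefore top intersection numbers over $k$, in dimension $d_k(\cX_\sigma^{(a)})=\dim_{K_\sigma}\cX_\sigma^{(a)}+1=\dim_k\cX^{(a)}$. The left side is a geometric volume on $\cX^{(a)}$; the right side is a function-field arithmetic volume on $\cX_\sigma^{(a)}$. This is how the proposition is used in \S\ref{ordinary hypersurfaces}: arithmetic bigness of $\Mbar$ over $K_{i-1}$ at stage $i$ is converted into geometric bigness over $K_{i-1}$ at stage $i-1$.

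By rereading the left side as a volume over $K_\sigma$, you only show that the $K_\sigma$-geometric part of $\Lbar_{\Phi_\sigma}$ has the same volume as itself. In that application, this is just the already-known bigness of the geometric part of $\Mbar$. The two readings give different numbers. Take $k=\C$ and $\dim S=1$. The volume of $\widetilde L_\Phi$ over $k$ is $\int\hat T_\Phi^{\wedge(N+1)}=0$ by \eqref{square zero}. Its $K_\sigma$-geometric part on the generic fiber $(\bP^1)^N$ has positive volume.

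Step 1 is also internally inconsistent. You invoke the uniqueness of Theorem \ref{Lf} in setting (2), which lives in $\Pichat(\,\cdot\,/k)$, for a class you have placed in $\Pichat(\,\cdot\,/K_\sigma)$.

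The repair is small. Replace the varying-the-base map by restriction along $S_\sigma\to S$, that is, $\Pichat(S\times(\bP^1)^N/k)\to\Pichat(S_\sigma\times(\bP^1)^N/k)$. Your functional-equation-plus-uniqueness argument then applies verbatim in setting (2); so does your Cauchy-sequence check with $d^{-n}(f^n)^*\overline{M}$.

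The volume identity then needs one input you did not supply, and which the paper cites. Top intersection numbers over $k$ are unchanged in passing from $\cX^{(a)}$ to its localization $\cX_\sigma^{(a)}$. This holds because $\Pichat(\cX_\sigma^{(a)}/k)$ is a direct limit over open subsets, and the intersection numbers are computed on the same projective $k$-models.
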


\begin{proof}
This is completely formal from Yuan-Zhang \cite{Yuan:Zhang:quasiprojective}.  The varying-the-base construction is functorial, and the resulting line bundle satisfies the same coordinatewise functional equations as the canonical line bundle for $\Phi_\sigma$.  The identification follows from Theorem \ref{Lf}.  Compatibility of intersection numbers is part of the direct-limit construction of the adelic Picard group; see \cite[\S2.5.5]{Yuan:Zhang:quasiprojective}.
\end{proof}

We will also need the following lemma about maximal variation under this process of generic slicing:  

\begin{lemma} \label{generic maximally varying}
If a pair $(\Phi,\cX)$ is maximally varying over $S$, then $(\Phi_\sigma,\cX_\sigma)$ is maximally varying over $S_\sigma$.  The conclusion remains true after a finite extension of $K_\sigma$ and passage to an irreducible component of the base change.
\end{lemma}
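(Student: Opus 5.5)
The plan is to compare the moduli map of $(\Phi_\sigma,\cX_\sigma)$ over $S_\sigma$ with the moduli map $\rho_{\Phi,\cX}$ over $S$. Write
\[ \mathcal M=\left(\Rat_{d_1}\times\cdots\times\Rat_{d_N}\times\mathcal C_E\right)/(\PGL_2)^N \]
for the target in \eqref{induced from S}. This quotient is defined over $\bQ$, hence over $k$. Its base change to $K_\sigma$ is the moduli space for pairs over $K_\sigma$, and the formation of the coarse quotient commutes with the flat base change $k\to K_\sigma$.

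The key observation is that $\rho_{\Phi_\sigma,\cX_\sigma}$ is the base change of $\rho_{\Phi,\cX}$. Concretely, it is the composition
\[ S_\sigma \to U_\sigma\times_k K_\sigma \to \mathcal M\times_k K_\sigma , \]
where the first arrow is $(\text{inclusion},\sigma)$ and the second is $\rho\times\mathrm{id}$. Equivalently, consider the map
\[ (\rho,\sigma): U_\sigma\dashrightarrow \mathcal M\times \bP^1_k, \]
and take its generic fiber over $\bP^1_k$, i.e.\ over $\Spec K_\sigma$.

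The argument then proceeds in three steps.

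\textbf{Step 1: $(\rho,\sigma)$ is generically finite onto its image.} This holds because $\rho$ already is: the fibers of $(\rho,\sigma)$ are contained in the fibers of $\rho$, which are finite over a dense open set.

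\textbf{Step 2: pass to the generic fiber.} Let $Z\subset \mathcal M\times\bP^1_k$ be the closure of the image of $(\rho,\sigma)$. The map $U_\sigma\dashrightarrow Z$ is dominant and generically finite. Its generic fiber over $\Spec K_\sigma$ is a dominant, generically finite map $S_\sigma\dashrightarrow Z_{K_\sigma}$, since generic finiteness and dominance are preserved by passing to generic fibers over $\bP^1$. In particular, $\dim Z_{K_\sigma}=\ell-1=\dim S_\sigma$.

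\textbf{Step 3: identify this map with $\rho_{\Phi_\sigma,\cX_\sigma}$.} The composition $Z_{K_\sigma}\subset \mathcal M\times\Spec K_\sigma$ is exactly the moduli map of $(\Phi_\sigma,\cX_\sigma)$. Hence $\rho_{\Phi_\sigma,\cX_\sigma}$ is generically finite onto its image.

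The main obstacle is making the identification in Step 3 precise. One needs $K_\sigma$-points of the quotient to correspond to the conjugacy classes used in the definition over the new base, and the map $S_\sigma\to\mathcal M_{K_\sigma}$ must genuinely be the one induced by $\rho$. To be safe, I will phrase maximal variation via dimension of the image, or equivalently via finiteness of fibers on geometric points. Geometric points of $S_\sigma$ over $\overline{K_\sigma}$ map to geometric points of $U_\sigma$ lying over the generic point of $\bP^1$. Two such points have conjugate fibers over $\overline{K_\sigma}$ if and only if the corresponding points of $U_\sigma$ lie in a common fiber of $\rho$. This holds because conjugacy over an algebraically closed field can be tested over $\overline{K_\sigma}\supset k$, using that the quotient's geometric points classify orbits, at least on the stable locus, which contains the image generically. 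Therefore the fibers of $\rho_{\Phi_\sigma,\cX_\sigma}$ are contained in the intersections of fibers of $\rho$ with fibers of $\sigma$, and these are finite generically.

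Finally, for a finite extension $K'/K_\sigma$ and an irreducible component $S'$ of $S_\sigma\times_{K_\sigma}K'$, the map $S'\to S_\sigma$ is finite and surjective onto $S_\sigma$, since all components dominate after a finite extension. The moduli map of the base-changed pair factors as $S'\to S_\sigma\to \mathcal M_{K'}$. Composing a generically finite map with a finite one keeps it generically finite onto its image, and $\dim S'=\dim S_\sigma$. This gives the final assertion.
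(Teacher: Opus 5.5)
Your argument is correct and is essentially the paper's proof in geometric form. With $Z=\overline{\rho(S)}$, the closure of the image of $(\rho,\sigma)$ has function field $k(Z)K_\sigma\subset k(S)$, so generic finiteness of $S_\sigma\dashrightarrow Z_{K_\sigma}$ is exactly the finiteness of $k(S)/k(Z)K_\sigma$, which the paper reads off from the tower $k(Z)\subset k(Z)K_\sigma\subset k(S)$. Your extra step identifying the sliced moduli map with the base change of $\rho$ is a detail the paper takes for granted.
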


\begin{proof}
Let $\rho_{\Phi, \cX} :S\dashrightarrow\mathscr M$ be the moduli map defined in \S\ref{moduli map} and put $Z=\overline{\rho(S)}$.  Through pullback we have an inclusion of function fields
	\[ k(Z)\subset k(S), \]
and maximal variation means that $k(S)/k(Z)$ is finite.  The function field of the image of the induced moduli map on $S_\sigma$ is the compositum $k(Z)K_\sigma$ inside $k(S)$.  Since
	\[ k(Z)\subset k(Z)K_\sigma\subset k(S), \]
the extension $k(S)/k(Z)K_\sigma$ is finite.  This is precisely generic finiteness of the sliced moduli map.  Finite extension of the ground field and passage to an irreducible component preserve this property.
\end{proof}

\bigskip
\section{Bifurcations for essential hypersurfaces, without Latt\`es components}
\label{ordinary hypersurfaces}

In this section we prove Theorem \ref{hypersurfaces} under the assumption that none of the maps $f_i$ is a Latt\`es family.  Let $S$ be an irreducible quasiprojective variety of dimension
	\[ \ell=\dim S>0, \]
let
	\[ \Phi=(f_1,\ldots,f_N):S\times(\bP^1)^N\rightarrow S\times(\bP^1)^N \]
be an algebraic family of product maps, and let $\cH\subset S\times(\bP^1)^N$ be an irreducible and flat family of essential hypersurfaces.  

By Proposition \ref{easy stability}, we know that if $\cH$ is $\Phi$-special then the pair $(\Phi, \cH)$ has vanishing bifurcation current.  In this section, we prove a converse statement in the following form, with the bifurcation measure instead of the current.

\begin{theorem} \label{non-Lattes hypersurface}
Suppose that $\Phi=(f_1,\ldots,f_N):S\times(\bP^1)^N\rightarrow S\times(\bP^1)^N$ is an algebraic family of product maps, with $\deg f_i > 1$ for each $i$, defined over $\C$.  Let $\cH\subset S\times(\bP^1)^N$ be an irreducible hypersurface defined over $\C$, forming a flat family of essential hypersurfaces over $S$.  Assume that 
\begin{enumerate}
\item[(i)]  no $f_i$ is a Latt\`es family; 
\item[(ii)] the pair $(\Phi,\cH)$ is maximally varying; and 
\item[(iii)] $\cH$ is not $\Phi$-special.
\end{enumerate}
Then 
	\[ T_{\Phi^{(\ell)}}^{\wedge N\ell}\wedge[\cH^{(\ell)}]\neq 0 \]
so that the bifurcation measure $\mu_{\Phi, \cH}$ is nonzero on $S$.
\end{theorem}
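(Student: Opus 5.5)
We follow the strategy outlined in \S\ref{sketch}: induction on $\ell=\dim S$ through a tower of generic multiplier slices, with the isotrivial case Theorem \ref{isotrivial case} as the base. Since positivity of $\hat T_{\Phi^{(\ell)}}^{\wedge N\ell}\wedge[\cH^{(\ell)}]$ is the bigness of the geometric part $\widetilde L_{\Phi^{(\ell)}}|_{\cH^{(\ell)}}$, by \eqref{geometric volume formula}, we first spread $(\Phi,\cH)$ out over a finitely generated field. We then work with adelic line bundles over $k=\C$ and with function fields, and only at the end return to the complex measure.

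\textbf{Tower construction.} If some $f_i$ is non-isotrivial, McMullen's rigidity theorem \cite[Corollary 2.3]{McMullen:families}, together with hypothesis (i), produces a multiplier $\sigma_1$ of a periodic cycle of $f_i$ that is transcendental over $\C$. We pass to $S_{\sigma_1}$ over $K_1=\C(\sigma_1)$ as in \S\ref{generic fibers}. Lemma \ref{generic maximally varying} preserves maximal variation. The slice $\cH_{\sigma_1}$ remains essential. It also remains non-special for the generic fiber, since specialness is a property of the generic fiber of $\cH$, which is unchanged. We iterate, choosing algebraically independent multipliers $\sigma_1,\dots,\sigma_m$, until every $f_i$ is isotrivial over $K_m$. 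At that stage Theorem \ref{isotrivial case}, applied over $\overline{K_m}$ with the base of dimension $\ell-m$, gives positivity of the geometric volume of $\Lbar_{\Phi_{\sigma}^{(\ell-m)}}|_{\cH_\sigma^{(\ell-m)}}$ at the terminal stage. If $m=\ell$, the base is a point and the statement reduces to the hypersurface itself.

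\textbf{Propagation back up the tower.} Assume that at stage $j$ the line bundle $\Lbar_j:=\Lbar_{\Phi^{(\ell-j)}}|_{\cH^{(\ell-j)}}$ over $K_j$ has big geometric part over $K_{j+1}$. By Proposition \ref{natural identification}, this equals the geometric bigness we want at stage $j+1$ one level down, viewed as an arithmetic bundle over the function field $K_{j+1}/K_j$. We must show that the geometric part over $K_j$ of the one-larger fiber power is big, i.e.\ that geometric bigness is not lost when one fiber power and one base dimension are added. We argue by contradiction. If the volume vanishes, we use the gluing Proposition \ref{gluing}, the Deligne pairing, and the fundamental inequality (Theorem \ref{fundamental inequality}). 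Applied over $K_{j+1}$, these produce a generic sequence of points in $\cH_s$ that is small for $\Lbar_\Phi$, and simultaneously small for the auxiliary bundles $\Lbar_{\Phi_{\hat\imath}}$ pulled back under the projections forgetting the $i$-th coordinate. Each of these projections $\cH\to (\bP^1)^{N-1}$ is generically finite, because $\cH$ is essential. Equidistribution (Theorem \ref{equidistribution}) at a non-archimedean place of $K_{j+1}$ where the multiplier $\sigma_{j+1}$ has a pole then gives equality of the measures $c_1(\Lbar_\Phi)^{N-1}$ and $c_1(\pi_{\hat\imath}^*\Lbar_{\Phi_{\hat\imath}})^{N-1}$ on $\cH^{\an}$. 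Proposition \ref{measure equality} slices this equality down to generic curve sections of $\cH$ inside coordinate planes $\bP^1\times\bP^1$. At that place the relevant $f_i$ have a repelling cycle with $|\sigma|_v>1$, so the non-archimedean Lyapunov exponent is positive. The Ji--Yap rigidity theorem \cite[Theorem 1.1]{JiYap2026} then forces these curve sections to be preperiodic. Varying the slice, we conclude that $\cH$ is built from preperiodic curves and is therefore either non-essential or $\Phi$-special. This contradicts essentiality or hypothesis (iii).

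\textbf{Main obstacle.} The hardest step is this propagation step. Two points need care. First, we must obtain geometric bigness of the intermediate Deligne bundle, by identifying its volume with the arithmetic volume at the previous stage, so that the two-sided fundamental inequality applies. Second, the slicing must land on curves to which Ji--Yap applies: the curves must lie in the support of the sliced base measure and must project nontrivially to both coordinates. We will handle the second point by choosing the place $v$ and the generic slice so that the multiplier is non-integral there, and by invoking Lemma \ref{fiber continuity}. Finally, we descend from the terminal-stage bigness over $K_0=\C$ to the complex statement using \eqref{geometric volume formula}, which identifies the volume with the mass of $\hat T_{\Phi^{(\ell)}}^{\wedge N\ell}\wedge[\cH^{(\ell)}]$.
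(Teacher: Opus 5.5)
\textbf{The gap: passing from the support of $\mu_B$ to the whole family.} Your architecture is the paper's. It uses the multiplier tower built with McMullen's theorem and the isotrivial terminal stage. The downward propagation runs the same way: Proposition \ref{gluing} gives geometric bigness of the $(q_i+1)$-st fiber power and of the auxiliary bundles $\Mbar_j$; the fundamental inequality gives a generic small sequence; equidistribution at $v_i$ gives proportional measures; Proposition \ref{measure equality} slices them to curves; Ji--Yap gives specialness; and Proposition \ref{natural identification} converts arithmetic volume at one stage into geometric volume one level down.

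The step that does not close as you describe it is the passage from the sliced measure identity to a statement about the whole family. Write $B=B_{j,j_0}$. Proposition \ref{measure equality} gives $\pi_j^*\mu_{(f_j)_y}\propto\pi_{j_0}^*\mu_{(f_{j_0})_y}$ on $C_y$ only for $y\in\supp\mu_{B,v_i}$, not for generic $y$. Neither the choice of $v$ nor Lemma \ref{fiber continuity} fixes this: that lemma only upgrades ``$\mu_B$-almost every $y$'' to ``every $y$ in the support.'' A priori the support could be a thin analytic set, so ``varying the slice'' is unjustified.

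The missing ingredient is Lemma \ref{supports}. A top-degree Chambert-Loir measure of strongly nef bundles charges no proper Zariski-closed subset, so $\supp\mu_{B,v_i}$ lies in no countable union of proper subvarieties of $B$. This lemma lets you discard the proper closed locus where $C_y$ fails to dominate both factors, which you need in order to apply Ji--Yap. Specialness of $C_y$ for $((f_j)_y,(f_{j_0})_y)$ is a countable union of algebraic conditions on $y$, so one such condition carries positive $\mu_B$-mass and must therefore hold on all of $B$. Only then do you get the conclusion. For $N>2$, $C_y$ depends only on $s$, since a fixed pair of maps has no nontrivially varying families of special curves; so $(\cH_i)_s\simeq C_s\times(\bP^1)^{N-2}$, contradicting essentiality. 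For $N=2$, $\cH_i$, and hence $\cH$, is special.

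\textbf{Related points to fix.}
\begin{itemize}
\item \emph{Choice of coordinate planes.} Positivity of the Lyapunov exponent at $v_i$ is known only for $f_{j_0}$, the factor carrying the cycle with multiplier $\sigma_i$. You must therefore use the $(j,j_0)$-planes and the projection $p_{j,j_0}$ that forgets exactly those two coordinates of the last copy; arbitrary coordinate planes will not do.
\item \emph{Scalar extension.} Ji--Yap is stated over complete algebraically closed fields. You need Lemma \ref{scalar extension measures} to pass the proportionality from $\cH(y)$ to $\widehat{\overline{\cH(y)}}$.
\item \emph{Which measures are compared.} The compared measures are the top-degree measures on $\cH_i^{(q_i+1)}$, and the small sequence lives there. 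They are not $c_1(\Lbar_\Phi)^{N-1}$ on $\cH^{\an}$, which is not a measure once the base has positive dimension. The first $q_i$ copies are absorbed into $\mu_{B}$. That is exactly why $\mu_B\neq0$ follows from the inductive hypothesis \eqref{vol i}, which is what makes Lemma \ref{supports} usable.
\end{itemize}
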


\subsection{Dimension reduction by multiplier functions} \label{dim red}
Choose a maximal collection of algebraically independent functions
	\[  \sigma_1,\ldots,\sigma_m \in \overline{\C(S)}  \qquad 0\leq m\leq\ell, \]
among all multipliers of periodic points of the factors $f_j$.  After a generically finite base change, we may mark the associated periodic points and view each $\sigma_i$ as a regular function on $S$.   We construct a tower of fields
	\[ \C=K_0\subset K_1\subset\cdots\subset K_m, \qquad K_i=K_{i-1}(\sigma_i), \]
and varieties $S_i$ as follows.  Put $S_0=S$.  Inductively, $S_i$ is the generic fiber of
	\[ \sigma_i:S_{i-1}\dashrightarrow\bP^1_{K_{i-1}}, \]
viewed as a variety over $K_i$; the irreducibility of $S$ implies that $S_i$ is integral over $K_i$.  Let $(\Phi_i,\cH_i)$ be the induced pair over $S_i$.  Then
	\[ \dim_{K_i}S_i=\ell-i, \]
and $(\Phi_i,\cH_i)$ is maximally varying by Lemma \ref{generic maximally varying}.

At the terminal stage $m$, the multipliers of all periodic points of the factors $f_j$ are constant on $S_m$ (meaning that they lie in the algebraic closure of $K_m$).  Choose an embedding of $\Kbar_m$ into $\C$.  McMullen's rigidity theorem then implies that every factor of $\Phi_m$ is either isotrivial or Latt\`es \cite{McMullen:families}.  By hypothesis, none of the factors are Latt\`es, so all factors are isotrivial.  After a further finite base change, finite extension of the field $K_m$, and coordinatewise conjugacy, $\Phi_m$ is constant over $S_m$, and it can be viewed as one map over the field $K_m$, independent of the parameter in $S_m$.

\subsection{Positive Lyapunov exponents} \label{pos Lyap}
For $i\geq1$, let $v_i$ be the non-archimedean place of $K_i=K_{i-1}(\sigma_i)$ at which $|\sigma_i|_{v_i}>1$.  In other words, we let $v_i$ represent the point at $\infty$ of $\bP^1$ in the image of $\sigma_i: S_{i-1} \dashrightarrow \bP^1$.  The periodic cycle with multiplier $\sigma_i$ is therefore repelling at $v_i$, at all parameters of $S_i$.  In particular, the corresponding factor of $\Phi_i$ has a positive non-archimedean Lyapunov exponent at $v_i$, for all parameters in $S_i$; see \cite[Corollaire D]{FavreRiveraLetelier2026}.

\subsection{The terminal geometric volume}
Set $q_i :=\ell-i = \dim_{K_i}S_i$.  We first prove that
\begin{equation} \label{base volume}
\volhat\left(
\widetilde L_{\Phi_m^{(q_m)}}|_{\cH_m^{(q_m)}}
\right)>0,
\end{equation}
with the convention that the left side is $1$ when $q_m=0$.

Assume $q_m>0$.  Since $\Phi_m$ is constant over $S_m$, the canonical line bundle associated to $\Phi_m^{(q_m)}$ is pulled back from $((\bP^1)^N)^{q_m}$ over $K_i$.  Maximal variation of $\cH_m$ implies that the evaluation map
	\[ \cH_m^{(q_m)}\rightarrow((\bP^1)^N)^{q_m} \]
is generically finite and so dominant; see for example \cite[Proposition 2.1]{DMY:UniformDAO}.  The projection formula of \cite[Proposition 4.1.2]{Yuan:Zhang:quasiprojective} therefore gives \eqref{base volume}, similar to the proof of Theorem \ref{isotrivial case} over $\C$.  Note that the coordinate changes used to trivialize $\Phi_m$ do not affect this computation, by the conjugacy invariance following \eqref{product equilibrium measure}.

\subsection{The induction step}
We descend from $i=m$ to $i=0$.  Fix $1\leq i\leq m$, and assume
\begin{equation} \label{vol i}
	\volhat\left( \widetilde L_{\Phi_i^{(q_i)}}|_{\cH_i^{(q_i)}} \right)>0,
\end{equation}
with $q_i =\ell-i$.  For $q_i=0$, this is our convention at the terminal stage.

For $q_i>0$, the projection
	\[ p:\cH_i^{(q_i+1)}\rightarrow\cH_i^{(q_i)} \]
has fibers isomorphic to the projective hypersurfaces $(H_i)_s$ for $s \in S_i$.  The restriction of the canonical geometric line bundle to every such fiber is ample.  Proposition \ref{gluing} therefore implies
\begin{equation} \label{geometric volume +1}
	\volhat\left( \widetilde L_{\Phi_i^{(q_i+1)}}|_{\cH_i^{(q_i+1)}} \right)>0.
\end{equation}
whenever $q_i>0$.  For $q_i=0$, so that $i = \ell = m$, the hypersurface $\cH_m$ is projective, so the geometric volume $\volhat\left( \widetilde L_{\Phi_i^{(1)}}|_{\cH_i^{(1)}} \right)$ is positive.

We now regard $K_i$ as a one-variable function field over $K_{i-1}$, so that $S_i$ and $\cH_i$ are essentially quasiprojective schemes over $K_{i-1}$.  Let
	\[ \Mbar = \Lbar_{\Phi_i^{(q_i+1)}}|_{\cH_i^{(q_i+1)}} \in \Pichat(\cH_i^{(q_i+1)}/K_{i-1})_{\nef}. \]
Its geometric part is big by \eqref{geometric volume +1}.  We claim that
\begin{equation} \label{arithmetic induction volume}
	\volhat(\Mbar)>0.
\end{equation}

Suppose to the contrary that $\volhat(\Mbar) = 0$.  Since $\Mbar$ is nef, its height on the total variety is $0$, and Theorem \ref{fundamental inequality} gives
	\[ e(\cH_i^{(q_i+1)},\Mbar)=0. \]
Thus there is a generic sequence
	\[ P_n\in\cH_i^{(q_i+1)}(\overline{K_i}) \]
with $h_{\Mbar}(P_n)\to0$.

For $q_i>0$ and for each $j\in\{1,\ldots,N\}$, let
	\[ p_j:\cH_i^{(q_i+1)} \rightarrow \cH_i^{(q_i)}\times_{S_i} (S_i \times (\bP^1)^{N-1}) \]
be the projection that forgets the $j$-th coordinate in the last copy of $\cH_i$.  For $q_i = 0$, take the corresponding projection 
		\[ p_j:\cH_i \rightarrow S_i \times (\bP^1)^{N-1}. \]
Essentiality of the hypersurface $\cH_i$ implies that $p_j$ is dominant and generically finite.  We use $p_j$ to pull back the canonical line bundle for the product dynamical system consisting of the first $q_i$ complete copies of $\Phi_i$ and all factors except $f_j$ in the last copy; denote the result by 
	\[ \Mbar_j \in \Pichat(\cH_i^{(q_i+1)}/K_{i-1}).\]  
Proposition \ref{gluing} and \eqref{vol i} imply that the geometric part of every $\Mbar_j$ is big.  By \eqref{product height sum},
	\[ 0\leq h_{\Mbar_j}(P_n)\leq h_{\Mbar}(P_n), \]
so $P_n$ is small for every $\Mbar_j$.  

Recall that we defined the place $v_i$ of the field $K_i$ in \S\ref{pos Lyap}.  Applying Theorem \ref{equidistribution} at the place $v_i$, we conclude that the corresponding measures on $\cH_i^{(q_i+1)}$ are proportional:
\begin{equation} \label{muj proportional}
	\mu_{j,v_i}\propto\mu_{j_0,v_i} \qquad\text{for all }j,
\end{equation}
where $j_0$ is an index for which the marked cycle with multiplier $\sigma_i$ belongs to $f_{j_0}$.

Fix $j\neq j_0$ and forget both the $j$-th and $j_0$-th coordinates in the last copy of $\cH_i$.  This gives a flat morphism, after restriction to a dense open set,
\begin{equation} \label{proj 2}
  	p_{j,j_0}:\cH_i^{(q_i+1)} \rightarrow 
		B_{j,j_0} :=\cH_i^{(q_i)}\times_{S_i}(S_i \times (\bP^1)^{N-2}). 
\end{equation}
Its fibers define curves $C_y \subset \bP^1\times\bP^1$ in the $(j,j_0)$ coordinates, for each $y \in B_{j, j_0}$.  Proposition \ref{measure equality}, applied to \eqref{muj proportional}, gives
\begin{equation} \label{on X_y}
	\pi_j^*\mu_{(f_j)_y} \propto \pi_{j_0}^*\mu_{(f_{j_0})_y}
\end{equation}
on the Berkovich analytification $C_y^{\an}$ for every $y$ in the support of the canonical measure on $B_{j,j_0}^{\an}$.

We use two general facts about these measures to interpret the relation of \eqref{on X_y}.

\begin{lemma} \label{scalar extension measures}
Equality, or proportionality, of the measures in \eqref{on X_y} is preserved after extension from the completed residue field $\cH(y)$ to a complete algebraically closed extension.
\end{lemma}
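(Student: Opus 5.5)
We prove the lemma by realizing both measures as Chambert-Loir measures of metrized line bundles on $C_y$. Extending scalars to a complete algebraically closed field $\mathbb{K}\supset\cH(y)$ gives the natural restriction map $\tau: C_{y,\mathbb{K}}^{\an}\to C_y^{\an}$. The plan is to show that each measure on $C_{y,\mathbb{K}}^{\an}$ is the unique measure of the same mass pushing forward to the corresponding measure on $C_y^{\an}$, and that proportionality can be recovered from this.

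\emph{Step 1: measures commute with base change.} The two measures in \eqref{on X_y} are $c_1(\pi_j^*\Lbar_{(f_j)_y})$ and $c_1(\pi_{j_0}^*\Lbar_{(f_{j_0})_y})$ on the projective curve $C_y$. Each canonical metric is a uniform limit of model (formal) metrics, namely the normalized pullbacks $d^{-n}(f^n)^*$ from \S4. For model metrics, Chambert-Loir measures are finite sums of Dirac masses at divisorial points. These points are compatible with base change: $\tau_*$ of the measure computed over $\mathbb{K}$ equals the measure computed over $\cH(y)$, by the projection formula \cite[\S18]{CLD} for the finite-type morphism $C_{y,\mathbb{K}}\to C_y$, or directly \cite[\S2]{ChambertLoir:equidistribution}. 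Passing to the uniform limit of metrics, we obtain $\tau_*\mu^{\mathbb{K}}=\mu$ for each of the two measures.

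\emph{Step 2: identify the lifted measures.} On $\bP^1_{\mathbb{K}}$, the equilibrium measure of $(f_j)_y$ is the canonical measure of the base-changed metric. Pulling back along $\pi_j$ commutes with base change, because pullback of metrics is functorial. Hence the measures on $C_{y,\mathbb{K}}^{\an}$ are exactly $\pi_j^*\mu_{(f_j)_y,\mathbb{K}}$ and $\pi_{j_0}^*\mu_{(f_{j_0})_y,\mathbb{K}}$.

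\emph{Step 3: transfer proportionality.} Suppose $\mu_1=c\,\mu_2$ on $C_y^{\an}$. We need $\mu_1^{\mathbb{K}}=c\,\mu_2^{\mathbb{K}}$; this does not follow from Step 1 alone, since $\tau_*$ is not injective on measures. There are two approaches.

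(a) Use the Galois/automorphism-invariant structure. The fibers of $\tau$ are orbits of $\mathrm{Aut}(\mathbb{K}/\cH(y))$, at least up to closure. Both measures are invariant under this group, since they come from metrics defined over $\cH(y)$. One would then need a uniqueness statement for invariant lifts of a measure.

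(b) Work at the level of potentials, which we prefer. Proportionality of the Monge--Ampère measures $c_1(\Mbar_1)=c\,c_1(\Mbar_2)$ on the curve $C_y$ means that $\Mbar_1$ and $c\Mbar_2$, with $c$ equal to the ratio of degrees, have metrics differing by a function $g$ with $dd^c g=0$ (up to constants, after passing to $\bQ$-line bundles with the same degree). By the Calabi-type uniqueness theorem on Berkovich curves (Thuillier, or Yuan--Zhang's Calabi theorem for semipositive metrics), $g$ is constant. Constancy of $g$ is preserved under base change, because $g_{\mathbb{K}}=g\circ\tau$. Taking $c_1$ over $\mathbb{K}$ then yields the proportionality there. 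The same argument handles equality.

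\emph{Main obstacle.} The key issue is the identification in Step 3(b). It requires the line bundles $\pi_j^*L$ and $c\,\pi_{j_0}^*L$ on $C_y$ to be numerically equivalent, and, for the Calabi uniqueness, isomorphic up to torsion on each component. This holds for $\bQ$-bundles whose degrees agree on the curve. The ratio of total masses is exactly the ratio $\deg\pi_j/\deg\pi_{j_0}$, so $c$ is determined, and we can multiply the metric on $\pi_j^*L^{\deg\pi_{j_0}}$ accordingly.

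If $C_y$ has genus $>0$, the two bundles may differ by a degree-zero class. For this case, we would first try the Calabi uniqueness on the Berkovich curve, stated in terms of functions: two continuous semipositive metrics on line bundles of equal degree with the same curvature measure have difference $\log|s|$ plus a constant, for a rational section $s$ in the $\bQ$-linear span with harmonic norm. Any such harmonic function on a compact Berkovich curve is constant, and constancy descends to $\mathbb{K}$ via $\tau$.
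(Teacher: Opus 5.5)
Your Steps 1 and 2 agree with the first half of the paper's proof: the canonical metrics and their Chambert-Loir measures are compatible with scalar extension \cite[Corollary 18.3.3(a)]{CLD}. You are also right that $\tau_*$-compatibility alone does not determine the lifted measures.

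The gap is in Step 3(b). That step needs $\deg(\pi_{j_0})\,\pi_j^*\mathcal O(1)$ and $\deg(\pi_j)\,\pi_{j_0}^*\mathcal O(1)$ to agree up to torsion in $\Pic$ of (the normalization of) $C_y$, so that the two metrics differ by a function. You assert this holds once the degrees agree, but that is only true in genus zero. Nothing forces $C_y$ to be rational here, since it is a two-coordinate slice of an arbitrary essential hypersurface. For example, a smooth curve of bidegree $(2,2)$ is elliptic, and $\pi_1^*\mathcal O(1)-\pi_2^*\mathcal O(1)\in\Pic^0$ need not be torsion. In that case the difference of the metrized bundles is a zero-curvature metric on a nontrivial degree-zero bundle $N$. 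For a rational section $s$ of $N$, the function $-\log\|s\|$ has logarithmic singularities along $\mathrm{div}(s)\neq0$. It is not a harmonic function on the compact curve, and it is not constant, so your fallback ``any such harmonic function is constant'' fails. What you would actually need is that $dd^c(\log\|s\|\circ\tau)$ is still $\delta_{\mathrm{div}(s)_{\mathbb K}}$, i.e.\ that flatness survives base change. That is a base-change statement for curvature of exactly the type the lemma asserts, so the argument does not close.

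The uniqueness statement you left open in option (a) is elementary, and it is the paper's proof.
\begin{itemize}
\item Take $\mathbb K=\bC_F:=\widehat{\overline F}$ with $F=\cH(y)$ and $G=\operatorname{Gal}(\overline F/F)$; this is all that is needed to apply Ji--Yap.
\item Since the metrics are defined over $F$, the lifted measures $\nu_1,\nu_2$ (one rescaled) are $G$-invariant, and Step 1 gives $\tau_*\nu_1=\tau_*\nu_2$.
\item By \cite[Corollary 1.3.6]{Berkovich}, $\tau$ identifies $C_{y}^{\an}$ over $F$ with the quotient of $C_{y,\bC_F}^{\an}$ by $G$.
\item For a continuous test function $\phi$, the Haar average $\bar\phi=\int_G\phi\circ g\,dg$ is continuous and $G$-invariant, so it descends along $\tau$.
\item Invariance gives $\int\phi\,d\nu_i=\int\bar\phi\,d\nu_i$, which depends only on $\tau_*\nu_i$.
\end{itemize}
Hence $\nu_1=\nu_2$. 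This uses nothing about the genus of $C_y$ or any relation between the two line bundles.
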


\begin{proof}
Set $F=\cH(y)$, $\bC_F=\widehat{\overline F}$, and $X=C_y$.  Let $\nu_1$ and $\nu_2$ be the two measures on $X_{\bC_F}^{\an}$, rescaling one of them in the case of proportionality. These measures are $G$-invariant for $G=\operatorname{Gal}(\overline F/F)$ and have equal pushforwards under the natural map $q:X_{\bC_F}^{\an}\to X_F^{\an}$, by compatibility of canonical metrics and Chambert--Loir measures with scalar extension; see \cite[Corollary 18.3.3(a)]{CLD}.  Since $q$ identifies $X_F^{\an}$ with $X_{\bC_F}^{\an}/G$ \cite[Corollary 1.3.6]{Berkovich}, averaging continuous test functions over $G$ with respect to normalized Haar measure gives $\nu_1=\nu_2$.
\end{proof}

\begin{lemma} \label{supports}
Suppose that $X$ is a smooth and irreducible quasiprojective algebraic variety defined over $K$.  Suppose that $\Lbar_1, \ldots, \Lbar_n \in \Pichat(X/k)$ are strongly nef adelic line bundles.  Let 
	\[ \mu_v = c_1(\Lbar_1) \wedge \cdots \wedge c_1(\Lbar_n) \]
be the associated Chambert-Loir measure on $X_v^\an$ at an analytification of $X$ at a place $v$ of $K$.  Then for every Zariski-closed subset $Y\subsetneq X$ defined over $\Kbar$, one has
	\[ \mu(Y^\an_v)=0. \]
In particular, if $\mu\neq 0$, then $\supp \mu$ is not contained in any countable union of proper algebraic subvarieties of $X$.
\end{lemma}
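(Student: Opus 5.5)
We would reduce the statement to the model-metric case and pass to the limit, using the fact that Chambert-Loir measures of strongly nef bundles are weak limits of measures attached to nef model metrics, for which the mass of a proper subvariety can be computed algebraically. By inner regularity of the Radon measure $\mu_v$, it suffices to show $\mu_v(E)=0$ for every compact $E\subset Y_v^{\an}$. After base change to a finite extension (which is compatible with the measures by \cite[Corollary 18.3.3(a)]{CLD}, as in the proof of Lemma \ref{scalar extension measures}), we may assume $Y$ is defined over $K$. We may also reduce to the case where $Y$ is an irreducible hypersurface. Indeed, each component of $Y$ lies in the zero locus $Y'$ of a nonzero rational function $g$ on $X$, so it suffices to treat $Y'$.

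The key step is a capacity-type estimate via the Poincar\'e--Lelong formula. Let $\log|g|_v$ be the function on $X_v^{\an}$. For nef model approximants $\Lbar_{i,n}$, \cite[Proposition 18.3.2, \S18.4]{CLD} gives
	\[ dd^c\log|g|_v\wedge c_1(\Lbar_{1,n})\wedge\cdots\wedge c_1(\Lbar_{n-1,n}) = [\operatorname{div} g]\wedge\cdots - \ldots, \]
and the resulting measures $c_1(\Lbar_{1,n})\wedge\cdots\wedge c_1(\Lbar_{n,n})$ integrate $\log|g|_v$ to a finite quantity, bounded below uniformly in $n$ on compact sets. This is the local height of $\operatorname{div}(g)$, and it converges by continuity of local heights under the boundary topology of \cite{Yuan:Zhang:quasiprojective}. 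By monotone convergence, $\int\max(\log|g|_v,-M)\,d\mu_v$ is bounded below uniformly in $M$. Hence $\log|g|_v\in L^1(\mu_v)$ on compact subsets, because $\log|g|_v$ is bounded above on compacts. Since $\log|g|_v=-\infty$ exactly on $Y'^{\an}_v$, this forces $\mu_v(Y'^{\an}_v\cap E)=0$.

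The second assertion then follows immediately by countable subadditivity. A countable union of the sets $Y^{\an}_v$ is $\mu_v$-null, so if $\supp\mu_v$ were contained in it, then $\mu_v=0$.

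The main obstacle is the uniform integrability of $\log|g|_v$. The approximating measures converge only weakly, and $\log|g|_v$ is not continuous, so passing to the limit requires care. I would handle this with the standard upper semicontinuity argument: for continuous truncations $\max(\log|g|_v,-M)$ we have weak convergence, and each of these integrals is bounded below by the limit local intersection number. That number is finite because the Deligne pairing $\langle\mathcal O(\operatorname{div} g),\Lbar_1,\ldots\rangle$ has a continuous metric (Lemma \ref{fiber continuity}-style reasoning). A compactly supported cutoff is needed here, since $X$ is only quasiprojective.
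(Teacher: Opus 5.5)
Your outer reductions are fine: passing to the union of Galois conjugates, reducing to a hypersurface, and deducing the second assertion by countable subadditivity. There is one slip to fix: a rational $g$ may have poles, and then $\log|g|_v$ is not bounded above on compacts. You should work chart by chart, with $g$ regular on an affine open.

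\textbf{The gap.} The genuine gap is the one estimate that carries the whole lemma: a lower bound for $\int\chi\,\max(\log|g|_v,-M)\,d\mu_n$ that is uniform in $n$, where the $\mu_n$ are the model measures on the compactifications $X_n$. The atoms of $\mu_n$ can accumulate toward $(Y')^{\an}_v$, where $\log|g|_v\to-\infty$. So this uniform bound is essentially equivalent to the statement you are proving, and your justification for it does not apply:
\begin{enumerate}
\item[(a)] Once you insert the compactly supported cutoff $\chi$, which you say is necessary, $\int\chi\log|g|_v\,d\mu_n$ is no longer a local intersection number. No Poincar\'e--Lelong or Mahler-measure identity computes it.
\item[(b)] Without the cutoff, that identity lives on the projective $X_n$ and involves the closure of $\operatorname{div}(g)$ where it meets the boundary. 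It also requires controlling $(\log|g|_v)^+$ near infinity.
\item[(c)] The approximating metrics converge only in the boundary topology, that is, up to $\epsilon_n$ times a boundary Green function that is unbounded on $X_v^{\an}$, and not uniformly. So there is no ``continuity of local heights'' to invoke.
\item[(d)] The Deligne pairing $\langle\mathcal O(\operatorname{div} g),\Lbar_1,\ldots\rangle$ over $\Spec K$ is not defined when $X$ is only quasiprojective, since the pairing of \cite{Yuan:Zhang:quasiprojective} needs a projective morphism.
\item[(e)] Lemma \ref{fiber continuity} concerns continuous test functions, not $\log|g|_v$, which is singular exactly where you need control.
\end{enumerate}

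\textbf{What is missing, and what the paper does.} You need a genuinely local estimate of Chern--Levine--Nirenberg type, obtained by integrating by parts against $dd^c\chi$ for a smooth cutoff in the sense of Lagerberg forms. The alternative, a global Mahler-measure formula with uniformly convergent semipositive metrics in the style of Chambert-Loir--Thuillier, is available only in the projective setting. Your strategy is exactly that logarithmic-integrability argument, and the quasiprojective boundary is precisely where it breaks. The paper bypasses the issue entirely. Strongly nef metrics are locally psh-approachable \cite[\S3.6.7]{Yuan:Zhang:quasiprojective}. For such metrics, Chambert-Loir--Ducros prove that the top-degree Monge--Amp\`ere measure gives no mass to Zariski-closed subsets of smaller dimension \cite[Corollaire 16.9.4]{CLD}. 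That result is precisely the local estimate your argument presupposes.
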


\begin{proof}
The metrics are locally psh-approachable by \cite[\S3.6.7]{Yuan:Zhang:quasiprojective}.  Chambert-Loir and Ducros have shown that the top-degree measure does not charge algebraic subsets of smaller dimension; see \cite[Corollaire 16.9.4]{CLD}. 
\end{proof}

Because of Lemma \ref{scalar extension measures}, the measure proportionality \eqref{on X_y} holds on $C_y$ base changed to the algebraic closure of $\cH(y)$.   We are able to apply the non-archimedean rigidity theorem of Ji--Yap \cite[Theorem 1.1]{JiYap2026} to \eqref{on X_y} on each curve $C_y$, for $y$ in the support of the canonical measure on $B_{j,j_0}$.  By construction, the map $(f_{j_0})_y$ has positive Lyapunov exponent at $v_i$, so the rigidity theorem tells us that $C_y$ is dynamically special for the pair $((f_j)_y,(f_{j_0})_y)$.  But note that special relations are determined by countably many algebraic conditions.  Lemma \ref{supports} therefore implies that one of these conditions holds identically on $B_{j,j_0}$.  
For $N=2$, this says that $\cH_i$ itself is $\Phi_i$-special.  

For $N>2$, this implies that an equation for $\cH_i$ depends only on the coordinates $j$ and $j_0$, contradicting essentiality. In detail, we view $B_{j,j_0}$ as a parameter space for the family of pairs $((f_j, f_{j_0}), \{C_y\}_{y\in B_{j,j_0}})$ in $\bP^1\times\bP^1$ over the field $K_i$.  We will conclude that this family of curves is persistently special over $B_{j,j_0}$.  Let $S_B$ be the projection of $B_{j,j_0}$ to $S_i$.  Fixing any $s \in S_B(\Kbar_i)$ while letting the other parameters vary (over a Zariski open subset of $(\cH_i)_s^{\ell-i} \times  (\bP^1)^{N-2}$), we see that $\{C_y\}$ must be a family of curves depending only on $s$, because a single map $(f_j, f_{j_0})_s$ cannot have preperiodic curves existing in nontrivially varying families.  In other words, the original hypersurface $(\cH_i)_s$ in $\{s\} \times (\bP^1)^N$ must project to this curve $C_s$ in $(\bP^1\times\bP^1)$, but then the family $\cH$ cannot be essential.  

In either case we have contradicted our assumptions on $\cH$.  This proves \eqref{arithmetic induction volume}.

By Proposition \ref{natural identification}, \eqref{arithmetic induction volume} is exactly
	\[ \volhat\left( \widetilde L_{\Phi_{i-1}^{(q_i+1)}}|_{\cH_{i-1}^{(q_i+1)}} \right) = \volhat\left( \widetilde L_{\Phi_{i-1}^{(\ell - i + 1)}}|_{\cH_{i-1}^{(\ell - i + 1)}} \right) >0. \]
The induction therefore gives
	\[ \volhat\left( \widetilde L_{\Phi^{(\ell)}}|_{\cH^{(\ell)}} \right)>0. \]
By \eqref{geometric volume formula}, this is equivalent to $T_{\Phi^{(\ell)}}^{\wedge N\ell}\wedge[\cH^{(\ell)}]\neq 0$, and Theorem \ref{non-Lattes hypersurface} is proved.

\bigskip
\section{Latt\`es factors and mixed families}
\label{lattes section}

In this section, we complete the proof of Theorem \ref{hypersurfaces}.  After Theorem \ref{non-Lattes hypersurface}, it only remains to treat families of maps $\Phi$ for which some factor $f_j$ is Latt\`es.  When all of the maps $f_1, \ldots, f_N$ are Latt\`es, we appeal to Gao's non-degeneracy characterization \cite{GaoBettiRank, GaoBettiRankCorrigendum} (see also Habegger's earlier non-degeneracy arguments for powers of a non-isotrivial elliptic scheme over a curve \cite[\S3]{Habegger:special}) for our Theorem \ref{lattes nondegeneracy}.  For the mixed cases, where some $f_i$ is Latt\`es and another $f_j$ is not, we use complex-analytic arguments, similar to our proofs in \cite{DM:commonprep}, and we rely on the Dynamical Manin-Mumford theorem for curves in $\bP^1\times\bP^1$ over $\C$ from \cite{Ghioca:Nguyen:Ye:DMM1}.  

Throughout this section we work over $\C$.  Let $S$ be an irreducible quasiprojective variety of positive dimension.

\subsection{The Betti form and dynamical Green currents}  
Each elliptic curve $E$ over $\C$ is diffeomorphic to the real torus $\R^2/\Z^2$.  Suppose that $\cal E \to S$ is a family of elliptic curves over $S$.  For a simply-connected (analytic) open subset $U$ of $S$, the {\bf Betti map} is defined by choosing a real-analytic trivialization of the bundle $\cE_U = p^{-1}(U) \iso U \times (\R^2/\Z^2)$, compatible with the group structure on each fiber, and projecting to the torus component: 
	\[ \beta_U:  \cE_U \to \R^2/\Z^2. \]
It is known that $\beta_U$ has complex-analytic fibers in $\cE_U$.  These fibers define the {\bf leaves of the Betti foliation} of $\cE$; see, for example, \cite{ACZ:Betti}, \cite{CDMZ}, and \cite{UU2} for more information.   For any choices of trivializations $\beta_U$, the local forms $\beta_U^*(dx\wedge dy)$ glue to a positive closed $(1,1)$-form $\omega_{\Betti}$ on $\cE$.  It restricts to normalized Haar measure on every fiber and vanishes on the Betti leaves.  In particular, 
\begin{equation} \label{Betti square zero}
	\omega_{\Betti}\wedge\omega_{\Betti}=0.
\end{equation}

Suppose that
	\[ f:S\times\bP^1\rightarrow S\times\bP^1 \]
is a family of Latt\`es maps.  After shrinking $S$ and making a generically finite base change, there is an elliptic scheme
	\[ p_f:\cE^f\rightarrow S, \]
a fiberwise affine isogeny $\varphi_f:\cE^f\to\cE^f$, and a finite morphism
	\[ \psi_f:\cE^f\rightarrow S\times\bP^1 \]
such that $f\circ\psi_f=\psi_f\circ\varphi_f$. The dynamical Green current of the Latt\`es family is the push-forward of the Betti form as a current:
\begin{equation} \label{Betti Green}
	\hat T_f= (\psi_f)_*\omega_{\Betti}.
\end{equation}
Indeed, the two sides restrict to the equilibrium measure of $f_s$ on each fiber, and they satisfy the same invariance relation.  

On $S \times \bP^1$, recall that the Julia sets for the dynamically stable family $f$ are moving holomorphically over all of $S$ \cite{Mane:Sad:Sullivan}.  The leaves of the Betti foliation project to the leaves of this holomorphic motion, which define the {\bf $J$-stable foliation} in $S\times \bP^1$.

We will use the following form of what is called the ``theorem of the fixed part." 

\begin{proposition} \label{leaf algebraicity}
Let $\cE\to S$ be a non-isotrivial elliptic scheme, and let $\sigma:S\to\cE$ be an algebraic section.  If the Betti coordinates of $\sigma$ are locally constant, then $\sigma$ is torsion.  In particular, a leaf of the Betti foliation of $\cE$ is an algebraic subvariety of $\cE$ if and only if it is a torsion leaf.  If $\cE$ is isotrivial, then $\sigma^*\omega_{\Betti}=0$ if and only if $\sigma$ is constant after a finite base change and a trivialization of $\cE$.
\end{proposition}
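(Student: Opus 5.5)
The plan is to reduce to the classical theorem of the fixed part (Manin's theorem of the kernel) via the local description of Betti coordinates as real coordinates with respect to a period basis.

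\emph{First assertion.} Over a simply connected open $U\subset S(\C)$, choose a holomorphic basis of periods $\omega_1(s),\omega_2(s)$ and an elliptic logarithm $z(s)$ of $\sigma(s)$. Then the Betti coordinates $(b_1,b_2)\in\R^2$ are characterized by
\[ z=b_1\omega_1+b_2\omega_2 . \]
If $b_1,b_2$ are locally constant, then $z$ satisfies the same Picard--Fuchs (Gauss--Manin) equation as the periods, up to the usual inhomogeneous term. Equivalently, $\sigma$ lies in the kernel of Manin's map. Since $\cE$ is non-isotrivial, the theorem of the kernel (Manin; see also Chai, or Bertrand's presentation) says that this kernel consists of torsion sections. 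Alternatively, one can argue via monodromy: the global monodromy of the period lattice is Zariski dense in $\mathrm{SL}_2$ when $\cE$ is non-isotrivial. Analytic continuation of $\sigma$ along loops changes $(b_1,b_2)$ by monodromy modulo $\Z^2$. Local constancy, together with the fact that $\sigma$ is a global single-valued section, forces $(b_1,b_2)\in\R^2/\Z^2$ to be fixed by a finite-index subgroup of $\mathrm{SL}_2(\Z)$. Such fixed points are rational, so $\sigma$ is torsion.

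\emph{Second assertion.} A torsion leaf is the image of a torsion multisection and is therefore algebraic. Conversely, let a Betti leaf be algebraic. After a finite base change, its irreducible components become algebraic sections with constant Betti coordinates, so they are torsion by the first part.

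\emph{Third assertion (isotrivial case).} After base change, trivialize $\cE\iso S\times E$. Then the Betti map is simply the projection to $E\iso\R^2/\Z^2$, and $\omega_{\Betti}$ is pulled back from a Haar form on $E$. Hence $\sigma^*\omega_{\Betti}=(q\circ\sigma)^*\omega_E$, where $q\circ\sigma:S\to E$ is holomorphic. This pullback of a positive area form vanishes exactly when $q\circ\sigma$ has rank zero, that is, when it is constant.

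\emph{Main obstacle.} The delicate step is the first one, namely justifying that local constancy of the Betti coordinates yields torsion. I will cite Manin's theorem of the kernel, or equivalently use the Zariski density of monodromy for non-isotrivial families, rather than reprove it.
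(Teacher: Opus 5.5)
Your proposal is correct and follows the paper's route: the first assertion is Manin's theorem of the kernel, algebraic leaves are handled by passing to a finite cover on which the leaf becomes a section, and the isotrivial case follows from the product description of the Betti map. Your optional monodromy argument, that points of $\R^2/\Z^2$ fixed by a finite-index subgroup of $\mathrm{SL}_2(\Z)$ are rational, is a valid self-contained substitute for the citation; the paper's own alternative is instead a dynamical proof via stable heights of the associated Latt\`es family.
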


\begin{proof}
The first assertion is the relative Manin-kernel theorem for an elliptic scheme \cite{Manin1963}; see also \cite[\S1.3(i)]{ACZ:Betti}.  A dynamical proof follows from \cite[Theorem 1.1]{D:stableheight}, upon projecting to an associated family of Latt\`es maps on $\bP^1$.   The general statement about algebraic leaves follows by replacing $S$ with a finite branched cover.  The assertion in the isotrivial case follows immediately from the product description of the Betti map.
\end{proof}

\subsection{Weakly special hypersurfaces}
\label{weakly special quotient}
Suppose now that
	\[ \Phi=(f_1,\ldots,f_N):S\times(\bP^1)^N \rightarrow S\times(\bP^1)^N  \]
is a family of products of Latt\`es maps.  After a generically finite base change, the map $\Phi$ lifts to the abelian scheme
\begin{equation} \label{abelian scheme product}
	\cA=\cE^{f_1}\times_S\cdots\times_S\cE^{f_N}; 
\end{equation}
let $\psi:\cA\to S\times(\bP^1)^N$ be the quotient map.

Suppose that $\cH\subset S\times(\bP^1)^N$ is a weakly special essential hypersurface.  Recall this means that $\cH$ is the quotient of a family of cosets for the group structure on the fibers of $\cA$, and essentiality implies that each projection of $\cH$ forgetting a single factor $\bP^1$ is dominant.  An irreducible component of its lift in $\cA$ can be written
	\[ \cB+\sigma, \]
where $\cB\subset\cA$ is an abelian subscheme of codimension one and $\sigma$ is a section of $\cA \to S$.  The quotient
	\[ q:\cA\rightarrow\cQ:=\cA/\cB  \]
is an elliptic scheme, and the image of $\cB+\sigma$ is a section we denote by
\begin{equation} \label{weak section}
	\bar\sigma_{\cH}:=q\circ\sigma:S\rightarrow\cQ.
\end{equation}
Note that the hypersurface $\cH$ is $\Phi$-special if and only if $\bar\sigma_{\cH}$ is a torsion section.  If $\cQ$ is isotrivial, then we say the section $\bar\sigma_{\cH}$ is {\bf constant} if, after base change so that the elliptic scheme is a product, the resulting section is the graph of a constant function.

\begin{proposition} \label{weakly special bif}
Suppose that $\Phi :S\times(\bP^1)^N \rightarrow S\times(\bP^1)^N$ is a product of Latt\`es maps, and suppose that $\cH$ is a weakly special and irreducible essential hypersurface in $S \times (\bP^1)^N$.  There is a constant $a_{\cH}>0$ such that
	\[	T_{\Phi,\cH}  = a_{\cH}\;\bar\sigma_{\cH}^*\omega_{\cQ} \]
on $S$, where $\omega_{\cQ}$ is the Betti form on $\cQ$.  Consequently,
	\[  T_{\Phi,\cH}\wedge T_{\Phi,\cH}=0. \]
Moreover, $T_{\Phi,\cH}=0$ if and only if either $\cH$ is $\Phi$-special, or $\cQ$ is isotrivial (and therefore so is $\Phi$) and $\bar\sigma_{\cH}$ is constant.
\end{proposition}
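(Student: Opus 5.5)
We lift the computation to the abelian scheme $\cA$ of \eqref{abelian scheme product}, where the Green current is built from Betti forms, and then push forward along the quotient $q:\cA\to\cQ$. By \eqref{Betti Green}, after the generically finite base change we have
\[ \psi^*\hat T_\Phi=\sum_{i=1}^N \omega_i, \]
where $\omega_i$ is the pullback to $\cA$ of the Betti form on $\cE^{f_i}$. Since $\psi$ is finite, $\pi_*(\hat T_\Phi^{\wedge N}\wedge[\cH])$ equals a positive constant times $p_*\bigl((\sum_i\omega_i)^{\wedge N}\wedge[\cB+\sigma]\bigr)$, where $p:\cA\to S$ is the structure map. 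Replacing $\cH$ by one component of its lift only changes positive constants, and the generically finite base change is harmless because we compare currents on a dense open set and then extend, since the currents have continuous potentials.

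Next we use the global real-analytic structure: over a simply connected $U\subset S$, the Betti map identifies $\cA_U$ with $U\times(\R^2/\Z^2)^N$, and $\sum_i\omega_i$ is the pullback of a fixed translation-invariant closed $2$-form $\Omega$ on the real torus $T=(\R^2/\Z^2)^N$. The abelian subscheme $\cB$ has a locally constant Betti image, namely a subtorus $T_B\subset T$ of real codimension $2$ with quotient $T/T_B$ identified with the Betti torus of $\cQ$. The coset $\cB+\sigma$ then maps to $\{(s,t):t\in T_B+\beta(\sigma(s))\}$. Fiber integration over $T_B$ gives
\[ p_*\bigl(\beta^*\Omega^{\wedge N}\wedge[\cB+\sigma]\bigr)=\bar\sigma_\cH^*\Bigl(\int_{T_B}\Omega^{\wedge N}\Bigr), \]
and by translation invariance the fiber integral $\int_{T_B}\Omega^{\wedge N}$ is a constant multiple $a_\cH$ of the invariant volume form on $T/T_B$, whose pullback via the Betti map of $\cQ$ is $\omega_\cQ$.

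The main obstacle is showing $a_\cH>0$, that is, that $\Omega^{\wedge N}$ does not vanish on directions transverse to $T_B$ after contracting along $T_B$. Here we will use essentiality. Since $\Omega$ is a sum of area forms on the factors $(\R^2/\Z^2)$, the quantity $a_\cH$ is a sum of nonnegative terms, and it vanishes only if $T_B$ fails to project onto the product of $N-1$ factors for every choice of omitted coordinate. Essentiality says exactly that each such projection of $\cB$ is surjective, and surjectivity of $\cB\to\prod_{i\neq j}\cE^{f_i}$ makes the corresponding term equal to a positive covolume. So we will write $a_\cH=\sum_j c_j$ with $c_j>0$.

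Given the formula, the remaining claims follow quickly. First, $T_{\Phi,\cH}\wedge T_{\Phi,\cH}=0$ because $\omega_\cQ\wedge\omega_\cQ=0$ by \eqref{Betti square zero}, pulled back under $\bar\sigma_\cH$. Second, $T_{\Phi,\cH}=0$ if and only if $\bar\sigma_\cH^*\omega_\cQ=0$, which holds if and only if the Betti coordinates of $\bar\sigma_\cH$ are locally constant, since $\omega_\cQ$ restricts to a positive form on a holomorphic section except along Betti leaves. By Proposition \ref{leaf algebraicity}, when $\cQ$ is non-isotrivial this means $\bar\sigma_\cH$ is torsion, i.e. $\cH$ is $\Phi$-special. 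When $\cQ$ is isotrivial it means $\bar\sigma_\cH$ is constant, or it may still be torsion, which is the special case. Finally, $\cQ$ is a quotient of $\cA$ and, by essentiality, isogenous to each factor $\cE^{f_i}$ (its map from each factor is nonzero). Hence isotriviality of $\cQ$ forces every $\cE^{f_i}$, and therefore $\Phi$, to be isotrivial.
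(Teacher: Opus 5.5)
Your proposal is correct and follows essentially the same route as the paper. You lift to $\cA$, write the pulled-back Green current as $\sum_i\omega_i$, and push $(\sum_i\omega_i)^{\wedge N}\wedge[\cB+\sigma]$ down to a positive multiple of $\bar\sigma_\cH^*\omega_\cQ$; the paper phrases this middle step via the isogenies $\cE^{f_i}\to\cQ$ rather than your fiber integration over $T_B$. You then finish, as the paper does, with $\omega_\cQ\wedge\omega_\cQ=0$, Proposition \ref{leaf algebraicity}, and the isogenies supplied by essentiality.

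One small point: the step you flag as the main obstacle is automatic. Since $\Omega^{\wedge N}$ is a nowhere-vanishing invariant top-degree form on $T$ (equivalently, $\Omega$ is a flat K\"ahler form on the complex subtorus $T_B$, so $\int_{T_B}\Omega^{\wedge(N-1)}>0$), you get $a_\cH>0$ without essentiality, which is really needed only for the isotriviality of $\Phi$.
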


\begin{proof}
Write $\omega_i$ for the Betti form on $\cE^{f_i}$ and also for its pullback to $\cA$.  From \eqref{Betti Green}, the pullback of $\hat T_\Phi$ to $\cA$ is, up to positive constants, the sum
	\[ \omega_1+\cdots+\omega_N. \]
Since $\cH$ is essential, the homomorphism from every factor $\cE^{f_i}$ to $\cQ$ induced by $q$ is nonzero, and hence is an isogeny.  It follows that each $\omega_i$ is a positive multiple of the pullback of $\omega_{\cQ}$ to the corresponding factor.  Using \eqref{Betti square zero}, expanding the $N$-th wedge power of $\hat T_\Phi$, and applying the projection formula to $\cB+\sigma$, we obtain
	\[ T_{\Phi,\cH} = \pi_*\left(\hat T_\Phi^{\wedge N}\wedge[\cH]\right)  = a_{\cH}\,\bar\sigma_{\cH}^*\omega_{\cQ}  \]
for some positive constant $a_{\cH}$.  The square $T_{\Phi, \cH} \wedge T_{\Phi, \cH}$ vanishes by \eqref{Betti square zero}.   We also deduce from $T_{\Phi,\cH} = a_{\cH}\,\bar\sigma_{\cH}^*\omega_{\cQ}$ that $T_{\Phi, \cH} = 0$ whenever $\cH$ is $\Phi$-special or if $\cQ$ is isotrivial and the section $\bar \sigma_{\cH}$ is constant. 

If $T_{\Phi, \cH} = 0$, then the restriction of a Betti map $\beta_U$ for $\cQ$ to the image of $\bar\sigma_{\cH}$ must be constant over any analytic $U$ where we can define the Betti map.  From Proposition \ref{leaf algebraicity}, this implies that either $\cQ$ is isotrivial or $\bar\sigma_{\cH}$ is a torsion section.  

Finally, we observe that if $\cQ$ is isotrivial, then each of the factors $\cE^f$ must be isotrivial, implying that $\Phi$ is itself isotrivial.  
\end{proof}

\subsection{Nondegeneracy characterization when all maps are Latt\`es} 
When all of the maps $f_i$ are Latt\`es maps, we rely on Gao's non-degeneracy characterization for abelian schemes \cite{GaoBettiRank, GaoBettiRankCorrigendum} (see also Habegger's \cite[\S3]{Habegger:special}) to prove the implications in Theorem \ref{hypersurfaces}.  More precisely:

\begin{theorem} \label{lattes nondegeneracy}
Suppose that $S$ is a quasiprojective variety over $\C$ of dimension $\ell \geq 1$, $\Phi : S\times(\bP^1)^N \rightarrow S\times(\bP^1)^N$ is a product of Latt\`es maps, and suppose that $\cH$ is an essential hypersurface in $S \times (\bP^1)^N$.  Assume, in addition, that $(\Phi, \cH)$ is maximally varying and $\cH$ is not $\Phi$-special.  Then
	\[  \hat T_{\Phi^{(\ell)}}^{\wedge N\ell} \wedge[\cH^{(\ell)}]\neq0 \]
unless $\cH$ is weakly special and $\ell = \dim S > 1$.
\end{theorem}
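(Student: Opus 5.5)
The plan is to lift everything to the abelian scheme $\cA$ of \eqref{abelian scheme product} and translate the nonvanishing of the top wedge power into Gao's non-degeneracy criterion for subvarieties of abelian schemes. After a generically finite base change, $\Phi^{(\ell)}$ lifts to the fiberwise isogeny on $\cA^{(\ell)}=\cA\times_S\cdots\times_S\cA$, which is an abelian scheme of relative dimension $N\ell$ over $S$. Let $\cY\subset\cA^{(\ell)}$ be an irreducible component of $\psi^{-1}(\cH)^{(\ell)}$. By \eqref{Betti Green}, the pullback of $\hat T_{\Phi^{(\ell)}}$ under $\psi^{(\ell)}$ is, up to positive constants, the sum of the Betti forms of the elliptic factors; this sum is a Betti form $\omega$ of $\cA^{(\ell)}$. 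The projection formula then shows that
\[ \hat T_{\Phi^{(\ell)}}^{\wedge N\ell}\wedge[\cH^{(\ell)}]\neq 0 \iff \omega^{\wedge\dim\cY}\wedge[\cY]\neq0. \]
The right-hand condition is exactly Gao's non-degeneracy of $\cY$, in the sense of \cite{GaoBettiRank, GaoBettiRankCorrigendum}.

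\emph{The weakly special case with $\ell=1$.} Here I would argue directly. Since $\cH$ is an essential hypersurface, the Remark in \S\ref{stableintro} gives $\mu_{\Phi,\cH}=c\,T_{\Phi,\cH}$ when $\dim S=1$. Proposition \ref{weakly special bif} gives $T_{\Phi,\cH}=a_\cH\,\bar\sigma_\cH^*\omega_\cQ$, and this vanishes only if $\cH$ is $\Phi$-special, or if $\cQ$ and $\Phi$ are isotrivial with $\bar\sigma_\cH$ constant. The first possibility is excluded by hypothesis. The second makes $(\Phi,\cH)$ isotrivial, because $\cH$ is then the quotient of a constant coset; this contradicts maximal variation since $\dim S>0$. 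Hence the measure is nonzero.

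\emph{The case where $\cH$ is not weakly special.} Here I would invoke Gao's criterion, in the form (after Habegger for fibered powers of elliptic schemes): $\cY$ is degenerate if and only if there is an abelian subscheme $\cB\subset\cA^{(\ell)}$ such that, for the quotient $q_\cB:\cA^{(\ell)}\to\cA^{(\ell)}/\cB$, the image of $\cY$ has too small ``Betti rank''. Concretely, this means
\[ \dim q_\cB(\cY)-\dim\rho(q_\cB(\cY))<\dim\cY-\dim\cB_s, \]
where $\rho$ denotes the modular part recording the variation of the fibers. Maximal variation of $(\Phi,\cH)$ rules out the failure coming from the modular term when $\cB=0$, by the same evaluation-map argument as in Theorem \ref{isotrivial case}: the Chow map $s\mapsto H_s$ together with the $j$-invariants is generically finite. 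For $\cB\neq0$, I would use the structure of abelian subschemes of a fibered power of a product of elliptic schemes. Such a $\cB$ is cut out by $\mathrm{End}$-linear relations among the coordinates. Degeneracy along $\cB$ would force each fiber $Y_s$, and hence each fiber of the lift of $H_s$, to be a union of translates of a positive-dimensional abelian subvariety, with the quotient varying in a way of smaller Betti rank. Since $\cH^{(\ell)}$ is a product of copies of the same hypersurface, the relations defining $\cB$ can be reduced to a single copy of $\cA$. There a codimension-one lift of $H_s$ that is invariant under translation by a positive-dimensional abelian subvariety $B_s$ must be a coset of a codimension-one subvariety once we also use essentiality. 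Concretely, the degeneracy condition forces $q_\cB(\cY)$ to be a section or a union of sections, which makes $\cH$ weakly special. This contradiction gives non-degeneracy.

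The main obstacle is this last translation. One must match Gao's Betti-rank inequality, stated for arbitrary abelian subschemes $\cB$ of the fibered power, with the hypersurface geometry. The argument has to show that any degenerating $\cB$ either comes from a coset structure on $H_s$ itself, making $\cH$ weakly special, or is excluded by maximal variation. I would handle this first for $\ell=1$, where Habegger's arguments in \cite[\S3]{Habegger:special} apply almost verbatim. I would then pass to the general $\ell$ using the product structure of $\cY$ and the fact that $\cB$ projects onto abelian subschemes of the individual copies of $\cA$.
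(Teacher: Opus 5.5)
Your lift to $\cA^{(\ell)}$ matches the paper. So does your treatment of the weakly special case with $\ell=1$, via Proposition \ref{weakly special bif} and $\mu_{\Phi,\cH}=c\,T_{\Phi,\cH}$ for hypersurfaces. The non-weakly-special case, however, has a genuine gap.

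First, the inequality you quote cannot be the degeneracy condition. For $\cB=0$ it reads $\dim\rho(\cY)>0$, which would make every family with nonconstant modular image degenerate. The elementary bound behind Gao's criterion is $\rank_\R(d\beta|_{\cY})\le 2\dim\iota(q_\cB(\cY))+2\dim\cB_s$, where $\iota$ is the modular map to the universal abelian variety. So for $\cB=0$ the obstruction is failure of generic finiteness of $\iota|_{\cY}$.

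More seriously, the dichotomy you aim for is false, and the steps leading to it do not hold. You want a degenerating $\cB$ either to give a coset structure on the lift of $H_s$ or to be ruled out by maximal variation. But:
\begin{enumerate}
\item Abelian subschemes of $\cA^{(\ell)}$ need not be products over the $\ell$ copies; diagonals and graphs of isogenies between copies occur. So you cannot reduce to a single copy of $\cA$.
\item Degeneracy along $\cB$ does not by itself make $\cY$ invariant under $\cB$.
\item An essential divisor in $E^N$ with positive-dimensional stabilizer need not be a coset. For example, pull back an ample curve $D\subset E^2$ that is not a coset along $E^3\to E^2$, $(x,y,z)\mapsto(y-x,z-x)$. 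In a suitable family varying $D$, the image in $(\bP^1)^3$ is essential, maximally varying, and not weakly special. Yet its lift is invariant under the diagonal elliptic curve.
\end{enumerate}

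The missing idea is to quotient by the stabilizer instead of trying to exclude it. The paper lets $\mathcal{C}$ be the identity component of the stabilizer of the generic fiber of the lift $\cX$, and writes $\cX=q^{-1}(\cY)$ for $q:\cA\to\cQ=\cA/\mathcal{C}$. Since $\cX$ is not a coset, $g=\dim(\cQ/S)\ge 2$, and $\cY$ is a family of irreducible divisors with finite stabilizer. These are the hypotheses of Gao's fiber-power theorem \cite[Theorem 10.1(i)]{GaoBettiRank}, which handles all abelian subschemes of the fibered power at once.

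The remaining input is generic finiteness of the modular map on $\cY^{(\ell)}$. This is not the evaluation-map argument of Theorem \ref{isotrivial case}, which only covers the isotrivial case. One first shows, from maximal variation and essentiality, that the pair-moduli map of $(\cQ,\cY)$ is generically finite. One then bounds $\dim\bigl(\cY^{(\ell)}\times_{\mathcal U_{g\ell}}\cY^{(\ell)}\bigr)$: coinciding divisors contribute at most $\ell+\ell(g-1)$, and distinct ones at most $2\ell+\ell(g-2)$. Both equal $\ell g=\dim\cY^{(\ell)}$.

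Gao's theorem then gives Betti rank $2g\ell$ on $\cY^{(\ell)}$. The $\mathcal{C}^{(\ell)}$-cosets contribute the remaining $2\ell(N-g)$ directions, giving the full rank $2N\ell$.
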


\begin{proof}
As in \S\ref{weakly special quotient}, we make a finite base change so that we can lift $\cH$ to a subvariety of an abelian scheme $\cA \to S$.  As in the proof of Proposition \ref{weakly special bif}, the current $T_{\Phi^{(\ell)}}^{\wedge N\ell}$ coincides, up to a positive constant and finite pushforward, with the appropriate top exterior power of the Betti form on the corresponding fiber power of the lift.  Write $\cX\subset \cA$ for the lift of $\cH$, and suppose that $\cH$ is not
weakly special. We aim to use Gao's fiber-power theorem \cite[Theorem 10.1(i)]{GaoBettiRank}, and to this end we will ensure that its hypotheses hold for a subscheme induced by $\cX$ on an appropriate quotient of $\cA$. Let $\mathcal{C}$ be the identity component of the stabilizer of the generic fiber of $\cX$. After further shrinking and base change, we form the quotient
	\[q:\cA\rightarrow\cQ:=\cA/\mathcal{C},\]
so that $\cX=q^{-1}(\cY)$ for $\cY=q(\cX)\subset \cQ$.  Write $g$ for the relative dimension of
$\cQ$.  Since $\cX$ is not a coset, $g\geq2$, and the fibers of $\cY$ are irreducible divisors. Moreover, they have finite stabilizer, so that $\cY$ is not invariant under translation by any positive-dimensional abelian subscheme. In particular, conditions (a), (b), and (c) of Gao's theorem hold for $\cY$.

We first observe that the map from $S$ to a moduli space of pairs of abelian varieties and subvarieties, associated to the pairs $(\cQ, \cY)$, is generically finite.  Indeed, if not, then we can restrict to a positive-dimensional component of a general fiber where the pair \((\mathcal Q,\cY)\) is constant.  Since \(\mathcal H\) is essential, every elliptic factor maps nontrivially to \(\mathcal Q\), and hence is isotrivial on this component.  After a finite base change, \(q\) is constant as well. Thus \((\Phi,\mathcal H)\) is constant up to coordinatewise conjugacy on this component, contradicting maximal variation.

Now let $\iota^{(\ell)}$ denote the induced moduli map for the fiber power of $\cQ^{(\ell)}$ to a universal abelian variety $\mathcal{U}_{g\ell}$, as in \cite{GaoBettiRank}.  We will now observe that its restriction to $\cY^{(\ell)}$ is generically finite.  Indeed, after shrinking $S$, consider the locus in $\cY^{(\ell)} \times_{\mathcal{U}_{g\ell}} \cY^{(\ell)}$ of parameters \(s,t\in S\) for which \(Q_s\) and \(Q_t\) are isomorphic as polarized abelian varieties, together with \(\ell\) points belonging to both \(Y_s\) and \(Y_t\) under such an identification.  If the two divisors coincide, generic finiteness of the moduli map of pairs shows that
this locus has dimension at most
	\[ \ell+\ell (g-1)= \ell g. \]
If they are distinct, their intersection has dimension at most \(g-2\), so the corresponding locus has dimension at most
	\[ 2\ell +\ell (g-2)=\ell g. \]
Since \(\dim\mathcal Y^{(\ell)}=\ell g\), the fiber product of \(\mathcal Y^{(\ell)}\) with itself over its modular image has dimension at most \(\ell g\). Thus \(\iota^{(\ell)}|_{\mathcal Y^{(\ell)}}\) is generically finite.

Gao's fiber-power theorem \cite[Theorem 10.1(i)]{GaoBettiRank}, now gives 
	\[\operatorname{rank}_{\R} \left(d\beta_{\cQ}^{(\ell)}|_{Y^{(\ell)}}\right)=2g\ell.\]
Since $\cX=q^{-1}(\cY)$, the Betti map on $\cX^{(\ell)}$ also has the full rank $2\ell(N-g)$ in the
directions of $\mathcal{C}^{(\ell)}$.  Its rank is therefore $2g\ell+2\ell(N-g)=2N\ell$, proving the required nonvanishing.

From Proposition \ref{weakly special bif}, the maximal variation implies that a non-special weakly special $\cH$ will have $T_{\Phi, \cH} \not=0$, but $(T_{\Phi, \cH})^{\wedge 2} = 0$.  This completes the proof of the theorem.  
\end{proof}

\subsection{Mixed Latt\`es and non-Latt\`es factors}
Suppose now that
	\[ \Phi=(f_1,\ldots,f_N):S\times(\bP^1)^N \rightarrow S\times(\bP^1)^N  \]
is a family of products where at least one $f_i$ is a Latt\`es family and at least one $f_j$ is not.  

The key idea in this case is to use the construction of rigid repellers, similar to our arguments in \cite{DM:commonprep}, and the analytic characterization of positivity in Proposition
\ref{analytic characterization}.  

One step in the argument relies on the following result about families of maps on $\bP^1$, following Baker's works \cite{Baker:ff,Baker2006} and proved explicitly in \cite{Mavraki:Schmidt}.  (An alternative proof of Proposition \ref{bounded degree} follows from the complex-analytic arguments in \cite{D:stableheight}.)  

\begin{prop} \label{bounded degree}  \cite[Corollary 4.9]{Mavraki:Schmidt}
Let $K$ be the function field of a complex curve, and let $f\in K(z)$ be non-isotrivial of degree at least $2$.  For every integer $D>0$, the set
	\[  \{P\in\Preper(f):[K(P):K]\leq D\}  \]
is finite.
\end{prop}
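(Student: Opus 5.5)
The plan is to argue by contradiction, combining height bounds with the geometric (complex-analytic) description of the canonical height over $K=\C(B)$, where $B$ is a smooth projective curve. Suppose the set of preperiodic $P$ with $[K(P):K]\le D$ is infinite. Each such point, together with its Galois conjugates, is an irreducible multisection $\Gamma_P\subset B\times\bP^1$ of degree $\le D$ over $B$. The first step is the Call--Silverman comparison $|\hat h_f-h|\le C_f$ on $\bP^1(\Kbar)$, with $C_f$ independent of the field extension. Since $\hat h_f(P)=0$, this bounds the naive height of $P$, and hence the bidegree of the curve $\Gamma_P$ in $B\times\bP^1$.

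The second step uses this bound. All such $\Gamma_P$ then lie in finitely many components of a Hilbert (or Chow) scheme $\mathcal Z$ of curves in $B\times\bP^1$ of bounded bidegree. Over $\C$, bounded height does not imply finiteness, so we pass to the Zariski closure $Y\subset\mathcal Z$ of the points $[\Gamma_P]$. If infinitely many preperiodic $P$ occur, some component $Y_0$ of $Y$ has positive dimension and contains a Zariski-dense set of preperiodic multisections.

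The third step identifies the height of a multisection with a current mass. In the complex picture, the height of the point attached to a multisection $\Gamma$ is, up to normalization, $\hat h_f(\Gamma)=\int_\Gamma \hat T_f$. This is essentially the stable-height description of \cite{D:stableheight}; equivalently, it is the Deligne pairing of $\Lbar_f$ restricted to $\Gamma$, computed from the curvature form. The function $[\Gamma]\mapsto\int_\Gamma\hat T_f$ is continuous on $\mathcal Z$, because $\hat T_f$ has continuous local potentials, and it is in fact pluriharmonic-type in flat families. It vanishes on the dense set of preperiodic members of $Y_0$, and hence vanishes identically on $Y_0$. Thus we obtain a positive-dimensional algebraic family of curves $\Gamma_y$, $y\in Y_0$, each with $\hat T_f|_{\Gamma_y}=0$. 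Because the family is non-constant and each member dominates $B$, the curves $\Gamma_y$ sweep out a nonempty open subset $\Omega\subset B\times\bP^1$.

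The final and hardest step deduces isotriviality of $f$ from this. On $\Omega$, the current $\hat T_f$ vanishes when restricted to every curve of a covering family transverse to the vertical direction. Together with $\hat T_f\wedge\hat T_f=0$ from \eqref{square zero}, this should force $\hat T_f$ to be locally of the form $\Psi^*\mu$ for a holomorphic foliation by the leaves $\Gamma_y$. Concretely, each vertical slice carries $\mu_{f_s}$, and these measures are transported by the holomorphic motion along the leaves. Since every leaf is passive and the leaves fill an open set, the Julia set moves holomorphically through an open family. Further, the conjugating motion is algebraic: it is given by the bounded-degree correspondence defined by $Y_0$, and it extends to a neighborhood of the Julia set that carries preperiodic points to preperiodic points. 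At this point I would invoke McMullen's rigidity theorem \cite[Corollary 2.3]{McMullen:families}, using that the multipliers of the cycles in the Julia set are constant along the motion, to conclude that $f$ is isotrivial or Latt\`es. The Latt\`es case needs a separate check: there, passive algebraic multisections are torsion by Proposition \ref{leaf algebraicity}, so only finitely many have bounded degree unless the elliptic family is isotrivial. Either way, we contradict non-isotriviality.

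I expect the main obstacle to be making this last step rigorous: showing that vanishing of $\hat T_f$ along an algebraic covering family of curves forces all multipliers to be locally constant. If a direct argument is awkward, I would fall back on Baker's non-archimedean approach \cite{Baker:ff,Baker2006}. Non-isotriviality gives a place $v$ of $K$ where $f$ has no potential good reduction. There, the local canonical height separates points, which bounds the number of $v$-adically small points of degree $\le D$.
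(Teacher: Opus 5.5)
Your argument breaks in the third step, and this is not a technicality: the step assumes the whole content of the proposition. The function $y\mapsto \hat h_f(P_y)$ on an algebraic family of irreducible multisections is neither continuous nor pluriharmonic. The current $\hat T_f$ has continuous potentials only over the locus $B^0\subset B$ of good reduction. Its potentials blow up along the bad fibers, which exist because $f$ is non-isotrivial, and mass of $\hat T_f|_{\Gamma_y}$ can escape into them as $y$ moves.

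Here is a concrete example. Take $K=\C(t)$, $f(z)=z^2+t$, and $P_y=s+y$ with $s^2=-t$. The curves $\Gamma_y=\{(z-y)^2=-t\}$ are then all irreducible of the same bidegree. One computes $f(P_y)=2ys+y^2$ and $f^2(P_y)=(4y^2-1)s^2+4y^3s+y^4$. Tracking degrees in $s$ gives $\hat h_f(P_y)=1/4$ for generic $y$, but $\hat h_f(P_{\pm1/2})=1/8$, because the leading coefficient cancels at the second iterate.

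In general, $h(f^n(P_y))$ is constant off a proper Zariski-closed $Z_n\subset Y_0$ and can only drop on $Z_n$. So $\hat h_f(P_y)$ takes its generic value off the countable union $\bigcup_n Z_n$ and can be smaller on it. Nothing prevents that union from being Zariski dense, or the preperiodic members from lying in it. Hence vanishing on a Zariski-dense countable set of members says nothing about the generic member. Even for an honestly continuous function, Zariski density of a countable set is not density in the classical topology. Ruling out infinitely many height-zero members in a family of positive generic height is exactly what has to be proved.

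The paper does not prove the statement; it imports it from \cite[Corollary 4.9]{Mavraki:Schmidt}, following Baker \cite{Baker:ff,Baker2006}. The essential input there is non-archimedean. Non-isotriviality gives a place of $K$ where $f$ has no potential good reduction, and there the canonical measure is not a point mass. Potential theory on the Berkovich line at that place, combined with the decomposition of heights into local terms, then yields $\epsilon>0$ such that only finitely many points of degree $\le D$ have $\hat h_f\le\epsilon$. Your closing fallback points in this direction, but ``the local canonical height separates points'' is not the mechanism, and it does not repair the third step. The paper also mentions a complex-analytic alternative via \cite{D:stableheight}, so your overall strategy is not hopeless. It must, however, confront the degeneration at the bad fibers rather than assume continuity.

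Note also that your fourth step, which you flagged as the main obstacle, is actually the easy one. If a positive-dimensional family of multisections all had height zero, each member would be preperiodic, by the equivalence of height zero, stability and preperiodicity for non-isotrivial families \cite{D:stableheight}. But $\Preper(f)\subset\bP^1(\Kbar)$ is countable: each point solves one of countably many equations $f^n(z)=f^m(z)$, each with finitely many roots. No foliation or McMullen argument is needed there.
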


\begin{theorem} \label{mixed prerepelling}
Suppose that $S$ is a quasiprojective variety over $\C$ of dimension $\ell \geq 1$, $\Phi= (f_1, \ldots, f_N) : S\times(\bP^1)^N \rightarrow S\times(\bP^1)^N$ is an algebraic family, and $\cH$ is an essential hypersurface in $S \times (\bP^1)^N$.  Assume there exist $i \not= j$ so that $f_i$ is a Latt\`es family and $f_j$ is not.  Assume also that $(\Phi,\cH)$ is maximally varying.  Then every nonempty analytic open subset of $S$ contains a parameter over which
$\cH^{(\ell)}$ has a rigid repeller for the pair $(\Phi^{(\ell)}, \cH^{(\ell)})$.  Consequently,
	\[ \hat T_{\Phi^{(\ell)}}^{\wedge N\ell} \wedge[\cH^{(\ell)}]\neq0.  \]
In fact, the bifurcation measure $\mu_{\Phi, \cX}$ on $S$ has $\supp \mu_{\Phi, \cX} = S(\C)$.
\end{theorem}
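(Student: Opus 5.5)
The plan is to build rigid repellers directly, using Proposition \ref{analytic characterization} applied to the pair $(\Phi^{(\ell)},\cH^{(\ell)})$. Note that $\cH^{(\ell)}$ has codimension $\ell=\dim S$ in $S\times(\bP^1)^{N\ell}$, so that proposition converts the existence of a rigid repeller over a parameter in a given open set $U\subset S$ into positivity of the measure near that parameter. Because $\mu_{\Phi,\cH}$ is the pushforward of this measure, this would give $\supp\mu_{\Phi,\cH}=S(\C)$. Since rigidity is an open condition on transversality, it suffices to find, over some $s_0\in U$, points $y^{(1)},\dots,y^{(\ell)}\in H_{s_0}$ with two properties. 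First, each coordinate of each $y^{(k)}$ is preperiodic to a repelling cycle. Second, the induced map from a neighborhood of $s_0$, obtained by following the $\ell$ conditions ``$\Phi_s^{n}(y^{(k)}(s))$ lies on $\Phi^{n}_s(H_s)$'' along holomorphically moving preimages of repelling cycles, has an isolated zero at $s_0$.

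The construction proceeds coordinate by coordinate. Fix the Latt\`es index $i$ and the non-Latt\`es index $j$. For each $k$, choose the coordinates of $y^{(k)}$ in all factors other than $j$ to be points that are preperiodic to repelling cycles and follow their holomorphic continuations. In the Latt\`es factors these can be taken in the stable family's Julia set, moving along the $J$-stable foliation, so they are harmless. Essentiality of $H_s$ then lets us solve for the $j$-th coordinate $z_j^{(k)}(s)$, which is a multivalued algebraic function of $s$ determined by $\cH$ and the chosen moving points. The task is to show that, for suitable choices, the $\ell$ functions $s\mapsto z^{(k)}_j(s)$ become preperiodic for $(f_j)_s$ at an isolated, transverse common parameter in $U$.

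For this I would use the bifurcation theory of the single family $f_j$ on $S$. Proceed by restricting to generic curves through $U$ and inducting on $\ell$, as in \cite{DM:commonprep}. Along a curve $\gamma\subset S$, the function $z_j^{(k)}$ is a marked point, and there are two cases.

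\emph{Case 1: the marked point is active for $f_j$.} Then, by Montel-type arguments, it becomes preperiodic to a repelling cycle at a dense set of parameters, and it does so with the required isolation.

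\emph{Case 2: the marked point is passive everywhere on $\gamma$.} If $f_j|_\gamma$ is non-isotrivial, the stable-height dichotomy of \cite{D:stableheight}, or equivalently Proposition \ref{bounded degree}, forces it to be preperiodic. Varying the Latt\`es-coordinate input over the infinitely many choices of torsion points of bounded degree then contradicts Proposition \ref{bounded degree}, unless $\cH$ does not depend on the $j$-th coordinate, which contradicts essentiality. If instead $f_j|_\gamma$ is isotrivial, maximal variation says that either the Latt\`es factor or $\cH$ moves along $\gamma$. Dynamical Manin--Mumford for curves in $\bP^1\times\bP^1$ \cite{Ghioca:Nguyen:Ye:DMM1} then rules out persistent preperiodicity of a whole family of preperiodic Latt\`es points pushed to a fixed non-Latt\`es map: a Latt\`es map and a non-Latt\`es map admit no special curve relation. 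In this case we again get activity.

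Having activity for each $k$, I would perturb the choices independently for the $\ell$ copies. The $\ell$ activity loci then intersect properly, so that the resulting collision parameter is isolated; this is the proper-intersection step of \cite[\S4]{DM:commonprep}, carried out via \cite{Dujardin:higherbif}-style laminar currents.

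The main obstacle is this last step. One must show that the $\ell$ preperiodicity conditions can be made independent, giving an isolated zero rather than a positive-dimensional common zero set. I would first try to exploit the freedom in the Latt\`es coordinates: choosing torsion points of growing order gives infinitely many independent algebraic conditions, and maximal variation of $(\Phi,\cH)$ ensures that their differentials in $s$ span.
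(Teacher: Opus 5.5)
There is a genuine gap at exactly the step you call the main obstacle, and the remedy you sketch does not close it. Activity of each marked point $z_j^{(k)}$ on generic curves through $U$, even after independently perturbing the inputs, gives no control over how the $\ell$ collision hypersurfaces in $S$ meet. They could all be cut out by functions of the same transverse direction. Nonvanishing of the wedge product of the corresponding bifurcation currents is precisely what has to be proved, so appealing to laminar currents or to ``spanning differentials'' is circular. Maximal variation is a generic-finiteness statement about the moduli map and says nothing directly about such differentials.

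The missing idea, which the paper takes from \cite{DM:commonprep}, is to run the one-parameter argument \emph{inside the previous collision locus} rather than on generic curves of $S$. Having found a proper collision for the first copy at $s_1\in U$, let $S_1$ be the codimension-one algebraic subvariety through $s_1$ along which that pre-repelling point stays in $\cH$. Shrink $U$ so the cycle stays repelling on $U\cap S_1$. Then repeat the argument for the restricted family on $S_1$ to get $s_2\in U\cap S_1$ and $S_2$ of codimension two, and continue down to an isolated $s_\ell$. This works because the hypotheses pass to every positive-dimensional subvariety:
\begin{itemize}
\item after removing the Latt\`es locus of $f_j$, the non-Latt\`es factor stays non-Latt\`es;
\item maximal variation of $(\Phi,\cH)$ makes the pair non-isotrivial on each $S_k$;
\item for general $N$ you may need to re-choose $i,j$ and the auxiliary repelling points at each stage, so that the induced curve family stays non-isotrivial over $S_k$ (Lemma~\ref{curve restriction}).
\end{itemize}
Your Case 1/Case 2 dichotomy, if proved on \emph{every} curve along which the pair is non-isotrivial, would feed directly into this induction.

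Case 2 also needs repair.
\begin{itemize}
\item If the Latt\`es factor $f_i$ is non-isotrivial along $\gamma$, its preperiodic inputs of bounded degree over $\C(\gamma)$ are finite in number (apply Proposition~\ref{bounded degree} to $f_i$ itself), so varying them gives no contradiction for $f_j$. Argue fiberwise instead, as the paper does. If every input yields a preperiodic $z_j$, then at a fixed $s_0$ the curve slice carries infinitely many preperiodic points, and the equidistribution argument of \cite{Ghioca:Nguyen:Ye:DMM1} forces $(f_j)_{s_0}$ to be Latt\`es.
\item If $f_j$ is isotrivial along $\gamma$, passivity makes $z_j$ constant after trivialization, not preperiodic, so Manin--Mumford is not the relevant input. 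The contradiction comes from Proposition~\ref{leaf algebraicity}: by density and continuity of the holomorphic motion, every leaf of the Latt\`es $J$-stable foliation would become algebraic. When all inputs are themselves constant, it comes instead from non-isotriviality of the slice family.
\end{itemize}
The paper avoids this bookkeeping by running the dichotomy in the opposite direction. It pushes repelling points of $f_j$ through the curve correspondence into the Latt\`es coordinate, where failing to meet a repelling leaf properly means coinciding with a leaf, which must then be algebraic.
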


\begin{proof}
Let us first assume that $N=2$, so $\Phi = (f_1, f_2)$ with $f_1$ Latt\`es.  By replacing $S$ with a Zariski open subset, we can assume that $f_2$ is never Latt\`es over $S$.  Suppose that $\cal C \subset S \times (\bP^1\times\bP^1)$ is a family of essential curves over $S$ and that the pair $(\Phi, \cal C)$ is maximally varying.  

We aim to show that this set-up always leads to positive bifurcation measure $\mu_{\Phi, \cX}$ with $\supp \mu_{\Phi, \cX} = S(\C)$.  

We first assume that $f_1$ is non-isotrivial over $S$.  Fix any parameter $s_0 \in S$, and choose a small analytic open neighborhood $U$ of $s_0$ and $V \subset \bP^1$ for which $J(f_{2,s}) \cap V \neq \emptyset$ for all $s \in U$ and so that some local branch of $\cal C$ is the graph of conformal $h_s: V \to W_s$ for all $s \in U$.  We can use $h_s$ to build proper intersections between repelling cycles of $\Phi$ and $\cal C$ over $U$.  

Recall that the Latt\`es family $f_1$ is stable, and so the holomorphic motion of its Julia set  defines a foliation $\mathcal{F}$ of $S \times \bP^1$.  A given repelling periodic point of $f_{2, s_0}$ in $V$ can be followed holomorphically over a small neighborhood of $s_0$.  If, via the local correspondence $h_s$, this point coincides with a leaf of $\mathcal{F}$, then it extends by analytic continuation to coincide with a leaf globally over $S$; the leaf of $\mathcal{F}$ is therefore an algebraic subvariety of $S \times \bP^1$, so it must be a preperiodic leaf for $f_1$, by the non-isotriviality of $f_1$ and Proposition \ref{leaf algebraicity}.  If this holds for infinitely many repelling cycles for $f_{2, s_0}$ in the open set $V$, then $\cal C_{s_0}$ contains infinitely many preperiodic points of $\Phi_{s_0}$.  The equidistribution argument in \cite[\S6, proof of Theorem~1.3]{Ghioca:Nguyen:Ye:DMM1} then implies that $f_{2, s_0}$ is also a Latt\`es map.  This contradicts our assumption on $f_2$.  So there must be a proper intersection between a repelling periodic point for $\Phi$ and $\cal C$ over some parameter $s_1 \in U$.   

Now suppose that $f_1$ is isotrivial.  If $f_2$ is also isotrivial, then we can perform a base change and coordinate change on $\bP^1\times\bP^1$ over $S$ so that $\Phi$ will be independent of $s \in S$.  As the pair $(\Phi,\cal C)$ is maximally varying over $S$, the curve $\cal C$ must be nontrivially varying in this coordinate system.  If we cannot find a proper intersection between a repelling cycle of $\Phi$ and the curve family $\cal C$ over the neighborhood $U$ of $s_0$, then the curves must also be independent of $s\in S$, a contradiction.  

Now suppose that $f_1$ is isotrivial but $f_2$ is not.  If, via the local correspondence $h_s$ defined above, a repelling periodic point for $f_2$ coincides with a leaf of $\mathcal{F}$, then it extends by analytic continuation to coincide with a leaf globally over $S$.  The isotriviality of $f_1$ implies that all of the leaves of the foliation are algebraic, and they project with bounded degree to the base $S$.  If all of the preperiodic points for $f_2$ in that neighborhood $V$ correspond to leaves of this foliation, then this would contradict Proposition \ref{bounded degree} (upon restricting to a 1-parameter family along which $f_2$ is non-isotrivial). 

In each of these three cases, we have found a proper intersection between a marked repelling cycle of $\Phi$ and the curve family $\cal C$ over a parameter $s_1 \in U$.  
For $\dim S = 1$, this gives positivity of the current $\hat T_{\Phi}^{\wedge 2} \wedge[\cal C]$ from Proposition \ref{analytic characterization} and completes the proof in the case $N=2$ and $\dim S =1$.

Now assume $\ell := \dim S > 1$, continuing to assume $N=2$.  We will proceed inductively, as in \cite[proof of Proposition~8.5]{DM:commonprep}, to build a rigid repeller at some parameter in $U$ for the fiber power $\cal C^{(\ell)}$.  

Write $S_0 = S$, and let $S_1\subset S_0$ be the codimension 1 subvariety containing $s_1$ along which the preperiodic point for $\Phi$ is persistently in $\cal C$.  Shrink the neighborhood $U$ if necessary so the cycle it maps to remains repelling throughout $U \cap S_1$.  We repeat the arguments above to find a parameter $s_2 \in U \cap S_1$ where there is another proper intersection between a pre-repelling point of $\Phi$ and the curve family $\cal C$.  At the final stage of the induction, we will have marked $\ell$ preperiodic points of $\Phi$ over a parameter $s_\ell \in S$ contained in the original open set $U \subset S$, that defines a proper intersection with $\cal C^{(\ell)}$ in $S \times (\bP^1\times\bP^1)^\ell$, and consequently 
	\[ \hat T_{\Phi^{(\ell)}}^{\wedge 2\ell} \wedge[\cal C^{(\ell)}]\neq0 \]
in $U \times (\bP^1\times\bP^1)^\ell$.  This completes the proof in the case $N=2$.  

For the case of general $N$, we choose a Latt\`es index $i$ and a non-Latt\`es index $j$.  As above, we replace $S$ with a Zariski-open subset over which $f_j$ is never Latt\`es.  Fix a small analytic $U \subset S$, and mark repelling periodic points over $U$ for each of the maps $\not= f_i$ or $f_j$.  Let 
	\[ \cal C_{ij} \subset U \times (\bP^1\times\bP^1) \]
be the induced family of curves where we evaluate the polynomials defining $\cH$ at these marked points, and pass to an irreducible component.  Consider the dynamics of $\Phi_{ij} = (f_i, f_j)$.

\begin{lemma}  \label{curve restriction}
For some choice of $i, j$ and some choice of marked points, the pair $(\Phi_{ij}, \cal C_{ij})$ is non-isotrivial over $U$.  
\end{lemma}

\begin{proof}
If either $f_i$ or $f_j$ is not isotrivial, then there is nothing to show.  So we may assume that all maps $f_1, \ldots, f_N$ are isotrivial over $S$.  Changing coordinates on $(\bP^1)^N$ so that the maps are independent of $s \in S$, we have a family of polynomials defining $H_s$ by $\{F_s = 0\}$ that depend nontrivially on $s$.  The Zariski density of the repelling periodic points for a map $f_k$ implies that we can find a repelling periodic point $c_k$ where a specialization $F_s(\cdots, c_k, \cdots)$ continues to depend nontrivially on $s$ for a constant $c_k$. Proceed inductively.
\end{proof}

To complete the proof of Theorem \ref{mixed prerepelling}, we apply the arguments from the $N=2$ case to build a proper intersection between a repelling point of $\Phi_{ij}$ and $\cal C_{ij}$ over $U$.  We then proceed inductively to construct subvarieties $S_k \subset S$ of codimension $k$, and each time choose a new $i,j$ (if necessary) and new repelling points (if necessary) so that Lemma \ref{curve restriction} holds, so that we can build new intersections until we have constructed a rigid repeller in $\cH^{(\ell)}$.  The proof is completed with Proposition \ref{analytic characterization}.
\end{proof}

\subsection{Proof of Theorem \ref{hypersurfaces}}
If $\cH$ is $\Phi$-special, then Proposition \ref{easy stability} implies the vanishing of the current $T_{\Phi, \cH}$ on $S$ and therefore also the measure $\mu_{\Phi, \cH}$.  

If not all maps $f_i$ are Latt\`es, then Theorems \ref{non-Lattes hypersurface} and \ref{mixed prerepelling} establish the equivalences of Theorem \ref{hypersurfaces}.  If all maps are Latt\`es, we obtain the non-degeneracy characterization from Theorem \ref{lattes nondegeneracy} and Proposition \ref{weakly special bif}.  
\qed

\bigskip
\section{Arithmetic bigness for essential hypersurfaces}
\label{S:arithmetic bigness}

We now assume that all data are defined over $\Qbar$.  In this section, we prove Theorem \ref{main3} in the case where $\cX$ is an essential hypersurface.   To this end, we combine Theorem \ref{hypersurfaces} with the arithmetic bigness theorem of Yuan \cite[Theorem 4.1]{Yuan:bigness} and the measure-rigidity theorem of Mavraki--Schmidt \cite[Theorem 1.10, Theorem 3.1]{Mavraki:Schmidt} (which itself extends a result of Levin--Przytycki \cite{Levin:Przytycki}).

\subsection{Weakly special hypersurfaces}
Let
	\[	F=(f_1,\ldots,f_N):(\bP^1)^N\rightarrow(\bP^1)^N \]
be a product map, and let $H\subset(\bP^1)^N$ be an irreducible essential hypersurface.  Recall that $H$ is weakly $F$-special if either $H$ is $F$-special, or all of the maps $f_i$ are exceptional and $H$ is the quotient of a group coset.  In the latter case, since $H$ is an essential hypersurface, it follows that all $f_i$ have a common group type (all elliptic and from isogenous elliptic curvves, or all multiplicative) and there are quotient maps
	\[ \Psi_i:G\rightarrow\bP^1, \]
a codimension-one algebraic subgroup $B\subset G^N$, and a point $\sigma\in G^N$ such that
	\[ H=\overline{(\Psi_1,\ldots,\Psi_N)(B+\sigma)}. \]
We see that $H$ is $F$-special if and only if the class
	\[ \bar\sigma=\sigma+B\in G^N/B \]
is torsion.  

Let $\Phi$ and $\cH$ be families over an irreducible base $S$.  We use the same terminology when the geometric generic fiber has the stated property.  

\begin{lemma} \label{strictly weakly special torsion}
Assume that $\dim S>0$ and that $(\Phi,\cH)$ is maximally varying.  If $\cH$ is weakly $\Phi$-special, then 
	\[\{s \in S:  H_s \mbox{ is } \Phi_s\mbox{-special}\} \]
is Zariski dense in $S$.
\end{lemma}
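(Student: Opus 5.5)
If $\cH$ is $\Phi$-special there is nothing to prove, so I will assume that $\cH$ is weakly special but not $\Phi$-special. Then, as recalled above for the generic fiber, all $f_i$ are exceptional of a common group type. After a generically finite base change and shrinking $S$, I lift to a group scheme $\cG\to S$ (a product of elliptic schemes, or $\mathbb G_m^N$). There the generic fiber of $\cH$ is the image of $\cB+\sigma$, with $\cB$ a codimension-one subgroup scheme. The quotient $\bar\sigma_{\cH}:S\to\cQ=\cG/\cB$ is a section of a one-dimensional group scheme, and $H_s$ is $\Phi_s$-special exactly when $\bar\sigma_{\cH}(s)$ is torsion. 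So the goal is to show that $\bar\sigma_{\cH}^{-1}(\cQ_{\mathrm{tors}})$ is Zariski dense in $S$. Since base change is generically finite, density upstairs gives density in the original $S$.

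\emph{Elliptic case.} I use Proposition \ref{weakly special bif}, which gives $T_{\Phi,\cH}=a_{\cH}\,\bar\sigma_{\cH}^*\omega_{\cQ}$. It says $T_{\Phi,\cH}=0$ only if $\cH$ is special, or $\cQ$ is isotrivial and $\bar\sigma_{\cH}$ is constant. The first is excluded by assumption. For the second, if $\cQ$ is isotrivial then $\Phi$ is isotrivial. After trivializing, $\cB$ is then locally constant, since abelian subvarieties form a countable set. So if $\bar\sigma_{\cH}$ were also constant, the pair $(\Phi,\cH)$ would be isotrivial, contradicting maximal variation with $\dim S>0$. Hence $\bar\sigma_{\cH}^*\omega_{\cQ}\neq0$. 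Because $\omega_{\cQ}$ is locally $\beta_U^*(dx\wedge dy)$, there is a nonempty analytic open $U\subset S$ on which the real-analytic map $\beta_U\circ\bar\sigma_{\cH}:U\to\R^2/\Z^2$ has real rank $2$, hence is a submersion. Torsion points of $\cQ_s$ are exactly the points with Betti coordinates in $\Q^2/\Z^2$. Preimages of this dense set under a submersion are dense in $U$ for the analytic topology. Their Zariski closure therefore contains a nonempty open subset of the irreducible variety $S$, so it is all of $S$.

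\emph{Multiplicative case.} Here $\cQ\simeq S\times\mathbb G_m$ and $\bar\sigma_{\cH}$ is a regular function $u:S\to\mathbb G_m$. The maps are isotrivial (conjugate to power or Chebyshev maps), and $\cB$ is locally constant. So maximal variation forces $u$ to be nonconstant, hence dominant onto $\mathbb G_m$. The torsion parameters form $\bigcup_{\zeta\in\mu_\infty}u^{-1}(\zeta)$. I would argue in one of two equivalent ways. Either the family of hypersurfaces $u^{-1}(\zeta)$ is infinite, because the fibers over distinct $\zeta$ are disjoint, so their union is Zariski dense. Or, analytically, $\arg u$ is a submersion where $du\neq0$, so preimages of roots of unity are analytically dense in an open set.

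The step I expect to need the most care is the reduction of the isotrivial and constant-section degenerations to a contradiction with maximal variation. This means checking that, in the trivialized coordinates, the pair $(\Phi,\cH)$ is recorded up to coordinatewise conjugacy by the data $(\cB,\bar\sigma_{\cH})$. I also need to check that the finite base changes used in lifting to $\cG$ preserve both maximal variation and Zariski density.
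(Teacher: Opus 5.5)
Your proof is correct and is essentially the paper's argument. The paper handles the multiplicative case exactly as you do (isotriviality of the maps plus maximal variation forces $\bar\sigma$ to be nonconstant). In the elliptic case it splits into non-isotrivial $\cQ$ (Proposition~\ref{leaf algebraicity}) and isotrivial $\cQ$ (maximal variation forces $\bar\sigma$ nonconstant). That is precisely the dichotomy you import via Proposition~\ref{weakly special bif}, and your Betti-submersion step spells out the paper's terse ``$\bar\sigma$ cannot omit torsion values.''

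One correction: your alternative ``analytic'' argument in the multiplicative case is wrong as stated. The set $u^{-1}(\mu_\infty)$ lies in the real hypersurface $\{|u|=1\}$, which has empty interior, so it is not dense in any open subset of $S$. Rely instead on your disjoint-hypersurfaces argument, which is correct.
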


\begin{proof}
If $\cH$ is $\Phi$-special, then the conclusion clearly holds, so we may assume that $\cH$ is weakly $\Phi$-special but not $\Phi$-special.

After shrinking and making a generically finite base change, write the exceptional lift as
	\[ \cB+\sigma\subset\cG^N, \qquad \bar\sigma:S\rightarrow\cQ:=\cG^N/\cB. \]
In the multiplicative case, the maps are isotrivial over $S$, so maximal variation means that the hypersurface $\cH$ must be varying. This implies that the image of $\bar\sigma$ in $\cQ \simeq \mathbb{G}_m$ is non-constant.  The conclusion follows.  

In the elliptic case, if $\cQ$ is a non-isotrivial family of elliptic curves, Proposition \ref{leaf algebraicity} implies that $\bar\sigma$ cannot omit torsion values, and the conclusion follows.  When $\cQ$ is isotrivial, the maximal variation of $(\Phi, \cH)$ implies that $\bar\sigma$ is non-constant, and again we have a Zariski-dense set of parameters where $H_s$ is special.
\end{proof}

\subsection{Arithmetic rigidity for hypersurfaces}
The following statement is the arithmetic analog of a step in the inductive proof from Section \ref{ordinary hypersurfaces}.  Here we use a complex-analytic slicing argument for currents, following the proofs of \cite[Theorem 1.8]{Mavraki:Schmidt} and \cite[Theorem 5.1]{DM:commonprep}.

\begin{proposition} \label{arithmetic rigidity hypersurface}
Assume that $(\Phi,\cH)$ is maximally varying and that
	\[ \widetilde L_{\Phi^{(\ell+1)}}|_{\cH^{(\ell+1)}} \]
is big, where $\ell=\dim S>0$.  If
	\[ \volhat\left( \cH^{(\ell+1)}, \Lbar_{\Phi^{(\ell+1)}}|_{\cH^{(\ell+1)}} \right)=0, \]
then $\cH$ is weakly $\Phi$-special.
\end{proposition}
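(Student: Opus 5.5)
We mirror the induction step of Section \ref{ordinary hypersurfaces}, now over $\Qbar$ and at the archimedean place. Set $\Mbar=\Lbar_{\Phi^{(\ell+1)}}|_{\cH^{(\ell+1)}}$, viewed in $\Pichat(\cH^{(\ell+1)}/\Z)$. Since $\Mbar$ is nef, $\volhat(\Mbar)=\Mbar^{d}=0$, so its height on $\cH^{(\ell+1)}$ vanishes. The geometric part is big by hypothesis, so Theorem \ref{fundamental inequality} gives $e(\cH^{(\ell+1)},\Mbar)=0$. Hence there is a generic sequence $P_n\in\cH^{(\ell+1)}(\Qbar)$ with $h_{\Mbar}(P_n)\to 0$.

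For each $j$, let $p_j$ forget the $j$-th coordinate in the last copy of $\cH$. This map is dominant and generically finite by essentiality. Let $\Mbar_j=p_j^*(\cdots)$ be the pullback of the canonical bundle of the product system with the factor $f_j$ omitted in the last copy. Its geometric part is big: the base part $\widetilde L_{\Phi^{(\ell)}}|_{\cH^{(\ell)}}$ is big by Theorem \ref{hypersurfaces}, unless $\cH$ is truly degenerate. True degeneracy is excluded here because bigness at level $\ell+1$ forces $\mu_{\Phi,\cH}\ne0$, via the fiber-power relation of Proposition \ref{gluing} (an argument to be checked). With the base big, Proposition \ref{gluing} applies. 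By \eqref{product height sum}, $0\le h_{\Mbar_j}\le h_{\Mbar}$, so $P_n$ is small for every $\Mbar_j$. The Fundamental Inequality again forces $h_{\Mbar_j}(\cH^{(\ell+1)})=0$, so Theorem \ref{equidistribution} applies to each $\Mbar_j$ at an archimedean place. We conclude that the measures $(\hat T^{(j)})^{\wedge N(\ell+1)-\ell-1}\wedge[\cH^{(\ell+1)}]$ are proportional for all $j$.

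Next, slice complex-analytically, as in \cite[Theorem 1.8]{Mavraki:Schmidt} and \cite[Theorem 5.1]{DM:commonprep}. Fix $j\ne j'$ and project via $p_{j,j'}$ to $B_{j,j'}$ as in \eqref{proj 2}. Fubini for currents over $\C$ is the archimedean analogue of Proposition \ref{measure equality}. It gives $\pi_j^*\mu_{(f_j)_y}\propto \pi_{j'}^*\mu_{(f_{j'})_y}$ on the curves $C_y\subset\bP^1\times\bP^1$, for $y$ in the support of the base measure. That support is not contained in any countable union of proper subvarieties, by the analogue of Lemma \ref{supports}. The complex measure rigidity \cite[Theorem 3.1]{Mavraki:Schmidt} then says that either $C_y$ is $((f_j)_y,(f_{j'})_y)$-special, or both maps are exceptional of the same type and $C_y$ is the image of a coset. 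Since both alternatives are cut out by countably many algebraic conditions, one of them holds identically on $B_{j,j'}$.

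Finally, assemble the pairwise conclusions. If some pair is persistently special for $N>2$, then the argument at the end of \S\ref{ordinary hypersurfaces} shows that $\cH$ depends only on two coordinates, contradicting essentiality. For $N=2$, this alternative means $\cH$ is special. Otherwise every pair $(f_j,f_{j'})$ is exceptional with a common group, and each two-coordinate section of $\cH$ is a coset image. We then lift $\cH$ to $\cG^N$ and argue that a hypersurface all of whose generic two-dimensional slices are coset images is itself a coset image. Concretely, the lift has a codimension-one stabilizer, detected by the translation invariance of the slices. This shows that $\cH$ is weakly special.

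The main obstacle I expect is the slicing step. We must justify Fubini and upgrade proportionality on the support to a countable algebraic condition holding on all of $B_{j,j'}$. A second difficulty is the final assembly into a global coset when $N>2$.
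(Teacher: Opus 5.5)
Your proposal is correct and follows the paper's proof essentially step for step. In both, the fundamental inequality gives a generic small sequence, which is then small for every bundle omitting one coordinate in the last copy. Archimedean equidistribution and Fubini slicing then give proportional pulled-back maximal-entropy measures on the curves $C_y\subset\bP^1\times\bP^1$. Mavraki--Schmidt rigidity, together with the countable-union argument, makes the relation persistent. Finally, essentiality (or, in the exceptional case, the stabilizer argument in $G^N$) finishes the proof.

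Two small repairs are needed. First, the equidistributed measures on $\cH^{(\ell+1)}$ have degree $N(\ell+1)-1=\dim\cH^{(\ell+1)}$, not $N(\ell+1)-\ell-1$. Second, you need bigness at level $\ell$ to apply Proposition \ref{gluing}, but that proposition cannot supply it: it only passes from a big base at level $\ell$ up to level $\ell+1$. Instead, get level-$\ell$ bigness in one of two ways. You can assume, for contradiction, that $\cH$ is not weakly special and invoke Theorem \ref{hypersurfaces}. Or you can do a direct fiber-product computation showing that, for an essential hypersurface, both geometric volumes are positive multiples of $\int_S T_{\Phi,\cH}^{\wedge\ell}$. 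The paper itself passes over this point without comment.
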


\begin{proof}
Put $m=\ell+1$ and
	$\Mbar=\Lbar_{\Phi^{(m)}}|_{\cH^{(m)}}. $
The given hypotheses tell us that the geometric part of $\Mbar$ is big, and $\Mbar$ is nef.  The
fundamental inequality therefore gives a generic sequence $P_n\in\cH^{(m)}(\Qbar)$ with
	\[ h_{\Mbar}(P_n)\rightarrow0,\text{ as }n\to\infty.\]

As in the proof of \eqref{arithmetic induction volume}, forget one coordinate in the last copy of $\cH$ in $\cH^{(m)}$.  Essentiality makes each of the resulting projections generically finite, and Proposition \ref{gluing} shows that the corresponding adelic line bundles remain geometrically big.  Since their heights are nonnegative and bounded above by $h_{\Mbar}$, the same sequence is small for all of them. Equidistribution at the archimedean place gives proportionality of the resulting measures over $\C$, as in \cite{Mavraki:Schmidt}.

Now forget two coordinates and consider the projections as we did in \eqref{proj 2}.  We form a family of curves in $\bP^1\times\bP^1$, where we consider the action of two of the maps $(f_i, f_j)$.  For a generic such curve, we deduce from slicing the resulting measures that the pullbacks of the two maximal-entropy measures are proportional, giving an archimedean version of the proportionality in \eqref{on X_y}, exactly as in \cite{Mavraki:Schmidt} and \cite{DM:commonprep}.  More precisely, this measure relation holds on all fibers of the projection over the support of the measure in base.  For each such curve, the measure rigidity theorem of Mavraki--Schmidt \cite[Theorem 3.1]{Mavraki:Schmidt} implies that this curve is weakly special for appropriate iterates of the corresponding pair of maps.  This holds for any pair $(i,j)$, so either all of the maps are exceptional and arise as quotients of the same group $G$ and the curves are weakly special, or all of these curves are dynamically special.  Moreover, the support of the measure in the base does not lie in a countable union of hypersurfaces, and so the algebraic relations must hold persistently for the entire curve family.  In particular, the curve family is weakly special on the generic slice.  We finish the proof as in the conclusion fo the proof of Theorem \ref{non-Lattes hypersurface}.  That is, for $N=2$, we see immediately that $\cH$ is weakly $\Phi$-special.  

Suppose that $N>2$ and that the maps are not all exceptional. Holding the parameter $s$ in the base constant implies that all of the curves for the slices over $s$ are preperiodic for the corresponding slices of pairs of iterates of $f_i$ or that the curve family is also constant (as they are all preperiodic for a single map on $\bP^1\times\bP^1$). Thus the defining equation of $\cH$ does not involve all coordinates.  The latter contradicts essentiality. 

It remains to treat the exceptional case. We work over the geometric generic fiber of $S$ and choose compatible group covers $\Psi_i:G\to\bP^1$. Let $X\subset G^N$ be an irreducible component of the lift of $H$. Fixing all coordinates except $i$ and $N$ and specializing at general values gives curves whose images are the weakly special curves obtained above. By the exceptional case of the measure-rigidity argument, their irreducible components are translates of one-dimensional algebraic subgroups of $G^2$. For each $i<N$, there are only countably many possible subgroups underlying these curves. For any fixed such subgroup $D$,
viewed in coordinates $i$ and $N$ of $G^N$, the condition $x+D\subset X$ is Zariski closed in $x\in X$. These closed sets cover a nonempty Zariski open subset of $X$, so one
of them must be all of $X$. Thus there is a one-dimensional subgroup $B_i$, supported in coordinates $i$ and $N$, such that translation by $B_i$ preserves $X$.  Essentiality implies that $B_i$ projects surjectively to both coordinates $i$ and $N$. The subgroup $B$ generated by $B_1,\ldots,B_{N-1}$ therefore projects surjectively to the first $N-1$ coordinates. Since translation by $B$ preserves $X$, and $\dim X=N-1$, we conclude that $\dim B=N-1$ and that $X$ is a translate of $B$. Consequently, $\cH$ is weakly $\Phi$-special.
\end{proof}

\subsection{The arithmetic bigness criterion}
Let $\cH\subset S\times(\bP^1)^N$ be a flat family of essential hypersurfaces defined over $\Qbar$.  We write
\[ \overline{\mathcal M}_{\Phi,\cH} = \left\langle
		\Lbar_\Phi|_{\cH},\ldots,\Lbar_\Phi|_{\cH} \right\rangle_{\cH/S} \in\Pichat(S/\bZ),\]
where the Deligne pairing has $N$ entries.  Its curvature at an archimedean place is the bifurcation current $T_{\Phi,\cH}$, up to the normalization
of $\Lbar_\Phi$.  Its fiber height is
\begin{equation} \label{hypersurface Deligne height}
 h_{\overline{\mathcal M}_{\Phi,\cH}}(s) =N\deg_{L_{\Phi_s}}(H_s)\,\hat h_{\Phi_s}(H_s),
\end{equation}
where (abusing the notation slightly) we write $\hat{h}_{\Phi_s}:= h_{\overline{L}_{\Phi_s}}$ even in the case that the component maps have different degrees.
This follows from base-change compatibility of Deligne pairings and the
definition of the height of a subvariety; see \cite[Theorem 4.1.3 and \S5.3.1]{Yuan:Zhang:quasiprojective}.

Recall the definition of the moduli space map $\rho_{\Phi, \cH}$ from \S\ref{moduli map}. 

\begin{theorem} \label{arithmetic bigness: essential case}
Let $S$ be an irreducible quasiprojective variety over $\Qbar$ with $\ell=\dim S>0$.  Let $\Phi$ be a family of product maps over $S$, and let $\cH\subset S\times(\bP^1)^N$ be a flat family of irreducible essential hypersurfaces.  The following are equivalent.
\begin{enumerate}
\item $\volhat\left(\Lbar_{\Phi^{(\ell+1)}}|_{\cH^{(\ell+1)}} \right)=0$.
\item $\volhat\left(S,\overline{\mathcal M}_{\Phi,\cH}\right)=0$.
\item Either $\dim\overline{\rho_{\Phi,\cH}(S)}<\ell$, or $\cH$ is weakly $\Phi$-special.
\end{enumerate}
\end{theorem}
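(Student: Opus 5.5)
We will prove (1)$\Leftrightarrow$(2) with Yuan's criterion, then show that the failure of (3) forces positive volume and that (3) forces vanishing volume.

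\emph{Step 1: (1)$\Leftrightarrow$(2).} By \cite[Theorem 4.1]{Yuan:bigness}, arithmetic bigness of $\Lbar_{\Phi^{(\ell+1)}}|_{\cH^{(\ell+1)}}$ is equivalent to arithmetic bigness of the Deligne pairing $\overline{\mathcal M}_{\Phi,\cH}$ on $S$. Here $\cH$ has relative dimension $N-1$, so $N$ entries appear in the pairing. Both bundles are nef, so by \eqref{adelic volume formula} each volume is a top self-intersection number. The equality of the two vanishing loci should follow from that theorem directly. If needed, we will instead expand $(\Lbar_{\Phi^{(\ell+1)}})^{N(\ell+1)}$ on the fiber power using the Deligne-pairing projection formula. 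This writes it as a positive multiple of $\overline{\mathcal M}_{\Phi,\cH}^{\ell+1}$, since the terms with fewer than $N$ entries in a given fiber vanish because that fiber has dimension $N-1$.

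\emph{Step 2: (3)$\Rightarrow$(1),(2).} Suppose first that $\dim\overline{\rho_{\Phi,\cH}(S)}<\ell$. Then $(\Phi,\cH)$ is pulled back, after base change and coordinatewise conjugation, from a pair over a base $Z$ of smaller dimension. By conjugacy invariance and functoriality, $\overline{\mathcal M}_{\Phi,\cH}$ is pulled back from $Z$. Its top self-intersection on $S$ therefore vanishes by the projection formula \eqref{proj formula} with $\deg u=0$, which gives (2).

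Suppose instead that $\cH$ is weakly $\Phi$-special. If $\dim\rho(S)<\ell$ we are in the previous case, so we may assume maximal variation. By Lemma \ref{strictly weakly special torsion}, the set of $s$ with $H_s$ special is Zariski dense. At such $s$, \eqref{hypersurface Deligne height} gives $h_{\overline{\mathcal M}}(s)=0$, since special hypersurfaces have canonical height zero. Hence the essential minimum of $\overline{\mathcal M}$ is $0$. Because $\overline{\mathcal M}$ is nef, the fundamental inequality (or simply the fact that a big nef bundle has positive essential minimum) then forces $\volhat(\overline{\mathcal M})=0$. More carefully: if the volume were positive, a Zariski-dense sequence of height-zero points would contradict Theorem \ref{fundamental inequality}, applied on $S$ with the geometric bigness supplied there. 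Alternatively, when $\widetilde M$ is not big the volume is already $0$ by nefness.

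\emph{Step 3: not (3)$\Rightarrow$ positive volume.} Now assume $(\Phi,\cH)$ is maximally varying and $\cH$ is not weakly special. In particular $\cH$ is not $\Phi$-special. By Theorem \ref{hypersurfaces}, together with the all-Latt\`es clause of Theorem \ref{lattes nondegeneracy}, which applies because $\cH$ is not weakly special, we get $\mu_{\Phi,\cH}\ne0$. Thus $\widetilde L_{\Phi^{(\ell)}}|_{\cH^{(\ell)}}$ is big by \eqref{geometric volume formula}. Proposition \ref{gluing} applied to $\cH^{(\ell+1)}\to\cH^{(\ell)}$ then makes $\widetilde L_{\Phi^{(\ell+1)}}|_{\cH^{(\ell+1)}}$ big. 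Proposition \ref{arithmetic rigidity hypersurface} now says that vanishing arithmetic volume would force $\cH$ to be weakly special, a contradiction. So (1) fails, and by Step 1 so does (2).

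I expect the main obstacle to be Step 1: matching Yuan's criterion precisely to the fiber power $\ell+1$, with the correct pairing normalization.
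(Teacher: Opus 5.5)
Your proposal is correct and follows essentially the paper's proof. You use Yuan's fiber-power criterion \cite[Theorem 4.1]{Yuan:bigness} for (1)$\Leftrightarrow$(2); pullback from a lower-dimensional moduli image, or the dense special locus of Lemma \ref{strictly weakly special torsion}, for (3)$\Rightarrow$(1),(2); and Theorem \ref{hypersurfaces} (with Theorem \ref{lattes nondegeneracy} in the all-Latt\`es case), Proposition \ref{gluing} and Proposition \ref{arithmetic rigidity hypersurface} for the converse. You phrase the (3)$\Rightarrow$ direction on $S$ via $\overline{\mathcal M}_{\Phi,\cH}$ rather than on $\cH^{(\ell+1)}$, which is immaterial given (1)$\Leftrightarrow$(2).

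One caveat: drop your ``if needed'' expansion in Step 1, because its stated mechanism is backwards. Terms in which some copy of $\cH$ receives exactly $N-1$ factors are not killed by the fiber dimension; integrating out that copy just produces the fiber degree. What would actually have to vanish are the terms in which some copy receives $N+1$ or more factors, and that requires a genuinely dynamical input rather than a dimension count.
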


\begin{proof}
The equivalence of (1) and (2) is the fiber-power criterion for
arithmetic bigness of the Deligne pairing
\cite[Theorem 4.1]{Yuan:bigness}.  We prove the equivalence with (3).

Assume first that the pair is not maximally varying.  Put
	\[ q=\dim\overline{\rho_{\Phi,\cH}(S)}<\ell. \]
After shrinking $S$ and making a generically finite base change, the
pair is, up to coordinatewise conjugacy, the pullback of a pair
$(\Phi_T,\cH_T)$ over a variety $T$ of dimension $q$.  Put $m=\ell+1$.
The line bundle on $\cH^{(m)}$ is pulled back from $\cH_T^{(m)}$, whose
absolute dimension over $\bZ$ is
	\[ q+m(N-1)+1<Nm. \]
The top arithmetic intersection therefore vanishes.  The projection formula \eqref{proj formula} gives (1).

Suppose next that $\cH$ is weakly special and the pair $(\Phi, \cH)$ is maximally varying.  From Lemma \ref{strictly weakly special torsion}, we know that there is a Zariski-dense set of parameters $s$ so that $H_s$ is $\Phi_s$-special.  Therefore, $H_s \cap\Preper(\Phi_s)$ is 
Zariski dense in $H_s$ for these parameters.  Their $m$-fold fiber powers give a Zariski-dense set of points of height zero in $\cH^{(m)}$.  
A big adelic line bundle has height bounded below by a strictly positive constant away from a proper closed subset \cite[Theorem 5.3.7]{Yuan:Zhang:quasiprojective}.  Thus the line bundle $\Lbar_{\Phi^{(\ell+1)}}|_{\cH^{(\ell+1)}}$ cannot be big, and (1) follows.

Conversely, assume (1) and maximal variation.  If $\cH$ is not weakly $\Phi$-special, then Theorem \ref{hypersurfaces}, followed by Proposition \ref{gluing}, shows that the geometric part of
	\[ \Lbar_{\Phi^{(\ell+1)}}|_{\cH^{(\ell+1)}} \]
is big. But this contradicts Proposition \ref{arithmetic rigidity hypersurface}, thus proving (3).
\end{proof}

We can now prove Theorem \ref{main1} in the case of essential hypersurfaces, over $\Qbar$.

\begin{corollary} \label{dense special hypersurfaces}
In the setting of Theorem \ref{arithmetic bigness: essential case}, the following are equivalent.
\begin{enumerate}
\item The set of $s\in S(\Qbar)$ for which $H_s$ is $\Phi_s$-special is
Zariski dense.
\item The set $\Preper(\Phi^{(\ell+1)})\cap\cH^{(\ell+1)}$ is Zariski dense in $\cH^{(\ell+1)}$.
\item Either $\cH$ is $\Phi$-special, or $\cH$ is weakly $\Phi$-special and $\dim\overline{\rho_{\Phi,\cH}(S)}>0.$
\end{enumerate}
\end{corollary}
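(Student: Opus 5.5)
We prove the cycle of implications $(1)\Rightarrow(2)\Rightarrow(3)\Rightarrow(1)$, with Theorem \ref{arithmetic bigness: essential case} and Lemma \ref{strictly weakly special torsion} as the main inputs. Throughout, put $m=\ell+1$ and $\Mbar=\Lbar_{\Phi^{(m)}}|_{\cH^{(m)}}$.

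\emph{$(1)\Rightarrow(2)$.} Let $s$ be a parameter with $H_s$ $\Phi_s$-special. By the dynamical Manin--Mumford characterization recalled in \S\ref{relations}, $H_s\cap\Preper(\Phi_s)$ is then Zariski dense in $H_s$, so $(H_s\cap\Preper(\Phi_s))^m$ is Zariski dense in the fiber $H_s^m$ of $\cH^{(m)}$. Now let $Z$ be the Zariski closure of $\Preper(\Phi^{(m)})\cap\cH^{(m)}$. It contains $H_s^m$ for a Zariski-dense set of $s$. By flatness and irreducibility of $\cH^{(m)}$ over $S$, some component of $Z$ must contain every fiber over a dense set, and therefore equals $\cH^{(m)}$.

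\emph{$(2)\Rightarrow(3)$.} Preperiodic points have $h_{\Mbar}=0$. Statement (2) therefore gives a Zariski-dense set of height-zero points, so $\Mbar$ is not big, by \cite[Theorem 5.3.7]{Yuan:Zhang:quasiprojective}. Since $\Mbar$ is nef, \eqref{adelic volume formula} then forces $\volhat(\Mbar)=0$. Theorem \ref{arithmetic bigness: essential case} now gives a dichotomy: either $\cH$ is weakly $\Phi$-special, or $\dim\overline{\rho_{\Phi,\cH}(S)}<\ell$.

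\begin{enumerate}
\item[(a)] If $\cH$ is weakly special, we still need to rule out the case where it is not special and $\rho$ is constant. In that case the pair is isotrivial. After base change the section $\bar\sigma$ is constant and non-torsion, so no fiber $H_s$ is special, and no $H_s$ contains a Zariski-dense set of preperiodic points. This contradicts (2), since (2) implies that the fibers over a dense set of $s$ contain Zariski-dense preperiodic sets: project $Z=\cH^{(m)}$ to the last factor.
\item[(b)] If instead $\rho$ has image of dimension $q<\ell$, we reduce to the maximally varying case. After base change and conjugacy, we pull back from a pair $(\Phi_T,\cH_T)$ over $T$ with $\dim T=q$. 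Density in (2) descends to density of $\Preper(\Phi_T^{(m)})\cap\cH_T^{(m)}$, and since $m\ge q+1$ we may project to $q+1$ factors. We then induct on $\dim S$, applying Theorem \ref{arithmetic bigness: essential case} to $(\Phi_T,\cH_T)$, which is maximally varying. The base case $q=0$ is a single pair $(F,H)$ over $\Qbar$, where (2) says $H$ is $F$-special by dynamical Manin--Mumford. Since being special is preserved under pullback, either $\cH_T$ is special or it is weakly special with positive-dimensional moduli image, which is (3).
\end{enumerate}

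\emph{$(3)\Rightarrow(1)$.} If $\cH$ is $\Phi$-special, every fiber is special. Otherwise $\cH$ is weakly special with $q=\dim\overline{\rho(S)}>0$. Pulling back from a maximally varying pair over $T$ of dimension $q>0$, Lemma \ref{strictly weakly special torsion} gives a Zariski-dense set of special parameters in $T$. Their preimages are Zariski dense in $S$, because $S\dashrightarrow T$ is dominant.

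The main obstacle is the bookkeeping when $\rho$ is not generically finite. Density and speciality have to be transferred correctly along the factorization through $T$, including the generically finite base changes and the dimension drop, and this is where the induction on $\dim S$ (equivalently, on the moduli dimension) must be set up carefully.
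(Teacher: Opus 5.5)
Your proposal is correct and follows essentially the paper's argument. Both prove (2)$\Rightarrow$(3) by descending to the maximally varying pair over $T$, then using dynamical Manin--Mumford when $q=0$, and \cite[Theorem 5.3.7]{Yuan:Zhang:quasiprojective} together with Theorem \ref{arithmetic bigness: essential case} when $q>0$; both get (3)$\Rightarrow$(1) from Lemma \ref{strictly weakly special torsion}, and your explicit passage to $T$ there supplies the maximal-variation hypothesis the lemma needs.

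Several steps of yours are redundant: the preliminary application of Theorem \ref{arithmetic bigness: essential case} on $S$, your case (a), and the ``induction on $\dim S$''. One application on the maximally varying $T$ suffices, and the isotrivial subcase of (a) is just the $q=0$ case of (b). In that subcase, density of preperiodic points in the fibers comes from the product structure of the trivialized family, not merely from projecting to one factor.
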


\begin{proof}
The set of preperiodic points is Zariski-dense in any special subvariety, so the implication from (1) to (2) is clear.  Condition (3) implies (1) with Lemma \ref{strictly weakly special torsion}. 
Finally assume (2). Put $q:=\dim\overline{\rho_{\Phi,\cH}(S)}$. After shrinking $S$ and making a generically finite base change, the pair is, up to coordinatewise conjugacy, the pullback of a maximally varying pair $(\Phi_T,\cH_T)$ over a variety $T$ of dimension $q$. Descent to $T$ and projection to $q+1$ copies give a Zariski-dense set $\Preper(\Phi_T^{(q+1)})\cap\cH_T^{(q+1)}$ in $\cH_T^{(q+1)}$.  If $q=0$, the fixed hypersurface $H_T$ contains a Zariski-dense set of preperiodic points and is therefore $\Phi_T$-special, as recalled in \S\ref{relations}. Consequently, $\cH$ is $\Phi$-special.  If $q>0$, the adelic line bundle $\Lbar_{\Phi_T^{(q+1)}}|_{\cH_T^{(q+1)}}$ has a Zariski-dense set of height-zero points, so (as in the proof of Theorem \ref{arithmetic bigness: essential case} using \cite[Theorem 5.3.7(1)]{Yuan:Zhang:quasiprojective}) we infer that its arithmetic volume is zero. Since $(\Phi_T,\cH_T)$ is maximally varying, Theorem \ref{arithmetic bigness: essential case} implies that $\cH_T$, and hence $\cH$, is weakly special. This proves (3) in every case.
\end{proof}

\bigskip
\section{True degeneracy}
\label{ess decomposition}

Let $S$ be an irreducible quasiprojective variety over $\C$, and let
\[ \Phi=(f_1,\ldots,f_N):S\times(\bP^1)^N \rightarrow S\times(\bP^1)^N \]
be a product family.  In this section, $\cX\subset S\times(\bP^1)^N$ is irreducible and flat over $S$, its geometric generic fiber $X$ over $S$ is geometrically integral of dimension
$r$, where
	\[ 1\leq r\leq N-1, \]
and $X$ dominates every coordinate $\bP^1$.  In this section, we introduce the essential hypersurface decomposition of $X$ and the activity dimension of the pair $(\Phi, \cX)$.  We prove Theorem \ref{true deg char}, our characterization of true degeneracy. 

\subsection{The essential hypersurface shadows}
\label{decomposition}
For $J\subset\{1,\ldots,N\}$, let
	\[ Y_J=\overline{\pi_J(X)}\subset(\bP^1)^{J} \]
for the projection $\pi_J: (\bP^1)^N \to (\bP^1)^J$ to the coordinates indexed by $J$.  Define
\begin{equation} \label{J(X)}
	\mathcal J(X)=\{J\subset\{1,\ldots,N\}:|J|=r+1\mbox{ and }\dim Y_J=r\}.
\end{equation}
This collection $\cal J(X)$ is nonempty.  Indeed, among the coordinate functions on $X$ one can choose $r$ which form a transcendence basis for $\C(X)/\C$.  Adding any further coordinate gives an element of $\mathcal J(X)$.

For $J\in\mathcal J(X)$, the variety $Y_J$ is a hypersurface in $(\bP^1)^J$, but its equation need not involve every coordinate.  Let $E_J\subset J$ be the set of variables which occur with positive degree in an irreducible multihomogeneous equation for $Y_J$.  The same equation defines an irreducible essential hypersurface
	\[ H_J\subset(\bP^1)^{E_J}, \]
and
\begin{equation} \label{shadow product}
	Y_J=(\pi_{E_J}^{J})^{-1}(H_J) =H_J\times(\bP^1)^{J\setminus E_J}
\end{equation}
for the projection $\pi_{E_J}^J: (\bP^1)^J \to (\bP^1)^{E_J}$.  We call $(E_J,H_J)$ an \textbf{essential hypersurface shadow} of $X$.
The indexed collection
	\[ \mathscr H(X)=\{(E_J,H_J):J\in\mathcal J(X)\} \]
is the essential hypersurface decomposition of $X$.  

After shrinking $S$, the sets $E_J$ are constant, every $H_J$ spreads
out to a flat family
	\[ \cH_J\subset S\times(\bP^1)^{E_J}, \]
and the maps $\cX\to\cY_J$ are dominant and generically finite.  We
make this restriction without further comment.

\begin{proposition} \label{essential intersection}
The variety $X$ is an irreducible component of
\[ Z(X)=\bigcap_{J\in\mathcal J(X)}
       \pi_{E_J}^{-1}(H_J) \;  \subset \; (\bP^1)^N. \]
Moreover, $X$ is $\Phi$-special if and only if every shadow $H_J$ is $\Phi_{E_J}$-special.
\end{proposition}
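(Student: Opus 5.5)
The plan has two parts: a dimension count for the first assertion, and the dynamical Manin--Mumford characterization together with the projection formula for the second.

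\emph{Part 1: $X$ is a component of $Z(X)$.} By construction $X\subset\pi_J^{-1}(Y_J)=\pi_{E_J}^{-1}(H_J)$ for every $J\in\mathcal J(X)$, so $X\subset Z(X)$. It suffices to show that the component $Z_0$ of $Z(X)$ containing $X$ has dimension $r$.

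Fix a transcendence basis of coordinates, say indices $I$ with $|I|=r$. For each $k\notin I$, the set $J_k=I\cup\{k\}$ lies in $\mathcal J(X)$. On $Z_0$, the equation of $Y_{J_k}$ is a nontrivial relation between the coordinate $z_k$ and $(z_i)_{i\in I}$. This relation genuinely involves $z_k$: otherwise the coordinates indexed by $I$ would be algebraically dependent on $X$.

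I expect the main obstacle here to be ruling out that $Z_0$ is a degenerate component in which the $I$-coordinates become dependent, for instance by lying over a locus where the equation for $Y_{J_k}$ vanishes identically in $z_k$. To handle this, I work at a general point $x\in X$, where each relation is nondegenerate in $z_k$. Then $z_k$ is locally determined up to finitely many choices by $(z_i)_{i\in I}$, and the projection $Z(X)\to(\bP^1)^I$ is quasi-finite near $x$. Hence $\dim Z_0\le r$, so $Z_0=X$.

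\emph{Part 2: $X$ is $\Phi$-special iff every shadow $H_J$ is $\Phi_{E_J}$-special.}

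For the forward direction, suppose $X$ is special. By the dynamical Manin--Mumford theorems recalled in \S\ref{relations}, $X$ contains a Zariski-dense set of points of $\Preper(\Phi)$. Their images under $\pi_{E_J}$ are preperiodic for $\Phi_{E_J}$ and are Zariski dense in $\pi_{E_J}(X)$, whose closure is $H_J$. So $H_J$ is $\Phi_{E_J}$-special by the same theorems. Alternatively, one can observe directly that images of preperiodic subvarieties under coordinate projections are preperiodic for the projected product map.

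For the converse, take a common product of iterates $F$ of $\Phi$ (or a commuting endomorphism) for which every $H_J$ is preperiodic. Then each $\pi_{E_J}^{-1}(H_J)$ is $F$-preperiodic, so after passing to an iterate $F^a$ and a shift $F^b$, the variety $Z(X)$ is mapped into itself. Since $Z(X)$ has finitely many components, the components of $F^n(Z(X))$ form an eventually periodic pattern. By Part 1, $X$ is a component of $Z(X)$, and it has the same dimension as its images.

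The remaining difficulty is that $F^n(X)$ might land inside a higher-dimensional component. To rule this out, I apply Part 1 to $F^n(X)$, which has the same shadows up to $F$. Since $F$ is finite, $\mathcal J(F^n(X))=\mathcal J(X)$, and its shadows are $F^n_{E_J}(H_J)$; these lie in the finite preperiodic orbits of the $H_J$. Thus the varieties $F^n(X)$ range among components of finitely many varieties $Z'$ built from finitely many shadow configurations. There are therefore only finitely many such varieties, so $X$ is preperiodic and hence $\Phi$-special.
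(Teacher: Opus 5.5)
Your Part 1 is the paper's dimension count: a transcendence basis $R$ of coordinates, the shadows $R\cup\{i\}$, and finite fibers over a dense open subset of $(\bP^1)^R$. Say ``every component of $Z(X)$ containing $X$'' rather than ``the component''. Your forward direction is the paper's ``immediate'' remark.

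For the converse you take a genuinely different route, and its core is correct and nicely elementary. Since $F$ is finite and split, $F^n(X)$ is irreducible, dominates every coordinate, satisfies $\mathcal J(F^n(X))=\mathcal J(X)$, and has essential shadows $F^n_{E_J}(H_J)$. So if every $H_J$ is $F_{E_J}$-preperiodic, only finitely many varieties $Z(F^n(X))$ occur. Part 1 puts each $F^n(X)$ among their components, and $X$ is $F$-preperiodic without any appeal to dynamical Manin--Mumford. Your intermediate claim that some $F^a$ maps $Z(X)$ into itself is inaccurate, since the invariant set is $\bigcap_J\pi_{E_J}^{-1}(F^b_{E_J}(H_J))$, but your last paragraph makes it unnecessary.

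The gap is the first sentence of the converse. You assume a \emph{single} $F$, commuting with a product of iterates of $\Phi$, for which every $H_J$ is simultaneously $F_{E_J}$-preperiodic. The definition only gives a separate $\Psi_J$ for each shadow, and for overlapping $E_J$ these can disagree in a shared coordinate. A common product of iterates need not exist. For $\Phi=(z^2,z^3,z^5)$ and $X$ the diagonal curve in $(\bP^1)^3$, every shadow is a diagonal and is special. Yet no $(z^{2^a},z^{3^b},z^{5^c})$ with $a,b,c\geq 1$ makes any of them preperiodic: an essential hypersurface periodic under a product map $G$ forces equal coordinate degrees, as one sees by computing $\deg(G^p|_H)$ through each forget-one projection. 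One must instead use the commuting map $(z^2,z^2,z^2)$.

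Producing a common $F$ in general needs real input:
\begin{enumerate}
\item For non-exceptional coordinates, you need the Ritt--Eremenko theorem that a map commuting with an iterate of $f_i$ shares an iterate with $f_i$.
\item You also need the equal-degree constraint (that $d_i^{n_{J,i}}$ is constant on $E_J$). This is what lets you choose exponents $a_i$ with $(a_i)_{i\in E_J}$ an integer multiple of $(n_{J,i})_{i\in E_J}$ for all $J$ at once.
\item For exceptional coordinates, you need the classification that special essential shadows do not mix types and are quotients of torsion cosets, so that suitable multiplication maps on the underlying groups work.
\end{enumerate}

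The paper sidesteps all of this. It uses only the shadows $R\cup\{i\}$, each with its own $\Psi_i$. Since commuting maps have the same preperiodic points, the points of $X$ over $U\cap\prod_{j\in R}\Preper(f_j)$ are $\Phi$-preperiodic. The paper then concludes by dynamical Manin--Mumford from this Zariski-dense set.
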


\begin{proof}
Choose a set $R \subset\{1, \ldots, N\}$ of $r$ coordinates for which $X\to(\bP^1)^R$ is dominant and generically finite.  For every $i\notin R$, the set $R\cup\{i\}$ lies in $\mathcal J(X)$.  Over a dense open subset of $(\bP^1)^R$, each $Y_{R\cup\{i\}}\to(\bP^1)^R$ has finite fibers.  Their fiber product over $(\bP^1)^R$ therefore has dimension $r$, and it contains $Z(X)$ which in turn contains the $r$-dimensional $X$.  This proves the first assertion.

For the second statement, it is immediate that the essential shadows of a special variety are special.  

Conversely, choose a nonempty open $U\subset(\bP^1)^R$ over which all projections $Y_{R\cup\{i\}}\to(\bP^1)^R$ and $X\to(\bP^1)^R$ have finite fibers. For each $i\notin R$, specialness gives a split map $\Psi_i$, commuting with a product of iterates of the component maps in $\Phi_{R\cup\{i\}}$, for which $Y_{R\cup\{i\}}$ is preperiodic.
Commuting maps have the same preperiodic points.
Since the aforementioned maps are finite over $U$, every lift
to $Y_{R\cup\{i\}}$ of a $\Phi_R$-preperiodic point in $U$ is $\Phi_{R\cup\{i\}}$-preperiodic. Consequently, the fibers of $X$ over $U\cap\prod_{j\in R}\Preper(f_j)$ consist of $\Phi$-preperiodic points. Their union is Zariski dense in $X$. So $X$ is $\Phi$-special by \cite{Ghioca:Nguyen:Ye:DMM2,MSW2023, Hindry, Ghioca:Tucker:DMM}.
\end{proof}

We will need the following for our proof of Theorem \ref{true deg char}.

\begin{prop} \label{essential product}
Suppose that $X \times Y \subset (\bP^1)^N \times (\bP^1)^M$ is a closed subvariety defined over a field of characteristic 0.   Then each essential shadow of $X \times Y$ is either an essential shadow of $X$ or an essential shadow of $Y$ (appropriately indexed).  
\end{prop}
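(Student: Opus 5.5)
Let $X$ have dimension $r$ and $Y$ dimension $t$, so $X\times Y$ has dimension $r+t$. I will split an index set $J\in\mathcal J(X\times Y)$, with $|J|=r+t+1$, as $J=J_1\sqcup J_2$, where $J_1\subset\{1,\ldots,N\}$ and $J_2$ indexes coordinates of $(\bP^1)^M$. The projection of a product is a product, so
\[ Y_J(X\times Y)=Y_{J_1}(X)\times Y_{J_2}(Y). \]
Write $a=\dim Y_{J_1}(X)\le\min(r,|J_1|)$ and $b=\dim Y_{J_2}(Y)\le\min(t,|J_2|)$. The condition $\dim Y_J=r+t$ forces $a+b=r+t$, and hence $a=r$ and $b=t$. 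Since $|J_1|+|J_2|=r+t+1$, exactly one of two cases holds. Either $|J_1|=r+1$ and $|J_2|=t$, or $|J_1|=r$ and $|J_2|=t+1$.

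Take the first case. Then $Y_{J_2}(Y)$ has dimension $t=|J_2|$, so it is all of $(\bP^1)^{J_2}$. Also $Y_{J_1}(X)$ is a hypersurface in $(\bP^1)^{J_1}$ with $|J_1|=r+1$, so $J_1\in\mathcal J(X)$. This gives
\[ Y_J=Y_{J_1}(X)\times(\bP^1)^{J_2}. \]
Its irreducible defining equation is the irreducible equation $F$ for $Y_{J_1}(X)$, regarded as a multihomogeneous polynomial in the coordinates of $J$ that does not involve $J_2$. Irreducibility survives adding dummy variables, and the equation is unique up to scalars. So the variables that occur with positive degree are exactly $E_{J_1}(X)$, and $H_J=H_{J_1}(X)$. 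Thus $(E_J,H_J)=(E_{J_1},H_{J_1})$, an essential shadow of $X$. The second case is symmetric and gives an essential shadow of $Y$, with indices shifted by $N$.

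The one step that needs care is the dimension count showing that $a+b=r+t$ forces both projections to have full dimension. This uses $\dim(A\times B)=\dim A+\dim B$ for irreducible varieties over a field of characteristic $0$; the images are closures of projections of irreducible varieties and so are irreducible. The step is otherwise routine. I would also check that the identification of $E_J$ does not depend on how the equation is normalized, which follows from uniqueness of the irreducible equation of a hypersurface up to scalar. The converse, that every shadow of $X$ arises this way, is not required, but it holds: append any $t$ coordinates of $Y$ that form a transcendence basis, using the nonemptiness argument for $\mathcal J$.
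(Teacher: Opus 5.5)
Your proof is correct and follows essentially the same route as the paper. Both arguments split the index set between the two factors and use the dimension count to force either $|J_1|=r+1,\ |J_2|=t$ or $|J_1|=r,\ |J_2|=t+1$, with both projections of full dimension, and then identify the essential shadow of $Y_{J_1}(X)\times(\bP^1)^{J_2}$ with that of $Y_{J_1}(X)$. The paper also proves the converse inclusion first, which you rightly note is not needed here, and you are more explicit than the paper about why $E_J$ and $H_J$ agree.
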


\begin{proof}
Let $r = \dim X$ and $s = \dim Y$.   We first show that the essential shadows of $X$ and $Y$ arise for $X\times Y$.  Suppose that $(E_J, \cH_J)$ is an essential shadow for $X \subset (\bP^1)^N$, for $J \in \cal J(X)$.  Let $K\subset \{1, \ldots, M\}$ be any subset with $|K|=s$ so that $Y \subset (\bP^1)^M$ projects dominantly to $(\bP^1)^K$.  Let $K+N\subset \{1, 2, \ldots, N+M\}$ denote the shifted set of indices (adding $N$ to each element of $K$).  Then $J \cup (K+N)$ defines a set of indices in $\{1, 2, \ldots, N+M\}$ for which $|J \cup (K+N)| = r+s+1$ and the projection of $X\times Y$ to $(\bP^1)^{J \cup (K+N)}$ has dimension $r+s$.  We see immediately that $(E_J, \cH_J)$ is the associated essential shadow, for index set $E_{J \cup (K+N)} = E_J$.  Similarly for the essential shadows of $Y$.

Now suppose that $L \subset \{1, \ldots, N+M\}$ is any subset with $|L| = r+s+1$.  For $|L \cap \{1, \ldots, N\}| < r$ or $|L \cap \{N+1, \ldots, N+M\}| < s$, the projection of $X \times Y$ to $(\bP^1)^L$ must have dimension $< r+s$, so $L \not\in \cal J(X\times Y)$.  For $|L \cap \{N+1, \ldots, N+M\}| = s$, we have $L \in \cal J(X\times Y)$ if and only if the projection of $Y$ to $(\bP^1)^{L \cap \{N+1, \ldots, N+M\}}$ has dimension $s$ and $L \cap \{1, \ldots, N\}$ is in $\cal J(X)$.  For such an index set, we see that the essential shadow coincides with $(E_J, \cH_J)$ with $J=L\cap\{1,\ldots,N\}$.  Similarly for index sets with $|L\cap \{1, \ldots, N\}|=r$, and the proof is complete.
\end{proof}

\subsection{Decomposition of the bifurcation current}
For a subset $A\subset\{1,\ldots,N\}$, write $\Phi_A$ for the corresponding subproduct of maps $f_i$ with $i \in A$.  The following formula explains why the shadows contain exactly the information relevant to stability.

\begin{proposition} \label{essential stability}
There are positive rational constants $c_J$ such that
	\[	T_{\Phi,\cX}  =\sum_{J\in\mathcal J(X)}  c_J\,T_{\Phi_{E_J},\cH_J}. \]
In particular,
	\[ T_{\Phi,\cX}=0  \quad\Longleftrightarrow\quad T_{\Phi_{E_J},\cH_J}=0 
			\quad\text{for every }J\in\mathcal J(X). \]
\end{proposition}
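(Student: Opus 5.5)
We expand the wedge power of $\hat T_\Phi=\sum_{i=1}^N T_i$, with $T_i=\pi_i^*\hat T_{f_i}$, and use \eqref{square zero} to discard every term containing some $T_i$ twice. This gives
\[
\hat T_\Phi^{\wedge(r+1)}\wedge[\cX]=(r+1)!\sum_{|J|=r+1}\Bigl(\bigwedge_{i\in J}T_i\Bigr)\wedge[\cX].
\]
Each current $\bigwedge_{i\in J}T_i$ is pulled back from $S\times(\bP^1)^J$ under $\mathrm{id}_S\times\pi_J$. We push forward in two stages, first to $S\times(\bP^1)^J$ and then to $S$. The projection formula for currents with continuous local potentials gives
\[
(\mathrm{id}\times\pi_J)_*\Bigl(\bigwedge_{i\in J}T_i\wedge[\cX]\Bigr)=\bigwedge_{i\in J}\hat T_{f_i}\wedge(\mathrm{id}\times\pi_J)_*[\cX].
\]

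The terms split according to whether $J\in\mathcal J(X)$. If $J\notin\mathcal J(X)$, then $\cY_J$ has relative dimension $<r$, so $(\mathrm{id}\times\pi_J)_*[\cX]=0$ as a current of the relevant bidimension, and the term vanishes. If $J\in\mathcal J(X)$, then $(\mathrm{id}\times\pi_J)_*[\cX]=\delta_J[\cY_J]$, where $\delta_J\ge1$ is the generic degree of $\cX\to\cY_J$, which is constant after the shrinking of $S$ already made.

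Next we use the product structure \eqref{shadow product}, namely $\cY_J=\cH_J\times(\bP^1)^{J\setminus E_J}$ over $S$. The wedge product then factors as
\[
\Bigl(\bigwedge_{i\in E_J}\hat T_{f_i}\wedge[\cH_J]\Bigr)\otimes\bigwedge_{i\in J\setminus E_J}\hat T_{f_i}.
\]
Pushing forward along the fibers $(\bP^1)^{J\setminus E_J}$ multiplies by the fiber masses $\int\mu_{(f_i)_s}=1$. By Fubini for currents with continuous potentials, this can be checked on smooth approximants of the Green potentials and then passed to the limit. Finally, $\hat T_{\Phi_{E_J}}^{\wedge|E_J|}=|E_J|!\bigwedge_{i\in E_J}\hat T_{f_i}$ by \eqref{product equilibrium measure}, and $\cH_J$ has relative dimension $|E_J|-1$. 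Hence the pushforward to $S$ equals $T_{\Phi_{E_J},\cH_J}/|E_J|!$.

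This yields the formula with
\[
c_J=\frac{(r+1)!\,\delta_J}{|E_J|!},
\]
which is a positive rational constant. The equivalence of vanishing follows because each $T_{\Phi_{E_J},\cH_J}$ is a positive current, and a positive combination of positive currents vanishes only if each summand does.

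The main obstacle is the projection-formula step. Since $\cX$ and $\cY_J$ may be singular, and $\cX\to\cY_J$ is only generically finite, I would justify it by approximating the continuous potentials uniformly by smooth ones, for which the identity is the standard formula for smooth forms. Weak continuity of Bedford--Taylor products under uniform convergence of potentials then passes the identity to the limit. The analytic subsets where finiteness fails are not charged, because the currents have continuous potentials.
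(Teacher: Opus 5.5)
Your proposal is correct and follows essentially the same route as the paper. You expand using $T_i\wedge T_i=0$, push forward to $S\times(\bP^1)^J$ by the projection formula (which kills the $J\notin\mathcal J(X)$ terms and produces $\delta_J$), integrate out the inessential $\bP^1$-factors using unit fiber masses, and arrive at the same constants $c_J=\delta_J(r+1)!/|E_J|!$. Your added justification of the projection formula via smooth approximation of the continuous potentials fills in a step that the paper simply cites.
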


\begin{proof}
Write $T_i$ for the pullback of $\hat T_{f_i}$ to $S\times(\bP^1)^N$. Since $T_i\wedge T_i=0$, we have
\[
\hat T_\Phi^{\wedge(r+1)}\wedge[\cX]
=(r+1)!\sum_{|J|=r+1}\left(\bigwedge_{j\in J}T_j\right)\wedge[\cX].
\]
Let $\pi_J:S\times(\bP^1)^N\to S\times(\bP^1)^J$ be the coordinate projection. If $\dim Y_J<r$, then $(\pi_J)_*[\cX]=0$, so the corresponding summand has zero pushforward to $S$ by the projection formula. For $J\in\mathcal J(X)$, put $\delta_J=\deg(X\to Y_J)$. The projection formula gives
\[
(\pi_J)_*\left(\left(\bigwedge_{j\in J}T_j\right)\wedge[\cX]\right)
=\frac{\delta_J}{(r+1)!}\,
\hat T_{\Phi_J}^{\wedge(r+1)}\wedge[\cY_J].
\]

Using \eqref{shadow product}, we now integrate over the factors indexed by $J\setminus E_J$. Each coordinate Green measure has mass $1$, so
\[
(\pi_{E_J}^J)_*\left(\hat T_{\Phi_J}^{\wedge(r+1)}\wedge[\cY_J]\right)
=\frac{(r+1)!}{|E_J|!}\,
\hat T_{\Phi_{E_J}}^{\wedge|E_J|}\wedge[\cH_J].
\]
Pushing to $S$ proves the asserted formula, with
\[
c_J=\delta_J\frac{(r+1)!}{|E_J|!}>0.
\]
All terms are positive, which proves the vanishing criterion.
\end{proof}

\subsection{Activity dimension}
\label{active dim}
Recall that $\mathscr H(X)=\{(E_J,H_J):J\in\mathcal J(X)\}$ denotes the essential hypersurface decomposition of $X$.  We ignore the special shadows, since their bifurcation currents vanish.  Among the remaining, non-special essential hypersurfaces, we consider the {\bf elliptic shadows}, 
	\[ \mathcal J^{\mathrm{ell}} =
	\{J:H_J\text{ is strictly weakly special of elliptic type}\} \]
and the {\bf ordinary shadows},
	\[ \mathcal J^{\mathrm{ord}} = \{J:H_J\text{ is non-special and }J\notin\mathcal J^{\mathrm{ell}}\}. \]
Note that a strictly weakly special hypersurface of multiplicative type is placed in the ordinary collection.  

For $J\in\mathcal J^{\mathrm{ord}}$, let
\begin{equation} \label{pair moduli}
	 \rho_J:S\dashrightarrow\mathscr M_J 
\end{equation}
be the moduli map of the pair $(\Phi_{E_J},\cH_J)$, as defined in \S\ref{moduli map}.  For $J\in\mathcal J^{\mathrm{ell}}$, let
	\[ \bar\sigma_J:S\rightarrow\cQ_J \]
be the quotient section from \eqref{weak section}.  On a simply-connected analytic open set $U\subset S$, choose a Betti map and put
\begin{equation} \label{Betti J}
	\beta_J=\beta_{\cQ_J}\circ\bar\sigma_J: U\rightarrow\R^2/\Z^2.  
\end{equation}

We form the {\bf activity map}
\begin{equation} \label{collective activity map}
	\mathscr A_{\Phi,\cX}:U\rightarrow \prod_{J\in\mathcal J^{\mathrm{ord}}}\mathscr M_J
		\times  \prod_{J\in\mathcal J^{\mathrm{ell}}}\R^2/\Z^2
\end{equation}
from all the maps $\rho_J$ and $\beta_J$.  Its generic real rank is even and does not depend on the Betti charts.  We define the {\bf activity dimension} by
\begin{equation} \label{activity dimension definition}
	\actdim(\Phi,\cX) =\frac12\rank_{\R}\mathscr A_{\Phi,\cX}.
\end{equation}
If there are no non-special shadows, we say the activity dimension is $0$.  The evenness of $\rank_{\R}\mathscr A_{\Phi,\cX}$ follows because the kernels of Betti differentials occurring here are complex subspaces; see \cite[\S4.2, equations (4.6)--(4.9)]{ACZ:Betti} and \cite[\S4]{GaoBettiRank}.  If there are no elliptic shadows, note that the activity dimension is the complex dimension of the image of the product of the ordinary moduli maps.

\subsection{Proof of the true-degeneracy criterion}
We are now ready to prove Theorem \ref{true deg char}.  As usual, set $\ell=\dim S>0$.  

We first outline the proof idea here.  If $\cX$ has maximal activity dimension, we will use the activity of the essential hypersurface shadows $(\Phi_{E_J}, \cH_J)$ and Theorem \ref{hypersurfaces} to know that certain wedge powers of their bifurcation currents are positive.  Via the analytic characterization of this positivity (Proposition \ref{analytic characterization}), we can find  index sets 
	\[ J_1, \ldots, J_\ell \in \cal J(X) \]
for which we can build a rigid repeller in 
	\[ \cal H := \cH_{J_1} \times_S \cdots \times_S \cH_{J_\ell} \]
for the fiber product $\Psi := \Phi_{E_{J_1}} \times_S \cdots \times_S \Phi_{E_{J_\ell}}$.  The existence of this rigid repeller implies the positivity of the current 
\begin{equation} \label{H pos}
	(\hat T_\Psi)^{\wedge \left(\sum_i |E_{J_i}|\right)} \wedge [\cH] 
\end{equation}
in $S \times (\bP^1)^{\left(\sum_i |E_{J_i}|\right)}$ by Proposition \ref{analytic characterization}.  From this, we will deduce the positivity of
 	\[ (\hat T_{\Phi^{(\ell)}})^{\wedge \ell(r+1)} \wedge [\cX^{(\ell)}]. \]
For the converse implication, if the activity dimension is smaller than $\ell$, then every parameter lies on a positive-dimensional complex-analytic fiber of the activity map, along which all essential shadows are stable. If the bifurcation measure is nonzero, Proposition \ref{analytic characterization} will be used to show existence of rigid repellers for some coordinate projections of the top fiber power of $\cX$.  On the other hand, stability of the essential hypersurface shadows would force the projected periodic points to persist in the subvarieties in a family, which would contradict rigidity of the intersections.    

\begin{proof}[Proof of Theorem \ref{true deg char}]
Let $r=\dim_S\cX$ be the relative dimension of $\cX$, so that $1\leq r<N$.  After finite surjective base change and shrinking $S$, we may assume that $S$ is smooth, the ordinary moduli maps $\rho_J$ are regular, and all families and projected coordinate images are flat over $S$ with geometrically integral fibers of constant dimension and degree.  These reductions preserve both conditions in the statement of Theorem \ref{true deg char}, by the projection formula and the fact that the currents do not give mass to proper algebraic subsets of $\cX^{(\ell)}$.

Assume first that $\actdim(\Phi,\cX)=\ell$. For each ordinary shadow which is not weakly special, remove from $S$ the Zariski closure of the parameters with special fibers. 
These closures are proper by Corollary \ref{dense special hypersurfaces}, applied after spreading the coefficients out over $\Qbar$.  

Let $\rho^{\mathrm{ord}}$ be the product of the ordinary moduli maps, and put $o=\rank_\C\rho^{\mathrm{ord}}$, the generic value of its rank. If $o=\ell$, put $e=0$ and $S^{\mathrm{ell}}=S$. Suppose instead that $o<\ell$. For $J\in\mathcal J^{\mathrm{ell}}$, let $\bar\sigma_J:S\to\cQ_J$ be the quotient section defined in \S\ref{weakly special quotient}, and choose a small analytic open set $U$ on which its Betti map $\beta_J$ \eqref{Betti J} is defined. Put
\[ e=\tfrac12\rank_\R \prod_{J\in\mathcal J^{\mathrm{ell}}} \beta_J.  \]
Since the Betti differentials have complex kernels, we can choose distinct $J_1,\ldots,J_e$ so that $\beta:=(\beta_{J_1},\ldots,\beta_{J_e})$ has rank $2e$. Shrink $U$ so that $\beta$ has this rank and $(\rho^{\mathrm{ord}},\beta)$ has rank $2\ell$ throughout $U$.

Choose a rational value in the open image $\beta(U)$, and let $S^{\mathrm{ell}}$ be the Zariski closure of a connected component of its fiber. This is an irreducible algebraic subvariety of codimension $e$. After finite base change, its local equations are $\bar\sigma_{J_i}=\tau_i$, where the $\tau_i$ are torsion sections of the corresponding quotient elliptic schemes. Choose torsion lifts of these sections in the abelian scheme \eqref{abelian scheme product}, the product of elliptic schemes for the Latt\`es factors indexed by $E_{J_i}$. Their images in $S\times(\bP^1)^{E_{J_i}}$ give a locally-defined preperiodic section $x^{\mathrm{ell}}$ for
	\[ \Phi^{\mathrm{ell}}=\Phi_{E_{J_1}}\times_S\cdots\times_S\Phi_{E_{J_e}} \]
over $U \cap S^{\mathrm{ell}}$.  Since all factors are Latt\`es, the point $x^{\mathrm{ell}}$ is preperiodic to a repelling cycle along all of $S^{\mathrm{ell}}$ and its factors lie persistently in the hypersurfaces $\cH_{J_i}$.  Note that, by our definition of $e$ and the specialness of each of these shadows along $S^{\mathrm{ell}}$, we can choose this marked preperiodic point $x^{\mathrm{ell}}$ so that it cannot lie in all of these shadows persistently on any larger-dimensional subvariety of $S$.  Moreover, by our assumption that the rank of $(\rho^{\mathrm{ord}},\beta)$ is $2\ell$ on $U$ and the maximality of the activity dimension, we see that $\rho^{\mathrm{ord}}|_{S^{\mathrm{ell}}}$ has rank $\ell-e$.

Set $V_0=S^{\mathrm{ell}}$. We inductively construct subvarieties $V_j\subset V_{j-1}$ as follows. Choose $K_j\in\mathcal J^{\mathrm{ord}}$ with
	\[ \ell_{K_j}=\dim\overline{\rho_{K_j}(V_{j-1})}>0. \]
After shrinking $V_{j-1}$, we may assume that the moduli map $\rho_{K_j}$ is smooth onto its image. Choose an $\ell_{K_j}$-dimensional algebraic subvariety $S_{K_j}\subset V_{j-1}$ transverse to the fibers of $\rho_{K_j}$, so that $\rho_{K_j}|_{S_{K_j}}$ is generically \'etale onto its image. Theorem \ref{hypersurfaces} and Proposition \ref{analytic characterization}
give a Zariski-dense set of rigid repellers for $\cH_{K_j}^{(\ell_{K_j})}$ over $S_{K_j}$. Choose one over a parameter $s_j$ where the moduli map is smooth and the product of the remaining
ordinary moduli maps has maximal rank. The local continuation of the repeller defines a locally-defined algebraic section $\eta_j$.  We claim that the incidence with $\eta_j$ cuts out, locally near $s_j$ in $V_{j-1}$, exactly the fiber of $\rho_{K_j}$ through $s_j$.  Indeed, after finite base change, the pair $(\Phi_{E_{K_j}},\cH_{K_j})$ is pulled back from its moduli image, up to coordinatewise conjugacy.  The marked repelling point and the equations defining the intersections can therefore also be chosen to depend only on the moduli parameters.  Since the intersection is isolated on the transverse subvariety $S_{K_j}$, the locus where it lies in $\cH_{K_j}^{(\ell_{K_j})}$ within $V_{j-1}$ is precisely the moduli fiber.  Let $V_j$ be the irreducible component of this fiber containing $s_j$. Then
	\[ \dim V_j=\dim V_{j-1}-\ell_{K_j}. \]
As the product of the ordinary moduli maps has maximal rank on $V_{j-1}$, its remaining components have rank $\dim V_j$ on $V_j$, so the induction continues. The previously marked repelling points remain persistently in their hypersurfaces along $V_j$, and remain repelling because the corresponding pairs are constant up to conjugacy. We may therefore repeat the construction after removing the relevant singular and exceptional loci.

The construction ends at a parameter $s_0$, and the dimension drops give
	\[e+\sum_{j=1}^m\ell_{K_j}=\ell.\]
If $e=\ell$, no ordinary shadows are needed and we put $m=0$. Put
	\[\cH^{\mathrm{act}}=\cH_{J_1}\times_S\cdots\times_S\cH_{J_e}\times_S
	\cH_{K_1}^{(\ell_{K_1})}\times_S\cdots\times_S\cH_{K_m}^{(\ell_{K_m})},\]
and let $\Phi^{\mathrm{act}}$ be the corresponding product map on $S\times(\bP^1)^n$, where $n=\sum_i|E_{J_i}|+\sum_j\ell_{K_j}|E_{K_j}|$. The periodic images of the chosen preperiodic sections meet the product of the corresponding image families in an isolated point over $s_0$ and this product is flat of codimension $\ell$. Proposition \ref{analytic characterization} therefore gives 
\begin{equation}\label{active quotient positive}
	 \hat T_{\Phi^{\mathrm{act}}}^{\wedge n}\wedge[\cH^{\mathrm{act}}]\ne0. 
\end{equation}

For each $J \in \{J_1, \ldots, J_e, K_1, \ldots, K_m\}$, we set $\cY_J = (\pi_{E_J}^J)^{-1}(\cH_J)$, and let $\cY \subset S \times (\bP^1)^{\ell(r+1)}$ be their fiber product. As in the proof of Proposition \ref{essential stability}, the nonvanishing in \eqref{active quotient positive} and integration over the unused $\bP^1$-coordinates shows that the corresponding canonical measure on $\mathcal Y$ is nonzero in $S\times (\bP^1)^{\ell(r+1)}$.  As $\cX^{(\ell)}\to\mathcal Y$ is generically finite, the projection formula therefore gives
	\[\hat T_{\Phi^{(\ell)}}^{\wedge\ell(r+1)}\wedge[\cX^{(\ell)}]\ne0.\]
This proves that $\cX$ is potentially non-degenerate.

Before proceeding to the converse implication, we point out that we have proved something more than potential non-degeneracy of $\cX$.  Indeed, the family $\cH^{\mathrm{act}}$ constructed above is a fiber product of exactly $\ell$ essential hypersurface shadows, counting repetitions.  Denote these by $\cH_1,\ldots,\cH_\ell$, and put $d_i=|E_i|$, so that $n=\sum_i d_i$.  Expanding the top wedge power of the Green current on $\cH_{\mathrm{act}}$, and using the vanishing of the square of each coordinate Green current, we obtain
	\[\pi_*\left(\hat T_{\Phi^{\mathrm{act}}}^{\wedge n}\wedge[\cH^{\mathrm{act}}]\right)
	= \frac{n!}{d_1!\cdots d_\ell!} T_{\Phi_{E_1},\cH_1}\wedge\cdots\wedge T_{\Phi_{E_\ell},\cH_\ell}. \]
The left side is a nonzero positive measure by \eqref{active quotient positive}.  Consequently, this wedge product of $\ell$ bifurcation currents is also nonzero.  Since Proposition \ref{essential stability} expresses $T_{\Phi,\cX}$ as a positive linear combination of the bifurcation currents of the essential hypersurface shadows, we conclude that
\begin{equation} \label{old bif measure}
	T_{\Phi,\cX}^{\wedge\ell} \not= 0. 
\end{equation}

Now assume that $\actdim(\Phi,\cX) < \ell$. Let $U \subset S$ denote an analytic open subset on which the activity map is defined, or let $U$ be all of $S$ if there are no non-special elliptic shadows. When the Betti components all have torsion values (or if there are no non-special elliptic shadows), a fiber $F$ of the activity map is a positive-dimensional complex-algebraic subvariety in $S\cap U$.  Note that each of the essential hypersurface shadows is stable along these subvarieties $F$ in $S\cap U$ (from Proposition \ref{easy stability}).  This $F$ extends to a positive-dimensional algebraic subvariety of $S$ that we will also denote by $F$. The essential hypersurface shadows remain stable along this algebraic subvariety $F$, because the ordinary components are not varying, while the elliptic components are persistently special. From Proposition \ref{essential product}, we know that the essential hypersurface shadows of the fiber power $\cX^{(\ell)}$ coincide with those of the original variety $\cX$. It follows from Proposition \ref{essential stability} that $\cX^{(\ell)}$ and all of its shadows are stable along $F$. That is, we have
\begin{equation} \label{stable on F}
	T_{\Phi^{(\ell)}|_F,\cX^{(\ell)}|_F} = 0
\end{equation}
on $F$.   In particular, restricting to the normalization of any algebraic curve $C \subset F$, we also have that
\begin{equation} \label{stable proj on C}
	\hat T_{(\Phi^{(\ell)})_J}^{\wedge (r\ell+1)} \wedge [\cY_J|_C] = 0
\end{equation}
for any projection of $\cX^{(\ell)}$ to $\cY_J$ for subsets $J \subset \{1,\ldots,N\ell\}$ with $|J|=r\ell+1$ and $\dim_S\cY_J=r\ell$. In particular, any intersection between an iterated image of $\cY_J|_C$ and a repelling periodic section must persist along the curve $C$, by invariance of the Green currents and Proposition \ref{analytic characterization}.

We now show that $\cX$ must be truly degenerate. From the projection formula, the positivity of $(\hat T_{\Phi^{(\ell)}})^{\ell(r+1)} \wedge [\cX^{(\ell)}]$ implies the positivity of a measure of the form
	\[ (\hat T_{(\Phi^{(\ell)})_K})^{\ell(r+1)} \wedge [\cY_K] \ne 0 \]
in $S \times (\bP^1)^{\ell(r+1)}$ for a projection of $\cX^{(\ell)}$ to $\cY_K\subset S \times (\bP^1)^K$ for some set of indices $K \subset \{1,\ldots,N\ell\}$ with $|K|=\ell(r+1)$. In particular, the projection satisfies $\dim\cY_K = \dim\cX^{(\ell)}=\ell(r+1)$. 

From Proposition \ref{analytic characterization}, the variety $\cY_K$ contains a Zariski-dense set of points that are preperiodic to repelling cycles for $(\Phi^{(\ell)})_K$ and intersect transversely with $\cY_K$.   It follows that we can choose one of these rigid repellers in $\cY_K$ that does not lie in the intersection of the distinct irreducible components of $Z(\cY_K)$, as defined in Proposition \ref{essential intersection}.   Choose such a rigid repeller $v_0\in \cY_K$, and let $s_0$ be its parameter. Then, along any algebraic curve $C$ through $s_0$ in $S$, we obtain a rigid repeller for at least one of the essential hypersurface shadows of $\cY_K$.  Indeed, if the projected preperiodic point was contained persistently in every essential shadow of $\cY_K$ along the curve, then the original point would remain in $Z(\cY_K)$. 

To obtain a contradiction, we now apply the preceding construction of $F$ using only the activity components corresponding to the essential hypersurface shadows of $\cY_K$. Each of these shadows of $\cY_K$ is also an essential hypersurface shadow of $\cX^{(\ell)}$, hence of $\cX$ by Proposition \ref{essential product}, so the activity rank bound gives a positive-dimensional fiber $F$ of the (restricted) activity map.  Each elliptic shadow under consideration contains a preperiodic point over $s_0$, and its quotient section has a torsion value there.  So all of these hypersurface shadows are stable along $F$, as described above.  Restricting to an algebraic curve $C\subset F$ through $s_0$ contradicts the rigid repeller just obtained. This completes the proof.
\end{proof}

\begin{corollary} \label{measure cor}
Under the hypotheses of Theorem \ref{true deg char}, with $\ell = \dim S$, the following are equivalent:
\begin{enumerate} 
\item		$\actdim(\Phi, \cX) = \ell$; 
\item		$\mu_{\Phi, \cX} \not=0$; and 
\item 	$(T_{\Phi, \cX})^{\wedge \ell} \not=0$.
\end{enumerate}
\end{corollary}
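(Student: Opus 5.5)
We will prove the cycle of implications (1) $\Rightarrow$ (3) $\Rightarrow$ (2) $\Rightarrow$ (1). Almost everything has already been established inside the proof of Theorem \ref{true deg char}; the corollary mainly packages it.

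For (1) $\Rightarrow$ (3), assume $\actdim(\Phi,\cX)=\ell$. The first half of the proof of Theorem \ref{true deg char} constructs the family $\cH^{\mathrm{act}}$, a fiber product of exactly $\ell$ essential hypersurface shadows $\cH_1,\ldots,\cH_\ell$, counted with repetition. It then shows that
\[ T_{\Phi_{E_1},\cH_1}\wedge\cdots\wedge T_{\Phi_{E_\ell},\cH_\ell}\neq 0 . \]
This is recorded as \eqref{old bif measure}. To pass to $T_{\Phi,\cX}^{\wedge\ell}$, we use Proposition \ref{essential stability}, which writes $T_{\Phi,\cX}=\sum_J c_J T_{\Phi_{E_J},\cH_J}$ with every $c_J>0$. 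Expanding the $\ell$-th wedge power gives a sum of wedge products of positive closed $(1,1)$-currents with continuous potentials, each with a positive coefficient. Every term in this expansion is a positive measure, so no cancellation can occur. The term indexed by $(\cH_1,\ldots,\cH_\ell)$ is nonzero, hence $(T_{\Phi,\cX})^{\wedge\ell}\neq0$.

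For (3) $\Rightarrow$ (2), we invoke the inequality $\mu_{\Phi,\cX}\geq c\,(T_{\Phi,\cX})^{\wedge\ell}$ from \cite[Proposition 1.4]{Gauthier:Taflin:Vigny}, quoted in the remark in \S\ref{stableintro}. If one prefers to verify it directly, expand $\hat T_{\Phi^{(\ell)}}^{\wedge\ell(r+1)}$ on $S\times(\bP^1)^{N\ell}$ as a sum of products of coordinate currents with nonnegative coefficients. Among the terms are those that put exactly $r+1$ coordinate currents on each of the $\ell$ copies of $\cX$. Pushing these terms forward to $S$ fiber by fiber recovers a positive multiple of $T_{\Phi,\cX}^{\wedge\ell}$. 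All the remaining terms are positive measures.

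For (2) $\Rightarrow$ (1), note that $\mu_{\Phi,\cX}\neq0$ means exactly that $\cX$ is potentially non-degenerate, that is, not truly degenerate. Theorem \ref{true deg char} then says $\actdim(\Phi,\cX)\geq\ell$. It remains to rule out $\actdim(\Phi,\cX)>\ell$. The activity map has source an open subset of $S$, whose real dimension is $2\ell$, so its real rank is at most $2\ell$ and the activity dimension is at most $\ell$. Hence equality holds.

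The only point needing care is the positivity bookkeeping in (1) $\Rightarrow$ (3). I expect this to be the main obstacle, if anything is. We must be sure that the mixed wedge products of the shadow currents are well defined and positive, so that the nonvanishing of the single term \eqref{old bif measure} forces the whole sum to be nonzero. This holds because each $T_{\Phi_{E_J},\cH_J}$ is a positive closed current with continuous local potentials, so Bedford–Taylor products of them are well defined and positive.
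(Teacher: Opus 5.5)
Your proposal is correct and follows the paper's own argument: (1)$\Rightarrow$(3) comes from the construction in the proof of Theorem \ref{true deg char} combined with the positive decomposition of Proposition \ref{essential stability}, (3)$\Rightarrow$(2) is \cite[Proposition 1.4]{Gauthier:Taflin:Vigny}, and (2)$\Rightarrow$(1) is Theorem \ref{true deg char}. Your remark that $\actdim(\Phi,\cX)\le\ell$, because the activity map has real rank at most $2\ell$, makes explicit a point the paper leaves implicit; also note that in the paper the label \eqref{old bif measure} refers to the conclusion $T_{\Phi,\cX}^{\wedge\ell}\neq0$ itself, not to the wedge product of the shadow currents.
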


\begin{proof}
Theorem \ref{true deg char} provides the equivalence of (1) and (2).  That (3) $\implies$ (2) is a direct computation; see \cite[Proposition 1.4]{Gauthier:Taflin:Vigny}.  The implication (1) $\implies$ (3) appears in the proof of Theorem \ref{true deg char} leading up to \eqref{old bif measure}.
\end{proof}

\bigskip
\section{Proof of Theorem \ref{main1} and its uniformity consequences}
\label{proof main1}

In this section we prove Theorem \ref{main1} and explain how to derive the consequences Theorems \ref{degree bound}, \ref{preperiodbound}, \ref{theorem: common}, \ref{thm:mixed}, and Corollary \ref{iterateshare}.  We assume that
	\[\Phi = (f_1, \ldots, f_N): S \times(\bP^1)^N \to S\times (\bP^1)^N \]
is an algebraic family of product maps, with each $f_i$ of degree $>1$, and 
	\[ \cX\subset S\times(\bP^1)^N \]
is irreducible and flat over the irreducible base $S$, with geometrically integral generic fiber of dimension $0 < r < N$ dominating every coordinate.

\subsection{Simultaneous torsion for the exceptional shadows}\label{simultaneous torsion}
Recall the essential hypersurface decomposition of $\cX$ introduced in \S\ref{decomposition}.  
After shrinking $S$ and making one common generically finite base change, every weakly special exceptional essential shadow admits a group lift
	\[ \cB_J+\sigma_J\subset\cG_J^{E_J} \]
of codimension 1, and a quotient section
	\[ \bar\sigma_J:S\rightarrow \cQ_J:=\cG_J^{E_J}/\cB_J. \]
At a parameter $s$ in $S$, recall that we have
	\[ H_{J,s} \mbox{ is } \Phi_{E_J,s}\mbox{-special} \quad
	\Longleftrightarrow\quad \bar\sigma_J(s)\text{ is torsion in }\cQ_J. \]
We say that the exceptional shadows of $\cX$ are {\bf simultaneously torsion weakly special} if each such shadow is weakly special and 
\begin{equation} \label{simultaneous torsion locus}
	S^{\rm tor}_{\cX} := \{s\in S(\bC):\bar\sigma_J(s)\text{ is torsion for every such }J\}
\end{equation}
is Zariski dense in $S$.  The definition is independent of the common cover and of the chosen group lifts.  If there are no exceptional shadows, or if all of the weakly special shadows are special, we put $S^{\rm tor}_{\cX}=S$.

Because of the structure of these groups, we can also see that the existence of a single torsion point in the coset $(\cB_J+\sigma_J)_s$ for a parameter $s \in S$ implies that the weakly special hypersurface $H_{J,s}$ is in fact $\Phi_{E_J, s}$-special.  For each weakly special exceptional shadow, let 
	\[\mathcal{U}_J \subset S \times (\bP^1)^{E_J} \]
be the image of the group $\cG_J^{E_J}$ under the associated quotient; note that $\mathcal{U}_J$ is Zariski open but is not the total space in the case where $\cG_J$ is multiplicative.  We deduce that 
\begin{equation} \label{torsion versus preperiodic}
  H_{J,s} \mbox{ is } \Phi_{E_J,s}\mbox{-special} \quad
	\iff \quad  H_{J,s}\cap \mathcal{U}_J \cap \Preper(\Phi_{E_J}) \not= \emptyset.
\end{equation}

\subsection{Proof of Theorem \ref{main1} over $\Qbar$} \label{main1 over Qbar}
Put $\ell=\dim S$ and $m=\ell+1$.  We first work over $\Qbar$.

Assume (1) in the statement of Theorem \ref{main1}; that is, we assume that the special fibers are dense.  It follows immediately that every essential shadow is special at each of these parameters.  Applying Corollary \ref{dense special hypersurfaces}, we see that each essential shadow is weakly special.  Our assumption then ensures that the same dense set of
parameters works for every shadow, proving that the exceptional shadows are simultaneously torsion weakly special.

Conversely, suppose that (2) holds.  Then by definition, there is a dense set (which was denoted by $S^{\rm tor}_{\cX}$ above) where all essential hypersurface shadows are special.  Proposition \ref{essential intersection} then shows that $X_s$ is special for each $s \in S^{\rm tor}_{\cX}$.  This shows that (1) and (2) are equivalent.  

Assuming (1), we use the Zariski-density of preperiodic points in any special subvariety to deduce (3).  

It remains to show that (3) implies (2).  For every essential shadow, the dominant projection
	\[ \cX^{(m)}\rightarrow\cH_J^{(m)} \]
sends preperiodic points to preperiodic points.  Hence condition (3) implies that
	\[ \Preper(\Phi_{E_J}^{(m)})\cap\cH_J^{(m)} \]
is Zariski dense for every $J$.  Corollary \ref{dense special hypersurfaces} implies that each non-exceptional essential shadow is special and each exceptional shadow is weakly special.

It remains to show that the exceptional shadows are simultaneously torsion weakly special, which relies on \eqref{torsion versus preperiodic}.  Indeed, the Zariski density $\Preper(\Phi_{E_J}^{(m)})\cap\cH_J^{(m)}$ implies that these points must intersect the open $\mathcal{U}_J$, and therefore $\cH_{J,s}$ is special at each of these parameters.  From condition (3), we can find a Zariski-dense set of parameters $s$ at which this holds for all of the exceptional weakly special shadows, and therefore they are simultaneously torsion weakly special.  

This proves Theorem \ref{main1} over $\Qbar$.

\subsection{Unformity of degrees:  proof of Theorem \ref{degree bound}}\label{proof degree bound}
Building on the conclusion of Theorem \ref{main1} for families $(\Phi, \cX)$ defined over $\Qbar$, we are able to deduce Theorem \ref{degree bound}.  

This argument is now standard in this area; compare \cite{Yuan:bigness, Gao:Ge:Kuhne, Gao:Habegger:RMM, DMY:UniformDAO}.

Fix a projective embedding of $(\bP^1)^N$.  There is a finite union of Chow varieties, defined over $\Q$, parameterizing all irreducible subvarieties $V$ of degree $<E$, in every possible dimension, and a finite union of parameter spaces for tuples $\Phi=(f_1,\ldots,f_N)$ with $2\leq\deg f_i\leq D$.  Stratify their product so that the universal cycle
	\[ \cV\rightarrow S \]
is flat with geometrically integral fibers and all coordinate projections are dominant. These strata are defined over $\Qbar$.  

We work on one irreducible stratum $S$ and argue by induction on the fiber dimension $r$.  Put $\ell=\dim S$ and $m=\ell+1$.  If
	\[ \Preper(\Phi^{(m)})\cap\cV^{(m)} \]
is not Zariski dense, let $\cY$ be its Zariski closure.  After shrinking $S$, every fiber $Y_s$ is a proper subvariety of $V_s^m$, with uniformly bounded number of components and degrees.  Put
	\[ \Sigma_s=V_s\cap\Preper(\Phi_s). \]
Then $\Sigma_s^m\subset Y_s$.  The elementary product-set lemma \cite[Lemma 4.3]{Gao:Ge:Kuhne} gives a proper subvariety $Z_s\subset V_s$ which contains $\Sigma_s$ and whose number of components and degrees are bounded only in terms of $m$, $\dim V_s$, $\deg V_s$, and the geometry of $Y_s$. Apply the induction hypothesis to the irreducible components of $Z_s$, restricting to their non-constant coordinates; a component with a non-preperiodic constant coordinate contributes no preperiodic points. This gives a uniform bound for the degree and number of components of
	\[ \overline{V_s\cap\Preper(\Phi_s)}^{\rm Zar} \]
on a nonempty open subset of $S$.

Suppose instead that the preperiodic points are dense in $\cV^{(m)}$.  By Theorem \ref{main1} over $\Qbar$, every essential shadow is weakly special. After shrinking $S$, this remains true fiberwise.  If all essential shadows of $V_s$ are special, then $V_s$ is special by Proposition \ref{essential intersection}, and its preperiodic closure is $V_s$.   Otherwise, choose a non-special exceptional shadow.  By \eqref{torsion versus preperiodic}, $\Sigma_s$ is empty if this shadow is elliptic.  In the multiplicative case, $\Sigma_s$ is contained in a union of at most $2N$ coordinate sections $V_s\cap\{z_i=a\}$.  Each section is proper, since $V_s$ dominates every coordinate, and satisfies $\deg(V_s\cap\{z_i=a\})\leq\deg V_s$. Applying induction to their
irreducible components gives the desired bound.

Finally, repeat the argument on the finitely many irreducible components of the complement of the chosen open subset of $S$.  Noetherian induction on the dimension of the parameter space terminates after finitely many steps.  Taking the maximum over the finitely many Chow strata and possible fiber dimensions proves the theorem.  The argument also applies to complex parameters: each finite orbit-relation locus in the universal family is defined over $\Qbar$ and has Zariski-dense $\Qbar$-points, so the containment $\Sigma_s^m\subset Y_s$ holds also for $s\in S(\C)$.
\qed

\subsection{Proof of Theorem \ref{main1} over $\C$}
Given the conclusion in \S\ref{main1 over Qbar}, it remains only to pass from $\Qbar$ to $\C$.  Suppose the pair $(\Phi, \cX)$ is defined over $\C$.  As usual, we let $\ell = \dim_\C S$, and set $r = \dim_S \cX$, the relative dimension of $\cX$.  We write $m = \ell + 1$.

Choose a finitely generated subfield $K\subset\C$ over which the pair $(\Phi, \cX)$ is defined, and a quasiprojective $\Qbar$-variety $B$ with function field $K$.  After shrinking $B$, all of the maps, subvarieties, projections, and group quotients spread out over $B$.  Let $(\Phi_B, \cX_B)$ denote models over $B$, with parameter space $S_B$, all defined over $\Qbar$.  

Suppose that condition (1) of Theorem \ref{main1} holds.  The equations determining specialness are given by polynomial identities (over $\Q$) in the data defining $\Phi$ and $\cX$, and therefore the locus of special fibers also spreads out over $B$.  That is, the Zariski closure in $S_B$ of locus $\{s: X_s \mbox{ is } \Phi_s\mbox{-special}\}$ must be all of $S_B$.  

Similarly, the properties in (2) of Theorem \ref{main1} are preserved in this model $(\Phi_B, \cX_B)$; namely, the essential hypersurface decomposition remains the same, and each of the shadows is special or weakly special if and only if they were over $\C$. 

Applying the equivalence of (1) and (2) over $\Qbar$ to $(\Phi_B, \cX_B)$ over $S_B$, we see that (1) $\iff$ (2) over $\C$.

Furthermore, conditions (1) and (2) clearly imply condition (3) over $\C$ by the Zariski-density of preperiodic points in any special subvariety.  

Now we assume (3). We will show that (2) holds. Recall that we know that (3) implies (1) and (2) over $\Qbar$. We first explain that we may reduce the problem to case that $\cal{X}$ is an essential hypersurface. If $r=N$, then $\cX$ is the whole ambient family and there is nothing to prove.  Otherwise, for each essential shadow $\cH_J$, the dominant projection
 $\cX^{(m)}\to\cH_J^{(m)}$, sends $\Phi^{(m)}$-preperiodic points to $\Phi_{E_J}^{(m)}$-preperiodic points. Hence condition (3) holds for every essential shadow. 
It suffices to prove that this density forces $\cH_J$ to be weakly
special. Indeed, once this is known for every shadow, shrink $S$ and make a common generically finite base change so that the exceptional shadows admit the group opens $\mathcal U_J$ of \S\ref{simultaneous torsion}. The intersection 
	\[W=\bigcap_{J\text{ exceptional}}q_J^{-1}(\mathcal U_J)\subset\cX\]
is a dense open subset. By (3), the preperiodic points of $\cX$ are Zariski dense. Their intersection with $W$ therefore lies over a Zariski-dense set of parameters $s$. For each such $s$, choose a preperiodic point $x\in W\cap X_s$. Then $q_J(x)$ is a preperiodic point in $H_{J,s}\cap\mathcal U_{J,s}$ for every exceptional shadow.
By \eqref{torsion versus preperiodic}, all exceptional shadows
are special at $s$. Proposition
\ref{essential intersection} now gives (1), and hence (2).

We may therefore replace $(\Phi,\cX)$ by one of its essential shadows. Thus, we henceforth assume that  $\cX\subset S\times(\bP^1)^N$ is an essential hypersurface, of relative dimension $r=N-1$, and prove that it is weakly special.

We use the `classical' specialization method,  treating transcendental coefficients as additional parameters after spreading them out, as in Zilber \cite[Theorem 1]{Zilber2002}, Bombieri--Masser--Zannier \cite[\S3]{BMZ2008}, \cite[\S3]{CMZ2018} and others. We spread out one
transcendence variable at a time, arguing with induction on the transcendence degree, as in Barroero--Dill \cite[Proposition 4.1]{Barroero:Dill} and Gao--Habegger
\cite[\S10]{Gao:Habegger:RMM}. 

Spreading out one transcendence variable gives a parameter space of dimension $m=\ell+1$, matching the number of copies in (3). If the resulting pair has smaller-dimensional moduli image, we apply induction on that image. In the maximally varying case, Theorem \ref{hypersurfaces} gives geometric adelic bigness on the $m$-th fiber power when the hypersurface is not weakly special. This yields a function-field height gap, contradicting the density of preperiodic points. The height gap replaces the argument with degeneracy loci and Betti rank in \cite[\S10]{Gao:Habegger:RMM}.

Let $K\subset\C$ denote a subfield, finitely generated over $\Qbar$, over which $S$, $\Phi$, and $\cX$ are defined, and let $L$ be the algebraic closure of $K$ inside $\C$. We use the same notation for the corresponding models over $L$. Then
\[d:=\operatorname{trdeg}_{\Qbar}L=\operatorname{trdeg}_{\Qbar}K<\infty.\]
Condition (3) also holds over $L$.  We argue by induction on $d$. 

If $d=0$, then $L=\Qbar$, and the assertion follows from Corollary \ref{dense special hypersurfaces}.  If $\ell=0$, it follows from the usual dynamical Manin--Mumford results, for any $d$.  We may therefore assume that $d>0$ and $\ell>0$.  Choose an algebraically closed subfield $k\subset L$ with  $\operatorname{trdeg}_{\Qbar}k=d-1$ and a one-variable function field $F/k$ over which the pair is defined, with $\overline F=L$.
Let $B_0/k$ be a smooth curve with function field $F$ and spread out the
pair $(\Phi,  \cX)$ to
	\[\cX_{B_0}\rightarrow S_{B_0}\rightarrow B_0.\]
After shrinking the base, these morphisms are flat, the hypersurfaces remain essential, and $\dim_k S_{B_0}=m$.
All fiber powers below are taken over $S_{B_0}$. Note that 
\[\Preper(\Phi_{B_0}^{(m)})\cap\cX_{B_0}^{(m)}(k)\]
is Zariski dense in $\cX_{B_0}^{(m)}$. Indeed, fixing the periods
and preperiods defines a closed locus over $k$, whose $k$-points
are dense because $k$ is algebraically closed, so this follows from our assumption (3) over $L$. 

Suppose that $\cX$ is not weakly special, so that $\cX_{B_0}$ is not weakly special either. Set
\[p:=\dim\overline{\rho_{\Phi_{B_0},\cX_{B_0}}(S_{B_0})}.\]
For $p<m$, after making a generically finite base change and shrinking, the pair $(\Phi_{B_0},  \cX_{B_0})$ descends (up to coordinatewise conjugacy) to a pair $(\Phi_T,\cX_T)$ over a $k$-variety $T$ of dimension $p$.  As in the proof of Corollary \ref{dense special hypersurfaces}, the density of preperiodic points holds in $\cX_T^{(m)}$ and so also in $\cX_T^{(p+1)}$, since $p+1\leq m$. The induction hypothesis over $k$ makes $\cX_T$ weakly special, a contradiction. Thus $p=m$ and $(\Phi_{B_0},\cX_{B_0})$ is maximally varying.

To finish, we use the following height inequality, which requires the input of Theorem \ref{hypersurfaces}.  Write $\cX_F\to S_F$ for the model of $\cX\to S$ over $F$, so that
\[\cX_F^{(m)}=\cX_{B_0}^{(m)}\times_{B_0}\operatorname{Spec}F,~
(\cX_F^{(m)})_L=\cX^{(m)}.\]
All heights below are function-field heights over $F/k$, normalized by dividing degrees over a finite extension $E/F$ by $[E:F]$.

\begin{lemma}
\label{function field height gap}
Suppose that $(\Phi_{B_0},\cX_{B_0})$ is maximally varying over $k$
and that $\cX_{B_0}$ is not weakly special. There exist a nonempty
Zariski-open subset $U\subset\cX_F^{(m)}$ and $\epsilon>0$ such that
\begin{equation}\label{specialization height gap}
\sum_{a=1}^m\sum_{i=1}^n
\hat h_{(f_i)_s}(x_{a,i})\geq\epsilon
\end{equation}
for every $P=(s;x_1,\ldots,x_m)\in U(L)$, where
$x_a=(x_{a,1},\ldots,x_{a,n})$.
\end{lemma}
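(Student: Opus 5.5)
The plan is to deduce the height gap from geometric bigness of the canonical bundle on the $m$-th fiber power, via the Yuan--Zhang height inequality over the function field $F/k$. Set
\[ \Mbar = \Lbar_{\Phi_{B_0}^{(m)}}|_{\cX_{B_0}^{(m)}} \in \Pichat(\cX_{B_0}^{(m)}/k)_{\nef}, \]
viewed as an adelic line bundle on $\cX_F^{(m)}$ over the one-variable function field $F$, through the varying-the-base map of \S\ref{section: prelims}. By \eqref{product height sum}, its height at $P=(s;x_1,\ldots,x_m)$ is exactly the double sum $\sum_{a}\sum_i \hat h_{(f_i)_s}(x_{a,i})$ appearing in \eqref{specialization height gap}. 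It therefore suffices to show that $\Mbar$ is big on $\cX_{B_0}^{(m)}$ over $k$, that is, $\volhat(\Mbar)>0$. Once this is known, \cite[Theorem 5.3.7]{Yuan:Zhang:quasiprojective} gives a nonempty Zariski-open $U$ and $\epsilon>0$ with $h_{\Mbar}\geq\epsilon$ on $U(L)$. Equivalently, Theorem \ref{fundamental inequality} gives $e(\cX_F^{(m)},\Mbar)\geq h_{\Mbar}(\cX_F^{(m)})>0$, provided we know both that the geometric part is big and that $\Mbar^{\,d_k}>0$.

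Step one is geometric bigness. Since $\dim_k S_{B_0}=m=\ell+1$, and $(\Phi_{B_0},\cX_{B_0})$ is maximally varying and not weakly special, Theorem \ref{hypersurfaces} (after choosing an embedding $k\hookrightarrow\C$) gives $\mu\neq 0$. Equivalently, $\hat T^{\wedge}\wedge[\cX_{B_0}^{(m)}]\neq0$ over $S_{B_0}$, with $m$ copies. This includes the all-Latt\`es case, since the weakly special exception is excluded. By \eqref{geometric volume formula} this is positivity of the $k$-geometric volume of $\Lbar_{\Phi_{B_0}^{(m)}}|_{\cX_{B_0}^{(m)}}$, which is the absolute (over $k$) volume of $\Mbar$ regarded in $\Pichat(\cX_{B_0}^{(m)}/k)$. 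Here I would use Proposition \ref{natural identification}, with $F=k(\sigma)$ and $\sigma$ the transcendental coordinate on $B_0$, to identify this with $\volhat(\Mbar)$ for $\Mbar$ over $F/k$. This identification is exactly the mechanism of the induction step in \S\ref{ordinary hypersurfaces}. It also gives bigness of the geometric part over $F$ on $\cX_F^{(m)}$: the $F$-dimension of $\cX_F^{(m)}$ is $\ell+m(N-1)$, and the geometric bundle there is the restriction over $S_F$ of dimension $\ell$, so Theorem \ref{hypersurfaces} applied over $S_F$, combined with Proposition \ref{gluing} to add one more fiber copy, yields positivity.

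Step two is the conclusion. With $\volhat(\Mbar)>0$ and $\Mbar$ nef, $\Mbar$ is big, and the lower bound on heights off a proper closed subset \cite[Theorem 5.3.7]{Yuan:Zhang:quasiprojective} yields $U$ and $\epsilon$. Heights of $L$-points are computed after passing to finite extensions $E/F$ with the stated normalization, so the bound holds uniformly on $U(L)$.

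The main obstacle I expect is the bookkeeping in step one. One must check that the $m$-dimensional base over $k$ exactly matches the number $m$ of fiber copies, so that Theorem \ref{hypersurfaces} applies to $S_{B_0}$, rather than to $S_F$, which has one fewer dimension. One must also confirm that the complex non-degeneracy transfers to the algebraically closed field $k$ through an embedding into $\C$, since intersection numbers are algebraic and independent of the embedding.
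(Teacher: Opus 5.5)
Your proposal is correct and follows the paper's proof. Theorem \ref{hypersurfaces}, applied over the $m$-dimensional base $S_{B_0,\C}$, gives positivity of the $k$-volume of $\Mbar$, which is transferred from $\C$ to $k$ because intersection numbers are preserved under scalar extension. This bigness survives restriction to $\cX_F^{(m)}$, and the Yuan--Zhang height inequality \cite[Theorem 5.3.7(1)]{Yuan:Zhang:quasiprojective} then produces $U$ and $\epsilon$.

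The differences are minor. The paper justifies the restriction step by \cite[Theorem 5.2.7]{Yuan:Zhang:quasiprojective}, using that the function fields agree; this also covers the case where $F$ is not a rational field $k(\sigma)$. It extracts the constant lower bound by comparing with a trivial bundle $\Nbar$ satisfying $h_{\Nbar}\equiv 1$. Your alternative fundamental-inequality route, and the separate geometric bigness over $S_F$ it requires, are therefore not needed.
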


\begin{proof}
After replacing $S_{B_0}$ by an affine open subset, put
\[D=\dim_k\cX_{B_0}^{(m)}=m+m(n-1)=mn,
~\Mbar:=\Lbar_{\Phi_{B_0}^{(m)}}|_{\cX_{B_0}^{(m)}}
\in\Pichat(\cX_{B_0}^{(m)}/k)\]
and recall that $\Mbar$ is a strongly nef geometric adelic line bundle. 
Let $\Mbar_{\mathrm{loc}}$ be the corresponding strongly nef local
complex canonical bundle on $\cX_{B_0,\C}^{(m)}$, recalling that $k$ is a subfield of $\C$, and let $\widetilde M_{\mathrm{loc}}$ be its geometric adelic class obtained
by forgetting the metrics, as in \S\ref{section: prelims}.
The Hilbert--Samuel formula \cite[Theorem 5.2.2(1)]{Yuan:Zhang:quasiprojective} and Guo's integration formula \cite[Theorem 1.2]{Guo}, recalled in
\eqref{geometric volume formula}, give
\begin{equation}\label{specialization volume mass}
\begin{aligned}
\volhat(\Mbar)
	&=\Mbar^D=(\widetilde M_{\mathrm{loc}})^D =\volhat(\widetilde M_{\mathrm{loc}})\\
	&=\int_{\cX_{B_0,\C}^{(m)}(\C)} \hat T_{\Phi_{B_0,\C}^{(m)}}^{\wedge D}>0.
\end{aligned}
\end{equation}
To explain these equalities in more detail, note that the two volumes are geometric adelic volumes over $k$ and $\C$, respectively.  The limiting construction in \cite{Yuan:Zhang:quasiprojective} represents $\overline M$ by nef line bundles $M_j$ on projective models $Y_j$ over $k$.  Over $\C$, the scalar extension of these line bundles $M_{j,\mathbb C}$ represent $\widetilde M_{\mathrm{loc}}$. Intersection degrees are preserved by scalar extension, so the limiting construction of the adelic intersection number in \cite[Prop 4.1.1]{Yuan:Zhang:quasiprojective} gives
	\[\overline M^D =\lim_j(M_j^D)|_{Y_j/k} =\lim_j(M_{j,\mathbb C}^D)|_{Y_{j,\mathbb C}/\mathbb C} =\widetilde M_{\mathrm{loc}}^D.\]
The last equality in \eqref{specialization volume mass} uses $c_1(\Mbar_{\mathrm{loc}}) =\hat T_{\Phi_{B_0,\C}^{(m)}}|_{\cX_{B_0,\C}^{(m)}}$.  The strict positivity follows from Theorem \ref{hypersurfaces}, applied to $(\Phi_{B_0,\C},\cX_{B_0,\C})$ over the $m$-dimensional parameter space $S_{B_0,\C}$; this pair is maximally varying and $\cX_{B_0,\C}$ is not weakly special.

We also write $\Mbar$ for its restriction to $\cX_F^{(m)}$, viewed
as an adelic line bundle over $k$. This restriction remains big by
\cite[Theorem 5.2.7]{Yuan:Zhang:quasiprojective}, since the natural morphism
$\cX_F^{(m)}\to\cX_{B_0}^{(m)}$ induces an isomorphism of function fields.
Let $\overline B_0$ be the smooth projective completion of $B_0$.
Choose $v_0\in\overline B_0(k)$, and let
$\Nbar=\overline{\mathcal O}_{\varphi}\in\Pichat(\operatorname{Spec}F/k)$
be the trivial line bundle represented by the metrized divisor
$(0,\{g_v\}_v),~
 g_{v_0}\equiv1,~ g_v\equiv0\quad(v\ne v_0).$
Here $g_v=-\log\|1\|_v$, and we normalize the absolute values by
$|a|_v=\exp(-\operatorname{ord}_v(a))$.
Since $k$ is algebraically closed, every place has degree $1$, so
$h_{\Nbar}\equiv1$. Applying
\cite[Theorem 5.3.7(1)]{Yuan:Zhang:quasiprojective} to
$\cX_F^{(m)}\to\operatorname{Spec}F$ therefore gives a nonempty
Zariski-open subset $U\subset\cX_F^{(m)}$ and $\epsilon>0$ such that
\[h_{\Mbar}(P)\geq\epsilon \text{ for every }P\in U(L),\]
so the lemma follows from  \eqref{product height sum}.
\end{proof}

By (3), the preperiodic points are Zariski dense in
$\cX^{(m)}=(\cX_F^{(m)})_L$, and hence one lies in $U(L)$, contradicting
\eqref{specialization height gap}. Thus $\cX$ is weakly special,
completing the induction and the proof of $(3)\Rightarrow(2)$.
\qed

\subsection{Further uniformity consequences}
\label{uniformity}

We conclude this section with particular consequences of Theorem \ref{main1} over $\Qbar$ and Theorem \ref{degree bound}.

\begin{proof}[Proof of Theorem \ref{preperiodbound}]
For a given map $F = (f_1, \ldots, f_N)$ on $(\bP^1)^N$ and vector of positive integers $\mathbf a=(a_1,\ldots,a_N)$, we set 
	\[F_{\mathbf a} :=(f_1^{a_1},\ldots,f_N^{a_N}). \]

Fixing integers $D$ and $E$, let $\mathcal P$ be the finite union of parameter spaces for tuples of non-exceptional maps of degrees between $2$ and $D$ and irreducible cycles in $(\bP^1)^N$ of degree at most $E$, restricted to the locus where the cycle dominates every coordinate and the maps are non-exceptional.  For given $\mathbf a,m,n$, the condition
	\[ F_{\mathbf a}^n(V)=F_{\mathbf a}^m(V) \]
is Zariski closed in $\mathcal P$.  

Let $\mathcal R \subset \mathcal P$ be the union of these closed loci over all $\mathbf a,m,n$.  Its points are precisely the relations under consideration, by the classification of special subvarieties for non-exceptional product maps.  Let $S$ be a positive-dimensional irreducible component of the Zariski closure of $\mathcal R$.  The special fibers of the universal family over $S$ are Zariski dense, by definition.  Applying Theorem \ref{main1}, and recalling that all maps are non-exceptional in this parameter space, we conclude that the geometric generic fiber is special.  It therefore satisfies one fixed relation
	\[ F_{\mathbf a}^n(V)=F_{\mathbf a}^m(V) \]
over the function field of $S$.  Since the corresponding relation locus is closed, it contains all of $S$.  Thus, $\mathcal R$ is closed.  It has finitely many irreducible components, and one fixed triple $(\mathbf a,m,n)$ works on each component.  Taking the maximum of the finitely many exponents proves the theorem.
\end{proof}

\begin{proof}[Proof of Corollary \ref{iterateshare}]
Write $d_f = \deg f$ and $d_g = \deg g$.  If $f$ and $g$ share an iterate, the diagonal $\Delta \subset \bP^1\times\bP^1$ is a relation for the pair, as observed in Proposition \ref{diagonal relations}.  In fact, the diagonal is necessarily periodic for some iterates $f^a$ and $g^b$ for which $d_f^a = d_g^b$.  Theorem \ref{preperiodbound} bounds the coordinatewise exponents and the orbit data of this relation. 
\end{proof}

\begin{proof}[Proof of Theorem \ref{theorem: common}]
Apply Theorem \ref{degree bound} with $N=2$, and let $V=\Delta$ be the diagonal in $\bP^1\times\bP^1$.  For any pair of maps $f, g: \bP^1\to \bP^1$ defined over $\C$, the intersection
	\[ \Delta\cap\Preper(f,g) \]
in $\bP^1\times\bP^1$ is naturally identified with $\Preper(f)\cap\Preper(g)$ in $\bP^1$.  Its Zariski closure in $\Delta$ is either zero-dimensional or all of $\Delta$.  In the first case its cardinality equals its degree and is uniformly bounded by Theorem \ref{degree bound}.  In the second case, the diagonal contains infinitely many $(f,g)$-preperiodic points, and we know from \cite[Theorem 1.2]{BD:preperiodic} or \cite[Theorem 1.3]{Yuan:Zhang:II} that
	 \[ \Preper(f)=\Preper(g). \]
This proves part (1).

Assume now that $f$ and $g$ are non-exceptional and have the same preperiodic set.  Then the diagonal is preperiodic for $(f^a,g^b)$ for some $a,b$ with $\deg(f)^a=\deg(g)^b$, combining the results of \cite[Theorem 1.4]{Yuan:Zhang:II} and \cite[Theorem A]{Levin:Przytycki} and the observation of Proposition \ref{diagonal relations}(2); see also \cite{Ghioca:Nguyen:Ye:DMM1}.  Theorem \ref{preperiodbound} bounds the orbit data of the diagonal, and Proposition \ref{diagonal relations} provides bounds on the integers $n,m$ such that
	\[ f^n\circ g^m=f^{2n}, \qquad  g^m\circ f^n=g^{2m}. \]
Conversely, these identities imply equality of the preperiodic sets by the preperiodicity of $\Delta$, also from Proposition \ref{diagonal relations}.  For bounded $n,m$, the displayed identities are algebraic equations in the coefficients of $f$ and $g$. The desired locus is therefore a finite union of closed algebraic subvarieties.  This proves part (2).
\end{proof}

\begin{proof}[Proof of Theorem \ref{thm:mixed}]
Let $L_E:\bP^1\to\bP^1$ be the Latt\`es map induced by multiplication by $2$ on $E$, and let $p(z)=z^2$.  Consider the product
	\[ \Psi=(L_E,f,p,\ldots,p) \]
on $(\bP^1)^{q+2}$ and the projective closure $H$ in $(\bP^1)^{q+2}$ of the affine hyperplane
	\[ a_0x+a_1y+\sum_{j=1}^q b_jz_j=1. \]
The degree of $H$ (upon embedding $(\bP^1)^{q+2}$ into some $\bP^N$) is bounded only in terms of $q$, for any choice of coefficients or maps, while the degrees of the coordinate maps are bounded in terms of $d$. Moreover, $H$ is irreducible and projects dominantly to each coordinate. Every solution in the statement is a point of $H\cap\Preper(\Psi)$.  Theorem \ref{degree bound} therefore gives a bound, depending only on $d$ and $q$, for the degree and the number of components of the Zariski closure $\overline{H\cap\Preper(\Psi)}$.

It remains to observe that every positive-dimensional component of this closure intersecting $U:=\bA^1\times\bA^1\times\mathbb G_m^q$ is contained in a degenerate subsum locus involving only multiplicative terms. Let $V$ be such a component.  By the classification of special subvarieties from \cite{Ghioca:Nguyen:Ye:DMM2, MSW2023}, no positive-dimensional relation can mix the ordinary $f$-coordinate with an exceptional coordinate. Thus, if $y$ were nonconstant on $V$, it would vary independently of the other coordinates, contradicting the equation defining $H$ because $a_1\neq0$. Hence $y=y_0$ is constant on $V$.

Let $W$ be the projection of $V$ onto the exceptional coordinates $(x,z_1,\ldots,z_q)$. Since $y=y_0$ on $V$, we will henceforth identify $V$ with $W$. By the description of special subvarieties,
$W\cap(\mathbb P^1\times\mathbb G_m^q)$
is the image under a quotient map of a torsion translate of a connected algebraic subgroup of $E\times\mathbb G_m^q$. Every such subgroup is a product $A\times T$, where $A$ is either $E$ or $\{O\}\subset E$ and $T\subset\mathbb G_m^q$ is a torsion coset.  If $A=E$, then, all points$(x,y_0,z_1,\ldots,z_q)\in\mathbb{A}^1\times\{y_0\}\times\tau T$, would satisfy the equation defining $H$, contradicting that $a_0\neq0$. Hence $A=\{O\}$, so $x=x_0$ is constant on $V$. Since $V$ meets $U$, we have $x_0\in\mathbb A^1$, and therefore
	\[V\cap U=\{(x_0,y_0)\}\times\tau T,\]
and $\dim T=\dim V>0$.  Since $\dim T>0$, there exist integers $m_1,\ldots,m_q$, not all
zero, such that
	\[(s^{m_1},\ldots,s^{m_q})\in T~\text{for every }s\in\mathbb C^\star.\]
Fix a nonzero integer $m$ occurring among the $m_j$, and set $I=\{j:m_j=m\}$. For every $(z_1,\ldots,z_q)\in\tau T$, we also have $(z_1s^{m_1},\ldots,z_qs^{m_q})\in\tau T.$
The equation defining $H$ therefore gives
	\[\sum_{j=1}^q b_jz_js^{m_j}=1-a_0x_0-a_1y_0 ~\text{for every } s\in\mathbb C^\star.\]
Comparing the coefficients of $s^m$, and since $m\neq0$, we obtain
	\[\sum_{j\in I}b_jz_j=0.\]
Thus the same nonempty subsum vanishes throughout $V\cap U$.

Since $V$ was an arbitrary positive-dimensional irreducible component
of $Z$ meeting $U$, every nondegenerate solution is a zero-dimensional
irreducible component of $Z$. The number of these components is
uniformly bounded in terms of $d$ and $q$ by Theorem
\ref{degree bound}, as established above. This proves the theorem.
\end{proof}

\bigskip
\section{Arithmetic bigness in general}
\label{arithmetic bigness}

We now prove Theorem \ref{main3}.  Throughout the section, all data are
defined over $\Qbar$, and
	\[ \ell=\dim S>0, \qquad r=\dim X_s. \]
After restriction to a nonempty open subset of $S$, and a generically finite base change, we assume that all essential shadow families and all exceptional quotient data are defined over $S$.  These operations do not change the vanishing, bigness, or density conditions under consideration.

\subsection{The Deligne activity bundle}
Define
\begin{equation} \label{Deligne activity bundle}
\overline{\mathcal M}_{\Phi,\cX} =
	\left\langle \Lbar_\Phi|_{\cX},\ldots,\Lbar_\Phi|_{\cX} \right\rangle_{\cX/S}
		\in\Pichat(S/\bZ)_{\nef},
\end{equation}
where the Deligne pairing has $r+1$ entries.  Its height at $s\in S(\Qbar)$ is
\begin{equation} \label{Deligne activity height}
	h_{\overline{\mathcal M}_{\Phi,\cX}}(s) 
	=(r+1)\deg_{L_{\Phi_s}}(X_s)\,\hat h_{\overline{L}_s}(X_s).
\end{equation}
The degree is positive and constant over $S$.

The following proposition follows from \cite{Yuan:bigness}.

\begin{proposition} \label{fiber power criterion}
Set $m=\ell+1$.  Then
	\[ \volhat\left( \Lbar_{\Phi^{(m)}}|_{\cX^{(m)}} \right)=0 ~
 	\Leftrightarrow~\volhat\left(S,\overline{\mathcal M}_{\Phi,\cX}\right)=0.\]
Moreover, for one, and hence every, ample Weil height $h_S$ on a projective compactification of $S$, these conditions are equivalent to the existence of a generic sequence $s_n\in S(\Qbar)$ for which
	\[\frac{h_{\overline{\mathcal M}_{\Phi,\cX}}(s_n)} {\max\{1,h_S(s_n)\}} \rightarrow0.\]
\end{proposition}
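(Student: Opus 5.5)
The plan is to reduce both assertions to Yuan's fiber-power criterion \cite[Theorem 4.1]{Yuan:bigness}, applied to the flat projective family $p:\cX\to S$ of relative dimension $r$ with the nef adelic line bundle $\Lbar:=\Lbar_\Phi|_{\cX}$. The first step is to check the hypotheses of that theorem in our setting. The bundle $\Lbar_\Phi$ is nef by \eqref{LPhi}. Its geometric part restricts on each fiber $X_s$ to the ample bundle $L_{\Phi_s}|_{X_s}$, so $\deg_{L_{\Phi_s}}(X_s)>0$, and this degree is constant over $S$ by flatness. After shrinking $S$ and a generically finite base change, as at the start of this section, $\cX\to S$ is flat and projective with geometrically integral fibers. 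I would also record that none of the three conditions changes under these reductions: volumes scale by the degree via \eqref{proj formula}, and a generic small sequence on a cover pushes down to one on $S$, and conversely lifts.

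Next I would identify the objects in Yuan's theorem with ours. The Deligne pairing $\langle\Lbar,\ldots,\Lbar\rangle_{\cX/S}$ with $r+1$ entries is by definition $\overline{\mathcal M}_{\Phi,\cX}$. The fiber power of $(\cX,\Lbar)$ over $S$ is $(\cX^{(m)},\sum_a \mathrm{pr}_a^*\Lbar)$. By the definition \eqref{LPhi} of $\Lbar_\Phi$ as a tensor product over coordinates, $\sum_a\mathrm{pr}_a^*\Lbar$ is exactly $\Lbar_{\Phi^{(m)}}|_{\cX^{(m)}}$. Yuan's theorem states that $\overline{\mathcal M}_{\Phi,\cX}$ is big on $S$ if and only if the restriction to the $(\ell+1)$-st fiber power is big. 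Since both bundles are nef, bigness is equivalent to positivity of the top self-intersection by \eqref{adelic volume formula}, and vanishing of the volume is the negation of this. This gives the first equivalence.

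For the height statement, I would argue directly on $S$ with the nef bundle $\overline{\mathcal M}:=\overline{\mathcal M}_{\Phi,\cX}$. Suppose $\volhat(\overline{\mathcal M})>0$. Then $\overline{\mathcal M}$ is big, so some multiple satisfies $a\overline{\mathcal M}-\overline{A}$ effective for an ample $\overline A$ extending over a compactification. This is the arithmetic Kodaira lemma in Yuan--Zhang, or equivalently \cite[Theorem 5.3.7]{Yuan:Zhang:quasiprojective} with $\overline A$ in place of $\Nbar$. It gives $h_{\overline{\mathcal M}}\ge c\,h_S-C$ off a proper closed subset, so no generic sequence can have ratio tending to $0$.

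Conversely, suppose $\volhat(\overline{\mathcal M})=0$. For every $\epsilon>0$, the bundle $\overline{\mathcal M}+\epsilon\overline{A}$ is big. I would then use the fundamental inequality (Theorem \ref{fundamental inequality}), or Yuan's lemma in \cite{Yuan:bigness}, which states that the essential minimum of $h_{\overline{\mathcal M}}$ relative to $h_{\overline A}$ is zero when the volume vanishes. This yields, for each $n$, a point $s_n$ outside the $n$-th member of an exhausting sequence of proper closed sets with $h_{\overline{\mathcal M}}(s_n)\le \frac1n\max\{1,h_S(s_n)\}$. A diagonal choice makes the sequence generic. Independence of the choice of $h_S$ follows because any two ample Weil heights are mutually bounded up to constants and scaling.

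The main obstacle is this last direction. When the geometric part of $\overline{\mathcal M}$ is not big, Theorem \ref{fundamental inequality} does not apply directly. I would handle this by perturbing to $\overline{\mathcal M}+\epsilon\overline{A}$ and using continuity of volume, which gives $\volhat(\overline{\mathcal M}+\epsilon\overline A)=O(\epsilon)$, together with the normalized height $h_{\overline{\mathcal M}+\epsilon\overline A}(S)$ tending to $\epsilon\cdot O(1)$. Applying the fundamental inequality to the perturbed bundle then produces generic points with $h_{\overline{\mathcal M}}\le h_{\overline{\mathcal M}+\epsilon\overline A}\ll\epsilon$ and $h_{\overline A}$ bounded. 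This is precisely the content of \cite[Theorem 4.1]{Yuan:bigness}, so I would cite it there.
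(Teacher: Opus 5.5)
Your proof of the first equivalence is the paper's argument: identify $\sum_a\mathrm{pr}_a^*\Lbar$ with $\Lbar_{\Phi^{(m)}}|_{\cX^{(m)}}$ and invoke \cite[Theorem 4.1]{Yuan:bigness}.

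The gap is in the height criterion, in the implication from $\volhat(S,\overline{\mathcal M}_{\Phi,\cX})=0$ to the existence of a generic sequence with small ratio. It occurs exactly in the case you flag, where $\widetilde{\mathcal M}_{\Phi,\cX}$ is not big. You assert that the normalized height $h_{\overline{\mathcal M}+\epsilon\overline A}(S)$ is $O(\epsilon)$ because the volume is. But its denominator $(\ell+1)(\widetilde{\mathcal M}+\epsilon\widetilde A)^{\ell}$ also tends to $0$ when $\widetilde{\mathcal M}^{\ell}=0$, so nothing follows.

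Concretely, let $(\Phi,\cX)$ be isotrivial with a fixed non-special fiber $X_0$. After the usual base change and conjugacy, $\overline{\mathcal M}_{\Phi,\cX}$ is pulled back from a point, so $\volhat=0$ and $\widetilde{\mathcal M}=0$. At the same time, $h_{\overline{\mathcal M}}\equiv c:=(r+1)\deg(X_0)\,\hat h(X_0)>0$ on $S$, by dynamical Bogomolov. Expanding, only terms with at most one factor $\overline{\mathcal M}$ survive, so $h_{\overline{\mathcal M}+\epsilon\overline A}(S)=c+\epsilon\,h_{\overline A}(S)$. No point of $S$ has $h_{\overline{\mathcal M}}<c$. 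The ratio tends to $0$ only along sequences with $h_S(s_n)\to\infty$, which an argument producing points of bounded $h_{\overline A}$ cannot detect. This truly degenerate situation is one that Theorem \ref{main3} must cover.

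Deferring to \cite[Theorem 4.1]{Yuan:bigness} does not close the gap. That result is the fiber-power bigness criterion, and the paper uses it only for the first equivalence.

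The missing ingredient is the converse height inequality \cite[Theorem 5.3.8]{Yuan:Zhang:quasiprojective}, which detects bigness of the geometric part from a lower bound on heights. The paper argues contrapositively:
\begin{enumerate}
\item If no such generic sequence exists, your diagonal argument gives $\epsilon>0$ and a nonempty open $U\subset S$ with $h_{\overline{\mathcal M}}\ge\epsilon\max\{1,h_S\}$ on $U(\Qbar)$.
\item Theorem 5.3.8 then makes $\widetilde{\mathcal M}_{\Phi,\cX}$ big.
\item Now Theorem \ref{fundamental inequality} applies to $\overline{\mathcal M}$ itself, with no perturbation, and $e(S,\overline{\mathcal M})\ge\epsilon$ gives $h_{\overline{\mathcal M}}(S)>0$, i.e.\ $\volhat>0$.
\end{enumerate}

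There is also a smaller slip in the other direction. The bound $h_{\overline{\mathcal M}}\ge c\,h_S-C$ does not rule out a generic sequence with $h_S(s_n)$ bounded and $h_{\overline{\mathcal M}}(s_n)\to0$. You need $h_{\overline{\mathcal M}}\ge\epsilon\max\{1,h_S\}$ on an open set, as given by \cite[Theorem 5.3.7(1)]{Yuan:Zhang:quasiprojective}. Equivalently, you can use positivity of $e(S,\overline{\mathcal M})$, which follows from Theorem \ref{fundamental inequality} because arithmetic bigness implies geometric bigness.
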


\begin{proof}
The first assertion is the fiber-power formulation of arithmetic bigness for the Deligne pairing \cite[Theorem 4.1]{Yuan:bigness}.  

Note that if $\overline{\mathcal M}_{\Phi,\cX}$ is arithmetically big, and if we fix an ample height $h_S$, then \cite[Theorem 5.3.7(1)]{Yuan:Zhang:quasiprojective} gives an $\epsilon > 0$ so that $h_{\overline{\mathcal M}_{\Phi,\cX}}\geq\epsilon\max\{1,h_S\}$ on a nonempty open subset.  So a generic sequence like the $\{s_n\}$ of the second assertion cannot exist.  Conversely, absence of such a sequence gives the same lower bound on some nonempty open subset.  Then \cite[Theorem 5.3.8]{Yuan:Zhang:quasiprojective} implies that $\widetilde{\mathcal M}_{\Phi,\cX}$ is big, and consequently, the positive essential minimum implies arithmetic bigness by the fundamental inequality \cite[Theorem 5.3.3]{Yuan:Zhang:quasiprojective}.
\end{proof}

Recall the notion of activity dimension from \S\ref{active dim}.  

\begin{proposition} \label{geometric Deligne nondegeneracy}
The geometric part $\widetilde{\mathcal M}_{\Phi,\cX}$ is big if and only if $\cX$ is potentially non-degenerate.  Equivalently,
	\[ \volhat\left(\widetilde{\mathcal M}_{\Phi,\cX}\right)>0 \quad
 	\Longleftrightarrow\quad \actdim(\Phi,\cX)=\ell. \]
\end{proposition}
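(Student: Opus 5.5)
The plan is to reduce bigness of the geometric Deligne bundle to a statement about currents over $\C$, and then quote Theorem \ref{true deg char} together with Corollary \ref{measure cor}. The geometric part $\widetilde{\mathcal M}_{\Phi,\cX}$ is a nef class on $S$, so by \eqref{adelic volume formula} and \eqref{geometric volume formula} its volume equals the top self-intersection $(\widetilde{\mathcal M}_{\Phi,\cX})^{\ell}$. After choosing an embedding $\Qbar\subset\C$, this top self-intersection equals the total mass of $c_1(\overline{\mathcal M}_{\Phi,\cX})_\infty^{\wedge \ell}$ on $S(\C)$. The first step is to identify the archimedean curvature of the Deligne pairing. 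By the curvature formula recalled in \S\ref{section: prelims},
\[ c_1(\overline{\mathcal M}_{\Phi,\cX})_\infty=\pi_*\left(\hat T_\Phi^{\wedge(r+1)}\wedge[\cX]\right)=T_{\Phi,\cX}, \]
using Theorem \ref{Lf} to identify $c_1(\Lbar_\Phi)$ over $\C$ with $\hat T_\Phi$. It follows that
\[ \volhat\left(\widetilde{\mathcal M}_{\Phi,\cX}\right)=\int_{S(\C)}T_{\Phi,\cX}^{\wedge\ell}. \]
I would justify passing this equality through the wedge power with the same approximation argument used in the proof of Lemma \ref{function field height gap}, where intersection numbers of models are preserved under scalar extension to $\C$. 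One could also invoke Guo's formula directly.

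With this identity in hand, the equivalence $\volhat(\widetilde{\mathcal M}_{\Phi,\cX})>0 \iff \actdim(\Phi,\cX)=\ell$ is exactly the equivalence (1)$\iff$(3) of Corollary \ref{measure cor}. That corollary applies because $\cX$ has generic fiber dominating each coordinate and $0<r<N$. The hypersurface case $r=N-1$ is included, and there Theorem \ref{hypersurfaces} already gives the same conclusion. Corollary \ref{measure cor} also gives $\mu_{\Phi,\cX}\neq0\iff T_{\Phi,\cX}^{\wedge\ell}\neq0$. By the definition of potential non-degeneracy, $\mu_{\Phi,\cX}\neq0$ says precisely that $\cX$ is potentially non-degenerate, which gives the first formulation of the statement. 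As a cross-check, this also matches \cite[Theorem 4.1]{Yuan:bigness}, which compares geometric bigness of the Deligne pairing with that of $\widetilde L_{\Phi^{(\ell)}}|_{\cX^{(\ell)}}$.

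The main obstacle is the first step: making rigorous that the volume of the geometric part of an adelic Deligne pairing on the quasiprojective base $S$ equals the mass of the top power of its curvature current. Two issues arise. First, $T_{\Phi,\cX}$ has only continuous potentials, so its wedge powers are defined in the Bedford--Taylor sense. Second, the non-compact base requires the Yuan--Zhang limit of models. I would handle this by representing $\overline{\mathcal M}_{\Phi,\cX}$ as a limit of Deligne pairings of nef model bundles. Their curvatures converge locally uniformly in potentials, by \cite[Theorem 4.1.3]{Yuan:Zhang:quasiprojective}, so the Monge--Amp\`ere masses converge by continuity of Bedford--Taylor products. The remaining possible difficulty is the generically finite base changes made earlier in the section. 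These change both sides by the same degree factor, by the projection formula \eqref{proj formula}, so they do not affect the equivalence.
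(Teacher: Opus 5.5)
Your proposal is correct and follows the paper's own argument. You identify the archimedean curvature of $\overline{\mathcal M}_{\Phi,\cX}$ with $T_{\Phi,\cX}$, use \eqref{geometric volume formula} to turn the geometric volume into the mass of $T_{\Phi,\cX}^{\wedge\ell}$, and conclude with Corollary \ref{measure cor}; your extra remarks on model approximations and base change spell out details the paper leaves implicit.
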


\begin{proof}
The curvature (1,1)-current of the Deligne activity bundle $\overline{\mathcal M}_{\Phi,\cX}$ at an archimedean place is $T_{\Phi,\cX}$.  The proposition follows immediately from Corollary \ref{measure cor} and the volume formula \eqref{geometric volume formula}.
\end{proof}

\subsection{Decomposition into essential shadow bundles}  Recall the essential hypersurface decomposition from \S\ref{decomposition}.  For $J\in\mathcal J(X)$, recall that
	\[ Y_J=H_J\times(\bP^1)^{J\setminus E_J}.	\]
Set
\begin{equation} \label{e_J}
	 e_J:=|E_J| \quad\mbox{and} \quad \delta_J=\deg(X\rightarrow Y_J), 
\end{equation}
and define
\begin{equation} \label{shadow Deligne bundle}
\overline{\mathcal M}_J := \left\langle \Lbar_{\Phi_{E_J}}|_{\cH_J},
 			\ldots, \Lbar_{\Phi_{E_J}}|_{\cH_J} \right\rangle_{\cH_J/S}\in \Pichat(S/\bZ)_{\Q,\mathrm{int}},
\end{equation}
where the Deligne pairing has $e_J$ entries.

This next proposition is an arithmetic analog of Proposition \ref{essential stability}.

\begin{proposition}
\label{Deligne essential decomposition}
In $\Pichat(S/\bZ)_{\Q,\mathrm{int}}$ we have
\begin{equation} \label{Deligne essential decomposition equation}
\overline{\mathcal M}_{\Phi,\cX} = \sum_{J\in\mathcal J(X)} c_J\overline{\mathcal M}_J,
		\qquad  c_J=\delta_J\frac{(r+1)!}{e_J!}>0.
\end{equation}
Every $\overline{\mathcal M}_J$ is nef, and
\begin{equation} \label{shadow fiber height}
h_{\overline{\mathcal M}_J}(s) \; = \;
	e_J\; \deg_{L_{\Phi_{E_J},s}}(H_{J,s}) \; \hat h_{\Phi_{E_J},s}(H_{J,s})\geq0.
\end{equation}
\end{proposition}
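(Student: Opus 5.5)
The plan is to follow the proof of Proposition \ref{essential stability} verbatim, with the currents $T_i$ replaced by the adelic line bundles $\Lbar_i:=\pi_i^*\Lbar_{f_i}$ and pushforwards of currents replaced by Deligne pairings.

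\emph{Step 1 (multilinear expansion).} Since $\Lbar_\Phi=\sum_{i=1}^N\Lbar_i$ and the Deligne pairing is multilinear and symmetric, we get
\[ \overline{\mathcal M}_{\Phi,\cX}=\sum_{(i_0,\ldots,i_r)}\left\langle \Lbar_{i_0}|_{\cX},\ldots,\Lbar_{i_r}|_{\cX}\right\rangle_{\cX/S}. \]
Any term with a repeated index vanishes. This is the arithmetic analogue of $\hat T_f\wedge\hat T_f=0$: the pairing $\langle\Lbar_{f_i},\Lbar_{f_i}\rangle_{S\times\bP^1/S}$ is trivial in $\Pichat(S/\bZ)$. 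To see this, apply the invariance $f_i^*\Lbar_{f_i}\simeq d_i\Lbar_{f_i}$. The projection formula then gives $d_i\langle\Lbar,\Lbar\rangle=d_i^2\langle\Lbar,\Lbar\rangle$, so the pairing is zero in the $\Q$-vector space. More generally, a repeated factor can be pushed forward along the projection that forgets all other coordinates, which kills the term. Collecting the remaining terms by index set gives $(r+1)!\sum_{|J|=r+1}\langle \Lbar_j|_\cX : j\in J\rangle$.

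\emph{Step 2 (reduction to shadows).} For each $J$, these bundles are pulled back along $\pi_J:\cX\to\cY_J\subset S\times(\bP^1)^J$. The projection formula for Deligne pairings under a generically finite map (\cite[Lemma 4.6.1]{Yuan:Zhang:quasiprojective}) gives the following:
\begin{itemize}
\item[] the term is $0$ if $\dim Y_J<r$;
\item[] otherwise it equals $\delta_J$ times the corresponding pairing on $\cY_J$.
\end{itemize}
Next, $\cY_J=\cH_J\times(\bP^1)^{J\setminus E_J}$. For each coordinate $j\notin E_J$, the pairing over the trivial $\bP^1$-factor with $\Lbar_{f_j}$ contributes its fiber degree $1$, again by the projection formula. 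Rewriting $\langle\Lbar_j : j\in E_J\rangle$ as $\frac{1}{e_J!}\langle\Lbar_{\Phi_{E_J}},\ldots\rangle$ (again by Step 1) yields $c_J=\delta_J(r+1)!/e_J!$.

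This step is the main technical point. I would verify the product/fibration projection formula in the adelic setting by reducing to models via the limit construction of \cite[Theorem 4.1.3]{Yuan:Zhang:quasiprojective}. On models it is a classical Deligne-pairing identity, and it passes to the limit in the boundary topology.

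\emph{Step 3 (nefness and heights).} Nefness of $\overline{\mathcal M}_J$ holds because Deligne pairings of nef adelic bundles are nef \cite[Theorem 4.1.3]{Yuan:Zhang:quasiprojective}. The height formula \eqref{shadow fiber height} follows exactly as in \eqref{hypersurface Deligne height}, by base-change compatibility at $s$ and the definition of $h(Z)$. Nonnegativity holds because $\Lbar_{\Phi_{E_J},s}$ is nef with nonnegative height on points, so the subvariety height is $\geq0$ by the fundamental inequality (Theorem \ref{fundamental inequality}), or directly from nefness.
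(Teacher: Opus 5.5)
Your proposal is correct and follows the paper's proof: the multilinear expansion, the elimination of repeated indices via the functional equation $f^*\Lbar_f\simeq d\,\Lbar_f$ and the projection formula for a finite map, the reduction to $\cY_J$ with factor $\delta_J$, integrating out the coordinates in $J\setminus E_J$, counting the $(r+1)!$ and $e_J!$ orderings, and base change for the fiber heights. The one variation is the repeated-index step: you project to the single repeated coordinate and reduce to $\langle\Lbar_{f_i},\Lbar_{f_i}\rangle_{S\times\bP^1/S}=0$, whereas the paper splits into the cases $\dim Y_I<r$ and $|I|=r$ and runs the same scaling argument with $\Phi_I$ on $S\times(\bP^1)^I$; your version is slightly more uniform and relies on the same mixed relative projection formula the paper already uses to integrate out the trivial $\bP^1$ factors.
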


\begin{proof}
Recall that we write 
$\Lbar_\Phi=\sum_{i=1}^N\Lbar_i$.  Multilinearity of the Deligne pairing as in \cite[Theorem 4.1.3]{Yuan:Zhang:quasiprojective}, gives
	\[\overline{\mathcal M}_{\Phi,\cX} = \sum_{(i_0,\ldots,i_r)\in\{1,\ldots,N\}^{r+1}}
			\left\langle \Lbar_{i_0}|_{\cX},\ldots,\Lbar_{i_r}|_{\cX} \right\rangle_{\cX/S}. \]
Let $I$ be the set of distinct indices in one summand.  If the image of
$X$ in $(\bP^1)^I$ has dimension $<r$, the summand vanishes by the
relative projection formula \cite[Lemma 4.6.1(3)]{Yuan:Zhang:quasiprojective}: at least one positive-dimensional fiber is met only by line bundles pulled back from the image.

Suppose the image has dimension $r$.  If $|I|=r$, one coordinate occurs
twice.  Push the pairing to $(\bP^1)^I$ and apply the finite endomorphism $\Phi_I$.  By linearity, the functional equations
	\[ \Phi_I^*\Lbar_i=d_i\Lbar_i \]
scale the pairing once by $d_i$ for every coordinate in $I$, and one additional time by the repeated degree.  However, from \cite[Lemma 4.6.1(4)]{Yuan:Zhang:quasiprojective}, pulling back by the finite map scales it only by the product of the degrees from distinct coordinates.  Since the repeated degree is $>1$, the pairing is zero.  Thus only tuples with $r+1$ distinct entries contribute. 

For a fixed $J\in\mathcal J(X)$, there are $(r+1)!$ orders of its
coordinates.  Pushing forward from $X$ to $Y_J$ contributes the factor
$\delta_J$.  The product decomposition of $Y_J$ and repeated use of the
projection formula integrate out the coordinates in $J\setminus E_J$,
leaving
	\[ \left\langle (\Lbar_i|_{\cH_J})_{i\in E_J} \right\rangle_{\cH_J/S}. \]
Expanding \eqref{shadow Deligne bundle}, the same repeated-coordinate
argument shows that only the $e_J!$ permutations in which every
essential coordinate occurs once survive.  This gives
\eqref{Deligne essential decomposition equation}.

Nefness follows from positivity of Deligne pairings \cite[Theorem 4.1.3]{Yuan:Zhang:quasiprojective}.  The fiber-height identity is base change for the pairing together with the definition of the canonical height of a subvariety; see \cite[Theorem 4.1.3 and \S5.3.1]{Yuan:Zhang:quasiprojective}.
\end{proof}

For a weakly special shadow with quotient section $\bar\sigma_J:S\to\cQ_J$, we consider the canonical heights on the group scheme $\cQ_J$, and we let $\overline{\mathcal N}_J$ on $S$ denote its pullback by $\bar\sigma_J$ to $S$, so that
\begin{equation} \label{quotient height bundle}
	h_{\overline{\mathcal N}_J}(s) = \hat h_{\cQ_{J,s}}(\bar\sigma_J(s)).
\end{equation}
Here the right-hand side is the N\'eron--Tate height in the elliptic case and the usual canonical height on $\mathbb G_m$ in the multiplicative case.  

The next lemma is an arithmetic analog of Proposition \ref{weakly special bif}.

\begin{lemma} \label{weak shadow height}
If $\cH_J$ is weakly special, then there is a constant $b_J>0$ such that
	\[ \overline{\mathcal M}_J=b_J \, \overline{\mathcal N}_J \]
up to addition of a torsion adelic line bundle.  In particular,
	\[ h_{\overline{\mathcal M}_J}(s)=0 ~\Leftrightarrow~\bar\sigma_J(s)\text{ is torsion}. \]
If $\cH_J$ is special, then $\overline{\mathcal M}_J=0$.
\end{lemma}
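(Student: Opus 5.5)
We will mimic the proof of Proposition \ref{weakly special bif}, replacing Betti forms by canonical adelic line bundles on the group schemes. After the common base change, lift $\cH_J$ to the coset family $\cB_J+\sigma_J\subset\cG_J^{E_J}$ with quotient map $q:\cG_J^{E_J}\to\cQ_J$, and write $\psi:\cG_J^{E_J}\to S\times(\bP^1)^{E_J}$ for the coordinatewise quotient. Each $\Lbar_{f_i}$ pulls back under the $i$-th factor of $\psi$ to a positive rational multiple of the canonical (N\'eron--Tate, or the standard toric) adelic line bundle $\overline{\mathcal L}_i$ on the $i$-th factor $\cG_J$. This holds because both sides satisfy the same functional equation under the isogeny lifting $f_i$, so the uniqueness part of Theorem \ref{Lf} applies, up to a torsion class. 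Essentiality makes the map $\cG_J\to\cQ_J$ induced by $q$ on each factor nonzero, hence an isogeny $\phi_i$. Consequently $\overline{\mathcal L}_i$ agrees with a positive rational multiple of $\phi_i^*\overline{\mathcal L}_{\cQ}$, again by uniqueness of canonical bundles.

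We then compute the Deligne pairing $\overline{\mathcal M}_J$ on the lift. By the projection formula for Deligne pairings (\cite[Lemma 4.6.1]{Yuan:Zhang:quasiprojective}), pulling back along the finite map $\cB_J+\sigma_J\to\cH_J$ multiplies the pairing by its degree. Expanding multilinearly as in Proposition \ref{Deligne essential decomposition}, only terms with all $e_J$ distinct coordinates survive. The remaining step is to evaluate
\[\left\langle \overline{\mathcal L}_1,\ldots,\overline{\mathcal L}_{e_J}\right\rangle_{(\cB_J+\sigma_J)/S}.\]
Translate by $-\sigma_J$ to reduce to the abelian subscheme $\cB_J$. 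Canonical bundles on group schemes are translation-invariant up to a pairing term with the section, which is exactly where $\bar\sigma_J^*\overline{\mathcal L}_{\cQ}$ should appear. The cleanest way to see this is to use the fibration $q$: the coset $\cB_J+\sigma_J$ is the fiber of $q$ over $\bar\sigma_J(S)$, so $\cB_J+\sigma_J\cong \cG_J^{E_J}\times_{\cQ_J,\bar\sigma_J}S$. Write the $e_J$ line bundles as a multiple of $q^*\overline{\mathcal L}_{\cQ}$ plus a part that is relatively ample on the fibers of $q$ and whose self-intersection on the fibers of $q$ is constant. The Deligne pairing over $\cG_J^{E_J}\to S$ restricted to the section then gives $b_J\,\bar\sigma_J^*\overline{\mathcal L}_{\cQ}=b_J\overline{\mathcal N}_J$, with $b_J>0$ the fiber degree times the combinatorial constants. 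This uses the same mechanism as Proposition \ref{gluing}.

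The hard step is to control the term with no factor of $q^*\overline{\mathcal L}_{\cQ}$. That pairing is pulled back from the zero section and should vanish, or be torsion, by the homogeneity argument already used in Proposition \ref{Deligne essential decomposition}. Pulling back by multiplication by $n$, which preserves $\cB_J$, scales a pairing of $e_J$ quadratic (or, in the toric case, linear) canonical bundles by a different power of $n$ than the relative degree factor. So that pairing is torsion, and similarly for the pairings containing two or more factors of $q^*\overline{\mathcal L}_{\cQ}$, since $\cQ_J$ has relative dimension one. I would redo this scaling bookkeeping carefully, as in the proof of Proposition \ref{Deligne essential decomposition}.

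The height statement then follows from \eqref{quotient height bundle} and positivity of $b_J$: N\'eron--Tate heights, and canonical heights on $\mathbb G_m$, vanish exactly at torsion points. In the special case $\bar\sigma_J$ is a torsion section. Then $\overline{\mathcal N}_J=\bar\sigma_J^*\overline{\mathcal L}_{\cQ}$ is torsion, because $[n]^*\overline{\mathcal L}_{\cQ}=n^2\overline{\mathcal L}_{\cQ}$ (respectively $n\overline{\mathcal L}_{\cQ}$) while $[n]\circ\bar\sigma_J$ is the zero section, along which the canonical bundle is trivial. Hence $\overline{\mathcal M}_J=0$ in $\Pichat(S/\bZ)_{\Q}$.
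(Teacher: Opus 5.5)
There is a genuine gap at the step you flag as hard. In the fibration picture, the term with no factor of $q^*\overline{\mathcal L}_{\cQ}$ is $\bar\sigma_J^*\overline R$, where $\overline R=\langle\overline R_1,\ldots,\overline R_{e_J}\rangle_{\cG_J^{E_J}/\cQ_J}$ is an adelic line bundle on $\cQ_J$. Nothing in your argument shows it is pulled back from the zero section. Do the scaling bookkeeping here carefully:
\begin{itemize}
\item $[n]$ on $\cG_J^{E_J}$ lies over $[n]$ on $\cQ_J$, not over the identity.
\item Each $\overline R_i$ is still $\lambda$-homogeneous, with $\lambda=n^2$ or $n$.
\item $[n]$ has degree $\lambda^{e_J-1}$ on the fibers of $q$.
\end{itemize}
Base change and the projection formula then give $\lambda^{e_J}\overline R=\lambda^{e_J-1}[n]^*\overline R$, that is, $[n]^*\overline R=\lambda\overline R$. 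This is exactly the weight of $\overline{\mathcal L}_{\cQ}$, so there is no mismatch of powers and no reason for $\overline R$ to be torsion.

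In fact, adding multiples of $q^*\overline{\mathcal L}_{\cQ}$ does not change the restriction to fibers of $q$, so your description does not determine the multiples. If you take them all to be $0$, the remainder is the entire pairing, and your claim would make $\overline{\mathcal M}_J$ torsion even when $\bar\sigma_J$ is not. The mismatched powers you have in mind occur only for a pairing over $S$ on the identity coset $\cB_J$, where $[n]$ covers the identity of $S$. In that translation picture, however, everything of interest sits in the correction terms from translating by $\sigma_J$, and you only assert that these produce $b_J\overline{\mathcal N}_J$.

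The repair, which is the paper's argument, is to drop the decomposition entirely, along with the identification of each $\overline{\mathcal L}_i$ with a multiple of $\phi_i^*\overline{\mathcal L}_{\cQ}$.
\begin{itemize}
\item Apply your scaling computation to the full relative pairing $\overline A=\langle\overline L,\ldots,\overline L\rangle_{\cG_J^{E_J}/\cQ_J}$ of the pulled-back bundle $\overline L$. This gives $[n]^*\overline A=\lambda\overline A$.
\item The underlying bundle of $\overline A$ has positive degree on the fibers of $\cQ_J\to S$, and it is symmetric in the elliptic case.
\item Uniqueness of canonical adelic line bundles on the one-dimensional group scheme $\cQ_J$ then gives $\overline A=c\,\overline{\mathcal L}_{\cQ}$ with $c>0$, up to torsion.
\item Base change along $\bar\sigma_J$, together with the projection formula for the degree-$\delta$ map $\cB_J+\sigma_J\to\cH_J$, gives $\delta\,\overline{\mathcal M}_J=c\,\overline{\mathcal N}_J$.
\end{itemize}
Your deductions of the height criterion and of the special case from this identity are fine.
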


\begin{proof}
Let $\cH_J$ be a weakly special shadow, with group lift
	\[\cB_J+\sigma_J\subset\cG_J^{E_J}\]
as in \S\ref{simultaneous torsion}. Let $q:\cG_J^{E_J}\to\cQ_J$ be the quotient by $\cB_J$.
Let $\overline L$ be the pullback of $\Lbar_{\Phi_{E_J}}$ under the coordinatewise quotient map
$\cG_J^{E_J}\to S\times(\bP^1)^{E_J}$.  Set $e= |E_J|$, and
	\[\overline A=\langle\overline L,\ldots,\overline L\rangle_{\cG_J^{E_J}/\cQ_J},\]
with $e$ entries.

For $n>1$, write $\lambda=n^2$ in the elliptic case and $\lambda=n$ in the multiplicative case. Since $[n]^*\overline L=\lambda\overline L$ and $\deg([n]|_{{\cB_{J,s}}})=\lambda^{e-1}$ for $s\in S$, base change for the Deligne pairing and the projection formula
\cite[Theorem 4.1.3 and Lemma 4.6.1(4)]{Yuan:Zhang:quasiprojective} give  
	\[\lambda^e\overline A=\lambda^{e-1}[n]^*\overline A.\]
Thus $[n]^*\overline A=\lambda\overline A$. The underlying line bundle $A$ has positive degree on the fibers of $\cQ_J\to S$, and in the elliptic case it is also symmetric.  Since the quotient is one-dimensional, uniqueness of the canonical adelic line bundles therefore gives
	\[\bar\sigma_J^*\overline A=c \, \overline{\mathcal N}_J  ~\text{for some }c>0;\]
see \cite[Theorems 6.1.2--6.1.3]{Yuan:Zhang:quasiprojective}.  If the projection $\cB_J+\sigma_J\to\cH_J$ has degree $\delta$, base change and the projection formula now yield
	\[\delta\,\overline{\mathcal M}_J=\bar\sigma_J^*\overline A=c\,\overline{\mathcal N}_J.\]
This proves the identity with $b_J=c/\delta$. 

The second statement follows from the first since the canonical height on an elliptic curve or the multiplicative group vanish precisely when the point is torsion.
\end{proof}

\subsection{The relative Bogomolov input}
The following proposition is where the hypothesis in Theorem \ref{main3}, excluding the mixed group types, is used.

\begin{proposition}
\label{homogeneous relative Bogomolov}
Let $B$ be normal, irreducible, and quasiprojective over $\Qbar$.  For integer $q\geq 2$, and for each $a\in\{1,\ldots,q\}$, let $\cQ_a\to B$ be a one-dimensional group scheme.  Assume that the $\cQ_a$ are either all either multiplicative or all elliptic schemes, and let
	$\xi_a:B\rightarrow\cQ_a$
be sections.  Let $\overline{\mathcal N}_a$ be the pullback of the canonical-height bundle on $\cQ_a$ to $B$, and set
	$\overline{\mathcal N}=\sum_{a=1}^q\overline{\mathcal N}_a.$
Then the essential minimum is $e(B,\overline{\mathcal N})=0$ if and only if the set 
	\[ \{b\in B(\Qbar):\xi_a(b)\text{ is torsion for every }a\}\] 
is Zariski dense in $B$.
\end{proposition}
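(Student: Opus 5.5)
The easy direction comes first. Suppose the simultaneous torsion locus
\[ T := \{b\in B(\Qbar):\xi_a(b)\text{ is torsion for every }a\} \]
is Zariski dense. Each $h_{\overline{\mathcal N}_a}$ is a canonical height pulled back by $\xi_a$, so it vanishes exactly where $\xi_a$ is torsion. Hence $h_{\overline{\mathcal N}}\equiv 0$ on $T$. The bundle $\overline{\mathcal N}$ is nef, as a sum of pullbacks of nef canonical bundles, so its heights are nonnegative, and a Zariski-dense set of height-zero points then forces $e(B,\overline{\mathcal N})=0$ directly from the definition of the essential minimum.

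For the converse, assume $e(B,\overline{\mathcal N})=0$, so there is a generic sequence $b_n\in B(\Qbar)$ with $h_{\overline{\mathcal N}}(b_n)\to 0$. We pass to the total group scheme
\[ \cG:=\cQ_1\times_B\cdots\times_B\cQ_q\to B \]
and the section $\xi=(\xi_1,\ldots,\xi_q):B\to\cG$, with image the irreducible subvariety $Z:=\xi(B)$. The canonical height on $\cG$ is the sum of the $\hat h_{\cQ_a}$, so the points $\xi(b_n)$ form a generic sequence of small points on $Z$ for the fiberwise canonical height.

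In the elliptic case, $\cG$ is an abelian scheme after base change. Kühne's relative Bogomolov theorem \cite{Kuhne:RBC} then says $Z$ must be \emph{degenerate}: the union of its positive-dimensional special (torsion-coset) subvarieties within fibers-over-subvarieties is Zariski dense. Equivalently, in the Gao--Habegger formulation, $Z$ is contained in a flat subgroup scheme translated by a torsion section, over a dense family of subvarieties. We then want to conclude that torsion points of $\cG$ are dense in $Z$. Since $Z$ is the image of a section, its intersection with a fiber is a single point. Bogomolov degeneracy for a section therefore means that the Betti map $\beta\circ\xi$ has real rank less than $2\dim B$ generically; more precisely, by \cite{GaoBettiRank}, $Z$ is degenerate exactly when it fails to have maximal Betti rank. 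I would argue as follows. Let $\cG'\subset\cG$ be the smallest abelian subscheme plus torsion section containing $Z$ (the Manin-kernel or "fixed part" reduction, Proposition \ref{leaf algebraicity}). Replacing $\cG$ by $\cG'$ and applying Kühne in $\cG'$, smallness forces the Betti rank of $\xi$ in $\cG'$ to be $2\,\mathrm{rel.dim}\,\cG'$. If $\dim B\ge \mathrm{rel.dim}\,\cG'$, the Betti map is then submersive onto the torus on an open set. Its preimages of rational points, which are exactly the simultaneous torsion parameters, are dense, and by the algebraicity of torsion loci they are Zariski dense. If instead $\dim B<\mathrm{rel.dim}\,\cG'$, then $Z$ is non-degenerate in $\cG'$, and Kühne's theorem contradicts the existence of the generic small sequence. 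The isotrivial elliptic factors are handled in the same way after trivializing, using the classical Bogomolov theorem in place of the relative one.

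In the multiplicative case, after a finite base change everything is isotrivial: $\cG\simeq B\times\mathbb G_m^q$ and $\xi$ is a morphism $B\to\mathbb G_m^q$. Let $W=\overline{\xi(B)}$. A generic sequence of small points on $B$ maps to a sequence of small points on $W$ that is generic in $W$. Zhang's toric Bogomolov theorem \cite{Zhang:positive} then makes $W$ a torsion coset. Torsion points are dense in a torsion coset, so their preimages under the dominant map $B\to W$ are Zariski dense in $B$, and these preimages are exactly the parameters where every $\xi_a$ is torsion.

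The main obstacle is the elliptic step: extracting density of \emph{simultaneous} torsion from Kühne's degeneracy conclusion. This needs the Betti-rank dictionary of \cite{GaoBettiRank, Gao:Habegger:RMM} together with the reduction to the minimal group subscheme containing $Z$.
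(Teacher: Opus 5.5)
Your easy direction and your multiplicative case are fine and match the paper, which handles the latter by citing Zhang's toric Bogomolov theorem. The gap is in the elliptic converse, in the sentence ``applying K\"uhne in $\cG'$, smallness forces the Betti rank of $\xi$ in $\cG'$ to be $2\,\mathrm{rel.dim}\,\cG'$.'' K\"uhne's theorem \cite{Kuhne:RBC} does not output a Betti rank. It says that small points are not Zariski dense on a subvariety satisfying its non-degeneracy hypotheses. In your first branch you cannot apply it to $Z=\xi(B)$ itself, because $\dim Z=\dim B\geq\mathrm{rel.dim}\,\cG'$. There, full Betti rank $2\,\mathrm{rel.dim}\,\cG'$ is exactly what you are trying to prove, and minimality of $\cG'$ does not supply it.

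For a concrete example, let $B=C_1\times C_2$, and let a non-isotrivial elliptic scheme $\cE$ and two $\Z$-independent sections all be pulled back from the curve $C_1$. Then $\cG'=\cE^2$ has relative dimension $2=\dim B$, yet the Betti map of $\xi$ factors through $C_1$ and has rank at most $2<4$. In this example, the (true) fact that small points are not dense comes from applying K\"uhne to the curve $\xi(C_1)$ over the base $C_1$, not to $Z$. Your second branch is also justified incorrectly. With three such sections, $\dim B=2<3=\mathrm{rel.dim}\,\cG'$, but $Z$ is still degenerate in the Betti-rank sense you are using.

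The missing idea is the reduction the paper makes with Gao's Betti-rank theorem \cite[Theorem 1.1(i)]{GaoBettiRank}, following \cite[\S9]{Gao:Habegger:RMM}. Replace $\xi$ by a multiple, and replace the ambient scheme by the abelian scheme $\mathcal A$ generated by $\xi$. If the Betti rank of $\xi(B)$ is less than $2\dim(\mathcal A/B)$, then after a finite base change there is an abelian quotient of $\mathcal A$ with moduli image $\mathcal A_0\to B_0$ with two properties: the image $X_0$ of $\xi(B)$ satisfies $\dim X_0<\dim(\mathcal A_0/B_0)$, and $\overline{\Z X_0}=\mathcal A_0$. K\"uhne's theorem is applied to this $X_0$. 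The generic small sequence on $B$ yields a dense set of small points on $X_0$ by comparison of fiberwise canonical heights, which is a contradiction. Passing to the moduli image of the base and to an abelian quotient is precisely what your reduction to $\cG'$ omits.

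Only after full Betti rank $2\dim(\mathcal A/B)$ has been established does your final density argument go through, via analytically dense preimages of rational points of the torus, each lying in an algebraic locus $\{[N]\xi=0\}$. Your last paragraph locates this obstacle correctly, but the proposal does not overcome it.

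Separately, treating isotrivial factors ``in the same way after trivializing'' only covers the case where every $\cQ_a$ is isotrivial. Constant and varying factors occurring in the same $\xi$ cannot be split off, because simultaneous torsion couples them.
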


\begin{proof}
Set $T = \{b\in B(\Qbar):\xi_a(b)\text{ is torsion for every }a\}$.  It is clear that the density of $T$ in $B$ implies that the essential minimum is 0.  We now explain the converse.  For elliptic schemes, this follows from K\"uhne's relative Bogomolov theorem \cite{Kuhne:RBC}.  More precisely, after shrinking $B$, replace $\xi:=(\xi_a)_a$ by a suitable multiple and the ambient
fiber product by the connected abelian subscheme generated by its generic value, which we denote by $\mathcal A\to B$.  Thus $\overline{\mathbb{Z}\, \xi(B)}^{\mathrm{Zar}}=\mathcal A$. 
If the Betti map restricted to $\xi(B)$ has rank $<2\dim(\mathcal A/B)$, then, following
\cite[\S9]{Gao:Habegger:RMM}, in view of \cite[Theorem 1.1(i)]{GaoBettiRank}, after a finite base change there is an abelian quotient of $\mathcal A$ whose moduli image $\mathcal A_0\to B_0$ contains the image $X_0$ of $\xi(B)$ with
	\[\dim X_0<\dim(\mathcal A_0/B_0), 
	~\overline{\mathbb Z X_0}^{\mathrm{Zar}}=\mathcal A_0.\]
Small points remain dense in $X_0$ by comparison of fiberwise canonical heights, contradicting \cite[Theorem 1]{Kuhne:RBC} since $\mathcal A_0$ remains isogenous to a product of elliptic
schemes. Hence the Betti map has full rank and the sections are torsion simultaneously on a  Zariski-dense set.  For multiplicative groups, this statement follows from Zhang's multiplicative Bogomolov theorem
\cite[Theorem 6.2]{Zhang:positive}.  
\end{proof}

\subsection{Proof of Theorem \ref{main3}}
By Proposition \ref{fiber power criterion}, conditions (1) and (2) of Theorem \ref{main3} are each equivalent to
\begin{equation} \label{Deligne volume zero}
	\volhat\left(\overline{\mathcal M}_{\Phi,\cX}\right)=0.
\end{equation}
We compare this condition with part (3).

Suppose first that $\cX$ is truly degenerate, so its bifurcation measure $\mu_{\Phi, \cX}$ vanishes on $S$.  Expansion of the power of the dynamical Green current on $\cX^{(\ell)}$ and fiber integration give
	\[ \mu_{\Phi,\cX}\geq \frac{(\ell(r+1))!}{((r+1)!)^\ell}\, T_{\Phi,\cX}^{\wedge\ell}; \]
see \cite[Proposition 1.4]{Gauthier:Taflin:Vigny}.  The curvature (1,1)-current of the Deligne activity bundle $\overline{\mathcal M}_{\Phi,\cX}$ at an archimedean place is $T_{\Phi,\cX}$. Hence true degeneracy implies that the geometric part of $\overline{\mathcal M}_{\Phi,\cX}$ is not big.  Arithmetic bigness would imply geometric bigness \cite[Theorems 5.3.7(1) and 5.3.8]{Yuan:Zhang:quasiprojective}, so \eqref{Deligne volume zero} holds.

Suppose next that all non-special essential hypersurface shadows are simultaneously torsion weakly special.  At every parameter in the common dense torsion locus, each essential shadow is special.  Equations \eqref{Deligne essential decomposition equation} and \eqref{shadow fiber height} give
	\[ h_{\overline{\mathcal M}_{\Phi,\cX}}(s)=0 \]
on this dense set.  If the adelic bundle $\overline{\mathcal M}_{\Phi,\cX}$ were big, the height inequality would bound its height below by a positive number away from a proper closed subset \cite[Theorem 5.3.7]{Yuan:Zhang:quasiprojective}.  This is impossible, proving \eqref{Deligne volume zero}.  Thus (3) implies (1).

For the converse, assume \eqref{Deligne volume zero} and suppose that $\cX$ is not truly degenerate, meaning that it is potentially non-degenerate.  Then $\mu_{\Phi, \cX} \not=0$ on $S(\C)$, and so by Corollary \ref{measure cor}, we also have $T_{\Phi,\cX}^{\wedge\ell} \not= 0$.  In other words, the geometric part $\widetilde{\mathcal M}_{\Phi,\cX}$ is big.  By \cite[Theorem 5.2.2(1)]{Yuan:Zhang:quasiprojective} and the definition of the height, \eqref{Deligne volume zero} says that the normalized $\overline{\mathcal M}_{\Phi,\cX}$-height of $S$ is zero.  The fundamental inequality \cite[Theorem 5.3.3]{Yuan:Zhang:quasiprojective} gives
	\[ e\left(S,\overline{\mathcal M}_{\Phi,\cX}\right)=0. \]
Choose a generic sequence $s_n$ with $h_{\overline{\mathcal M}_{\Phi,\cX}}(s_n)\rightarrow0$.
Every summand in \eqref{Deligne essential decomposition equation} has nonnegative height.  Hence
\begin{equation} \label{each shadow small}
	h_{\overline{\mathcal M}_J}(s_n)\rightarrow0
\end{equation}
for every essential hypersurface shadow.

We claim that every non-special shadow is weakly special.  Suppose that
$\cH_J$ is not weakly special.  Let $T_J$ be the normalization of the image of $S$ by its pair-moduli map $\nu_J$, defined in \eqref{pair moduli}.  After a generically finite base change and coordinatewise conjugacy, the pair over $S$ is pulled back from a maximally varying pair
	$ (\Phi_{J,T_J},\cH_{J,T_J}) $
over $T_J$, and
	\[ \overline{\mathcal M}_J = \nu_J^*\overline{\mathcal M}_{J,T_J}.  \]
If $\dim T_J>0$, Theorem \ref{arithmetic bigness: essential case} shows that $\overline{\mathcal M}_{J,T_J}$ is arithmetically big.  As before, the fundamental inequality gives positive
essential minimum.  This contradicts \eqref{each shadow small}.  If $\dim T_J=0$, the height in \eqref{shadow fiber height} is a positive constant by the dynamical Bogomolov theorem for the fixed map; see \cite[Theorem 1.1]{Ghioca:Nguyen:Ye:DMM1},  \cite[Theorem 2.2(2)]{Ghioca:Nguyen:Ye:DMM2}, \cite[Theorem 1.1]{MSW2023}, \cite[Theorem 6.2]{Zhang:positive}
and \cite[Corollary 3]{Zhang:ICM}. We have again reached a contradiction, so the claim follows.

For the special shadows, the Deligne bundles vanish, and we may ignore them.  By Lemma \ref{weak shadow height}, the remaining part of \eqref{Deligne essential decomposition equation} is a positive linear combination of the quotient-height bundles $\overline{\mathcal N}_J$ defined in \eqref{quotient height bundle}.  The sequence $s_n$ is generic and small for their sum.  All quotient groups have the same type by hypothesis. Proposition \ref{homogeneous relative Bogomolov} therefore implies that
	\[ \{s\in S(\Qbar):\bar\sigma_J(s)\text{ is torsion for every non-special
shadow }J\}  \]
is Zariski dense.  Thus the non-special essential hypersurface shadows are simultaneously torsion weakly special.  This proves (1) implies (3) and completes the proof of Theorem \ref{main3}.

\bigskip \bigskip

\def\cprime{$'$}

\end{document}